\documentclass{amsart}
\usepackage{graphicx}
\usepackage{amssymb, comment, color}
\usepackage{tikz-cd}
\usetikzlibrary{arrows}
\tikzset{>=latex}

\usepackage{subcaption}
 \captionsetup[subfigure]{labelfont=rm}
 \usepackage{hyperref}

\usetikzlibrary{intersections,positioning}

\title[Genus zero transverse foliations for Reeb flows]{Genus zero transverse foliations for weakly convex Reeb flows on the tight $3$-sphere}

\newcommand{\C}{\mathbb{C}}
\newcommand{\D}{\mathbb{D}}
\newcommand{\R}{\mathbb{R}}
\newcommand{\Z}{\mathbb{Z}}

\newcommand{\N}{\mathbb{N}}
\newcommand{\M}{\mathcal{M}}

\newcommand{\J}{\mathcal{J}}
\newcommand{\B}{\mathcal{B}}

\renewcommand{\P}{\mathcal{P}}
\newcommand{\F}{\mathcal{F}}
\newcommand{\V}{\mathcal{V}}
\newcommand{\W}{\mathcal{W}}

\renewcommand{\sl}{\text{sl}}
\newcommand{\link}{\text{link}}
\newcommand{\wind}{\text{wind}}
\newcommand{\cz}{{\text{CZ}}}
\newcommand{\U}{\mathcal U}
\renewcommand{\S}{\mathcal S}

\newcounter{newcounter}[section]

\numberwithin{equation}{section}
\numberwithin{newcounter}{section}
\numberwithin{figure}{section}
\numberwithin{footnote}{section}

\newtheorem{dfn}[newcounter]{Definition}

\newtheorem{lem}[newcounter]{Lemma}
\newtheorem{prop}[newcounter]{Proposition}
\newtheorem{rem}[newcounter]{Remark}
\newtheorem{thm}[newcounter]{Theorem}

\begin{document}

 \author[N. V. de Paulo]{Naiara V. de Paulo}
 \address{Naiara V. de Paulo, Universidade Federal de Santa Catarina, Departamento de Matem\'atica, Rua Engenheiro Udo Deeke, 485, CEP 89065-100, Bairro Salto do Norte, Blumenau SC, Brazil}
 \email {naiara.vergian@ufsc.br} 
 
 \author[U. Hryniewicz ]{Umberto Hryniewicz}
\address{Umberto L. Hryniewicz \\ RWTH Aachen University, Jakobstrasse 2, Aachen 52064, Germany}
\email{hryniewicz@mathga.rwth-aachen.de}

\author[S. Kim]{Seongchan Kim}
  \address{Seongchan Kim, Department of Mathematics Education, Kongju National University, Kongju 32588, Republic of Korea}
  \email {seongchankim@kongju.ac.kr}

\author[P. A. S. Salom\~ao]{Pedro A. S. Salom\~ao}
\address{Pedro A. S. Salom\~ao, International Center for Mathematics, SUSTech, Shenzhen, China}
\email{psalomao@sustech.edu.cn}

\subjclass[2020]{Primary  53D35; Secondary 37J55}

\begin{abstract}
A contact form on the tight $3$-sphere $(S^3,\xi_0)$ is called weakly convex if the Conley-Zehnder index of every Reeb orbit is at least $2$. In this article, we study  Reeb flows of weakly convex contact forms on $(S^3,\xi_0)$ admitting a prescribed finite set of  index-$2$ Reeb orbits, which are all hyperbolic and mutually unlinked. We present conditions so that these index-$2$ orbits are binding orbits of a genus zero transverse foliation whose additional binding orbits have index $3$.  In addition, we show in the real-analytic case that the topological entropy of the Reeb flow is positive if the branches of the stable/unstable manifolds of the index-$2$ orbits are mutually non-coincident.   
\end{abstract}

\maketitle

\tableofcontents

\section{Introduction and main results}\label{sec_introduction}

The goal of this paper is to construct transverse foliations for Reeb flows on the tight $3$-sphere with prescribed binding orbits.
Our existence theorem uses specific assumptions that hold for energy levels of Hamiltonians with two degrees of freedom that appear in concrete problems.
Hence, we are able to study certain classes of degenerate Reeb flows on the tight $3$-sphere and attack questions in celestial mechanics.

The main motivation comes from the quest for homoclinics to the Lyapunov orbits in energy levels of the planar circular restricted $3$-body problem.  
We are particularly interested in the case where the energy is slightly above the first critical value; in this case, a Lyapunov orbit appears between the massive bodies.
The notion of chaos in the planar circular restricted $3$-body problem is historically connected to such homoclinics.
However, the existence of homoclinics to the Lyapunov orbit in this specific regime has only been proved for small mass ratios by Llibre, Mart\'inez, and Sim\'o in~\cite{LMSimo} and studied numerically in the same work.
The existence of such homoclinic orbits for arbitrary mass ratio remains open.
This problem is a point of entrance for symplectic methods since the desired homoclinic connection would follow if one can find a transverse foliation with the Lyapunov orbit as part of the binding.

\subsection{Transverse foliations and weakly convex Reeb flows} Let $\psi_t$, $t\in \R$, be a smooth flow on a smooth closed oriented $3$-manifold $M$. A {\bf transverse foliation} adapted to $(M,\psi_t)$ is a singular foliation $\F$ of $M$ satisfying the following:

\begin{itemize}

\item[(i)] The singular set $\P$ of  $\F$ consists of finitely many simple periodic orbits $P_1,\ldots,P_m\subset M$, called binding orbits. The set $\P$ is called the binding of~$\F$.

\item[(ii)]  The complement $M\setminus \cup_{P\in \P} P$ is smoothly foliated by surfaces, called the regular leaves of $\F$. Every regular leaf of $\F$ is a properly embedded surface 
$\dot \Sigma \hookrightarrow M\setminus \cup_{P\in \P} P.$ The closure $\Sigma = {\rm cl}(\dot \Sigma)$ is an 
 immersed compact surface whose boundary is formed by binding orbits in $\mathcal{P}$. The components of $\partial \Sigma$ are called the asymptotic limits of $\dot \Sigma$, and the ends of $\dot \Sigma$ are called the punctures of $\dot \Sigma$. Each $\dot \Sigma$ is transverse to the flow, and each puncture has an associated asymptotic limit.
\item[(iii)] The orientation of $M$ and the flow provide each regular leaf $\dot \Sigma$ with an orientation in such a way that trajectories intersect leaves positively. A puncture of $\dot \Sigma$ is called positive if the orientation of its asymptotic limit as the boundary of $\Sigma$ coincides with the orientation of the flow. Otherwise, the puncture is called negative.
\end{itemize}

A transverse foliation, all of whose regular leaves have genus zero, is called a genus zero transverse foliation.
This definition was introduced in \cite{HS_ICM} and is based on the finite energy foliations introduced by Hofer, Wysocki, and Zehnder in~\cite{fols}, where they prove the following remarkable result for Reeb flows on the tight three-sphere.

\begin{thm}[Hofer-Wysocky-Zehnder \cite{fols}]\label{thm_HWZ_a}
 Let $\lambda=f\lambda_0$ be a nondegenerate contact form on the tight three-sphere $(S^3,\xi_0)$. 
 Then the Reeb flow of $\lambda$ admits a genus zero transverse foliation whose binding orbits have Conley-Zehnder index equal to $1$, $2$, or $3$.  
\end{thm}

We want to construct genus zero transverse foliations adapted to Reeb flows on the tight $3$-sphere $(S^3,\xi_0)$ without assuming nondegeneracy on $\lambda$. We are particularly interested in transverse foliations whose binding contains a prescribed set of index-$2$ Reeb orbits.  

Here, $S^3=\{(x_1,x_2,y_1,y_2)\in \R^4 \mid x_1^2+x_2^2+y_1^2+y_2^2=1\}$, where $(x_1,x_2,y_1,y_2)$ are canonical coordinates on~$\R^4$. 
The (tight) contact structure $\xi_0$ on $S^3$  is the one induced by the Liouville form on $\R^4$, that is $\xi_0=\ker \lambda_0$, where $\lambda_0$ is the restriction of $\frac{1}{2} \sum_{i=1}^2 (x_i dy_i - y_i dx_i)$ to $S^3$.

Let $\lambda$ be a contact form on $(S^3,\xi_0)$, that is $\lambda = f\lambda_0$ for some  smooth function $f\colon S^3 \to (0,+\infty).$  Its Reeb vector field $R=R_\lambda$ is uniquely determined by 
$d\lambda(R,\cdot) = 0$ and $\lambda(R) = 1$, and its flow $\phi_t$  is called the Reeb flow of~$\lambda$.

A periodic orbit of $\lambda$, also called a Reeb orbit, is a pair $P=(x,T),$ where $x\colon \R \to M$ is a periodic trajectory of the Reeb flow of $\lambda$ and $T>0$ is a period of $x$. If $T$ is the least positive period of $x$,   $P$ is called simple. The Reeb orbits $(x,T)$ and $(y,T')$ satisfying $x(\R) =y(\R)$ and $T=T'$ are identified and we denote by $\P(\lambda)$ the set of equivalence classes of periodic trajectories of $\lambda$. We may also denote by $P$ the set $x(\R)\subset S^3$. The action $\int_{[0,T]}x^* \lambda$ of $P=(x,T)$  coincides with its period $T$ since $\lambda(R)=1$. A periodic orbit $P=(x,T) \in \P(\lambda)$ is said to be nondegenerate if $1$ is not an eigenvalue of the linearized map $d\phi_T:\xi_{x(0)}\to \xi_{x(0)}.$
The contact form $\lambda$ is called nondegenerate if every periodic orbit of $\lambda$ is nondegenerate.

The symplectic vector bundle $(\xi_0,d\lambda|_{\xi_0}) \to S^3$  admits a unique symplectic trivialization up to homotopy.  
Hence, the Conley-Zehnder index ${\rm CZ}(P)$ of every periodic orbit $P\in \P(\lambda)$ is uniquely determined by any such 
trivialization; see section~\ref{sec_def} for a definition of the Conley-Zehnder index. We will sometimes refer to the Conley-Zehnder index of a periodic orbit $P$ simply as the index of $P$.
Given $j\in \Z$, denote by $\P_j(\lambda)\subset \P(\lambda)$ the set of Reeb orbits with Conley-Zehnder index $j$, and denote by $\P_j^{u,-1}(\lambda)\subset \P_j(\lambda)$   the subset of index-$j$ Reeb orbits that are unknotted and have self-linking number $-1$; see section \ref{sec_def} for the definition of self-linking number.

\begin{dfn}
A contact form $\lambda$ on $(S^3,\xi_0)$ is called weakly convex if ${\rm{CZ}}(P)\geq 2$ for every $P\in \P(\lambda)$, that is $\P_j(\lambda)=\emptyset \ \forall j \leq 1$.
\end{dfn}

The following theorem is a particular case of Theorem \ref{thm_HWZ_a}.   

\begin{thm}[Hofer-Wysocki-Zehnder \cite{fols}, Siefring \cite{Si3}] \label{thm_HWZ_b} If $\lambda=f\lambda_0$ is a nondegenerate weakly convex contact form on the tight three-sphere $(S^3,\xi_0)$, then its Reeb flow admits a genus zero transverse foliation satisfying the following conditions:
\begin{itemize}
    \item[(i)] Every binding orbit has index $2$ or $3$.
    \item[(ii)] Every regular leaf is either a plane asymptotic to a binding orbit or a cylinder connecting an index-$3$ binding orbit at a positive end to an index-$2$ binding orbit at a negative end.
    \item[(iii)] If there exists only one binding orbit, then its index is $3$, and the transverse foliation determines an open book decomposition of $(S^3,\xi_0)$ whose pages are disk-like global surfaces of section.
    \item[(iv)] Every index-$2$ binding orbit bounds a pair of rigid planes whose closures form a $C^1$ two-sphere. The complement of the union of these two-spheres in $S^3$ contains an index-$3$ binding orbit in each component. The index-$3$ binding orbit admits rigid cylinders connecting it to the index-$2$ orbits at the boundary of the component. The complement of the rigid cylinders in each component is foliated by finitely many families of planes asymptotic to the index-3 binding orbit.
\end{itemize}
\end{thm}

\begin{rem}Theorem \ref{thm_HWZ_b} is discussed in \cite[section 7.2]{fols} without further details. The existence of precisely two distinct rigid planes asymptotic to a given index-$2$ binding orbit follows from standard regularity, compactness, and intersection arguments. See the discussion in  \cite[Proposition 4.1]{lemos}. If fact, the orbits that obstruct the existence of these rigid planes are the index-$1$ orbits, which do not exist by assumption. Siefring's intersection theory \cite{Si3}, see also \cite{props2}, implies that these two planes approach the binding orbit through opposite directions forming a $C^1$-embedded two-sphere.  This is explained in \cite[Appendix C]{dPS1}.
\end{rem}

The following definition was first introduced in \cite{dPS_SPJ}, motivated both by Theorem \ref{thm_HWZ_b} and the $3-2-3$ foliations studied in \cite{dPS1,dPS2}.

\begin{dfn}
 A (genus zero) transverse foliation satisfying the conditions of Theorem \ref{thm_HWZ_b} is called a {\bf weakly convex foliation} if it has at least one index-$2$ binding orbit. In particular, all regular leaves are planes and cylinders.   
\end{dfn}

 It follows from Theorem \ref{thm_HWZ_b} that every nondegenerate weakly convex Reeb flow on the tight three-sphere either admits an open book decomposition whose pages are disk-like global surfaces of section, or a weakly convex foliation whose regular leaves are planes and cylinders transverse to the flow.
It is our goal to construct weakly convex foliations for a possibly degenerate Reeb flow on the tight three-sphere with a prescribed set of index-$2$ binding orbits. 

\subsection{Main results} In the following we assume that the contact form $\lambda$ on the tight 3-sphere is weakly convex, that  $\P_2(\lambda)$ is non-empty and finite, and that every orbit in $\P_2(\lambda)$ is hyperbolic, unknotted, and has self-linking number $-1$. Moreover, we assume that the orbits in $\P_2(\lambda)$ are mutually unlinked and that their actions are small when compared to the actions of the simple orbits with higher indices. We also assume that each orbit in $\P_2(\lambda)$ does not link with any orbit in $\P_3^{u,-1 }(\lambda)$. Recall that a Reeb orbit $P'=(x',T')$, which is geometrically distinct from an unknotted periodic orbit $P$, is said to be linked with $P$ if $0 \neq [x'] \in H_1( S^3 \setminus P; \Z) \cong \Z.$ Otherwise, we say that $P'$ is not linked with $P$. The linking number between $P$ and $P'$ is denoted $\link(P,P')$.

Our main result states that under the above-mentioned hypotheses,  the Reeb flow of $\lambda$ admits a weakly convex foliation so that every orbit in $\P_2(\lambda)$ is a binding orbit. Moreover, all other binding orbits have index $3$. See also Theorem \ref{mainfef}.

\begin{thm}\label{main1}
Let $\lambda=f\lambda_0$ be a weakly convex contact form on  $(S^3,\xi_0)$. Assume that the following conditions are satisfied:
\begin{itemize}
    \item[I.] $\P_2(\lambda)=\P_2^{u,-1}(\lambda)$ is a non-empty finite set $\{P_{2,1},\ldots,P_{2,l}\}$ of mutually unlinked hyperbolic periodic orbits.

\item[II.] If $P\in \P(\lambda)$ is geometrically distinct from any orbit in $\P_2^{u,-1}(\lambda)$, then  its action is greater than the action of every orbit in $\P_2^{u,-1}(\lambda)$.



\item[III.] If $P\in \P_3^{u,-1}(\lambda),$ then $\link(P,P_{2,i}) = 0, \forall i=1,\ldots,l.$

\end{itemize}
Then the Reeb flow of $\lambda$ admits a weakly convex foliation $\F$ so that:
\begin{itemize}
\item[(i)] Every orbit in $\P_2(\lambda)$ is a binding orbit.

\item[(ii)] There are $l+1$ remaining binding orbits $P_{3,1},\ldots,P_{3,l+1}\in \P_3^{u,-1}(\lambda)$. 
    
\item[(iii)] For each $i\in \{1,\ldots,l\}$, there exists a pair of rigid planes $U_{i,1},U_{i,2}\in \F,$  so that each one of them is asymptotic to $P_{2,i}$ at its positive puncture. 
Moreover, $\S_i = U_{i,1}\cup P_{2,i}\cup U_{i,2}\hookrightarrow S^3$ is a $C^1$-embedded $2$-sphere. 

\item[(iv)] The open set $S^3 \setminus \bigcup_{i=1}^l \S_i$ has $l+1$ components $\U_1,\ldots, \U_{l+1}$. For each $j \in \{1,\ldots,l+1\}$ it holds $P_{3,j} \subset \U_j$.

\item[(v)] If $\S_i \subset \partial \U_j,$ then $\F$ contains  a  rigid cylinder $V_{j,i}\subset \U_j$ asymptotic to $P_{3,j}$ at its positive puncture and to $P_{2,i}$ at its negative puncture.

\item[(vi)] For each $j\in \{1,\ldots,l+1\}$, there exist $\tilde k_j\geq 1$ 
families of planes parametrized by the interval $(0,1)$, so that each such plane is asymptotic to $P_{3,j}$ at its positive puncture. 
Here, $\tilde k_j$ is the number of components of $\partial \U_j$. At each end, every such family of planes breaks onto a rigid cylinder $V_{j,i}$ connecting $P_{3,j}$ to some $P_{2,i}$ and a rigid plane asymptotic to $P_{2,i}$.
    
\end{itemize}

\end{thm}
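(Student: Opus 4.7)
The plan is to build $\F$ via pseudo-holomorphic curve theory in the symplectization $(\R\times S^3, d(e^a\lambda))$ and then pass to a limit from a non-degenerate approximation. First I would approximate $\lambda$ by $C^\infty$-close non-degenerate weakly convex contact forms $\lambda_n$ for which the orbits $P_{2,i}$ persist as hyperbolic, unknotted, $\sl=-1$, mutually unlinked, and retain the smallest actions below an arbitrarily large threshold; this is possible because $\P_2(\lambda)$ is finite and its elements are hyperbolic. Hypothesis III survives because linking numbers are locally constant. Choose a generic $d\lambda_n$-compatible almost complex structure $J_n$ on $\xi_0$ and extend to an $\R$-invariant $\tilde J_n$ on $\R\times S^3$. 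The strategy is to build the foliation $\F_n$ satisfying (i)--(vi) for $(\lambda_n,\tilde J_n)$ and then recover $\F$ by SFT compactness as $n\to\infty$, using the uniform action bound coming from II.

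The rigid planes $U_{i,1},U_{i,2}$ of (iii) come from the theory of finite-energy planes asymptotic to unknotted index-$2$ hyperbolic orbits with $\sl=-1$. Their Fredholm index equals $\cz-1=1$, so their moduli space is $0$-dimensional modulo $\R$-translation, and existence and uniqueness of exactly two such rigid planes, one per branch of the stable manifold of $P_{2,i}$, follow from existence results of Hofer--Wysocki--Zehnder and Hryniewicz--Salom\~ao. Positivity of intersections and unknottedness of $P_{2,i}$ imply that $\S_i = U_{i,1}\cup P_{2,i}\cup U_{i,2}$ is $C^1$-embedded; mutual unlinkedness of the $P_{2,i}$'s, together with an intersection argument among planes asymptotic to distinct $P_{2,i}$, forces the $\S_i$'s to be pairwise disjoint, so they decompose $S^3$ into $l+1$ open $3$-balls $\U_j$, giving (iv).

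The core of the argument is producing, inside each $\U_j$, a simple index-$3$ unknotted orbit $P_{3,j}$ with $\sl=-1$ together with the rigid cylinders $V_{j,i}$. Starting from a boundary plane $U_{i,1}$ of $\U_j$, I would use the implicit function theorem at its moduli point to generate a local $1$-parameter family and track its maximal connected component inside $\U_j$. The other end must degenerate by SFT compactness to a finite-energy building; weak convexity forbids $\cz\leq 1$ asymptotic limits, II forbids breakings through index-$2$ orbits outside $\overline{\U_j}$, and III prevents the building from escaping $\U_j$ via a linked index-$3$ orbit. A Fredholm-index, intersection-number and asymptotic-behaviour analysis then forces the limit to be a rigid finite-energy plane asymptotic to a single simple orbit $P_{3,j}\in\P_3^{u,-1}(\lambda_n)$ contained in $\U_j$, together with, at each boundary component of $\U_j$, a rigid cylinder $V_{j,i}$ with positive puncture at $P_{3,j}$ and negative puncture at $P_{2,i}$. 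Iterating along every boundary sphere of $\U_j$ produces all the $V_{j,i}$ of (v), and an intersection argument shows that $P_{3,j}$ is unique in $\U_j$. This is the main obstacle: it is the step where every alternative SFT limit must be simultaneously excluded.

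With the binding orbits in hand, (vi) follows from a Bishop-type finite-energy-foliation construction: planes with positive puncture at $P_{3,j}$ have Fredholm index $\cz-1=2$, so their moduli space modulo $\R$ is $1$-dimensional, and each rigid cylinder $V_{j,i}$ serves as a breaking model from which a $1$-parameter family of planes is continued by gluing, yielding $\tilde k_j$ families whose images sweep out $\U_j$ minus the rigid curves. At the opposite end of each family, SFT compactness delivers the broken configuration $V_{j,i}\cup U_{i,\bullet}$ dictated by II--III. Finally, I would pass from $\lambda_n$ to $\lambda$ by SFT compactness: the rigid planes and cylinders converge, up to shifting parametrizations, to rigid planes and cylinders for $\lambda$ with the correct asymptotic data, and the $1$-parameter families converge after reparametrization to families with the same breaking behaviour. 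The asymptotic limits of the limit curves still satisfy $\cz\ge 2$ by weak convexity, so no exotic degenerate limits appear.
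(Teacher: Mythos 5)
Your overall route is the same as the paper's: approximate $\lambda$ by non-degenerate weakly convex forms $\lambda_n$ carrying the $P_{2,i}$, invoke the Hofer--Wysocki--Zehnder finite energy foliations for $(\lambda_n,J_n)$, and pass to the limit. But the final step, dismissed in your last two sentences with ``no exotic degenerate limits appear,'' is precisely where the substance of the proof lies, and weak convexity alone does not deliver it. Two concrete problems are unaddressed. First, the limits of the $1$-parameter families of index-$2$ planes asymptotic to $P_{3,j}^n$ need not be planes or cylinders: bubbling-off can produce a limit curve with positive puncture at $P_{3,j}$ and \emph{several} negative punctures at distinct index-$2$ orbits. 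Such a curve has Fredholm index $2-\#\Gamma\le 0$; it is excluded for generic $J$ among $\widetilde J$-holomorphic curves, but it can still arise as a $C^\infty_{\rm loc}$-limit of $\widetilde J_n$-curves. The paper must choose $J$ in a residual set where somewhere injective curves with one positive puncture at an index-$2$ orbit have no negative punctures (Lemma~\ref{lem_genJset}), show that at most $\tilde k_j$ limit curves with multiple negative ends can occur so that a generic anchor point $p\in\U_j$ avoids them (Proposition~\ref{prop_family_planes1}), and then perturb $J$ once more away from the already-frozen rigid planes (Theorem~\ref{thm_props3}) before gluing. None of this is implied by ``$\cz\ge 2$ rules out degenerate limits.''

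Second, even after the rigid planes, cylinders and candidate orbits $P_{3,j}$ converge, the limiting object need not be a \emph{foliation}: if $\lambda$ has a low-action periodic orbit $U$ that is geometrically distinct from all binding candidates and unlinked with every $P_{3,j}$, the families of planes cannot sweep out $\U_j\setminus(P_{3,j}\cup\text{rigid leaves})$. Such an orbit $U$ is invisible to the non-degenerate approximations and cannot be excluded by hypotheses I--III alone. The paper resolves this with the iterative ``freezing'' argument of Proposition~\ref{prop_specialP3}: restart with new sequences $\lambda_n$ for which $U$ and the previously found index-$3$ orbits are exact Reeb orbits, obtain new limiting index-$3$ orbits that must link $U$, and prove by a compactness argument on the accumulating sequences $U_k$ that this process terminates. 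Your proposal has no counterpart to this step. (A minor additional point: you cannot start the Bishop-type family from the rigid plane $U_{i,1}$ by the implicit function theorem, since its index is $1$ and it is isolated modulo $\R$-translation; the families consist of index-$2$ planes asymptotic to the index-$3$ orbit, which must be produced first.)
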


\begin{rem}
    A similar result holds for general contact forms on $(S^3,\xi_0)$, not necessarily weakly convex. Indeed, let $\lambda=f\lambda_0$ be a contact form on $(S^3,\xi_0)$. Let $C=C(\lambda)>0$ be the constant given in Proposition \ref{prop_unif_C}. Suppose that there exist $l$ orbits  $P_{2,1},\ldots,P_{2,l}\in \P_2^{u,-1}(\lambda)$ as in I, and that every other Reeb orbit with index  $-1,0,1$ or $2$ has action $>C$. Under the additional conditions II and III, the conclusions of Theorem \ref{main1} still hold.
\end{rem}

Theorem \ref{main1} is inspired by Theorems \ref{thm_HWZ_a} and \ref{thm_HWZ_b}.  Our results only make non-degeneracy hypotheses on the periodic orbits in $\P_2(\lambda)$ and are intended to apply to classical problems emerging in Celestial Mechanics.



\begin{figure}[ht!!]
  \centering
  \includegraphics[width=0.6\textwidth]{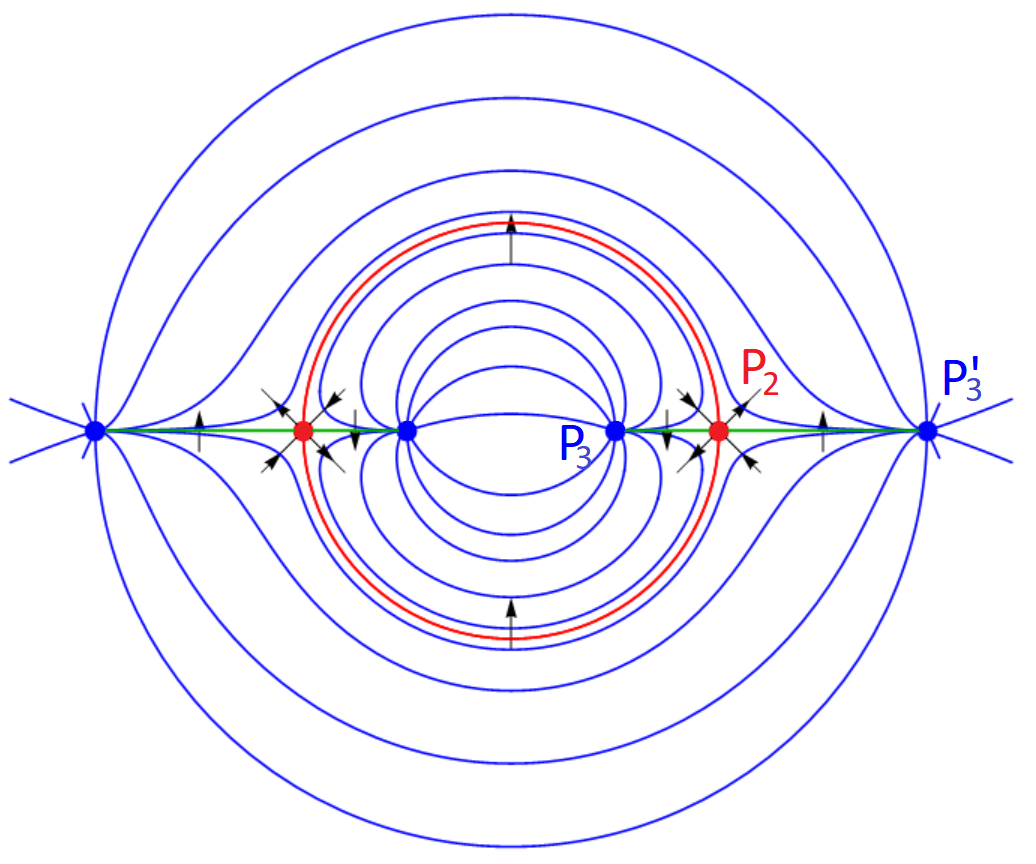}
  \caption{A weakly convex transverse foliation on $(S^3,\xi_0)$, called a $3-2-3$ foliation. The binding is formed by precisely one index-$2$ orbit  $P_2$ and two index-$3$ orbits $P_3,P_3'$. }
  \label{fif_foliation323b}
\hfill
\end{figure}


Our next result uses Theorem~\ref{main1} to study certain Reeb flows of real-analytic contact forms that appear in concrete problems.
To prepare for the statement we need to introduce some notation.

Consider a contact form $\lambda$ on $(S^3,\xi_0)$.
Let $\F$ be a weakly convex foliation adapted to the Reeb flow of $\lambda$.
Let $\U$ be a connected component of the complement of the union of the rigid spheres, and $P$ be a binding orbit in $\partial\U$.
The stable manifold $W^s(P)$ is an immersed cylinder transverse to the rigid sphere containing~$P$.
Hence, there are two well-defined local branches of $W^s(P)$ through $P$, and only one of them is contained in $\U$.
We denote by $W^s_\U(P)$ the branch of $W^s(P)$ that contains the local branch in $\U$.
The branch $W^u_\U(P)$ of the unstable manifold $W^u(P)$ is defined analogously.


\begin{thm}\label{main2}Let $\lambda=f\lambda_0$ be a real-analytic contact form on $(S^3,\xi_0)$ satisfying the conclusions of Theorem~\ref{main1}, i.e., admitting a weakly convex foliation as described in Theorem \ref{main1}. Suppose  that the actions of
the orbits in $\mathcal{P}_2(\lambda)$ coincide. 
Let $\U$ be a connected component of the complement of the rigid spheres.
Then the following statements hold:


\begin{itemize}
    
\item [(i)] 
Suppose that for every binding orbit $P$ in  $  \partial\U$ there exist binding orbits $P',P'' $ in $  \partial\U$ such that $W^s_\U(P) = W^u_\U(P')$ and $W^u_\U(P) = W^s_\U(P'')$.
Then there exists an invariant set $A\subset \U$ which admits a cross-section. This cross-section is a punctured disk bounded by the index-3 binding orbit in~$\U$, and the first return map has an infinite twist near each puncture. In particular, $A $ contains infinitely many periodic orbits. 

\item[(ii)] 
Suppose that $W^s_\U(P) \neq W^u_\U(P')$ and that $W^u_\U(P) \neq W^s_\U(P')$ holds for all pairs of binding orbits $P,P'$ in $ \partial\U$. 
Then there exists an invariant set $\Lambda\subset \U$ such that the Reeb flow restricted to $\Lambda$ has positive topological entropy.
 

\end{itemize}
\end{thm}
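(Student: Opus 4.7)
The plan is to analyze the dynamics in $\U$ via a first-return map on a cross section built directly from the foliation $\F$. Fix $\U$ with its unique index-$3$ binding orbit $P_3 \subset \U$ and write $P_{2,i_1},\dots,P_{2,i_k}$ for the index-$2$ binding orbits on $\partial\U$. The $\tilde k_j$ families of planes from item (vi) of Theorem~\ref{main1}, together with the rigid cylinders $V_{j,i}$ and the rigid planes lying in the $\S_i \subset \partial\U$, sweep out $\U\setminus P_3$ and provide a transverse disk $D$, a single page of the local open book near $P_3$, bounded by $P_3$. The Reeb flow induces a partially defined first-return map $\varphi\colon D\setminus E \to D$, where $E\subset D$ is the set of points whose forward trajectory escapes $\U$ along a stable manifold $W^s_\U(P_{2,i})$. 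Both statements of the theorem will follow by analyzing the structure of $E$ and of $\varphi$ near $E$, with the equal-action hypothesis ensuring that trajectories can transition between different $P_{2,i}$'s via bona fide heteroclinic cylinders without action mismatch.

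For (i), the heteroclinic matching hypothesis forces the closures $\overline{W^u_\U(P)}$ and $\overline{W^s_\U(P')}$ to coincide for the paired binding orbits in $\partial\U$, so that the branches assemble into a finite collection of closed invariant heteroclinic cylinders $C_1,\dots,C_N\subset \overline{\U}$. Their union together with the binding orbits themselves is an invariant topological $2$-complex $\Sigma$ bounding an invariant region $A\subset \U$ containing $P_3$. Intersecting $\Sigma$ with $D$ gives finitely many arcs terminating on $\partial D = P_3$, and the complement inside $D\cap A$ is the desired punctured-disk cross section. Near each puncture, orbits shadow the corresponding $P_{2,i}$ for arbitrarily long times; since $P_{2,i}$ is hyperbolic of Conley--Zehnder index $2$, its transverse linearized Poincar\'e map contributes a positive rotation rate, so the angle accumulated during a passage of duration $T$ behaves like $c T \sim -c' \log d$, where $d$ is the distance to the puncture. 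This is the infinite-twist property. A Poincar\'e--Birkhoff type argument applied to annular neighborhoods between suitable level sets of the rotation number then produces periodic orbits of arbitrarily large period in $A$.

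For (ii), analyticity of $\lambda$ promotes the $W^{s}(P_{2,i})$ and $W^u(P_{2,i})$ to real-analytic immersed surfaces in $S^3$. Since the branches $W^{u}_\U(P_{2,i})$ are trapped inside the precompact region $\overline{\U}$ and the Reeb flow preserves the contact volume $\lambda\wedge d\lambda$, they must recurrently accumulate; the non-coincidence hypothesis rules out the simplest possibility, namely joining onto an opposite-type branch. Analyticity then forbids this accumulation from being entirely tangential: were every heteroclinic intersection of $W^u_\U(P)$ with a stable branch $W^s_\U(P')$ a tangency, analytic continuation along the intersection curve would force the forbidden coincidence $W^u_\U(P) = W^s_\U(P')$. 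Thus at least one transverse hetero- or homoclinic point exists, and the Smale--Birkhoff theorem produces a horseshoe for an iterate of $\varphi$; the resulting hyperbolic invariant set $\Lambda \subset \U$ has positive topological entropy for the Reeb flow. The principal difficulty lies in this analytic step, namely ruling out flat tangential contact along an entire analytic curve of heteroclinic points and extracting a uniform transverse intersection; this is where both the Noetherian nature of analytic varieties and the confinement of the branches inside $\overline{\U}$ are essential.
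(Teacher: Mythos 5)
Your overall architecture (a cross section cut out of a page of the foliation, transition maps near the hyperbolic index-$2$ orbits, infinite twist for (i), transverse homo/heteroclinics for (ii)) matches the paper's, but both parts contain gaps, and the one in (ii) is fatal as written. For (i): the stable/unstable branches meet a page $\F^{+}$ in \emph{closed circles} bounding disks of $d\lambda$-area equal to the common action $T_2$; the punctured disk of the statement is obtained by collapsing those disks to points, not by cutting along ``arcs terminating on $\partial D=P_3$'' (trajectories through the interiors of those disks leave $\U$ through the rigid planes, so they cannot be part of an invariant cross section). More importantly, the infinite twist is \emph{not} produced by ``a positive rotation rate of the transverse linearized Poincar\'e map'': $P_{2,i}$ is hyperbolic, so its linearized return map has real eigenvalues and no rotation. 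The twist is the advance in the coordinate \emph{along the orbit} during a passage of duration $\sim -\ln r$; in Moser's analytic normal form the exterior transition map lifts to $(t,r)\mapsto (t+g(r)-h(r)\ln r,\,r)$ with $h(0)>0$, and $t$ becomes the angular coordinate about the collapsed puncture. Your scaling $-c'\log d$ is correct but for the wrong mechanism. Finally, the return map permutes the punctures, so a Poincar\'e--Birkhoff/Franks argument needs either a Brouwer fixed point first (case of several punctures) or the explicit infinite-twist estimate on an annulus (case of one puncture); the paper performs exactly this case split.

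For (ii) the decisive step is wrong. The assertion that a purely tangential intersection of $W^u_\U(P)$ with $W^s_\U(P')$ along an analytic curve ``would force the forbidden coincidence by analytic continuation'' is false: two distinct real-analytic surfaces can be tangent along their whole intersection locus (e.g.\ $\{z=0\}$ and $\{z=x^2\}$ in $\R^3$ along the $y$-axis), so non-coincidence does not exclude flat contact. The paper's transversality argument (Conley's idea) is quantitative and local: in the analytic normal form the passage map sends an analytic arc ending on the unstable circle to a spiral with slope $dt/dr<-c_1/r$, while any real-analytic arc representing the stable trace has, by finite-order Puiseux contact with the circle, slope $>-c_2/r^{1-\lambda}$ for some $\lambda\in(0,1)$; comparing slopes forces infinitely many transverse crossings. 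Real-analyticity is used to exclude infinite-order tangency of the arc with the circle, not to propagate a coincidence. Three further points: the unstable branches are \emph{not} trapped in ${\rm closure}(\U)$ (points inside the disks $\mathcal{D}^s$ exit $\U$ through the rigid planes), so existence of intersections must come, as in the paper, from the fact that the disks $\mathcal{D}^u$ all have $d\lambda$-area equal to the common action while each page has finite area; a single transverse \emph{heteroclinic} point between distinct hyperbolic orbits does not produce a horseshoe, so one must close up a cycle (the paper iterates the spiral argument along the eventually periodic orbit of the combinatorial map $G$ to obtain a transverse homoclinic); and Smale--Birkhoff cannot be invoked off the shelf for the only partially defined return map --- the Bernoulli subsystem has to be built by hand with countably many strips \`a la Moser, with the cone estimates degenerating as one approaches the stable circle.
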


The assumption that the actions of the Reeb orbits in $\P_2(\lambda)$ coincide and are small when compared to the actions of the other Reeb orbits are usually verified for Hamiltonian dynamics near certain critical energy surfaces as we later explain.

The foliations obtained and used in this paper, as well as the techniques involved in the proof of Theorem~\ref{main1}, were introduced by Hofer, Wysocki and Zehnder in~\cite{fols}. In the degenerate case, such techniques were further developed in \cite{convex}.
The argument to get intersections of stable and unstable manifolds of binding orbits with Conley-Zehnder index equal to $2$ is originally found in~\cite{fols}. In the real-analytic case, we use Conley's ideas from \cite{C1}. As for the construction of a Bernoulli shift subsystem, we follow Moser's book \cite{Moser}. Genus zero transverse foliations with a single index-$2$  binding orbit were treated in \cite{dPS1,dPS2,Sa1}, see also \cite{lemos}. 
The reader finds more on transverse foliations in the surveys \cite{dPS_SPJ} and \cite{HS_ICM} and references therein.

\subsection{Sketch of proof of   Theorem \ref{main1}} Consider  sequences $\lambda_n \to \lambda$ of non-degenerate contact forms and $J_n \to J$ of generic compatible complex structures on $\ker \lambda_n = \ker \lambda$ so that for every $n$ the $\R$-invariant almost complex structure induced by $(\lambda_n,J_n)$ admits a finite energy foliation $\mathcal{F}_n$ as in Theorem \ref{thm:HWZFEF} below.  We show in Proposition \ref{prop_unif_C}  that the actions of the binding orbits of $\mathcal{F}_n$ are uniformly bounded and thus assumptions I and III imply that they consist of continuations of the $l$ orbits in $\P_2(\lambda)$, and of $l+1$ index-$3$ orbits $P^n_{3,1},\ldots P^n_{3,l+1}$. Moreover, $\mathcal{F}_n$ projects to a genus zero transverse foliation whose regular leaves are planes and cylinders asymptotic to the binding orbits, see Proposition \ref{prop_weak_conv}. Now we want to push $\mathcal{F}_n$ to a limiting finite energy foliation $\mathcal{F}$ adapted to $(\lambda,J)$. At this point, some difficulties show up in the compactness argument. It is crucial that we start with an almost complex structure $J$ which is generic enough so that some particular low energy pseudo-holomorphic curves asymptotic to the orbits in $\P_2(\lambda)$ do not exist. Here we use assumption II, see Lemma \ref{lem_genJset}. For such generic $J$'s we are able to control  the rigid planes asymptotic to the index-$2$ orbits proving that they converge to corresponding rigid planes associated with $(\lambda,J)$, see Proposition \ref{prop_rigid_planes}. The action boundedness  of the binding orbits $P_{3,1}^n,\ldots, P^n_{3,l+1}$ allows us to find distinct limiting Reeb orbits $P_{3,1}^0,\ldots,P_{3,l+1}^0\in \P_3^{u,-1}(\lambda)$ in the complement of the rigid planes. Now some undesired low action Reeb orbits of $\lambda$ that are unlinked with any of those index-$3$ orbits may obstruct the existence of a transverse foliation with binding orbits $P_{3,1}^0,\ldots, P_{3,l+1}^0$ and the orbits in $\P_2(\lambda)$.  To overcome this difficulty we may need to re-start the procedure of taking sequences $\lambda_n$,~$J_n$ as above so that the previously found index-$3$ orbits and an undesired unlinked orbit are also Reeb orbits of $\lambda_n$ for every $n$. Then we obtain new limiting index-$3$ Reeb orbits $P^1_{3,1},\ldots,P^1_{3,l+1}$ that necessarily link with the previous unlinked orbit. Repeating the aforementioned procedure of taking new sequences $\lambda_n$,~$J_n$ that freeze not only an eventual new unlinked Reeb orbit of $\lambda$, but also the previously frozen Reeb orbits, we show that the process must terminate after finitely many steps. Hence we eventually find special index-$3$ orbits $P_{3,1},\ldots, P_{3,l+1}\in \P_3^{u,-1}(\lambda)$ that are necessarily linked with all Reeb orbits that are not covers of the orbits in $\P_2(\lambda)$, see Proposition~\ref{prop_specialP3}. These special orbits and the orbits in $\P_2(\lambda)$  are candidates for binding orbits of the desired foliation. We then take limits of the planes asymptotic to the index-$3$ orbits and of the rigid cylinders connecting the index-$3$ to the index-$2$ orbits, to obtain a genus zero transverse foliation. This foliation, however, may include leaves with more than one negative puncture asymptotic to distinct orbits in $\P_2(\lambda)$. Using uniqueness of such pseudo-holomorphic curves, we show in Proposition \ref{prop_family_planes1} the existence of at least one plane asymptotic to each index-$3$ orbit. After slightly changing the almost complex structure near the rigid planes to rule out such non-generic curves with multiple negative ends at index-$2$ orbits, we obtain the desired foliation from the compactness properties of the planes asymptotic to the index-$3$ orbits and an application of the gluing theorem.

\subsection{Potential applications}


Let $H \colon \R^4 \to \R$ be a smooth Hamiltonian. Assume that $0\in \R$ is a critical value of $H$ and that the critical set $H^{-1}(0)$ contains finitely many saddle-center equilibrium points $p_1,\ldots, p_m$.
Assume that as the energy changes from negative to positive, a sphere-like component $C_E\subset H^{-1}(E), E<0$,  gets connected to other components of the energy surface, precisely at $p_1,\ldots, p_m$. In particular, $C_E$ becomes a singular sphere-like subset $C_0 \subset H^{-1}(0)$ with singularities at $p_1,\ldots,p_m$. For every $E>0$ small, $H^{-1}(E)$ contains an index-$2$ hyperbolic orbit $\gamma_i$ in the neck-region about $p_i, i=1, \ldots, m.$  The orbit $\gamma_i$, called the Lyapunoff orbit,  bounds a pair of planes in the energy surface, which are transverse to the flow and form with $\gamma_i$ a $2$-sphere $\mathcal{S}_i \subset H^{-1}(E)$,~$\forall i$.  We may wish to study the dynamics on the subset $S_E \subset H^{-1}(E)$ near $C_0$, which is bounded by the union of the $2$-spheres $\mathcal{S}_i$. The subset $S_E$ is diffeomorphic to a $3$-sphere with $m$ disjoint $3$-balls removed.  In many situations, $S_E$ is the region in $H^{-1}(E)$ where the interesting dynamics takes place. We thus may assume, possibly after changing $H$ away from $S_E$, that  the components which get connected to $C_E$ are formed by $m$ suitable sphere-like hypersurfaces. In particular, $S_E, E>0$ small, is a subset of a non-convex sphere-like subset $\hat S_E \subset H^{-1}(E)$. The Hamiltonian flow on $\hat S_E$ is equivalent to a Reeb flow on the tight $3$-sphere. If $H$ satisfies some mild convexity conditions on $C_0$, then, for energies $E>0$ sufficiently small, $\hat S_E$ satisfies the hypotheses of Theorem \ref{main1}. Indeed, $\hat S_E$ admits no periodic orbits with index $\leq 1$, and index-$3$ orbits do not intersect the $2$-spheres $\mathcal{S}_i$. The transverse foliation given in Theorem \ref{main1} restricts to a transverse foliation on $S_E$ that contains the Lyapunoff orbits and an index-$3$ orbit as binding orbits. Dynamical properties such as multiplicity of periodic orbits and the existence of homoclinics/heteroclinics to the Lyapunoff orbits follow from the transverse foliation. If $H$ satisfies a particular $\Z_m$-symmetry, then Theorem \ref{main2} applies,  deriving more information about the dynamics. As an example, the H\'enon-Heiles system is $\Z_3$-symmetric and thus the three index-$2$ Lyapunoff orbits exist and have the same arbitrarily small action for energies slightly above its  critical value $\frac{1}{6}$.   We will explore this example further in \cite{mechanical}.  The circular planar restricted three-body problem is  $\Z_2$-symmetric and fits a similar setting after regularizing collisions with one of the primaries. We expect to find applications of Theorems \ref{main1} and \ref{main2} for certain mass ratios and energies slightly above the first Lagrange value. 
\bigskip

\noindent
{\bf Acknowledgements.} 
UH is partially supported by the DFG SFB/TRR 191 ‘Symplectic Structures in Geometry, Algebra and Dynamics’, Projektnummer 281071066-TRR 191.
SK is supported by the National Research Foundation of Korea (NRF) grant funded by the Korea government (MSIT) (No.\ NRF-2022R1F1A1074066) and the research grant of Kongju National University in 2022 (Grant number: 2022-0174-01). 
PS acknowledges the support of NYU-ECNU Institute of Mathematical Sciences at NYU Shanghai and the 2022 National Foreign Experts Program. 
PS is partially supported by FAPESP 2016/25053-8 and CNPq 306106/2016-7.

\section{Basic Definitions}\label{sec_def}

Let $M$ be a closed three-manifold equipped with a contact form $\lambda$.
 Let $P=(x,T)\in \P(\lambda),$ $ x_T:=x(T\cdot)\colon \R / \Z \to M$ and  $J\in \mathcal{J}(\lambda)$, where  $\mathcal{J}(\lambda)$ denotes the set of $d\lambda$-compatible almost complex structures on the contact structure  $\xi=\ker \lambda$. The asymptotic operator $A_P=A_{P,J}$ is the  unbounded self-adjoint operator acting on  sections of $\xi$ along $P$ 
$$
A_{P}(\eta) := -J|_{x_T} \cdot  \mathcal{L}_{\dot x_T}\eta\in L^2(x_T^* \xi),$$ for every $\eta\in W^{1,2}(x_T^*\xi),$  
where $\mathcal{L}_{\dot x_T} \eta$ is the Lie derivative of $\eta$ in the direction of $\dot x_T$
$$
\begin{aligned}
(\mathcal{L}_{\dot x_T} \eta)(t)  :=  \frac{d}{ds}\bigg|_{s=0} \left\{ D\varphi^{-1}_{sT}(x_T(t+s)) \cdot \eta(t+s) \right\}.
\end{aligned}
$$
The eigenvalues of $A_P$  are real and accumulate only at $\pm \infty$. A non-trivial eigenvector never vanishes and thus, for a  fixed  frame $\Psi$ of the contact structure along $P$,  determines  a winding number $\wind_\Psi (\mu)\in \Z$ that depends only on the eigenvalue $\mu$. The function $\mu \mapsto \wind_\Psi(\mu)$ is monotonically  increasing and surjective and satisfies $\#\wind_\Psi^{-1}(k)=2, \forall k\in \Z,$ where the multiplicities are counted.  See \cite[Section 3]{props2}.

The periodic orbit $P$ is degenerate if and only if $0$ is an eigenvalue of $A_{P}.$ Denoting by $\wind_\Psi^{< 0}(A_P)$ the winding number of the largest negative eigenvalue and by  $\wind_\Psi^{\geq 0}(A_P)$ the winding number of the smallest non-negative eigenvalue, the (generalized) Conley-Zehnder index of $P$ with respect to the frame $\Psi$ is defined as
$
\cz_\Psi(P) := \wind_\Psi^{< 0}(A_P)+\wind_\Psi^{\geq 0}(A_P).
$
It depends on the frame $\Psi$ but not on $J$. If $(M,\xi) = (S^3,\xi_0)$, we fix a frame induced by a global trivialization of $\xi$ and omit $\Psi$ in the notation.

Now let  $x\colon  \R / \Z \to (S^3,\xi_0)$ be an unknot transverse to the standard contact structure $\xi_0$,  and let $u \colon \D \to S^3$ be an embedded disk, where $\D=\{z\in \C:|z|\leq 1\}$.    Assume that $u$ is a spanning disk for $x$, that is $x(t) = u( e^{ 2 \pi i t} ), \forall t \in \R / \Z$. Choose a non-vanishing section $X$ of the pullback bundle $u^* \xi _0 \to \D$.  Fix a Riemannian metric $g$ on $S^3$ and denote by exp the associated exponential map. If $X$ is sufficiently small, then $x_X(\R / \Z) \cap x(\R / \Z)= \emptyset,$ 
where  $x_X(t) := \exp_{x(t )}X(t),\forall t \in \R / \Z$. We may assume that $x_X$ is transverse to $u$. The \emph{self-linking number} of $x$, denoted by $\sl(x)$, is defined as the algebraic intersection number   $x_X \cdot u$. Here, $S^3$ is oriented in such a way that  $\lambda \wedge d \lambda >0$. The orientation of $x_X$ is the one induced by $x$, and $x$ is oriented as the boundary of $u$.   The self-linking number is independent of the involved choices.

\subsection{Pseudo-holomorphic curves}

In this section, $\lambda$ is a contact form on a smooth closed three-manifold $M$ and $\xi=\ker\lambda$ is the induced contact structure. The Reeb vector field $R=R_\lambda$ is defined as before. 
The symplectization of $M$ is  the symplectic manifold $( \R \times M, d(e^r \lambda))$, where $r$ is the $\R$-coordinate.  
For each $J \in \mathcal{J}(\lambda)$, 
the pair $(\lambda,J)$  determines an $\R$-invariant  almost complex structure $\widetilde J$ on $\R \times M$ so that
$
\widetilde J \cdot \partial_r = R$ and  $\widetilde J|_\xi = J.
$
Let $(S, j)$ be a closed connected Riemann surface, and let $\Gamma \subset S$ be a finite set of punctures. We consider finite energy $\widetilde J$-holomorphic curves in $\R \times M$, i.e.\ smooth maps $\tilde u=(a,u) \colon S \setminus \Gamma \to \R \times M$ satisfying
$ d\tilde u \circ j = \widetilde J(\tilde u) \circ d\tilde u,
$
and having finite Hofer's energy 
$0<E(\tilde{u}) = \sup_{\psi \in \Lambda}  \int_{S \setminus \Gamma} \tilde{u}^*d ( \psi(a) \lambda) <+\infty,
$
  where $\Lambda$ is the set of non-decreasing smooth functions $\psi \colon \R \to [0,1]$. If $S=S^2$ and $\# \Gamma=1$, then  $\tilde{u}$ is called a finite energy plane.


\begin{dfn}
A $\widetilde{J}$-holomorphic map $\tilde{u} \colon S \setminus \Gamma \to \R \times M$ is called somewhere injective if there exists $z_0 \in S \setminus \Gamma$ such that $d \tilde{u}(z_0) \neq 0$ and $\tilde{u}^{-1}( \tilde{u}(z_0)) = \{z_0 \}.$
\end{dfn}

 Given a puncture $z_0 \in \Gamma$, choose a holomorphic chart $\phi \colon (\D\setminus \partial \D, 0) \to (\phi(\D\setminus \partial \D), z_0)$ centered at $z_0$. The map $\tilde{u} \circ \phi( e^{- 2 \pi (s+ it)}), (s,t) \in [0,+\infty) \times \R / \Z,$ will be still  denoted by $\tilde u=(a,u)$. The finite energy condition implies that $\tilde u$ is non-constant and the limit
$
m= m_{z_0}:= \lim_{s \to +\infty} \int_{\{s\}\times S^1} u^*\lambda
$
exists. The puncture $z_0$ is said to be removable if $m=0$. In this case, $\tilde{u}$ can be smoothly extended over $z_0$ by Gromov's removable singularity theorem. The puncture $z_0$ is called positive or negative if $m>0$ and $m<0$, respectively. In the following, we tacitly assume that all punctures are non-removable. Stokes' theorem tells us that the set $\Gamma$ is non-empty.  We assign the sign $\varepsilon=\pm 1$ to each puncture, depending on whether it is positive or negative, respectively.  This induces a decomposition $\Gamma = \Gamma_+ \cup \Gamma_-$.

\begin{thm}[Hofer {\cite[Theorem 31]{Hofer93}}]\label{thm_asymp_limit}
Let $z_0 \in \Gamma$ be a non-removable puncture of a finite energy pseudo-holomorphic curve $\tilde{u}=(a, u) \colon S \setminus \Gamma \to \R \times M$. Let $(s,t) \in [0, +\infty) \times \R / \Z$ be holomorphic polar coordinates centered at $z_0$  as above, and set $\tilde u(s,t)=(a(s,t),u(s,t))$. Let $\varepsilon \in \{-1,1\}$ be the sign of $z_0$. Then, for every sequence $s_n \to +\infty$, there exist a subsequence $s_{n_k}$ of $s_n$ and a periodic   orbit $P=(x,T)\in \P(\lambda)$   so that $u(s_{n_k}, \cdot) \to x(\varepsilon T\cdot)$ in $C^{\infty}(\R / \Z, M)$ as $k \to +\infty$. 
\end{thm}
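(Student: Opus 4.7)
I would adapt Hofer's original bubbling-off argument. Treat the positive puncture case $\varepsilon = +1$ first; the negative case is analogous after an obvious sign change. In the given holomorphic polar coordinates, the Cauchy-Riemann equation reads $\partial_s \tilde u + \tilde J(\tilde u)\partial_t \tilde u = 0$ on $[0,\infty)\times \R/\Z$. A key preliminary observation, used throughout, is that the $d\lambda$-energy decays near the puncture: by Stokes' theorem,
\begin{equation*}
\int_{[s_0,s_1]\times \R/\Z} u^*d\lambda \;=\; \int_{\{s_1\}\times \R/\Z} u^*\lambda \;-\; \int_{\{s_0\}\times \R/\Z} u^*\lambda \;\xrightarrow[s_0,s_1\to\infty]{}\; 0,
\end{equation*}
since both boundary integrals converge to $m$; in particular $\int_{[s,\infty)\times\R/\Z} u^*d\lambda \to 0$ as $s\to\infty$. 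Non-removability of $z_0$ (together with $\varepsilon=+1$) forces $m > 0$.

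\textbf{Uniform gradient bound (main obstacle).} The decisive analytic step is the estimate $\sup_{[1,\infty)\times\R/\Z}|\nabla\tilde u| < \infty$. I would argue by contradiction: suppose $|\nabla\tilde u(z_n)|\to\infty$ with $z_n = (s_n,t_n)$ and $s_n\to\infty$. Hofer's elementary rescaling lemma yields nearby points $z_n'$ and radii $\epsilon_n\to 0$ with $R_n := |\nabla\tilde u(z_n')|\to\infty$, $\epsilon_n R_n\to\infty$, and $|\nabla\tilde u|\leq 2R_n$ on the Euclidean ball $B_{\epsilon_n}(z_n')$. Rescale and normalize the $\R$-component by
\begin{equation*}
\tilde w_n(w) := \bigl(a(z_n' + w/R_n) - a(z_n',0),\;u(z_n' + w/R_n)\bigr),\qquad w\in B_{\epsilon_n R_n}(0).
\end{equation*}
These maps are $\tilde J$-holomorphic with $|\nabla\tilde w_n|\leq 2$, $|\nabla\tilde w_n(0)|=1$, and $E(\tilde w_n)\leq E(\tilde u)$. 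Arzela-Ascoli and elliptic bootstrapping extract a $C^\infty_{\mathrm{loc}}$-subsequential limit $\tilde w\colon\C\to\R\times M$, a nonconstant finite energy plane. Any such plane must satisfy $\int_\C \tilde w^*d\lambda > 0$: otherwise, $d\lambda$-compatibility of $\tilde J|_\xi$ forces the $\xi$-component of $d\tilde w$ to vanish, which reduces the Cauchy-Riemann equation to that of an entire holomorphic $\C\to\C$ of finite Hofer energy, hence $\tilde w$ is constant. But this positive $d\lambda$-energy of the bubble is attained in the domain disks $B_{R/R_n}(z_n')\subset [s_n - R/R_n,\,s_n + R/R_n]\times\R/\Z$, which are shrinking neighborhoods of the puncture; this contradicts the decay established above.

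\textbf{Extraction and identification of the limit.} Given the gradient bound, the shifted sequence $\tilde u_n(s,t) := (a(s_n+s,t)-a(s_n,0),\,u(s_n+s,t))$ is equicontinuous on compact subsets of $\R\times\R/\Z$, and elliptic bootstrapping upgrades any $C^0_{\mathrm{loc}}$-subsequential limit to a $C^\infty_{\mathrm{loc}}$-limit $\tilde v = (\alpha,v)\colon\R\times\R/\Z\to\R\times M$, a $\tilde J$-holomorphic cylinder with $\int v^*d\lambda = 0$ by the $d\lambda$-energy decay. The Cauchy-Riemann equation together with $d\lambda$-compatibility then forces the $\xi$-component of $dv$ to vanish, so $v(s,t) = x(\tau(s,t))$ for an integral curve $x\colon\R\to M$ of the Reeb field. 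A direct computation with the equation gives $\tau(s,t) = mt + \alpha(s)$ for some smooth $\alpha$, and the requirement that $v$ descends to $\R\times\R/\Z$ forces $x$ to be $|m|$-periodic. Setting $T := |m| = \varepsilon m$, absorbing the phase $\alpha(0)$ into $x$, and evaluating the $C^\infty_{\mathrm{loc}}$-convergence at $s=0$ yields the claimed $u(s_{n_k},\cdot)\to x(\varepsilon T\cdot)$ in $C^\infty(\R/\Z,M)$ with $P = (x,T)\in\P(\lambda)$, completing the proof.
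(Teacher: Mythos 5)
Your argument is correct and is essentially Hofer's original proof of this result, which the paper cites from \cite[Theorem 31]{Hofer93} without reproving: the $d\lambda$-energy decay near the puncture, the gradient bound via bubbling-off (any bubble would be a nonconstant finite energy plane with positive $d\lambda$-area concentrated in shrinking annuli near the puncture, a contradiction), and the extraction of a zero-$d\lambda$-area limit cylinder are exactly the standard steps. The one compressed point is the identification $\tau(s,t)=mt+\mathrm{const}$: the Cauchy--Riemann equation alone only makes $\tau$ harmonic with $\tau(s,t+1)=\tau(s,t)+m$, and one must invoke the finite Hofer energy of the limit cylinder to exclude the non-affine harmonic modes before concluding that $v(0,\cdot)$ is the constant-speed parametrization $x(\varepsilon T\cdot)$.
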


The periodic orbit in the previous statement is referred to as an asymptotic limit of $\tilde{u}$  at $z_0 \in \Gamma$. Denote the set of asymptotic limits of $\tilde{u}$ at  $z_0$ by $\Omega = \Omega(z_0)\subset \P(\lambda)$. This set is non-empty, compact and connected. See for instance   \cite[Lemma 13.3.1]{FvKbook}. Explicit examples of finite energy curves with the image of $\Omega$ being diffeomorphic to the two-torus is provided by Siefring in  \cite{Si3}.

The following theorem due to Hofer, Wysocki  and Zehnder tells us that if an asymptotic limit of $\tilde{u}$ at $z_0 \in \Gamma$ is non-degenerate, then  $\Omega$ consists of a single Reeb orbit. Moreover, $\tilde{u}$ has exponential convergence to the  asymptotic limit.

\begin{thm} [Hofer-Wysocki-Zehnder {\cite{props1}}]  \label{thm:asymptoticbehaviour}
Let $z_0 \in \Gamma$ be a non-removable puncture of a finite energy pseudo-holomorphic curve $\tilde{u}=(a, u) \colon S \setminus \Gamma \to \R \times M$. Choose holomorphic polar coordinates $(s,t) \in [0,+\infty) \times \R / \Z$ near $z_0$, and set $\tilde{u}(s,t) = (a(s,t), u(s,t))$ as before. 
Assume that $P=(x,T)\in \P(\lambda)$ is a non-degenerate asymptotic limit of $\tilde{u}$ at $z_0$. 
Then there exist  $c,d \in \R$ such that
\begin{enumerate}
\item[(i)] $\sup_{t \in S^1} \lvert a(s,t) - \varepsilon Ts -d \rvert  \to 0$ as $s \to+\infty$.
\item[(ii)] $u(s,\cdot) \to x(\varepsilon T\cdot+c)$ in $C^\infty(\R/\Z, M)$ as $s \to+\infty$.
\item[(iii)]  let $\pi \colon TM \to \xi$ be the projection along the Reeb direction. If $\pi \circ du$ does not vanish identically near $z_0$, then $\pi \circ du(s,t) \neq 0$ for every $s \gg0.$ 
\item[(iv)] define $\zeta(t,s)$ by $u(s,t) = \exp_{x(\varepsilon Tt+c)}\zeta(\varepsilon Tt+c,s), \forall t \in \R /\Z$. Then there exist an eigenvalue $\mu$ of $A_{P}$, with $\varepsilon \mu<0$, and a $\mu$-eigenfunction $e(t)\in \xi|_{x(Tt)},  t\in \R/ \Z,$ so that
$
\zeta(t,s) = e^{\varepsilon \mu s}\left( e(t) + R_1(s,t) \right),  s\gg 0,
$
where the remainder $R_1$ and its derivatives converge to $0$, uniformly in $t$,  as $s \to +\infty$.
\end{enumerate}
\end{thm}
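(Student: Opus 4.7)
The plan is to follow the original Hofer-Wysocki-Zehnder strategy and reduce the theorem to a spectral analysis of the asymptotic operator $A_P$ in a tubular coordinate system around the asymptotic limit. After choosing holomorphic polar coordinates $(s,t)\in[s_0,+\infty)\times\R/\Z$ near $z_0$ and writing $\tilde u=(a,u)$, the equation $d\tilde u\circ j=\tilde J\circ d\tilde u$ decomposes as
\begin{equation*}
\partial_s a=\lambda(\partial_t u),\qquad \partial_t a=-\lambda(\partial_s u),\qquad \pi\,\partial_s u+J(u)\,\pi\,\partial_t u=0,
\end{equation*}
where $\pi\colon TM\to\xi$ is the projection along $R$. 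I treat the case of a positive puncture $\varepsilon=+1$; the case $\varepsilon=-1$ is formally identical after reversing the sign of $s$.

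The first step is to upgrade the sequential convergence of Theorem \ref{thm_asymp_limit} to smooth uniform convergence along the end. Uniform gradient bounds for $\tilde u$ over half-cylinders follow from a Hofer-type rescaling argument: if $|\nabla\tilde u|\to+\infty$ along some sequence, one extracts either a non-constant finite energy plane or a non-constant $J$-holomorphic sphere tangent to $\xi$, both excluded by the action and contact hypotheses. Combined with elliptic bootstrap this gives $C^k$-bounds on $u(s,\cdot)$ near the orbit $x$. Next I would transport the map into a Martinet-type tubular coordinate system $\Phi\colon\R/\Z\times B_\delta\to\mathcal{N}(x(\R))$ in which $\lambda$ assumes the normal form $f(dt+\alpha)$ with $f(t,0)=T$ and $\alpha$ vanishing to first order along the orbit. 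Writing $u(s,t)=\Phi(\vartheta(s,t),\zeta(s,t))$, the Cauchy-Riemann system for $\tilde u$ becomes an evolution equation of the form
\begin{equation*}
\partial_s\zeta+A_P\,\zeta+N(s,t,\zeta,\partial_s\zeta,\partial_t\zeta)=0,
\end{equation*}
together with a companion first-order equation for $\vartheta-t$ and for $a$, where $A_P$ is the asymptotic operator of the orbit and $N$ collects terms that are at least quadratic in $(\zeta,\partial\zeta)$.

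The decisive step, and where I expect the main obstacle to lie, is a Floer-Hofer monotonicity argument applied to $g(s):=\tfrac12\|\zeta(s,\cdot)\|_{L^2}^2$. A direct computation using the PDE above yields, modulo nonlinear remainders,
\begin{equation*}
g'(s)=-\langle \zeta,A_P\zeta\rangle_{L^2}+O(\|\zeta\|^{3}),\qquad g''(s)\geq 2\|A_P\zeta\|_{L^2}^{2}+O(\|\zeta\|^{3}).
\end{equation*}
Non-degeneracy of $P$ means $0\notin\sigma(A_P)$, so $A_P$ has a spectral gap around $0$. Combining the spectral gap with the differential inequality and the fact that $g(s)\to 0$, a Gronwall-type comparison forces either $\zeta\equiv 0$ for $s$ large or $g(s)\leq Ce^{2\mu s}$ for some negative eigenvalue $\mu$ of $A_P$. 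Bootstrapping via the PDE promotes this to $C^k$ exponential decay of $\zeta$, giving (ii) and the decay rate in~(iv). Applying the same monotonicity argument to $\zeta$ minus its $L^2$-projection onto the $\mu$-eigenspace yields strictly faster decay for the remainder $R_1$, pinning down the leading term of $\zeta$ as an honest $\mu$-eigenfunction $e(t)$ as asserted in~(iv). Integrating $\partial_s a=\lambda(\partial_t u)=T+O(e^{\mu s})$ then yields the affine asymptotics in~(i) with some constant $d$.

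Finally, for (iii) the section $\pi\circ du$, viewed in the tubular coordinates, satisfies a linear Cauchy-Riemann-type equation with a bounded zeroth-order perturbation coming from the linearization of the Reeb flow. The similarity principle for such operators implies that either $\pi\circ du$ vanishes identically near $z_0$ or its zeros are isolated and of positive local degree. The exponential expansion of step~(iv) shows that, modulo higher-order corrections, $\pi\circ du$ is governed near $z_0$ by the non-vanishing eigenfunction $e(t)$, so no zero can accumulate at the puncture and $\pi\circ du$ is nowhere zero for $s\gg0$.
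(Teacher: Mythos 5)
The paper does not prove this theorem: it is quoted verbatim as a result of Hofer--Wysocki--Zehnder \cite{props1} and used as a black box, so there is no in-paper argument to compare against. Your sketch follows exactly the strategy of the cited source: gradient bounds by bubbling-off plus elliptic bootstrap to upgrade Theorem~\ref{thm_asymp_limit} to genuine convergence along the end, a Martinet-type normal form turning the Cauchy--Riemann system into an evolution equation governed by the asymptotic operator, the convexity/spectral-gap estimate on $g(s)=\tfrac12\|\zeta(s,\cdot)\|_{L^2}^2$ to obtain exponential decay, and the similarity principle together with the non-vanishing of eigenfunctions for item (iii). That architecture is correct and is the standard one.

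The one place where the sketch is genuinely thinner than the actual proof is the passage from exponential decay to the precise representation $\zeta=e^{\varepsilon\mu s}(e(t)+R_1)$ in item (iv). Subtracting the $L^2$-projection of $\zeta$ onto the $\mu$-eigenspace and ``rerunning the monotonicity argument'' does not work as literally stated: the operator appearing in the evolution equation is an $s$-dependent perturbation $A(s)$ of $A_P$ (its coefficients depend on $u(s,\cdot)$, which is only known to converge to the orbit at the rate you are trying to establish), and $A(s)$ does not preserve the spectral decomposition of $A_P$, so the projected equation acquires coupling terms that must be estimated. Pinning down that the rescaled limit $\lim_{s\to\infty}e^{-\varepsilon\mu s}\zeta(s,\cdot)$ exists, is nonzero, and is an honest eigenfunction is the technical heart of \cite{props1} and requires the full iteration scheme for asymptotically autonomous linear equations in Hilbert space (later systematized by Siefring). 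Relatedly, you should note explicitly why the whole family $u(s,\cdot)$, and not merely subsequences, converges to $x$: non-degeneracy makes $P$ isolated among orbits of nearby period, and the set of asymptotic limits at a puncture is connected, so $\Omega(z_0)=\{P\}$. With those two points supplied, the proposal is a faithful reconstruction of the cited proof.
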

If $\tilde u$ admits a single asymptotic limit $P$ at $z_0\in \Gamma$ and the asymptotic behavior of $\tilde u$ near $z_0$ is as in Theorem \ref{thm:asymptoticbehaviour} for a non-vanishing eigenvalue $\mu$ of $A_P$, then we say that $z_0$ is a non-degenerate puncture of $\tilde u$ and that $\tilde u$  exponentially converges to $P$ at $z_0$.
In the case  every asymptotic limit of a finite energy pseudo-holomorphic curve $\tilde{u} = (a,u)$ is non-degenerate, its Conley-Zehnder index and Fredholm index are defined as
$
\mathrm{CZ}(\tilde{u}) := \sum_{z \in \Gamma_+} \mathrm{CZ}(P_z) - \sum_{z \in \Gamma_-} \mathrm{CZ}(P_z),
$
where $P_z$ is the asymptotic limit of $\tilde{u}$ at $z \in \Gamma$, and
$
\mathrm{Ind}(\tilde{u}) := \mathrm{CZ}(\tilde{u} )- \chi(S) + \# \Gamma,
$
respectively. Here, $\cz$ is computed in a frame along the asymptotic limits induced by a trivialization of $u^*\xi$. The integer $\cz(\tilde u)$ does not depend on this trivialization. Moreover, one can define the following algebraic invariants: suppose that $\pi \circ du$ does not vanish identically. Then  
 $
 \int_{S \setminus \Gamma} u^* d \lambda >0,
 $
 and Carleman's similarity principle tells us that the zeros of $\pi \circ du$ are isolated. The asymptotic behavior of $\tilde{u}$ described in Theorem \ref{thm:asymptoticbehaviour} implies that  $ \pi \circ du$ does not vanish near the punctures. It follows that the zeros of $\pi \circ du$ are finite, and  each zero of $\pi \circ du$ has a positive local degree. We define
 $
 \mathrm{wind}_{\pi}(\tilde{u}) := \# \{ \text{zeros of } \pi \circ du \} \geq 0, 
 $
 where the zeros are counted with multiplicity. 
 
 Let $z \in \Gamma$ be a puncture, and let $P=(x,T)\in \P(\lambda)$ be the asymptotic limit of $\tilde{u}$ at $z$. Let $e\in \Gamma(x_T^*\xi)$ be the associated eigenfunction as in Theorem \ref{thm:asymptoticbehaviour}-(iv).  Define the winding number $\mathrm{wind}_\infty (\tilde u; z)$ of $\tilde{u}$ at $z$ to be the winding number of $t\mapsto e(t)$ in a frame of $x_T^*\xi$ induced by a trivialization  of $u^*\xi$. The winding number of $\tilde{u}$ is then defined as
$
\mathrm{wind}_\infty(\tilde{u}) = \sum_{z \in \Gamma_+} \mathrm{wind}_\infty (\tilde u; z)-  \sum_{z \in \Gamma_-} \mathrm{wind}_\infty (\tilde u; z).
$
This integer does not depend on the choice of trivialization. The two winding numbers are related by
$
\mathrm{wind}_\pi (\tilde{u}) = \mathrm{wind}_\infty(\tilde{u})  -\chi(S) + \# \Gamma,
$
see \cite[Proposition 5.6]{props2}.

\begin{dfn}[{Siefring \cite{Si2}}]\label{throughopposite}
Let $\tilde{u},\tilde{v}$ be finite energy planes which are exponentially asymptotic to the same periodic orbit $P \in \mathcal{P}(\lambda).$ Let $e_+, e_-$ be the respective eigensections of $A_P$ that describe $\tilde{u}, \tilde{v}$ near $\infty$. We say that $\tilde{u}$ and $\tilde{v}$ are asymptotic to $P$ through opposite directions $($resp. through the same direction$)$ if $e_+ = c e_-$ for some $c<0$ $($resp. for some $c>0)$.
\end{dfn}

\subsection{Finite Energy Foliations}

Suppose that the contact form $\lambda$ on $(S^3,\xi_0)$ is non-degenerate, and choose $J \in \mathcal{J}  (\lambda)$.
 A stable finite energy foliation for $(S^3, \lambda, J)$ is a two-dimensional smooth foliation $\tilde{\mathcal{F}}$ of $ \R \times S^3$ having the following properties:
\begin{enumerate}

\item  Every leaf $\tilde{F} \in \tilde{\mathcal{F}}$ is the image of an embedded finite energy $\tilde{J}$-holomorphic sphere $ \tilde{u}_{\tilde{F}} = (a_{\tilde{F}}, u_{\tilde{F}})$ that has precisely one positive puncture but an arbitrary number of  negative punctures.  The energies of such finite energy spheres are uniformly bounded.

\item The asymptotic limits of every $\tilde{F} \in \tilde{\mathcal{F}}$, defined as the asymptotic limits of $\tilde{u}_{\tilde{F}}$,   have Conley-Zehnder indices belonging to the set $\{1,2,3\}$ and self-linking number $-1$.

\item There exists an $\R$-action $ T \colon \R \times \tilde{\mathcal{F}} \to\tilde{\mathcal{F}}$, defined by  
$
T_c(\tilde{F}):=T(c, \tilde{F})= \{ ( c+r, z ) \mid (r,z ) \in \tilde{F} \}\in \tilde \F,  \forall c, 
$
so that if $\tilde{F} \in \tilde{\mathcal{F}}$ is not a fixed point of $T$, then   $ \mathrm{Ind}(\tilde{u}_{\tilde{F}})  \in \{1,2\}$, and $u_{\tilde{F}}$ is an embedding, transverse to the Reeb vector field. If $\tilde{F}$ is a fixed point of $T$, i.e.\ $T_c(\tilde{F}) = \tilde{F}$, $\forall c \in \R$, then $\mathrm{Ind}(\tilde{u}_{\tilde{F}}) =0$, and $\tilde{u}_{\tilde{F}}$ is a cylinder over a periodic orbit.  
\end{enumerate}

The following statement on the existence of a stable finite energy foliation is due to Hofer, Wysocki and Zehnder.

\begin{thm}[Hofer-Wysocki-Zehnder \cite{fols}]\label{thm:HWZFEF} Let $\lambda$ be a non-degenerate contact form on the tight three-sphere $(S^3,\xi_0)$. There exists a residual subset $\mathcal{J}_{\mathrm{reg}}(  \lambda) \subset \mathcal{J} (  \lambda)$, in the $C^{\infty}$-topology,    such that every $J \in  \mathcal{J}_{\mathrm{reg}}(  \lambda)$ admits a stable finite energy foliation $\tilde{\mathcal{F}}.$  Moreover, the projected foliation $\mathcal{F}:=p(\tilde{\mathcal{F}})$, where $p \colon \R \times S^3 \to S^3$ denotes the projection onto the second factor, is a genus zero transverse foliation adapted to the flow of $\lambda$.    If the binding $\P$ consists of a single periodic orbit $P,$   then $\mathrm{CZ}(P)=3$ and the foliation $\F$ provides an open book decomposition of $S^3$ whose pages are disk-like global surfaces of section. If $\# \P \geq 2$ and the binding has no periodic orbit with $\cz=1$, then the foliation $\mathcal{F}$ is a weakly convex foliation satisfying the following properties:

\begin{enumerate}

\item[(i)] The binding $\P$ consists of $\ell$ periodic orbits $P_{2,1}, \ldots, P_{2,\ell}$ with Conley-Zehnder index  $2$  and $\ell+1$ periodic orbits $P_{3,1}, \ldots, P_{3,\ell+1}$ with Conley-Zehnder index $3$.   The binding orbits are unknotted and  mutually unlinked  and have self-linking number $-1$.  The   orbits $P_{2,1}, \ldots, P_{2,\ell}$ are hyperbolic.

\item[(ii)] For every $P_{2,i}$, there exists a pair of rigid planes $U_{i,1}, U_{i,2} \in \F$ both asymptotic to $P_{2,i}$ through opposite directions. The set $\mathcal{S}_i:=U_{i,1}\cup P_{2,i}\cup U_{i,2}$ is a $C^1$-embedded $2$-sphere which separates $S^3$ into two components. In particular, $S^3 \setminus \cup_i \mathcal{S}_i$ has $l+1$ components, denoted by $\mathcal{U}_j,j=1,\ldots,l+1$. 

\item[(iii)] Each $P_{3,j}$ is contained in $\mathcal{U}_j$ and spans $\tilde k_j$  one-parameter families of planes parametrized by the interval $(0,1)$. The integer  $\tilde k_j$ coincides with the number of components of $\partial \U_j$. At their ends, the families breaks onto a cylinder connecting $P_{3,j}$ to some $P_{2,i}\subset \partial \U_j$ and a plane asymptotic to $P_{2,i}$. 

 \end{enumerate}
 \end{thm}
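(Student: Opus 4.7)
The strategy is the one originally carried out by Hofer, Wysocki and Zehnder in \cite{fols}: produce $\tilde{\F}$ by a Bishop-family/continuation argument starting from a reference foliation on an explicit model, and then read off the structural conclusions from the Fredholm, asymptotic and intersection theory of finite energy curves in \cite{Hofer93, props1, props2}.

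For the genericity step I would take $\J_{\mathrm{reg}}(\lambda)$ to be the set of $J\in \J(\lambda)$ such that (a) every somewhere injective finite energy $\widetilde J$-holomorphic sphere with $\mathrm{Ind}\leq 2$ is cut out transversely by the Cauchy-Riemann operator, and (b) no somewhere injective sphere with $\mathrm{Ind}<0$ exists. Since $\lambda$ is non-degenerate, only finitely many Reeb orbits have action below any prescribed bound, so the set of possible asymptotic configurations is countable; a Sard-Smale argument on the universal moduli space, stratified by this data, yields a residual subset, with multiply covered curves reduced to underlying simple ones in the standard way.

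Fix $J\in \J_{\mathrm{reg}}(\lambda)$ and connect $(\lambda, J)$ to a model pair $(\lambda_*, J_*)$ carrying an explicit stable finite energy foliation $\tilde{\F}_*$ via a smooth path $(\lambda_s, J_s)_{s\in[0,1]}$ of generic non-degenerate contact data. I would propagate $\tilde{\F}_*$ along the path by following each leaf as an $s$-family of embedded somewhere injective curves. Automatic transversality in dimension four together with the implicit function theorem keeps such families smooth while no bubbling occurs, and the Hofer energy is uniformly bounded along the path. When a family breaks, the compactness and asymptotic theorems (Theorems \ref{thm_asymp_limit} and \ref{thm:asymptoticbehaviour}) identify the limit as a stable building of finite energy curves with non-degenerate asymptotic limits; rigid cylinders in the breaking enter the binding, and the connecting ends are reglued via the gluing theorem for pseudo-holomorphic curves. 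Genericity of $J$ rules out simple negative-index components and coincidences of asymptotic eigenrays. Iterating across $s\in[0,1]$ delivers a stable finite energy foliation adapted to $(\lambda, J)$, whose projection is a genus zero transverse foliation of $S^3$ by construction.

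For the structural statements I would combine the index formula $\mathrm{Ind}(\tilde u) = \cz(\tilde u) - \chi(S) + \#\Gamma$ with $\wind_\pi \geq 0$ and the identity $\wind_\pi = \wind_\infty - \chi + \#\Gamma$. An embedded leaf transverse to the Reeb flow has $\wind_\pi = 0$, which via Theorem \ref{thm:asymptoticbehaviour}(iv) and the monotonicity of $\wind_\Psi$ on eigenvalues of $A_P$ forces the Conley-Zehnder indices of its asymptotic limits into $\{2,3\}$ once $\cz=1$ is excluded. At a binding orbit $P$ with $\cz(P)=2$, hyperbolicity and the winding theory give exactly two eigenrays for the largest negative eigenvalue of $A_P$ and hence the pair $\{U_{i,1}, U_{i,2}\}$ through opposite directions (Definition \ref{throughopposite}); the exponential convergence formula then makes $\S_i$ a $C^1$-embedded $2$-sphere. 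Unknottedness and self-linking $-1$ follow from a relative first Chern class computation in the trivialization provided by a rigid plane; mutual unlinkedness follows from positivity of intersections; and the count $\#\{\cz=3\} = \ell+1$ is an Euler characteristic consequence of the cell decomposition of $S^3$ by the $\S_i$. The rigid one-parameter families in (iii) and their breaking pattern, as well as the open book conclusion in the single-binding case (where $\cz(P)=3$ is forced by $\mathrm{Ind}=2$ for the rigid planes), come from the same compactness/gluing theory applied to the moduli of planes. The main obstacle is the continuation step: controlling every bubbling and breaking as $s$ varies and ensuring that regluing produces a genuine foliation rather than an immersed configuration with unwanted intersections. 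This relies on the positivity-of-intersections arguments and the stratified induction on action developed in \cite{fols}.
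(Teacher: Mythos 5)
The paper does not prove this statement at all: Theorem \ref{thm:HWZFEF} is imported verbatim from Hofer--Wysocki--Zehnder \cite{fols} and used as a black box (its only role here is as input to Propositions \ref{prop_unif_C} and \ref{prop_weak_conv}), so there is no internal proof to compare your argument against. Judged against the cited source, your text is a reasonable high-level reconstruction of the HWZ program, but it is an outline rather than a proof, and it defers exactly the parts that constitute the substance of \cite{fols}. The decisive difficulty is not the Fredholm/genericity package you describe in step one, nor the index bookkeeping at the end; it is the middle step, where you propose to propagate a model foliation along a path $(\lambda_s,J_s)$ by continuing individual leaves via automatic transversality and regluing at breakings. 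What HWZ actually establish is a compactness theorem for entire stable finite energy foliations: starting from the explicit open-book foliation of an ellipsoid-type form, they pass to limits of whole foliations along the deformation, and the hard work is showing (via positivity and stability of intersections, asymptotic winding estimates, and an induction controlled by action/index) that the limit objects reassemble into a genuine foliation of $\R\times S^3$ with the stated binding structure, rather than an immersed or overlapping configuration. Your proposal names this as ``the main obstacle'' and cites \cite{fols} for it, which is to concede the theorem rather than prove it.

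Two smaller points. First, your derivation of the count of index-$3$ orbits from ``an Euler characteristic consequence of the cell decomposition by the $\S_i$'' only yields that $S^3\setminus\cup_i\S_i$ has $\ell+1$ components; one still needs that each component contains exactly one index-$3$ binding orbit, which comes from the foliation structure of each component by the families of planes and their breaking, not from topology alone. Second, hyperbolicity of the index-$2$ binding orbits and the ``opposite directions'' statement for the two rigid planes are consequences of the asymptotic winding analysis together with the fact that the two planes close up to a $C^1$ sphere transverse to the flow on either side; your appeal to ``exactly two eigenrays for the largest negative eigenvalue'' is the right ingredient but needs the uniqueness-of-planes and non-intersection statements (the analogues of Lemma \ref{lem_int_ui12} here) to conclude that the two leaves realize the two opposite rays.
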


Let $\lambda=f\lambda_0$ be a possibly degenerate contact form on   $(S^3, \xi_0)$,  and let $J \in \mathcal{J}(  \lambda)$. As proved in \cite[Proposition 6.1]{convex}, there exists a sequence of non-degenerate  contact forms $\lambda_n=f_n \lambda_0$ on $(S^3,\xi_0)$ such that $\lambda_n \to \lambda$ in $C^{\infty}$ as $n\to +\infty$. Since $\mathcal{J}_{\rm reg}(\lambda_n),$ given in  Theorem \ref{thm:HWZFEF}, is dense in $\mathcal{J}(\lambda_n)=\mathcal{J}(\lambda)$, for each large $n$ one can find an almost complex structure $J_n\in\mathcal{J}_{\rm reg}(\lambda_n)$ such that $J_n \to J$ in $C^{\infty}$ as $n \to +\infty$ and, moreover, each $(\lambda_n,J_n)$ admits a genus zero transverse foliation $\F_n$ which is the projection to $S^3$ of a stable finite energy foliation on $\R \times S^3$. The following proposition will be useful in our argument later on.

\begin{prop}\label{prop_unif_C}
Let $\lambda_n \to \lambda$ and $\ J_n \to J$ as $n\to \infty$,  and let $\F_n$ be a genus zero transverse foliation associated with $(\lambda_n,J_n)$ as in Theorem \ref{thm:HWZFEF}. Then there exists a universal constant $C>0$, depending only on $\lambda$, such that the binding orbits of $\F_n$ have action less than $C$ for every large $n$. 
\end{prop}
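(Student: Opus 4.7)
The plan is a Stokes--Fubini argument combined with the convergence of contact volumes under $\lambda_n \to \lambda$.

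First I would reduce the problem to bounding the actions of the index-$3$ binding orbits. By the structure of $\F_n$ from Theorem~\ref{thm:HWZFEF}, every index-$2$ binding orbit $P_{2,i}^n$ is the negative asymptotic of some rigid cylinder $V_{j,i}^n \in \F_n$ whose positive asymptotic is an index-$3$ binding orbit $P_{3,j}^n$. Stokes' theorem then yields
\[
T(P_{3,j}^n) \;=\; T(P_{2,i}^n) \;+\; \int_{V_{j,i}^n} d\lambda_n \;\geq\; T(P_{2,i}^n),
\]
so it suffices to produce a uniform upper bound on $T(P_{3,j}^n)$. (The rigid planes $U_{i,k}^n$ provide an analogous identity $\int_{U_{i,k}^n} d\lambda_n = T(P_{2,i}^n)$, but this is not needed for the reduction.)

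Next, for each $j$ I fix a page $\Pi_j^n \subset \U_j^n$ chosen from the one-parameter family of planes asymptotic to $P_{3,j}^n$. Stokes gives $\int_{\Pi_j^n} d\lambda_n = T(P_{3,j}^n)$. Since $\Pi_j^n$ is embedded and transverse to $R_{\lambda_n}$, and since $d\lambda_n$ is Reeb-invariant, flowing $\Pi_j^n$ by the Reeb flow $\phi^{\lambda_n}$ for a positive time $\tau$ on which the resulting map $\Pi_j^n \times [0,\tau] \to S^3$ remains injective sweeps out a region whose contact volume computes, by the coordinate identity $\Phi^*(\lambda_n \wedge d\lambda_n) = dt \wedge d\lambda_n|_{\Pi_j^n}$, to exactly $\tau \cdot T(P_{3,j}^n)$. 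This region sits inside $S^3$, so
\[
T(P_{3,j}^n) \;\leq\; \tau^{-1} \int_{S^3} \lambda_n \wedge d\lambda_n.
\]

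Finally, I would argue that a positive lower bound $\tau \geq \tau_0 > 0$, uniform in $n$ and $j$, is available. The key input is $C^\infty$-convergence $\lambda_n \to \lambda$, ensuring $R_{\lambda_n} \to R_\lambda$ uniformly, combined with compactness of $S^3$ and the transversality of the pages $\Pi_j^n$ to $R_{\lambda_n}$; the foliation structure of $\F_n$ prevents interior points of $\Pi_j^n$ from returning too quickly under the flow. Together with $\int_{S^3} \lambda_n \wedge d\lambda_n \to \int_{S^3} \lambda \wedge d\lambda$, this delivers a constant $C = C(\lambda)$ with $T(P_{3,j}^n) \leq C$ for all large $n$, completing the proof.

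The main obstacle is the last step: extracting the uniform lower bound $\tau_0$ on the first-return time of $R_{\lambda_n}$ to the pages. The pages $\Pi_j^n$ depend on both indices $n$ and $j$, and in principle an interior point of a page could come back quickly under the Reeb flow. Overcoming this requires using the leafwise geometry of $\F_n$ together with the $C^\infty$-compactness of the Reeb vector fields $R_{\lambda_n}$ near the limit $R_\lambda$ to rule out such short returns uniformly.
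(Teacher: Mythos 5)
Your strategy is genuinely different from the paper's. The paper's proof is essentially a citation: the foliations $\F_n$ are produced by the Hofer--Wysocki--Zehnder construction in \cite{fols}, which comes with an explicit a priori energy bound for all leaves — a constant depending only on the $C^0$-norm of $f_n$ (the foliation is obtained by stretching from the round sphere, where the leaves have fixed $d\lambda_0$-area). Since $f_n\to f$ in $C^0$, the actions of all asymptotic limits, hence of all binding orbits, are bounded by a constant $C(\lambda)$. No geometry of the limit leaves is needed. Your Stokes--Fubini argument, by contrast, tries to derive the bound from the ambient contact volume, and the reduction to the index-$3$ orbits via rigid cylinders, as well as the identity $\Phi^*(\lambda_n\wedge d\lambda_n)=dt\wedge d\lambda_n|_{\Pi_j^n}$, are both correct.

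The gap you flag at the end is, however, fatal rather than technical. For a general embedded surface transverse to the Reeb flow there is \emph{no} positive lower bound on the first-return time: a leaf can have two plaques that are arbitrarily close in $S^3$, so an interior point can return almost instantly, and transversality alone (without a uniform lower bound on the angle between $R_{\lambda_n}$ and $T\Pi_j^n$, and a uniform lower bound on the distance between distinct plaques of the same leaf) gives no quantitative control. The foliation structure does not rescue this: the pages $\F^{j,n}_{k,\tau}$ are produced by the very construction whose output you are trying to bound, and \emph{any} control on their geometry — how tightly they wrap, how close the two ends of a returning trajectory can be — is exactly the kind of a priori estimate that the energy bound of \cite{fols} provides. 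In other words, closing your last step appears to require the conclusion (or its source), so the argument is circular as it stands. A secondary issue: your reduction step uses the rigid-cylinder structure of Theorem \ref{thm:HWZFEF}(i)--(iii), which is only available once index-$1$ binding orbits are excluded; in the paper that exclusion (Lemma \ref{lem_convergeP3}(iii)) is applied \emph{after} Proposition \ref{prop_unif_C} supplies the action bound, so you would also need to handle possible index-$1$ binding orbits directly. The correct repair is to replace the volume argument by the quantitative energy estimate built into the construction of $\F_n$.
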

\begin{proof} The assertion follows from the construction of the foliations $\F_n$ due to Hofer, Wysocki  and Zehnder \cite{fols}. In fact, the action of the binding orbits depend only on the $C^0$-norm of $f$. The uniform upper bound for every large $n$ then follows. \end{proof} 

\subsection{Topological entropy}\label{sec:entro}

Let $X$ be a nowhere vanishing vector field on a closed manifold $M$. Abbreviate by $\psi_t$ the flow of $X$.  We fix a metric $d$ that generates the topology of $M$. For every $T>0$, we define
$
d_T(x,y) := \max_{ t \in [0,T]} d( \psi_t(x), \psi_t(y) ),$ $\forall x, y \in M.
$
Fix $\varepsilon >0$. A subset $U$ is said to be $(T,\varepsilon)$-separated if $d_T(x,y) \geq \varepsilon$ for every  $x\neq y \in U$. 
Let $N(T,\varepsilon)$ denote the maximal cardinality of a $(T,\varepsilon)$-separated set. The  topological entropy $h_{\mathrm{top}}(\psi_t)$ of the flow $\psi_t$ is defined to be the growth rate of $N(T,\varepsilon)$:  
\[
h_{\mathrm{top}}(\psi^t):= \lim_{\varepsilon \to 0^+} \limsup _{T \to + \infty} \frac{1}{T} \log N(T, \varepsilon).
\]
It is well-known that the topological entropy of a smooth flow is finite.
For more details on   topological entropy, we refer to \cite{HKentropy, Paternainentropy}.

\begin{rem}

 The topological entropy $ {h}_{\rm top}(f)$ of a continuous map $f$ on a compact  Hausdorff metric space $(M,d)$ is defined  in the same way as above, with 
 \[
 d_n(x,y) := \max \{d (f^k(x), f^k(y)) \mid k=0, 1, \ldots, n-1\}, \quad  \forall x, y \in M, \; n \in \N.
 \]
 If $M$ is a smooth manifold whose topology is determined by the metric $d$ and if the flow $\psi_t$ of a nowhere vanishing vector field $X$ is smooth, then  $h_{\rm top}(\psi_t) =  {h}_{\rm top}(\psi_1)$. See \cite[Proposition 3.1.8]{HKentropy}.
\end{rem}

The following theorem due to Katok \cite{Katokentropy} relates topological entropy to periodic orbits.
\begin{thm}[{Katok}]\label{thm:Katok}
Let $\psi_t$ be the flow of a nowhere vanishing vector field $X$ on a  closed three-manifold and let $P_T(\psi_t)$ denote the number of periodic orbits of $\psi_t$ with period smaller than $T>0$. 
If $h_{\mathrm{top}} (\psi_t)>0$, then
$
\limsup_{T \to + \infty} \frac{ \log P_T(\psi_t)}{T} >0.
$
\end{thm}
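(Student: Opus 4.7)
The plan is to reduce to Katok's horseshoe theorem for $C^{1+\alpha}$ diffeomorphisms via the time-one map. First, since $X$ is smooth and nowhere vanishing on a closed manifold, $\psi_1$ is a $C^\infty$ diffeomorphism and the identity $h_{\mathrm{top}}(\psi_t) = h_{\mathrm{top}}(\psi_1)$ recalled in the preceding remark gives $h_{\mathrm{top}}(\psi_1) > 0$. By the variational principle, for each $\delta > 0$ there exists an ergodic $\psi_1$-invariant Borel probability measure $\mu$ with measure-theoretic entropy $h_\mu(\psi_1) > h_{\mathrm{top}}(\psi_1) - \delta > 0$.

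Second, I would use Pesin theory to show that $\mu$ is hyperbolic. Since the ambient manifold is three-dimensional, the Oseledets decomposition at $\mu$-a.e. $x$ produces three Lyapunov exponents $\lambda_1(x) \leq \lambda_2(x) \leq \lambda_3(x)$, one of which vanishes because it corresponds to the flow direction. Ruelle's inequality applied to $\psi_1$ forces a strictly positive exponent, while the same inequality applied to $\psi_1^{-1}$ (whose measure-theoretic entropy equals $h_\mu(\psi_1) > 0$) forces a strictly negative one. Hence $\mu$ is hyperbolic in the Pesin sense, with all non-zero exponents bounded away from zero on a set of large measure.

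Third, I would invoke Katok's fundamental horseshoe theorem: for any $C^{1+\alpha}$ diffeomorphism carrying a non-atomic hyperbolic ergodic measure $\mu$ of positive entropy, and any $\varepsilon > 0$, there exist an integer $N \geq 1$ and a compact $\psi_1^N$-invariant hyperbolic set $\Lambda_\varepsilon$ on which $\psi_1^N$ is topologically conjugate to a subshift of finite type satisfying
\[
h_{\mathrm{top}}(\psi_1^N|_{\Lambda_\varepsilon}) \geq N \bigl(h_\mu(\psi_1) - \varepsilon\bigr).
\]
On a subshift of finite type, the number of periodic points of period at most $n$ grows at least like $e^{n h}$ with $h = h_{\mathrm{top}}(\psi_1^N|_{\Lambda_\varepsilon})$. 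Each such periodic point of $\psi_1^N$ yields a periodic orbit of the flow $\psi_t$ of period at most $nN$, hence
\[
\limsup_{T \to +\infty} \frac{\log P_T(\psi_t)}{T} \geq \frac{h}{N} \geq h_\mu(\psi_1) - \varepsilon \geq h_{\mathrm{top}}(\psi_t) - \delta - \varepsilon.
\]
Sending $\varepsilon,\delta \to 0^+$ gives the desired positivity.

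The main obstacle is the horseshoe construction in the third step: one must produce Pesin blocks of large measure, carry out a shadowing/closing argument along Lyapunov-regular orbits, and build a Markov-like partition at scales adapted to the Pesin structure — this is the technical heart of Katok's original paper and I would cite it directly rather than reprove it. The other steps (variational principle, Ruelle's inequality, periodic-point counts in subshifts) are standard black boxes in smooth ergodic theory.
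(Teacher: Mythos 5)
The paper does not actually prove this statement: it is quoted as a known theorem of Katok with a reference to \cite{Katokentropy}, so there is no internal argument to compare against. Your sketch follows the standard route (reduce to the time-one map, variational principle, Ruelle's inequality applied forwards and backwards to locate one positive and one negative exponent besides the flow direction, then a horseshoe and a periodic-point count), and the first two steps are correct as stated.

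The gap is in the third step. The black box you invoke is the \emph{diffeomorphism} version of Katok's horseshoe theorem, which requires $\mu$ to be a hyperbolic measure for $\psi_1$, i.e.\ to have \emph{no} zero Lyapunov exponents. But for the time-one map of a nowhere vanishing flow on a closed manifold the exponent along the flow direction is always exactly zero (since $d\psi_n\cdot X(x)=X(\psi_n(x))$ and $\|X\|$ is bounded above and away from zero), so the measure $\mu$ you produced is never hyperbolic for the diffeomorphism $\psi_1$, and the theorem you quote does not apply to it. What is needed is the flow version of Katok's theorem, in which ``hyperbolic'' means that the only vanishing exponent is the one along the flow; that version is proved via Poincar\'e return maps to local transversals rather than via the time-one map, and it is exactly the statement covering the present three-dimensional nonsingular situation (your Ruelle-inequality argument shows precisely that every positive-entropy ergodic measure of such a flow is hyperbolic in this flow sense). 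Relatedly, your final count conflates periodic points of $\psi_1^N$ with periodic orbits of the flow: the correspondence is far from injective, and if a flow orbit has period dividing $nN$ then \emph{every} point of it is fixed by $\psi_1^{nN}$. Working on a transversal, a flow orbit of period at most $T$ meets the section a number of times that is at most linear in $T$, so the overcounting is subexponential and harmless, but this needs to be said. Both issues are resolved by citing the flow version of Katok's theorem directly, which is in effect what the paper does.
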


 A standard way to detect chaotic behavior of a flow  is to build a  Bernoulli shift.  In order to recall its definition, let $A= \{1, \ldots, N \}$ be a finite alphabet. The set $\Sigma_N$ consisting of all doubly infinite sequences  
 $a = (a_j)_{j \in \Z}, a_j \in A,$ 
 is equipped with the metric
 \begin{equation*}\label{eq:metrix:bernoulli}
 d(a,b) = \sum_{j \in \Z} \frac{1}{2^{\lvert j \rvert}} \frac{ \lvert a_j - b_j \rvert }{1 + \lvert a_j - b_j \rvert}, \quad \forall a, b \in \Sigma_N,
 \end{equation*}
 which makes   $(\Sigma_N, d)$ a compact Hausdorff metric space. 
The  Bernoulli shift on $\Sigma_N$ is the homeomorphism $\sigma \colon \Sigma_N \to \Sigma_N,$ defined as
$\sigma(a) = ( \sigma(a)_j)_{j \in \Z},$ where $\sigma(a)_j :=a_{j+1}.$

 A homeomorphism $\phi$ on a compact Hausdorff metric space $\Lambda$ is said to be {semi-conjugate} to a Bernoulli shift  if there exists a continuous surjective map $\tau \colon \Lambda \to \Sigma_N $   for some $N \geq 2$   such that $\tau \circ \phi = \sigma \circ \tau$. 
 %


   Let $Q\subset \R^2$ be the unit square $[0,1]\times [0,1]$. We denote its right, left,  upper and lower edges by $V_0, V_\infty, H_0$ and $H_\infty$, respectively.  The compact region bounded by two disjoint and vertically monotone curves connecting $H_0$ to $H_\infty$ is called a vertical strip in $Q$.  Similarly, one defines a horizontal strip in $Q$. 
  
 We refer the reader to     \cite[Chapter III]{Moser} for the proof of the following statement.

 \begin{prop}\label{prop:embedbernoullishift}
 Let $\phi\colon Q \to \R^2$ be a mapping that satisfies the following:  
  \begin{enumerate}
 \item[(N1)]  In the square $Q$, there exist disjoint vertical strips $V_1, \ldots, V_N$ and disjoint horizontal strips $H_1, \ldots, H_N$ such that $\phi(H_i) = V_i$ for every $i=1,\ldots, N.$ The vertical strips and horizontal strips are ordered from  right to left and from  top to bottom, respectively. 
  \item[(N2)]  If $V\subset Q$ is a vertical strip, then for each $i$, the set $\phi (V) \cap V_i$ contains a vertical strip. Similarly, if $H \subset Q$ is a horizontal strip, then $\phi^{-1}(H) \cap H_i$ contains a horizontal strip for every $i$. 
 \end{enumerate} 
  Then there exists a compact invariant set $\Lambda \subset Q$ such that   $\phi|_{\Lambda}$ is semi-conjugates to a Bernoulli shift with $N$ symbols. 
Consequently, $h_{\rm{top}}(\phi)>0$.
 \end{prop}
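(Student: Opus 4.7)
The plan follows Moser's classical horseshoe construction from \cite{Moser}: the goal is to code the orbits that remain in $\bigcup_i H_i$ for all time by bi-infinite sequences in $\{1,\ldots,N\}^{\Z}$, and to show that this coding defines a continuous surjection semi-conjugating $\phi$ to the Bernoulli shift.

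For a finite forward word $(a_0,\ldots,a_n)$, I would inductively construct a nested family of horizontal strips
\[
\widehat H(a_0,\ldots,a_n) \subset \widehat H(a_0,\ldots,a_{n-1}) \subset \cdots \subset H_{a_0},
\]
with $\widehat H(a_0,\ldots,a_n) = \{x \in Q : \phi^j(x) \in H_{a_j},\ 0\leq j\leq n\}$. The inductive step invokes the horizontal half of (N2): if $\widehat H(a_1,\ldots,a_n)$ contains a horizontal strip $H'$, then $\phi^{-1}(H') \cap H_{a_0}$ contains a horizontal strip. Symmetrically, combining $\phi(H_i)=V_i$ from (N1) with the vertical half of (N2), I would build a nested family of vertical strips
\[
\widehat V(a_{-m},\ldots,a_{-1}) \subset \widehat V(a_{-m+1},\ldots,a_{-1}) \subset \cdots \subset V_{a_{-1}},
\]
consisting of points $x$ with $\phi^j(x) \in H_{a_j}$ for $-m \leq j \leq -1$.

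For any $a\in\Sigma_N$, each finite intersection $\widehat H(a_0,\ldots,a_n) \cap \widehat V(a_{-m},\ldots,a_{-1})$ is nonempty because a horizontal strip always meets a vertical strip in $Q$; by the finite intersection property for compact subsets of $Q$, the bi-infinite intersection $R(a) := \bigcap_{j\in\Z}\phi^{-j}(H_{a_j})$ is therefore nonempty. I would then set $\Lambda := \bigcup_{a\in\Sigma_N} R(a)$, equivalently the set of $x\in Q$ whose entire $\phi$-orbit lies in $\bigcup_i H_i$; this $\Lambda$ is compact and $\phi$-invariant. The map $\tau\colon\Lambda\to\Sigma_N$ sending $x$ to the unique code $(a_j)$ with $\phi^j(x)\in H_{a_j}$ is well-defined by disjointness of the $H_i$, surjective by the previous step, and continuous since $x,y$ close implies that $\phi^j(x)$ and $\phi^j(y)$ lie in the same strip for $|j|\leq n$, making $d(\tau(x),\tau(y))$ small. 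The relation $\tau\circ\phi=\sigma\circ\tau$ is immediate, and since topological entropy does not decrease under continuous surjections \cite{HKentropy} and $h_{\mathrm{top}}(\sigma)=\log N>0$, the semi-conjugacy forces $h_{\mathrm{top}}(\phi)>0$.

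The main difficulty is getting the induction quantitatively correct: the inductive hypothesis cannot be mere nonemptiness but must record the strip property, i.e.\ that the intersection stretches all the way across $Q$ from edge to edge in the appropriate direction, so that (N2) can be reapplied at the next stage. This geometric content---built into the definitions of horizontal and vertical strips and into (N2)---is what ensures that a horizontal strip and a vertical strip must meet, which is the crux of the nonemptiness argument. Once this is handled, the remaining arguments (compactness, continuity of $\tau$, and the entropy bound) are routine.
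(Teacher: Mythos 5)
Your proposal is correct and is precisely the argument of Moser's Chapter III, which is exactly what the paper cites in lieu of a proof: nested horizontal/vertical strips built inductively from (N1)--(N2), nonemptiness of the bi-infinite intersections via the fact that a horizontal strip must meet a vertical strip plus compactness, and the resulting coding map $\tau$ giving the semi-conjugacy and the entropy bound $h_{\rm top}(\phi)\geq \log N>0$. You also correctly identify the one point that needs care, namely that the inductive hypothesis must carry the full edge-to-edge strip property (not mere nonemptiness) so that (N2) can be reapplied.
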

Proposition \ref{prop:embedbernoullishift} generalizes to the case of  countably many disjoint vertical strips $V_1,V_2,\ldots,$  considering an alphabet with countably many symbols.

\section{Proof of Theorem \ref{main1}}\label{sec:3}

Let $\lambda$ be a weakly convex contact form on $(S^3,\xi_0)$ and let $\P_2(\lambda)=\{P_{2,1},\ldots,P_{2,l}\}$ be a finite set of index $2$ periodic orbits satisfying the hypotheses of Theorem \ref{main1}. 

Given $C>0$, we denote by $\P ^{\leq C}(\lambda)\subset \P(\lambda)$ the set of   periodic orbits with  action $\leq C$. For every $j \in \Z$ we  define $\P_j^{  \leq C}(\lambda) := \P _j (\lambda) \cap \P ^{\leq C}(\lambda) $ and $\P_j^{u, -1, \leq C}(\lambda) := \P_j^{u, -1}(\lambda) \cap \P ^{\leq C}(\lambda)$, where  $\P _j (\lambda)$ and $\P_j^{u, -1, \leq C}(\lambda)$ were established in the introduction.

Take any sequence  $\lambda_n$ of contact forms converging to
$\lambda$ as $n\to \infty$, and satisfying
\begin{itemize}
    \item[(a)] $\lambda_n$ is non-degenerate, $\forall n \in \N$. 
    \item[(b)] $P_{2,i}\in \P_2(\lambda_n)$, and $P_{2,i}$ is hyperbolic, $\forall i \in \{1, \ldots, l\}, \forall n \in \N$.
\end{itemize} 
As mentioned before, the non-degeneracy in condition (a) is achieved as in \cite[Proposition 6.1]{convex}. To achieve condition (b), we restrict to the space of contact forms $\lambda_n=f_n\lambda$ satisfying
$f_n|_{P_{2,i}}\equiv 1$ and $df_n|_{P_{2,i}}\equiv 0, \forall i,n,$ see \cite[Lemma 6.8]{convex}.

Since $P_{2,1},\ldots,P_{2,l}$ are hyperbolic, we can assume, moreover, that for any fixed $C>0$ sufficiently large, the following assertion holds:
\begin{itemize}
    \item[(c)] $\P_2^{\leq C}(\lambda_n)= \{P_{2,1}, \ldots, P_{2,l}\}, \forall n.$ 
\end{itemize}
Indeed, let $C>0$ be large enough so that $P_{2,i}\in \P_2^{\leq C}(\lambda_n),\, \forall i, \forall n$. If for each $n$ we can find an index-$2$ Reeb orbit $Q_n$ of $\lambda_n$, which is geometrically distinct from $P_{2,i}, \forall i=1,\ldots,l$, and whose action is $\leq C$,  then the Arzel\`a-Ascoli Theorem provides us with $Q \in \P^{\leq C}(\lambda)$ so that $Q_n \to Q$ in $C^\infty$ as $n\to +\infty$, up to the extraction of a subsequence. The lower semi-continuity of the Conley-Zehnder index and the weak convexity of $\lambda$ imply that $\cz(Q)=2$. Since $\P_2(\lambda)=\{P_{2,1},\ldots,P_{2,l}\}$, $Q$ must coincide with $P_{2,i}$ for some $i=1,\ldots,l$. However, this contradicts the hyperbolicity of the orbits in $\P_2(\lambda)$ and   condition (b).



The present goal is to show that $\lambda_n$ admits a weakly convex foliation $\F_n$ for every large $n$, so that every $P_{2,i}, i=1,\ldots,l,$ is a binding orbit. 

\begin{prop}\label{prop_weak_conv}
Let $J_n \in \J_{\rm reg}(\lambda_n)$ be a sequence of almost complex structures satisfying $J_n \to  J\in \J(\lambda)$ in $C^\infty$ as $n\to +\infty$, where $\J_{\rm reg}(\lambda_n)$ is given in Theorem \ref{thm:HWZFEF}. Let $\widetilde J_n$ be the almost complex structure on $\R \times S^3$  induced by $\lambda_n$ and $J_n$. Then, for every $n$ sufficiently large, the following holds.
\begin{itemize}
    \item[(i)]  The Reeb flow of $\lambda_n$ admits a weakly convex foliation $\F_n$, whose leaves are projections to $S^3$ of embedded finite energy $\widetilde J_n$-holomorphic planes and cylinders. 
    \item[(ii)] The binding of $\F_n$ consists of the orbits $P_{2,1},\ldots,P_{2,l} \in \P_2(\lambda_n)$ and $l+1$ orbits $P_{3,1}^n,\ldots, P_{3,l+1}^n\in \P_3^{u,-1,\leq C}(\lambda_n),$ where $C>0$ is a fixed large number that does not depend on $n$.
    \end{itemize}
\end{prop}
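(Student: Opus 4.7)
The plan is to apply Theorem \ref{thm:HWZFEF} to each pair $(\lambda_n, J_n)$, yielding a stable finite energy foliation $\tilde{\F}_n \subset \R \times S^3$ whose projection $\F_n = p(\tilde{\F}_n)$ is a genus zero transverse foliation adapted to the Reeb flow of $\lambda_n$, with regular leaves that are planes and cylinders. Proposition \ref{prop_unif_C} provides a uniform action bound $C > 0$, depending only on $\lambda$, for all binding orbits of $\F_n$ when $n$ is large.

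To access the structured conclusions (i)--(iii) of Theorem \ref{thm:HWZFEF}, I first rule out index-$\leq 1$ binding orbits for large $n$. If some $Q_n \in \P^{\leq C}(\lambda_n)$ had $\cz(Q_n) \leq 1$ for infinitely many $n$, Arzel\`a-Ascoli together with the uniform action bound would yield a $C^\infty$-subsequential limit $Q \in \P^{\leq C}(\lambda)$ with $\cz(Q) \leq 1$ by lower semi-continuity of the Conley-Zehnder index, contradicting weak convexity of $\lambda$. Theorem \ref{thm:HWZFEF}(i) then forces the binding of $\F_n$ to consist of $l_n$ hyperbolic index-$2$ orbits and $l_n + 1$ unknotted index-$3$ orbits, all with self-linking $-1$. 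Condition (c) and the action bound place the index-$2$ binding orbits inside $\{P_{2,1}, \ldots, P_{2,l}\}$, whence $l_n \leq l$.

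The crucial step is the reverse inequality $l_n = l$, i.e.\ that every $P_{2,i}$ occurs as a binding orbit of $\F_n$. Suppose to the contrary that some $P_{2,i_0}$ is not. For each binding $P_{2,k}$ of $\F_n$, the rigid sphere $\mathcal{S}_k$ consists of $P_{2,k}$ together with two leaves transverse to the Reeb flow, so $P_{2,i_0}$ meets $\mathcal{S}_k$ with uniformly signed transverse intersections whose algebraic count equals $\link(P_{2,i_0}, P_{2,k}) = 0$ by Assumption I; hence $P_{2,i_0}$ is disjoint from $\mathcal{S}_k$ and lies in a single component $\U_j$. Passing to a subsequence and using the action bound, Arzel\`a-Ascoli, lower semi-continuity of the CZ index, and preservation of knottedness and self-linking under smooth limits of disjoint orbits, I obtain $P_{3,j}^n \to P_{3,j}^0 \in \P_3^{u,-1}(\lambda)$. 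Assumption III then gives $\link(P_{2,i_0}, P_{3,j}^n) = \link(P_{2,i_0}, P_{3,j}^0) = 0$ for large $n$. Every regular leaf of $\F_n$ inside $\U_j$ is a plane asymptotic to $P_{3,j}^n$ or a cylinder connecting $P_{3,j}^n$ to a boundary $P_{2,k}$, and each algebraic intersection count with $P_{2,i_0}$ reduces to a signed combination of the vanishing linking numbers $\link(P_{2,i_0}, P_{3,j}^n)$ and $\link(P_{2,i_0}, P_{2,k})$; since all intersections are positive, $P_{2,i_0}$ is disjoint from every regular leaf in $\U_j$. As $\U_j \setminus P_{3,j}^n$ is foliated by these leaves, $P_{2,i_0}$ must equal $P_{3,j}^n$, contradicting the fact that they have distinct Conley-Zehnder indices.

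The main obstacle I anticipate is the compactness assertion $P_{3,j}^n \to P_{3,j}^0 \in \P_3^{u,-1}(\lambda)$: while the uniform action bound yields subsequential limits, one must verify that no multiple-cover or index-jump phenomenon occurs, that the limit is non-degenerate with $\cz = 3$, unknotted, and of self-linking $-1$, and that the $l_n+1$ limits remain geometrically distinct, so that Assumption III can be invoked cleanly. Careful bookkeeping using the preservation of topological invariants under disjoint smooth deformation is the technical heart of this step.
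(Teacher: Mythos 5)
Your proposal is correct and follows essentially the same route as the paper: apply Theorem \ref{thm:HWZFEF} to each $(\lambda_n,J_n)$, use the uniform action bound of Proposition \ref{prop_unif_C} together with Arzel\`a--Ascoli and lower semi-continuity of $\cz$ to exclude index-$\leq 1$ binding orbits, and then force every $P_{2,i}$ into the binding via positivity of intersections and the vanishing of the relevant linking numbers. The compactness step you flag as the technical heart (that the index-$3$ binding orbits of $\F_n$ converge to simple index-$3$ orbits of $\lambda$, so that hypothesis III applies) is exactly what the paper isolates and proves beforehand as Lemma \ref{lem_convergeP3}, using weak convexity to rule out multiple covers and hyperbolicity of the $P_{2,i}$ to rule out index-$2$ limits.
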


Fix any $C>0$ sufficiently large so that property (c) above holds. Before proving Proposition \ref{prop_weak_conv}, we show that for all large $n$ the orbits in  $  \P_3^{u,-1,\leq C}(\lambda_n)$  do not link with any orbit in $\P_2^{\leq C}(\lambda_n)$ and, moreover, $\lambda_n$ does not admit orbits with $\cz=1$ up to the action~$C$.

\begin{lem}\label{lem_convergeP3}
Let $Q^n_1, Q^n_2 \in \P^{u,-1,\leq C}_3(\lambda_n)$ such that $Q^n_1\neq Q^n_2, \forall n$. Then 
\begin{itemize}
    \item[(i)] there exist  $Q^\infty_1,Q^\infty_2\in \P_3^{u,-1,\leq C}(\lambda)$ so that, up to a subsequence, $Q^n_1 \to Q^\infty_1$ and  $Q^n_2 \to Q^\infty_2$ as $n \to +\infty.$
In particular, $\link(Q^n_j,P_{2,i})=0,  \forall j,i,$ and large  $n$.   
\item[(ii)] if  $\link(Q^n_1,Q^n_2)=0, \forall n,$
then $Q^\infty_1 \neq Q^\infty_2$. In particular,
$
\link(Q^\infty_1,Q^\infty_2)=0.
$
\item[(iii)]  $\P_1^{\leq C}(\lambda_n) = \emptyset, \forall n$ large.
\end{itemize}
\end{lem}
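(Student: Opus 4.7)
The strategy combines Arzelà--Ascoli compactness with lower semi-continuity of the generalized Conley--Zehnder index, together with, for part (ii), a bifurcation analysis of the degenerate limit.

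For part (i), I would first invoke Arzelà--Ascoli: the action bound $C$ together with $\lambda_n\to\lambda$ in $C^\infty$ yields, after passing to a subsequence, $Q^n_j\to Q^\infty_j$ in $C^\infty$ with $Q^\infty_j\in\P^{\leq C}(\lambda)$. Lower semi-continuity of the generalized Conley--Zehnder index, which follows from continuous dependence of $A_P$ on the data and the stability of winding numbers of eigensections as long as no eigenvalue crosses $0$, gives $\cz(Q^\infty_j)\leq\cz(Q^n_j)=3$, while weak convexity gives $\cz(Q^\infty_j)\geq 2$. To rule out $\cz(Q^\infty_j)=2$: otherwise Hypothesis I would force $Q^\infty_j=P_{2,i}$ for some $i$; but $P_{2,i}$ is hyperbolic and non-degenerate, so the implicit function theorem together with condition (b) would identify $Q^n_j$ with $P_{2,i}$ for large $n$, forcing $\cz(Q^n_j)=2$, a contradiction. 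Hence $\cz(Q^\infty_j)=3$. An iteration argument for the generalized CZ index, combined with weak convexity on every iterate of a simple orbit, rules out covered limits, so $Q^\infty_j$ is simple; unknottedness and $\sl=-1$ pass to the $C^\infty$-limit of embedded loops, so $Q^\infty_j\in\P_3^{u,-1,\leq C}(\lambda)$. Finally, since $\link$ is integer-valued and continuous on pairs of disjoint loops, Hypothesis III applied to $Q^\infty_j$ yields $\link(Q^n_j,P_{2,i})=\link(Q^\infty_j,P_{2,i})=0$ for large $n$.

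For part (ii), I argue by contradiction: assume $Q^\infty_1=Q^\infty_2=:Q^\infty$. Since $Q^n_1\ne Q^n_2$ are distinct non-degenerate orbits of $\lambda_n$ converging to $Q^\infty$, uniqueness of continuation via the implicit function theorem on the equation for periodic orbits forces $Q^\infty$ to be degenerate; thus $0$ is an eigenvalue of $A_{Q^\infty}$ with some winding $w^0$. From $\cz(Q^\infty)=\wind^{<0}(A_{Q^\infty})+\wind^{\geq 0}(A_{Q^\infty})=3$ and $\wind^{<0}\leq\wind^{\geq 0}=w^0$ we get $2w^0\geq 3$, hence $w^0\geq 2$. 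In a tubular neighborhood of $Q^\infty$ the two bifurcated orbits $Q^n_1,Q^n_2$ can be represented as graphs of sections of $(Q^\infty)^*\xi$ whose difference is, to leading order, a non-zero multiple of the $0$-eigensection of $A_{Q^\infty}$, this being the bifurcation direction dictated by the non-trivial kernel. Hence the linking of $Q^n_1$ and $Q^n_2$ equals the winding of their relative displacement, which is $w^0\geq 2$, contradicting the hypothesis $\link(Q^n_1,Q^n_2)=0$. Therefore $Q^\infty_1\ne Q^\infty_2$, and continuity of the linking number on disjoint pairs gives $\link(Q^\infty_1,Q^\infty_2)=\lim_n\link(Q^n_1,Q^n_2)=0$.

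Part (iii) is the quickest: if the conclusion failed, some subsequence would yield $Q_n\in\P_1^{\leq C}(\lambda_n)$ converging by Arzelà--Ascoli to $Q^\infty\in\P^{\leq C}(\lambda)$; lower semi-continuity would then give $\cz(Q^\infty)\leq 1$, violating weak convexity. The main obstacle I anticipate is the bifurcation analysis in (ii): one must rigorously justify that two distinct non-degenerate orbits $Q^n_1,Q^n_2$ accumulating on a degenerate orbit $Q^\infty$ with $\dim\ker A_{Q^\infty}\geq 1$ must differ, to leading order, along the kernel, so that their linking is read off from the winding of the zero eigensection. This is typically effected via a Poincaré section transverse to $Q^\infty$ and a Lyapunov--Schmidt reduction of the perturbed return map on $\ker A_{Q^\infty}$.
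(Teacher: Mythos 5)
Your parts (i) and (iii) follow the paper's proof essentially verbatim (Arzel\`a--Ascoli from the action bound, lower semi-continuity of the generalized index, weak convexity to exclude multiple covers, hyperbolicity of the $P_{2,i}$ to exclude index-$2$ limits, and Hypothesis III plus continuity of $\link$), so there is nothing to add there. Part (ii) is where you genuinely diverge: the paper does \emph{not} pass through the degenerate limit orbit at all. It simply notes that $Q_1^n$ and $Q_2^n$ are eventually arbitrarily close, both non-degenerate of index $3$, and invokes \cite[Lemma~5.2]{convex}, which bounds from below the winding of \emph{any} non-vanishing solution of the transverse linearized flow along an index-$3$ orbit; the displacement of $Q_2^n$ relative to $Q_1^n$ is approximately such a solution, so $\link(Q_1^n,Q_2^n)>0$ directly. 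Your route instead deduces degeneracy of $Q^\infty$ from the failure of unique continuation, computes $\wind^{\geq 0}(A_{Q^\infty})\geq 2$ from $\cz=3$, and reads the linking off the zero eigensection. This is a legitimate self-contained alternative, but it carries an extra burden that the cited lemma avoids: you must prove that the normalized transverse difference of the two orbits converges to a \emph{non-trivial kernel element} of $A_{Q^\infty}$ (your Lyapunov--Schmidt step). That claim is standard but not free -- one has to handle the reparametrization/period discrepancy and show the transverse part of the difference does not degenerate -- and it is precisely the content the paper outsources; the cited lemma is more robust because it applies to arbitrary non-vanishing solutions of the linearized flow rather than only to periodic (kernel) ones. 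One further small correction: with the paper's conventions the linking number of the pushoff is not the winding $w^0$ of the displacement in the global trivialization but $w^0$ corrected by the self-linking number ($\sl=-1$), so you get $\link=w^0-1=1$ rather than $2$; the positivity, and hence the contradiction with $\link=0$, survives, but the identification ``linking $=$ winding'' as stated is off by this framing correction.
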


\begin{proof}

Because of the uniform upper bound on the actions of $Q^n_j, j=1,2,$ we can apply the Arzel\`a-Ascoli Theorem to extract a subsequence, still denoted by $Q^n_j$, so that $Q^n_j \to Q^\infty_j\in \P^{\leq C}(\lambda)$ in $C^\infty$ as $n\to +\infty$. The lower semi-continuity of the Conley-Zehnder index implies that $\cz(Q^\infty_j)\leq 3$. Since $\lambda$ is weakly convex, we conclude that $Q^\infty_j$ is simple. Indeed, if $Q^\infty_j = Q^d$ for some $Q\in\P(\lambda)$ and an integer $d>1$, then $\cz(Q^d)\geq 4,$ a contradiction. It turns out that, as a $C^\infty$-limit of $Q^n_j\in \P^{u,-1,\leq C}_3(\lambda_n)$, $Q^\infty_j$  is unknotted, has self-linking number  $-1$ and $\cz(Q^\infty_j) \in \{2,3\},$ $j=1,2$. For large $n$, the orbits  $P_{2,1},\ldots,P_{2,l}$ are the only orbits of $\lambda_n$ with $\cz < 3$ and action $\leq C$. Since these orbits 
are hyperbolic, we conclude that the limit $Q^\infty_j$ is not an orbit in $\P_2(\lambda)$. Hence  $\cz(Q^\infty_j)=3$ which implies  $Q^\infty_j\in \P_{3}^{u,-1,\leq C}(\lambda),$ $j=1,2$. 
In view of  hypothesis III in Theorem \ref{main1},  $Q^\infty_j$ is not linked with the orbits in $\P_2(\lambda)$. Hence, for every large $n$, $\link(Q^n_j,P_{2,i})=0, \forall j,i.$ This proves (i).

Assume now that $\link(Q^n_1,Q^n_2) =0,   \forall n$. Arguing indirectly, suppose that $Q^\infty_1 = Q^\infty_2$. Then $Q_1^n$ and $Q_2^n$ are arbitrarily close to each other as $n\to \infty$. Lemma 5.2 in \cite{convex} provides a lower bound on the winding of non-vanishing solutions of the transverse linearized flow along orbits with index $3$. 
In our case, since $\cz(Q^n_1) = \cz(Q^n_2)=3,   \forall n$ and both sequences converge to the same limit, which has also index $3$, we may apply \cite[Lemma 5.2]{convex} for every large $n$ to conclude that $\link(Q^n_1,Q^n_2)$ is necessarily positive, which is absurd.
Thus   $Q^\infty_1 \neq Q^\infty_2$ and as $C^\infty$-limits of $Q^n_j,j=1,2,$ we conclude that $\link(Q^\infty_1,Q^\infty_2) =0$. This proves (ii).

Suppose, by contradiction, that  $\P_1^{\leq C}(\lambda_n) \neq \emptyset$ for $n$ arbitrarily large. Then, after taking a subsequence, we may assume from the Arzel\`a-Ascoli Theorem that $P_1^n \to P_1^\infty$ in $C^\infty$ as $n\to +\infty,$ where $P_1^n \in \P_1^{\leq C}(\lambda_n), \forall n,$ and $P_1^\infty \in \P^{\leq C}(\lambda)$. The lower semi-continuity of the Conley-Zehnder index implies that $\cz(P_1^\infty)\leq 1,$ which contradicts  the weak convexity of  $\lambda$. Item (iii) is proved.
\end{proof}

\begin{proof}[Proof of Proposition \ref{prop_weak_conv}]Since $\lambda_n$ is non-degenerate and $J_n\in \J_{\rm reg}(\lambda_n)$, it follows from Theorem \ref{thm:HWZFEF} that the Reeb flow of $\lambda_n$ admits a genus zero transverse foliation $\F_n$ whose regular leaves are projections of embedded finite energy $\widetilde J_n$-holomorphic curves, where $\widetilde J_n$ is the $\R$-invariant almost complex structure in $\R \times S^3$ induced by $\lambda_n$ and $J_n$. By Proposition \ref{prop_unif_C}, there exists $C>0$ so that the actions of the binding orbits of $\F_n$ are bounded by $C$ for every  $n$. By Lemma \ref{lem_convergeP3}-(iii), $\P_1^{\leq C}(\lambda_n)=\emptyset,$ for every large $n$, and hence we conclude that the binding orbits of $\F_n$ have index $2$ or $3$ and that regular leaves of $\F_n$ are embedded planes and cylinders. Since, for large $n$, each $P_{2,i},i=1,\ldots, l,$ is not linked with any orbit in $\P^{u,-1,\leq C}_3(\lambda_n)$, see Lemma \ref{lem_convergeP3}-(i), it follows that each $P_{2,i},i=1,\ldots,l,$ is necessarily a binding orbit of $\F_n$. Each $P_{2,i}$ bounds a pair of planes, which are regular leaves of $\F_n$. Together with $P_{2,i}$, these planes  form a $2$-sphere which separates $S^3$ into two components. In particular, the complement of the union of these $2$-spheres has $l+1$ components. Each such a component $\U_j$ has a unique index $3$ binding orbit $P_{3,j}$. We conclude that the binding of $\F_n$ is formed by the orbits in $\P_2(\lambda)$ and $l+1$ binding orbits  $P_{3,1},\ldots,P_{3,l+1} \in \P_3^{u,-1,\leq C}(\lambda_n).$ 
\end{proof}

In order to construct the desired genus zero transverse foliation $\mathcal F$ adapted to the Reeb flow of $\lambda$  as in Theorem \ref{main1}, we shall study the compactness properties of the finite energy curves in the foliations $\tilde{\mathcal F}_n$, which are adapted to $(S^3,\lambda_n, J_n)$ and project to  genus zero transverse foliations $\mathcal{F}_n$ as in Proposition \ref{prop_weak_conv}. Recall that the almost complex structures $\widetilde J_n$ were taken in the residual set $\mathcal{J}_{\rm reg}(\lambda_n) \subset \mathcal{J}(\lambda_n)=\mathcal{J}(\lambda)$ in such a way that $J_n \to J$ in $C^\infty$ as $n \to +\infty$ for a fixed $J\in\mathcal{J}(\lambda)$. From now on we  choose $J$ in a generic set in order to prevent some unsuitable curves that may arise as limits in the compactness argument. More precisely, we need to rule out certain somewhere injective holomorphic curves whose asymptotic limits are contained in $\P_2(\lambda)$.
 The space of $J$'s for which such curves do not exist is residual in the $C^\infty$-topology. 

\begin{lem}\label{lem_genJset}
There exists a residual subset   $\J_{\rm reg}^*(\lambda)\subset \J(\lambda)$ in the $C^\infty$-topology so that for every $J\in \J^*_{\rm reg}(\lambda),$  the following assertion holds: let $\widetilde J$ be the almost complex structure on $\R \times S^3$ induced by $\lambda$ and $J.$ Let $\tilde u \colon \C \setminus \Gamma\to \R \times S^3$, $\#\Gamma<+\infty,$ be a somewhere injective finite energy $\widetilde J$-holomorphic curve having positive $d\lambda$-area and a unique positive puncture whose asymptotic limit is an orbit in $\P_2(\lambda).$ Then   $\Gamma =\emptyset.$ 
\end{lem}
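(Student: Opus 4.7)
The plan is to argue by contradiction, combining the action identity with the standard transversality statement for somewhere injective curves in the symplectization. Suppose $\tilde u=(a,u)$ is as in the statement with $k:=\#\Gamma\geq 1$, and write the positive asymptote as $P_+=(x_+,T_+)\in\P_2(\lambda)$. At each $z_j\in\Gamma$, pick an asymptotic limit $P_-^j=(x_-^j,T_-^j)$ provided by Theorem \ref{thm_asymp_limit}. Stokes' theorem, together with positivity of the $d\lambda$-area, gives
\[
\sum_{j=1}^k T_-^j \;=\; T_+ \,-\, \int_{\C\setminus\Gamma} u^*d\lambda \;<\; T_+.
\]
By hypothesis II of Theorem \ref{main1}, every simple orbit of $\lambda$ outside $\P_2(\lambda)$ has action strictly larger than every $T_{2,i}$, hence larger than $T_+$; therefore each $P_-^j$ must be an iterate of an orbit in $\P_2(\lambda)$. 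Since those orbits are hyperbolic by hypothesis I, all their iterates are automatically non-degenerate, and Theorem \ref{thm:asymptoticbehaviour} then upgrades each limit set $\Omega(z_j)$ to a single orbit with exponential convergence.

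Weak convexity of $\lambda$ gives $\mathrm{CZ}(P)\geq 2$ for every Reeb orbit (iterates included). Using $\mathrm{CZ}(P_+)=2$, $\chi(S^2)=2$ and a total puncture count of $k+1$,
\[
\mathrm{Ind}(\tilde u) \;=\; 2 \,-\, \sum_{j=1}^k \mathrm{CZ}(P_-^j) \,-\, 2 \,+\, (k+1) \;=\; k+1 \,-\, \sum_{j=1}^k \mathrm{CZ}(P_-^j) \;\leq\; 1-k \;\leq\; 0 .
\]
Next I would invoke Dragnev's transversality theorem for somewhere injective $\widetilde J$-holomorphic curves in a symplectization with prescribed non-degenerate asymptotes (see also Wendl's SFT lectures): for each fixed combinatorial data $\mathbf{P}=(P_+;P_-^1,\ldots,P_-^k)$ consisting of non-degenerate Reeb orbits of $\lambda$, there is a $C^\infty$-residual set $\J(\mathbf{P})\subset\J(\lambda)$ such that the moduli space of somewhere injective $\widetilde J$-holomorphic curves asymptotic to $\mathbf{P}$ is a smooth manifold of dimension $\mathrm{Ind}$. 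Because $\P_2(\lambda)$ is finite, the collection of admissible data (finite lists of iterates of finitely many hyperbolic orbits, with $k\geq 1$) is countable, and one sets $\J^*_{\rm reg}(\lambda):=\bigcap_{\mathbf{P}}\J(\mathbf{P})$, which is still residual in $\J(\lambda)$ by Baire.

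For $J\in\J^*_{\rm reg}(\lambda)$ the curve $\tilde u$ would lie in a smooth moduli space of dimension $\leq 0$. Since $\tilde u$ has positive $d\lambda$-area it is not a cylinder over a Reeb orbit, so the $\R$-translation acts freely on this moduli space and the quotient would have strictly negative dimension, hence would be empty --- contradicting the existence of $\tilde u$. Therefore $k=0$ and $\Gamma=\emptyset$. The main technical input is Dragnev's transversality theorem applied simultaneously over all admissible asymptotic types; hypotheses I and II of Theorem \ref{main1} enter exactly to confine the possible negative asymptotic limits to the non-degenerate sub-locus of $\P(\lambda)$ where that theorem is available, which is the one delicate point in the argument.
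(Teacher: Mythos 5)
Your proposal is correct and follows essentially the same route as the paper: hypothesis II plus Stokes forces the negative asymptotes to be covers of the hyperbolic index-$2$ orbits (hence non-degenerate), the index computation gives $\mathrm{Ind}(\tilde u)\leq 1-\#\Gamma$, and Dragnev's transversality for somewhere injective curves with generic $J$ forces $\mathrm{Ind}(\tilde u)\geq 1$, so $\Gamma=\emptyset$. The only cosmetic difference is that you bound $\sum_j \mathrm{CZ}(P_-^j)\geq 2k$ via weak convexity while the paper uses $\mathrm{CZ}=2N_i$ for the $N_i$-fold covers of the hyperbolic orbits; both yield the same conclusion.
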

\begin{proof} An application of \cite[Corollary 1.10]{Drag} provides us with a residual subset  $\J_{\rm reg}^*(\lambda)\subset \J(\lambda)$ such that for every $J \in\J_{\rm reg}^*(\lambda) $ the following holds: if $\tilde u \colon \C \setminus \Gamma\to \R \times S^3$  is a somewhere injective finite energy $\widetilde J$-holomorphic curve as in the statement, then  
${\rm Ind}(\tilde u) = {\rm CZ}(\tilde{u}) - 2 + 1+ \# \Gamma \geq 1,
$ provided $\pi \circ du \not\equiv 0.$ 
 Our standing assumptions on the actions of the orbits in $\P_2(\lambda)$ imply that the asymptotic limits of $\tilde u$ at the negative punctures in $\Gamma$ are covers of orbits in $\P_2(\lambda)$.
 
  Set $\Gamma = \{ z_1, \ldots, z_{\#\Gamma}\}$ and denote by $N_i\geq 1$   the covering number of the asymptotic limit corresponding to $z_i, i=1, \ldots, {\#\Gamma}.$  Then   its Fredholm index satisfies 
$
1\leq \mathrm{Ind}(\tilde{u})  =  2- \sum_{i=1}^{\#\Gamma} 2 N_i - 1 + {\#\Gamma}  \leq 1 -\#\Gamma,
$
from which we obtain $\Gamma = \emptyset$.  
\end{proof}

\subsection{Rigid planes asymptotic to the index-2 orbits} 

In this section we prove that for a generic choice of $J\in \J(\lambda)$, each  $P_{2,i} \in \P_2(\lambda)$ is the asymptotic limit of a pair of $\widetilde{J}$-holomorphic planes so that the closures of their projections to $S^3$ form a $C^1$-embedded $2$-sphere.

\begin{prop}\label{prop_rigid_plane0} Fix $ {J} \in \mathcal{J}^*_{\rm reg}(\lambda)$ as in Lemma \ref{lem_genJset}. Then  for each $i=1,\ldots,l,$
there exist embedded  finite energy $\widetilde J$-holomorphic planes $\tilde u_{i,1}=(a_{i,1},u_{i,1}),$ $\tilde u_{i,2}=(a_{i,2},u_{i,2})\colon \C \to \R \times S^3$ which are asymptotic to $P_{2,i}$ through opposite directions $($see Definition \ref{throughopposite}$)$.   In addition, the union $\mathcal{S}_i=u_{i,1}(\C) \cup P_{2,i} \cup u_{i,2}(\C)$ is a $C^1$-embedded $2$-sphere and $\mathcal{S}_i \cap \mathcal{S}_j =\emptyset, \forall i \neq j$. 
\end{prop}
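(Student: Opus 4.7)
The plan is to construct each pair of planes by a compactness argument applied to the rigid planes in the foliations $\tilde{\mathcal{F}}_n$ produced by Proposition~\ref{prop_weak_conv}. For each large $n$ and each $i\in\{1,\ldots,l\}$, fix the embedded finite-energy $\widetilde{J}_n$-holomorphic planes $\tilde u_{i,1}^n=(a_{i,1}^n,u_{i,1}^n)$ and $\tilde u_{i,2}^n=(a_{i,2}^n,u_{i,2}^n)$ in $\tilde{\mathcal{F}}_n$ asymptotic to $P_{2,i}$ through opposite eigendirections of $A_{P_{2,i}}$ (which has one-dimensional leading eigenspace by the hyperbolicity of $P_{2,i}$). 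After normalizing via $\R$-translation (say $a_{i,k}^n(0)=0$), the SFT compactness theorem of Bourgeois--Eliashberg--Hofer--Wysocki--Zehnder yields, for each $k\in\{1,2\}$ and after a subsequence, a limit pseudo-holomorphic building $\tilde u_{i,k}$ in $(\R\times S^3, d(e^r\lambda), \widetilde{J})$.

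\textbf{Collapsing the building to a single embedded plane.} The core step is to show that this building is a single embedded finite-energy plane asymptotic to $P_{2,i}$. After absorbing trivial cylinders over $P_{2,i}$ at the top, the highest non-trivial component $\tilde w$ has a single positive puncture with asymptote $P_{2,i}$ and positive $d\lambda$-area. By Stokes' theorem and Hypothesis~II, any negative asymptote of $\tilde w$ must be an iterate of an orbit in $\mathcal{P}_2(\lambda)$. If $\tilde w$ were a non-trivial branched cover of a somewhere injective curve $\tilde v$, then the simplicity of $P_{2,i}$ together with the fact that $\tilde w$ has a unique positive puncture forces, via a ramification count at that puncture, the cover to have degree one; so $\tilde w$ is itself somewhere injective. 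Lemma~\ref{lem_genJset} now forbids any negative puncture on $\tilde w$, so $\tilde w$ is a finite-energy plane with positive asymptote $P_{2,i}$. With no negative ends left to match with lower levels and no non-constant closed holomorphic curves in a symplectization, the whole building reduces to $\tilde w$. Embeddedness of $\tilde u_{i,k}:=\tilde w$ follows from semi-continuity of Siefring's self-intersection number, which vanishes on the approximating embedded planes and thus on the somewhere injective limit.

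\textbf{Opposite directions, sphere structure, and pairwise disjointness.} The sign of the leading asymptotic eigensection in Theorem~\ref{thm:asymptoticbehaviour}(iv) is preserved under SFT convergence, so $\tilde u_{i,1}$ and $\tilde u_{i,2}$ approach $P_{2,i}$ through opposite directions and are in particular distinct, with $u_{i,1}(\C)\cap u_{i,2}(\C)=\emptyset$. The exponential asymptotic formula then yields, as in \cite{fols}, that $\mathcal{S}_i=u_{i,1}(\C)\cup P_{2,i}\cup u_{i,2}(\C)$ is a $C^1$-embedded $2$-sphere. For $i\neq j$, the approximating spheres $\mathcal{S}_i^n$ and $\mathcal{S}_j^n$ are disjoint leaves of $\mathcal{F}_n$; in the limit, positivity of intersections combined with Hypothesis~I (the mutual unlinkedness of the orbits in $\mathcal{P}_2(\lambda)$, which in particular forbids $P_{2,i}\subset \mathcal{S}_j$) excludes any intersection point, giving $\mathcal{S}_i\cap\mathcal{S}_j=\emptyset$.

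\textbf{Main obstacle.} The most delicate step is the exclusion of negative punctures at the top level, which is precisely what the residual set in Lemma~\ref{lem_genJset} buys. Without Hypothesis~II, low-action simple orbits outside $\mathcal{P}_2(\lambda)$ could appear as negative asymptotes of $\tilde w$ and the genericity argument would break down. A secondary subtlety is to ensure that the two opposite-direction sequences produce genuinely distinct limit planes rather than collapsing onto a common curve; this is controlled by the hyperbolicity of $P_{2,i}$, which pins down a one-dimensional leading eigenspace and makes the sign of approach a robust invariant of the compactness argument.
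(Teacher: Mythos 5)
Your overall strategy — extract the limit of the rigid planes $\tilde u^n_{i,1},\tilde u^n_{i,2}\in\tilde\F_n$, use Hypothesis II to force all negative asymptotes to be covers of orbits in $\P_2(\lambda)$, and then invoke Lemma \ref{lem_genJset} to kill the negative punctures — is the same as the paper's (Proposition \ref{prop_rigid_planes}). But there is a genuine gap at the step where you claim that $\tilde u_{i,1}$ and $\tilde u_{i,2}$ are asymptotic to $P_{2,i}$ through opposite directions because ``the sign of the leading asymptotic eigensection is preserved under SFT convergence.'' Convergence in $C^\infty_{\rm loc}$ (or in the SFT sense) does not by itself control the asymptotic eigendata of the limit, and in particular it does not rule out that the two sequences, which approach $P_{2,i}$ through opposite directions for each finite $n$, collapse onto the \emph{same} limit plane. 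This is exactly the scenario the paper must exclude, and it does so by a non-trivial topological argument (Lemma \ref{lem_int_ui12}): if $u_{i,1}(\C)=u_{i,2}(\C)$, then for large $n$ the planes $u^n_{i,1}$ and $u^n_{i,2}$ would be homotopic rel $P_{2,i}$ inside a small neighborhood $\V$ of $u_{i,1}(\C)\cup P_{2,i}$; but the sphere $\S_i^n$ separates $S^3$ into two components each containing an index-$3$ binding orbit $P^n_{3,j}$, so any such homotopy must cross some $P^n_{3,j}$, while Hypothesis III forces $P^n_{3,j}\cap\V=\emptyset$ for large $n$. Only after distinctness and disjointness are established does one deduce ``opposite directions'' from the uniqueness of planes asymptotic to $P_{2,i}$ through a given direction. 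Your appeal to hyperbolicity pinning down a one-dimensional eigenspace does not substitute for this: it tells you there are only two possible directions, not that the two limits occupy different ones.

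A secondary concern: you invoke the SFT compactness theorem for a sequence of curves that are holomorphic for varying structures $\widetilde J_n$ converging to a $\widetilde J$ associated with a \emph{degenerate} contact form $\lambda$, which is outside the standard hypotheses of \cite{BEHWZ03}. The paper instead runs the bubbling-off analysis by hand (Hofer's Lemma, finiteness of $\Gamma$, Lemma \ref{lem_p21}), and crucially uses the explicit normalization \eqref{paramun} — in particular the two boundary points $u_n(1),u_n(z^*_n)\in\partial\U$ with ${\rm Re}(z^*_n)\le 0$ — to exclude the degenerate limit in which $\tilde v$ is a trivial cylinder over $P_{2,i}$ (the case $\#\Gamma=1$, vanishing $d\lambda$-area, in Lemma \ref{lem_gamma_vazio}). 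With only the $\R$-translation normalization $a^n_{i,k}(0)=0$, you are relying on the stability clause of a compactness theorem you have not verified applies; either justify SFT compactness in this degenerate, $J_n$-varying setting, or adopt a normalization that rules out total degeneration directly.
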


To prove Proposition \ref{prop_rigid_plane0}, we choose a sequence of non-degenerate contact forms $\lambda_n$ converging to $\lambda$ and satisfying conditions (a), (b) and (c)  at the beginning of section \ref{sec:3}, and a sequence of almost complex structures $J_n \in \mathcal{J}_{\rm reg}(\lambda_n)$ converging to $J\in \mathcal{J}^*_{\rm reg}(\lambda)$ in $C^{\infty}$ so that the almost complex structure $\widetilde J_n$ induced by $(\lambda_n, J_n)$ admits a finite energy foliation $\tilde{\mathcal{F}}_n$ of $\R \times S^3$ whose projection to $S^3$ is a genus zero transverse foliation $\mathcal{F}_n$ adapted to the flow. Moreover, the binding of $\mathcal{F}_n$ consists of the orbits $P_{2,1},\ldots,P_{2,l}\in \P_2(\lambda_n)$ and  $P_{3,1},\ldots,P_{3,l+1}\in \P_3^{-1,u,\leq C}(\lambda_n)$, see Proposition \ref{prop_weak_conv}.

For every large $n$, $P_{2,i}\in \P_2^{\leq C}(\lambda_n)$ is the boundary of a pair of rigid planes $U_{i,1}^n,U_{i,2}^n\in \F_n$, both transverse to the Reeb vector field $R_{\lambda_n}$, so that the $2$-spheres
$\mathcal{S}_i^n = U_{i,1}^n \cup P_{2,i} \cup U_{i,2}^n,  i=1,\ldots, l,$ are mutually disjoint and  do not intersect any $P_{3,j}^n, j=1,\ldots, l+1$. The open set $S^3 \setminus \cup_{i=1}^l \S_i^n$ contains $l+1$ components $\U_j^n$ such that
$P_{3,j}^n \subset \U_j^n, \forall j,n.$ For each $i=1,\ldots,l,$ 
there exists a pair of embedded $\widetilde J_n$-holomorphic planes 
$\tilde u^n_{i,k}=(a^n_{i,k},u^n_{i,k})\colon \C \to \R \times S^3, k=1,2, $ 
asymptotic to $P_{2,i}$ through opposite directions so that
$U_{i,k}^n=u^n_{i,k}(\C)$ for every large $n$.


The following proposition implies Proposition \ref{prop_rigid_plane0}.


\begin{prop}\label{prop_rigid_planes}For each $i=1,\ldots,l,$ the embedded $\widetilde J_n$-holomorphic rigid planes $\tilde u^n_{i,1}, \tilde u^n_{i,2}\colon \C \to \R \times S^3$ converge in $C^\infty_{\rm loc}$ as $n\to \infty$, up to reparametrizations and $\R$-translations, to embedded $\widetilde J$-holomorphic rigid planes $\tilde u_{i,1}=(a_{i,1},u_{i,1}),\tilde u_{i,2}=(a_{i,2},u_{i,2})\colon \C \to \R \times S^3$ asymptotic to $P_{2,i}$ through opposite directions. The $2$-sphere $\S_i=u_{i,1}(\C) \cup P_{2,i}\cup u_{2,i}(\C)$ is $C^1$-embedded. Moreover, $\mathcal{S}_i \cap \mathcal{S}_j = \emptyset, \forall i\neq j,$ and given  neighborhoods $\V_i \subset S^3$ of $
\S_i = u_{i,1}(\C) \cup P_{2,i} \cup u_{i,2}(\C),i=1,\ldots,l,$ we have 
$
\S_i^n \subset \V_i,  \forall i=1,\ldots,l,  \forall \mbox{large } n. 
$
\end{prop}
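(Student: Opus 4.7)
The plan is to apply SFT compactness to each sequence $\tilde u^n_{i,k}$ after normalizing domain reparametrizations and $\R$-translations, and then use the genericity condition in Lemma \ref{lem_genJset} together with the simplicity of $P_{2,i}$ to identify the limit with a single embedded $\widetilde J$-holomorphic plane. After fixing the freedom from the conformal automorphisms of $\C$ and the $\R$-action on the target (by prescribing three marked points on the domain and the height $a$ at one of them), I would invoke the SFT compactness theorem of Bourgeois--Eliashberg--Hofer--Wysocki--Zehnder to extract, along a subsequence, a connected holomorphic building $\mathbf{u}_{i,k}$ in $(\R\times S^3,\widetilde J)$ whose unique free positive puncture is at $P_{2,i}$ and whose total $d\lambda$-area equals the action $T_{2,i}$. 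Stokes' theorem on $\R\times S^3$ excludes non-constant closed holomorphic curves, so every component of the building is a punctured sphere and all ghost components can be contracted.

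The hard part will be to show that $\mathbf{u}_{i,k}$ collapses to a single embedded plane. Stokes' theorem also forces every asymptotic Reeb orbit appearing in $\mathbf{u}_{i,k}$ to have action bounded by $T_{2,i}$; by hypothesis II, such an orbit must be a (possibly multiply-covered) orbit in $\P_2(\lambda)$. Let $C$ be the component of the top level that carries the free positive puncture. Since $P_{2,i}$ is simple, $C$ cannot be a proper multiple cover of a somewhere injective curve, so $C$ itself is somewhere injective. Lemma \ref{lem_genJset} now applies to $C$: it has positive $d\lambda$-area and a unique positive puncture at an orbit in $\P_2(\lambda)$, so it has no negative puncture and is therefore a plane. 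Since such a plane has no ends or nodes attaching it to other components, and the building is connected, $\mathbf{u}_{i,k}=C$ is a single plane, which I denote by $\tilde u_{i,k}=(a_{i,k},u_{i,k})$. Embeddedness of $\tilde u_{i,k}$ follows from the lower semicontinuity of Siefring's self-intersection number for somewhere injective curves with simple asymptotic limits, together with the fact that each $\tilde u^n_{i,k}$ is embedded.

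The remaining assertions are continuity statements. The asymptotic operators $A^n_{P_{2,i}}$ attached to $\lambda_n$ converge to $A_{P_{2,i}}$, and since $P_{2,i}$ is hyperbolic, the leading negative eigenvalue and its one-dimensional eigenspace depend continuously on $n$. Because $\tilde u^n_{i,1}$ and $\tilde u^n_{i,2}$ approach $P_{2,i}$ through opposite eigendirections for every $n$, the same property passes to the limit. Theorem \ref{thm:asymptoticbehaviour}(iv) then yields matching tangent planes for $u_{i,1}$ and $u_{i,2}$ along $P_{2,i}$, so that $\S_i = u_{i,1}(\C)\cup P_{2,i}\cup u_{i,2}(\C)$ is a $C^1$-embedded $2$-sphere. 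The disjointness $\S_i\cap \S_j=\emptyset$ for $i\neq j$ follows from positivity of intersections: the projections are transverse to the Reeb vector field, so any intersection between $u_{i,k}(\C)$ and $u_{j,l}(\C)$ would be transverse in $S^3$ and would persist under $C^\infty_{\rm loc}$-convergence, contradicting $\S_i^n\cap \S_j^n=\emptyset$ for large $n$. Finally, $\S_i^n\subset \V_i$ for large $n$ follows from $C^0$-convergence on the compact part of $\S_i$ combined with the uniform asymptotic exponential decay near $P_{2,i}$ inherited from the hyperbolicity of $P_{2,i}$ for every $\lambda_n$.
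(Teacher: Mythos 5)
Your overall architecture is essentially the paper's in different packaging: where you invoke SFT compactness and then use Lemma \ref{lem_genJset} plus hypothesis II to kill negative punctures of the somewhere injective top-level component, the paper runs a hands-on bubbling-off analysis under the normalizations \eqref{paramun} and reaches the same conclusion ($\Gamma=\emptyset$) via the same two inputs. That part is fine, modulo a remark you should make explicitly: SFT compactness is applicable here despite $\lambda$ being degenerate only because every Reeb orbit with action $\le T_{2,i}$ is a cover of an orbit in $\P_2(\lambda)$ (hypothesis II) and hence hyperbolic.

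The genuine gap is the step ``asymptotic to $P_{2,i}$ through opposite directions,'' which you dispatch by asserting that the approach directions ``pass to the limit.'' This is not a consequence of $C^\infty_{\rm loc}$-convergence: the asymptotic eigenfunction is data at the puncture, and to conclude that the eigenfunctions of $\tilde u^n_{i,k}$ converge to that of $\tilde u_{i,k}$ you need uniform exponential decay estimates for the whole sequence on a fixed neighborhood of infinity, together with the fact that the limit plane still decays along the winding-one eigenvalue (which requires the computation $\wind_\infty(\tilde u_{i,k})=\wind_\pi+1=1$); none of this is in your argument. The point is not pedantic: if the directions do not stay opposite, the two sequences could converge to the \emph{same} plane, in which case $\S_i$ is not a sphere and the rest of the proposition collapses. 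The paper rules this out by a purely topological argument (Lemma \ref{lem_int_ui12}) that you do not have: if $u_{i,1}(\C)=u_{i,2}(\C)$, then for large $n$ the two hemispheres of $\S_i^n$ would be homotopic rel $P_{2,i}$ inside a small neighborhood $\V$ of the common image; but $\S_i^n$ separates $S^3$ into two components each containing an index-$3$ binding orbit $P^n_{3,j}$, so any such homotopy must cross some $P^n_{3,j}$, while $P^n_{3,j}\cap\V=\emptyset$ for large $n$ because the limits $P^\infty_{3,j}$ are unlinked with $P_{2,i}$. Once distinctness and disjointness of the images are known, ``opposite directions'' follows from the uniqueness of planes asymptotic to $P_{2,i}$ in each direction and the fact that two planes approaching through the same direction must intersect. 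You should either import this linking argument or supply the uniform asymptotics; as written, the claim is unsupported.
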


\begin{proof}
Fix $i\in\{1,\ldots,l\}$. For simplicity,  denote $\tilde u^n_{i,1}$ by $\tilde u_n=(a_n,u_n), \forall n.$ The case of $\tilde u^n_{i,2}$ is treated similarly. Let $\U \subset S^3$ be a small compact tubular neighborhoods 
$
\U \equiv \R / T_{2,i}\Z \times B_\delta(0)
$
of $P_{2,i},$ where    $ \delta>0$ is small and   $T_{2,i}>0$ denotes the action of $P_{2,i}$ and  $B_\delta(0)\subset \R^2$ is the closed ball of radius $\delta$ centered at the origin.  Since $P_{2,i}$ is hyperbolic, we can take $\U$ sufficiently small so that
\begin{itemize}
    \item $\U$ contains no  periodic orbits that are contractible in $\U$.
    \item if $P\subset \U$ is a periodic orbit that is homotopic to $P_{2,i}$ in $\U$, then $P=P_{2,i}$.
\end{itemize}

Choose a parametrization of $\tilde u_n$ so that 
\begin{equation} \label{paramun}
\begin{cases}
  u_n(\C \setminus \D)\subset  \U, \\
 u_n(1)\in \partial \U,\\
  u_n(z^*_n) \in \partial \U  \  \mbox{ for some}  \ z^*_n \in \partial \D \  \mbox{ satisfying } \  {\rm Re}(z^*_n) \leq 0,\\
  a_n(2) = 0. 
\end{cases}
\end{equation}
The existence of such a parametrization is guaranteed as follows. For a fixed parametrization of  $\tilde u_n$,  the closure $K$ of the set $u_n^{-1}(S^3 \setminus \U)$ is  compact with non-empty interior. Take the closed disk $D\subset \C$ containing $K$ which has the smallest radius among all closed disks containing $K$. Then there exist  $w_1\neq w_2\in K \cap \partial D$ so that $u_n(w_1),u_n(w_2) \in \partial \U$.  Reparametrizing $\tilde u_n$ under a map of the form $z\mapsto a z + b, \ a,b\in \C$ we may assume that $D=\D=\{z\in \C:|z|\leq 1\}$ and that $w_1=1\in K\subset \D$. If $K\cap \partial \D$ does not contain a point $w_2\in \partial \D$ with non-positive real part, then shifting $K$ slightly to the left, it is possible to find a disk containing $K$ of radius smaller than $1$. This contradicts the minimizing property of $D$, thus the existence of $z_n^*$ as in \eqref{paramun} follows. The last condition in \eqref{paramun} may be achieved by considering a suitable $\R$-translation  of $\tilde u_n$ for each $n$.

\begin{lem}\label{lem_p21} Let $\tilde u_n=(a_n,u_n)\colon\C \to \R \times S^3$ satisfy the normalizations in \eqref{paramun}. Assume that there exist a subsequence of $\tilde u_n$, still denoted by $\tilde u_n$, and a finite energy $\widetilde J$-holomorphic map $\tilde v=(a,v)\colon\C \setminus \D \to \R \times S^3$ so that $\tilde u_n|_{\C\setminus \D}$ converges in $C^\infty_{\rm loc}(\C \setminus \D)$ to $\tilde v$ as $n \to +\infty$. Then the following assertions hold:
\begin{itemize}
    \item[(i)] $\tilde v$ is non-constant;
    \item[(ii)] $\tilde v$ is asymptotic to $P_{2,i}$ at $\infty$.
\end{itemize}
\end{lem}

\begin{proof}
For every $R> 1$ the image of the loop 
$
t \mapsto \gamma_R(t) := v(Re^{it}), \forall t\in \R / 2\pi \Z,
$ is contained in $\U$ since it is the $C^\infty$-limit of the loops  
$
t\mapsto \gamma_R^n(t):=u_n(Re^{it}),  \forall t\in \R / 2\pi \Z,  \forall n,
$ 
which are contained in $\U$. Hence $\gamma_R$ is homotopic to $\gamma_R^n$ in $\U$ for $n$ sufficiently large. Since $u_n(\C \setminus \D) \subset \U$ for every $n$, and $\gamma_R^n$ converges  to $P_{2,i}$ in $\U$ as $R \to +\infty$, we conclude that $\gamma_R$ is homotopic to $P_{2,i}$ in $\U$ for every $R>1$. This implies, in particular, that $\gamma_R$ is non-contractible in $\U$ and thus non-constant. As a result, $\tilde v$ is non-constant. Moreover, any asymptotic limit $P\subset \U$ of $\tilde v$ at $\infty$ must be homotopic to $P_{2,i}$ in $\U$ since each $\gamma_R$ has this property for every $R>1$.
Thus our choice of $\U$ implies that the unique asymptotic limit of $\tilde v$ at $\infty$ is $P_{2,i}$.
\end{proof}


We aim at showing that under the normalizations in \eqref{paramun} a bubbling-off phenomenon cannot occur for the sequence $\tilde u_n$, 
i.e. there is no subsequence of $\tilde u_n$, still denoted by $\tilde u_n$, satisfying $|\nabla \tilde u_{n}(z_n)| \to +\infty$ as $n \to \infty$ for a sequence $z_n \in \C$. Here, $|\nabla \tilde u_{n}(z_n)|$ is induced by the inner product on $\R\times S^3$ associated with the pair $(\lambda_n,J_n)$. In the absence of bubbling-off, the sequence $\tilde u_n$ has gradient bounds which, in this setup and under the normalizations in \eqref{paramun}, imply $C^\infty_{\rm loc}$-bounds for $\tilde u_n$ from an elliptic bootstrapping argument, see \cite{Hofer93}. As a result we will be able to conclude that, up to extraction of a subsequence, $\tilde u_n$ converges in $C^\infty_{\rm loc}$ to a $\widetilde J$-holomorphic plane $\tilde u_{i,1}\colon\C \to \R \times S^3$ asymptotic to $P_{2,i}$ as $n\to \infty$.  

An important tool in the bubbling-off analysis is the topological result known as  Hofer's Lemma, see \cite[Lemma 26]{Hofer93}. More specifically, assume $\tilde u_n$ admits a subsequence, still denoted by $\tilde u_n$, such that $|\nabla \tilde u_n(z_n)| \to +\infty$ as $n \to \infty$ for a sequence $z_n \in \C$. Hofer's Lemma allows us to perturb $z_n$ (the new points are still denoted by $z_n$) and find a sequence of positive numbers $\delta_n \to 0$, satisfying  $r_n:=\delta_n|\nabla \tilde u_n(z_n)| \to +\infty$, and so that an appropriate rescale $\tilde v_n \colon B_{r_n}(0) \to \R \times S^3$ of $\tilde u_n|_{B_{\delta_n}(z_n)}$  has $C^0_{\rm loc}$-  and $C^1_{\rm loc}$-bounds and satisfies $|\nabla \tilde v_n(0)|=1 $. To be precise, $\tilde v_n$ is defined by
$$\tilde v_n(z)=\left(a_n\left(z_n + \frac{\delta_n}{r_n} z\right)-a_n(z_n),u_n\left(z_n + \frac{\delta_n}{r_n} z\right)\right), \ \ \forall z\in B_{r_n}(0).$$
From an elliptic bootstrapping argument, we obtain $C_{\rm loc}^\infty$-bounds and then, up to extraction of a subsequence, $\tilde v_n$ converges in $C^{\infty}_{\rm loc}$  to $\tilde v \colon \C \to \R\times S^3,$  where $\tilde v$ is non-constant and has bounded energy by Fatou's Lemma.

If $|z_n| \to +\infty$ or $z_n$ converges to a point in $\C \setminus \D$, then 
in view of the normalizations in \eqref{paramun}, the image $v(\C)$ is contained in $\U$ and thus any of its non-trivial asymptotic limits  is a contractible periodic orbit in $\U$, a contradiction to the choice of $\U$. With this contradiction we conclude that the sequence $z_n$ must be bounded and, up to a subsequence, converges to some point $z_*\in\D$. Such a point is called a bubbling-off point for $\tilde u_n$.

Each bubbling-off point in $\D$ takes away at least $\gamma_0>0$ of the $d\lambda_n$-area of $\tilde u_n$. Here, $\gamma_0 >0 $ is any positive number smaller than the period of the shortest periodic orbit of $\lambda,$  which exists because of  the  assumptions on $\lambda.$ 
Hence, after passing to a subsequence, we may assume that the set of bubbling-off points $\Gamma \subset \D$ is finite. In particular,  $|\nabla \tilde u_n|$ is locally bounded on $\C \setminus \Gamma$. 

The normalizations in \eqref{paramun} provide $C^0_{\rm loc} $-bounds for $\tilde u_n$ in $\C \setminus \Gamma$. Hence, up to extraction of a subsequence, $\tilde u_n$ converges in $C^\infty_{\rm loc}(\C \setminus \Gamma)$ to a $\widetilde J$-holomorphic curve $
\tilde v=(b,v)\colon\C \setminus \Gamma \to \R \times S^3.
$
By Lemma \ref{lem_p21}, $\tilde v$ is asymptotic to $P_{2,i}$ at $\infty$.  Since $P_{2,i}$ is simple, it is somewhere injective. 

Let $z^*\in \Gamma.$ We claim that 
\begin{equation}\label{dlambda}\int_{\partial B_\varepsilon(z^*)} v^* \lambda > \gamma_0, \ \ \ \ \forall \varepsilon>0 \mbox{ small.} 
\end{equation}
Here, $B_\varepsilon(z^*)$ is the ball centered at $z^*$ of radius $\varepsilon>0$ and $\partial B_\varepsilon(z^*)$ has the counterclockwise orientation. 
To prove \eqref{dlambda}, recall that $\tilde u_n$ can be appropriately reparametrized in a small neighborhood of $z_n \to z^*$ so that it converges in $C^\infty_{\rm loc}$  to a non-constant finite energy plane with $d\lambda$-area $>\gamma_0$. These neighborhoods of $z_n$ are strictly contained in $B_\varepsilon(z^*)$ for every $n$ sufficiently large. Stokes' theorem then gives the desired estimate \eqref{dlambda}. The positivity of the integral in \eqref{dlambda}  implies that every puncture in $\Gamma$ is negative and therefore $\infty$ is the only positive puncture of $\tilde v$.

\begin{lem}\label{lem_gamma_vazio}
 $\Gamma = \emptyset$.
 \end{lem}
 
 \begin{proof}  
The first step  is to show that the asymptotic limit of $\tilde v$ at each  $z^* \in \Gamma$  is a cover of an orbit in $\P_2(\lambda)$. Indeed, the hypothesis II in Theorem \ref{main1} implies that if there exists an asymptotic limit $P=(x,T)$ at $z^*$ which is not a cover of an orbit in $\P_2(\lambda)$, then its period $T$ is greater than $T_{2,i}$. In particular,
$
\int_{\C \setminus \Gamma} v^* d\lambda <T_{2,i} - T<0,
$ a contradiction.

We conclude that $\tilde v$ is asymptotic to covers of orbits in $\P_2(\lambda)$ at its negative punctures.
Suppose that the $d\lambda$-area of $\tilde v$ vanishes. Then $\tilde v$ is a trivial cylinder over $P_{2,i}$. In particular, $\#\Gamma = 1$. If $\Gamma \neq \{1\} $, then $v(1) \in \partial \mathcal{U}$ since $u_n(1) \in \partial \mathcal{U}$ for every $n$. This is a contradiction. If $\Gamma = \{1\}$,  we know from our normalizations in \eqref{paramun} that $u_n(z_n^*)\in \partial \U$, and we can assume that $z_n^*\to z^*_\infty\in \partial \D,$ where ${\rm Re}(z^*_\infty) \leq 0,$ and hence  $z^*_\infty  \neq 1$ and $v(z^*_\infty) \in \partial \U,$ again a contradiction. 
It follows that the $d\lambda$-area of $\tilde v$ is positive. Since $J\in \J^*_{\rm reg}(\lambda)$, we conclude that $\Gamma =\emptyset$, see Lemma \ref{lem_genJset}. 
\end{proof}

We have proved that, under the normalizations  \eqref{paramun}, we can extract a subsequence of  $\tilde u_n$, still denoted  by  $\tilde u_n$, so that it converges in $C^\infty_{\rm loc}$ to a  finite energy $\widetilde J$-holomorphic plane 
$\tilde v\colon \C \to \R \times S^3$ asymptotic to $P_{2,i}$ at $\infty$. We denote this plane by
$
\tilde u_{i,1}=(a_{i,1},u_{i,1})\colon \C \to \R \times S^3.$
Note that it is embedded. Indeed, since $u_{i,1}$ does not intersect its asymptotic limit, an application of Siefring's result \cite[Theorem 5.20]{Si2}, see also  \cite[Theorem 14.5.5]{FvKbook}, shows that $\tilde u_{i,1}$ (and also $u_{i,1}$) is embedded.


Now the analysis of holomorphic cylinders with small area (see \cite[Lemma 4.9]{fols}) shows that given any $S^1$-invariant neighborhood $\W$ of $P_{2,i}$, there exist  $R_0>0$ and $n_0 \in \N$ such that the loop $t \mapsto \tilde u_n(Re^{2\pi i t/T_{2,i}})$ belongs to $\W$ for every $R>R_0$ and $n>n_0$. This implies that $u_n(\C)$ is arbitrarily $C^0$-close to $u_{i,1}(\C)\cup P_{2,i}$. For a proof of these statements in the same setting, see \cite[Lemma 7.5]{dPS1}.

Considering the sequence of   $\widetilde J_n$-holomorphic planes $\tilde u^n_{i,2}$, we proceed as before to obtain an embedded finite energy  $\widetilde J$-holomorphic plane 
$\tilde u_{i,2}=(a_{i,2},u_{i,2}) \colon \C \to \R \times S^3,$
asymptotic to $P_{2,i}$ as the $C^\infty_{\rm loc}$-limit of $\tilde u^n_{i,2}$.  Moreover,  $u^n_{i,2}(\C)$ is arbitrarily $C^0$-close to $u_{i,2}(\C) \cup P_{2,i}$ for $n$ large. 

From the uniqueness of $\widetilde J$-holomorphic planes asymptotic to $P_{2,i}$ through each direction,  see \cite[Proposition C.3]{dPS1}, we know that there exist at most two $\widetilde J$-holomorphic planes asymptotic to $P_{2,i}$, up to reparametrizations and $\R$-translations. In this case, they are asymptotic to $P_{2,i}$ through opposite directions, as shown below.

\begin{lem}\label{lem_int_ui12} 
$u_{i,m}(\C) \cap u_{j,n}(\C) = \emptyset, \forall (i,m)\neq (j,n)$.

\end{lem}
\begin{proof}
The transverse foliation $\F_n$ admits $l+1$ binding orbits $P_{3,j}^n,j=1,\ldots,l+1.$ 
By Lemma \ref{lem_convergeP3} these orbits  converge, up to a subsequence, to  mutually unlinked periodic orbits  $P_{3,1}^\infty,\ldots,P^\infty_{3,l+1} \in \P_3^{u,-1,\leq C}(\lambda),$ which are not linked with any index-2 Reeb orbits.

We first show that the planes $u_{i,1}(\C)$ and $u_{i,2}(\C)$ cannot be the same. Assume by contradiction that this is the case. Since $u^n_{i,1}(\C)$ and $u^n_{i,2}(\C)$ are $C^0$-close to $u_{i,1}(\C)\cup P_{2,i}=u_{i,2}(\C)\cup P_{2,i}$, we conclude that $u_{i,1}^n$ is homotopic to $u_{i,2}^n$, relative to $P_{2,i}$,  in a small neighborhood $\V\subset S^3$ of $u_{i,1}(\C)\cup P_{2,i}$. Since  the $2$-sphere $\S_i^n=u^n_{i,1}(\C) \cup P_{2,1} \cup u^n_{i,2}(\C)$ separates $S^3$ into two components and in each component there exists some index-3 binding orbit of $\F_n$, any homotopy from $u^n_{i,1}$ to $u^n_{i,2}$ necessarily intersects some $P_{3,j}^n$. Since the limit $P^\infty_{3,j}$ is not linked with $P_{2,i},\forall i,$ we know that for $n$ sufficiently large $P^n_{3,j} \cap \V = \emptyset$. 

We now assume by contradiction that $u_{i,1}(\C) \cap u_{i,2}(\C) \neq \emptyset,$ and hence $\tilde{u}_{i,1}(\C) \cap \tilde{u}_{i,2}(\C) \neq \emptyset.$ Then the positivity and stability of intersections of pseudo-holomorphic curves (see \cite[Appendix E]{MSbook}) tell us that $\tilde{u}_{i,1}^n(\C) \cap \tilde{u}_{i,2}^n(\C) \neq \emptyset$ for all $n$ large enough, a contradiction.
This finishes the proof.  \end{proof}

We conclude from Lemma \ref{lem_int_ui12} and the uniqueness of planes asymptotic to $P_{2,i}$, through each direction, that $\tilde u_{i,1}$ and  $\tilde u_{i,2}$ are asymptotic to $P_{2,i}$ through opposite directions.

\begin{lem}\label{lem_C1embedded}
The $2$-sphere $\mathcal{S}_i=u_{i,1}(\C) \cup P_{2,i} \cup u_{i,2}(\C)$ is $C^1$-embedded.
\end{lem}

\begin{proof}
Since $\cz(P_{2,i})=2$, the leading eigenvalues  of $A_{P_{2,i},J}$, which describe the behavior of  $\tilde u_{i,1}$ and $\tilde u_{i,2}$ near $\infty$, coincide with the unique eigenvalue $\mu<0$ with winding number $1$.  This means that we can write
$$
u_{i,j}(e^{2\pi(s+it)}) = \exp_{x_{2,i}(T_{2,i}t)}\{e^{\mu s}(e_j(t) + R_j(s,t))\}, \ \ \ \ s\gg 0, \ \ \ j=1,2,
$$
where $P_{2,i}=(x_{2,i},T_{2,i})$ and $e_j\colon \R / \Z \to x_{2,i}^*\xi,j=1,2,$ is a $\mu$-eigensection  with winding number $1$. The remainder term $R_j$ and its derivatives converge to $0$, uniformly in $t$ as $s\to +\infty$. See Theorem \ref{thm:asymptoticbehaviour}.

Since the $\mu$-eigenspace is one-dimensional, we have $e_2=c e_1$ for some $c\neq 0$. If $c>0$, then $u_{i,1}(\C) \cap u_{i,2}(\C) \neq \emptyset,$ see \cite[Proposition C.0]{dPS1}. This contradicts Lemma \ref{lem_int_ui12}, and we conclude that $c<0$. 

Defining 
$
r=e^{\mu s}  \Leftrightarrow  s = \frac{1}{\mu}\ln r,
$ we see that the maps
$$
v_j(r,t):= u_{i,j}(e^{2\pi(s+it)}) = {\rm exp}_{x_{2,i}( T_{2,i}t)}\{r(e_j(t) +\bar R_j(r,t))\},  \ \  \forall (r,t), \ \ j=1,2,
$$
extend continuously to $[0,\varepsilon]\times \R / \Z$ with $\varepsilon>0$ small. Since 
$
\lim_{r\to 0}  \bar R_j(r,t) =0   
$ uniformly in $t,$
we conclude that $v_j$ is at least $C^1$. Moreover, the tangent space of $v_j$ along $P_{2,i}$ coincides with $\R e_j \oplus TP_{2,i}$. Now, since $\wind_\pi(\tilde u)=\wind_\infty(\tilde u;\infty)-1=0,$ we conclude that $u_{i,j}$ is an immersion transverse to $R_\lambda$, $j=1,2.$ Hence $u_{i,1}(\C) \cup P_{2,i} \cup u_{i,2}(\C)$ is a $C^1$-embedded $2$-sphere. 
\end{proof}

Given a neighborhood $\V \subset S^3$ of $\S_i$, we have $\S_i^n \subset \V $ for every large $ n.$
Since the same argument holds for every $i=1,\ldots,l,$ the proof of Proposition \ref{prop_rigid_planes} is finished. \end{proof}

\subsection{Special index-$3$ orbits}
Although Lemma \ref{lem_convergeP3} provides candidates
$
P_{3,1}^\infty,\ldots,$ $P_{3,l+1}^\infty$ 
 in $ \P_3^{u,-1,\leq C}(\lambda)$ 
for the index $3$ binding orbits of the desired foliation, we cannot guarantee that every periodic orbit $U$, which is not a cover of an orbit in $\P_2(\lambda)$ nor a cover of any $P_{3,j}^\infty,$  is linked with some $P_{3,i}^\infty$. Such an orbit $U$ might a priori exist.  We shall  rule out this unpleasant scenario by  appropriately choosing new sequences $\lambda_n\to \lambda$ so that the corresponding limiting orbits $P_{3,1}^\infty,\ldots,P_{3,l+1}^\infty$ do not admit such an unlinked orbit $U$.

\begin{prop}\label{prop_specialP3}
Let $C>0$ be as in Proposition \ref{prop_unif_C}, and let $J\in \J^*_{\rm reg}(\lambda)$. Then there exist  a sequence of non-degenerate contact forms $\lambda_n=f_n \lambda$ converging in $C^\infty$ to $\lambda$  and a sequence of  almost  complex structures $J_n \in \J_{\rm reg}(\lambda_n)$ converging  in  $C^\infty$ to $J$  so that the following holds.
\begin{itemize}
\item[(i)] $P_{2,1},\ldots,P_{2,l}\in \P_2(\lambda_n), \forall n$, and the almost complex structure $\widetilde J_n$ on $\R \times S^3$ induced by $(\lambda_n,J_n)$ admits a stable finite energy foliation $\tilde{\mathcal{F}}_n$ that projects to a genus zero transverse  foliation $\F_n$, whose binding orbits are $P_{2,1},\ldots, P_{2,l}$ and  $P^n_{3,1},\ldots,P^n_{3,l+1}\in \P_3^{u,-1,\leq C}(\lambda_n).$ 
\item[(ii)] there exist $l+1$ periodic orbits 
$
\P_3^s(\lambda):=\{P_{3,1},\ldots,P_{3,l+1}\}\subset \P_3^{u,-1,\leq C}(\lambda),
$ 
so that for every $j$,
$P^n_{3,j} \to P_{3,j} \mbox{ as }n\to +\infty.$ 
Moreover, 
\begin{itemize}
    \item $\link(P_{3,i},P_{3,j})=0,\forall i\neq j.$ 
    \item $\link(P_{3,i},P_{2,j})=0, \forall i,j.$ 
    \item  every $P\in \P^{\leq C}(\lambda)$, which is not a cover of any orbit in $P_2(\lambda) \cup \P_3^s(\lambda)$,  is linked with some orbit in $\P_3^s(\lambda)$.
    \end{itemize}
   
\item[(iii)] let $\tilde u_{i,1}=(a_{i,1},u_{i,1}),\tilde u_{i,2}=(a_{i,2},u_{i,2})\colon \C \to \R \times S^3,  i=1,\ldots,l,$ be the unique $\widetilde J$-holomorphic planes asymptotic to $P_{2,i}$ which are $C^\infty_{\rm loc}$-limits of planes $\tilde u^n_{i,1},\tilde u^n_{i,2}$ in $\tilde{\F}_n$, asymptotic to $P_{2,i}$ through opposite directions, and whose existence is assured by  Proposition \ref{prop_rigid_planes}. Denote by $\U_j\subset S^3,j=1,\ldots,l+1,$ the  components of $S^3 \setminus \cup_{i=1}^l \S_i,
    $
    where
    $\S_i = u_{i,1}(\C) \cup P_{2,i} \cup u_{i,2}(\C).$
    Then
    $
    P_{3,j} \subset \U_j,  \forall j=1,\ldots,l+1.
    $
\end{itemize}
\end{prop}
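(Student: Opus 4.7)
The plan is an iterative freezing construction. Initialize by choosing sequences $\lambda_n \to \lambda$ and $J_n \to J \in \J^*_{\rm reg}(\lambda)$ as in Proposition \ref{prop_weak_conv}, producing foliations $\F_n$ with binding $\{P_{2,i}\} \cup \{P_{3,j}^n\}$. By Lemma \ref{lem_convergeP3} one may pass to a subsequence so that $P_{3,j}^n \to P_{3,j}^{(0)} \in \P_3^{u,-1,\leq C}(\lambda)$, with the $P_{3,j}^{(0)}$ pairwise unlinked and unlinked with every $P_{2,i}$. If every orbit $P \in \P^{\leq C}(\lambda)$ not a cover of any $P_{2,i}$ or $P_{3,j}^{(0)}$ is linked with at least one $P_{3,j}^{(0)}$, set $P_{3,j} := P_{3,j}^{(0)}$ and pass to (iii). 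Otherwise pick a ``bad'' orbit $U_1 \in \P^{\leq C}(\lambda)$, geometrically distinct from all named orbits, with $\link(U_1, P_{3,j}^{(0)}) = 0$ for every $j$.

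At the induction step $k+1$, having constructed limits $\{P_{3,j}^{(m)}\}_{m \leq k}$ and bad orbits $U_1, \ldots, U_k$, pick new sequences $\lambda_n^{(k+1)} = f_n^{(k+1)} \lambda \to \lambda$ of non-degenerate contact forms and $J_n^{(k+1)} \to J$ such that every orbit in the finite frozen set $\{P_{2,i}\} \cup \bigcup_{m \leq k}\{P_{3,j}^{(m)}\} \cup \{U_m\}_{m \leq k}$ is a Reeb orbit of each $\lambda_n^{(k+1)}$. This is achieved by insisting $f_n^{(k+1)} \equiv 1$ and $df_n^{(k+1)} \equiv 0$ on these orbits, following \cite[Lemma 6.8]{convex}. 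Apply Proposition \ref{prop_weak_conv} to obtain foliations $\F_n^{(k+1)}$ and Lemma \ref{lem_convergeP3} to extract new limits $\{P_{3,j}^{(k+1)}\} \subset \P_3^{u,-1,\leq C}(\lambda)$; select a new bad orbit $U_{k+1}$ if one exists.

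The main obstacle is to show that this iteration terminates after finitely many steps. The mechanism is that at each step the new limits $\{P_{3,j}^{(k+1)}\}$ must genuinely resolve $U_{k+1}$: since $U_{k+1}$ is a Reeb orbit of every $\lambda_n^{(k+1)}$ with action at most $C$ and is not itself a binding orbit of $\F_n^{(k+1)}$ (being geometrically distinct from every frozen orbit and from the hyperbolic $P_{2,i}$), it lies in some component $\U_j^{n,(k+1)}$ of the complement of the rigid $2$-spheres and must cross the leaves transversally; this transverse crossing count, evaluated against a one-parameter family of planes or cylinders asymptotic to $P_{3,j}^{n,(k+1)}$, forces $\link(U_{k+1}, P_{3,j}^{n,(k+1)}) \neq 0$, and passing to the limit yields $\link(U_{k+1}, P_{3,j}^{(k+1)}) \neq 0$. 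Termination is then argued by contradiction: an infinite sequence of distinct bad orbits $U_m$ with this monotonicity, combined with Arzel\`a--Ascoli compactness in $\P^{\leq C}(\lambda)$, integrality of linking numbers, and the constraint that all frozen index-$3$ orbits remain pairwise unlinked and unlinked with $\{P_{2,i}\}$ at every stage, exhausts the admissible pool of candidate limits in $\P_3^{u,-1,\leq C}(\lambda)$ after finitely many iterations.

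Finally, claim (iii) follows from Proposition \ref{prop_rigid_planes}: the $C^1$-spheres $\S_i^n$ are $C^0$-close to $\S_i$, and since each $P_{3,j}^n \subset \U_j^n$ has $C^\infty$-limit $P_{3,j}$ unlinked with every $P_{2,i}$, the limit cannot cross any $\S_i$ and hence lies in the corresponding component $\U_j$.
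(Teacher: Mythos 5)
Your overall strategy is the paper's: initialize with Proposition \ref{prop_weak_conv} and Lemma \ref{lem_convergeP3}, and whenever an unlinked ``bad'' orbit $U_k$ appears, restart with a new sequence $\lambda_n \to \lambda$ that freezes all previously found index-$3$ limits and all previous $U_m$'s, so that each new generation of limiting index-$3$ orbits is forced to link with the frozen bad orbits. The resolution mechanism you describe (a frozen $U_m$ is a Reeb orbit of the perturbed non-degenerate form, is not a binding orbit, hence must link with some index-$3$ binding orbit of the new foliation, and this linking survives the limit $n\to\infty$) is exactly the paper's, modulo an indexing slip in your middle paragraph where you speak of resolving $U_{k+1}$ rather than $U_m$, $m\le k$. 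Claim (iii) is also handled as in the paper (Lemma \ref{lem_P3infty}).

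The genuine gap is the termination argument, which is the actual content of the proposition. You assert that Arzel\`a--Ascoli, integrality of linking numbers, and the unlinking constraints ``exhaust the admissible pool of candidate limits in $\P_3^{u,-1,\leq C}(\lambda)$ after finitely many iterations.'' But $\lambda$ is degenerate, so $\P_3^{u,-1,\leq C}(\lambda)$ need not be finite --- there is no finite pool to exhaust, and nothing in your sketch prevents the iteration from producing infinitely many pairwise distinct limits $P_{3,j}^{(k)}$ and infinitely many distinct bad orbits $U_k$. The paper's argument is different and essential: assuming the process never terminates, one extracts convergent subsequences of \emph{both} families, $P_{3,j}^{\infty,k}\to Q_j^\infty$ and $U_k\to U_\infty$ in $\P^{\leq C}(\lambda)$, and then runs a trichotomy on $U_\infty$. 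If $U_\infty$ is a cover of some $Q_j^\infty$, or is linked with some $Q_j^\infty$, then (using $\cz(Q_j^\infty)=3$ and \cite[Lemma 5.2]{convex}, resp.\ stability of linking under $C^\infty$-convergence) $U_k$ would be linked with $P_{3,j}^{\infty,k}$ for large $k$, contradicting the choice of $U_k$. If $U_\infty$ is neither, then for large $k$ the orbit $U_k$ is unlinked with every $P_{3,j}^{\infty,p}$ for all $p>k$, contradicting the construction, which guarantees that every later generation links with $U_k$. Without this double-limit case analysis your proof does not close; you should replace the ``exhaustion'' claim with it.
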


\begin{proof}
Take a sequence of non-degenerate contact forms $\lambda_n=f_n\lambda ,n\in \N,$ converging in  $C^\infty$ to $\lambda$ and a sequence $J_n\in \J_{\rm reg}(\lambda_n)$ converging in $C^\infty$ to $J\in \J_{\rm reg}^*(\lambda)$  as in the previous section. We assume that every $\lambda_n$ satisfies conditions (a), (b), and (c) from the beginning of section \ref{sec:3}.   By  Lemma \ref{lem_convergeP3}  every orbit in $\P_3^{u,-1,\leq C}(\lambda_n)$ is not linked with any orbit in $\P_2^{\leq C} (\lambda_n)$ for every large $n$. 
We conclude  in view of  Proposition \ref{prop_weak_conv}  that the almost complex structure $\widetilde J_n$ induced by $(\lambda_n,J_n)$ admits a stable finite energy foliation which projects to a weakly convex   foliation $\F_n$ so that the orbits $P_{2,1},\ldots,P_{2,l}\in \P_2^{\leq C}(\lambda_n)$ are  binding orbits of $\mathcal{F}_n$. Moreover,  the remaining binding orbits of $\mathcal{F}_n$ are  periodic orbits in the set 
$$
\P_3^{n,0}(\lambda_n) := \{  P^{n,0}_{3,1},\ldots, P^{n,0}_{3,l+1}\} \subset \P_3^{u,-1,\leq C}(\lambda_n).
$$  They satisfy the additional properties:
\begin{itemize}
   
    \item  $\link(P^{n,0}_{3,i},P^{n,0}_{3,j})=0,\forall i\neq j.$ 
   
    \item $\link(P^{n,0}_{3,i},P_{2,j})=0, \forall i,j.$  
   
    \item given $P\in \P(\lambda_n)$, which is not a cover of any orbit in $\P_2(\lambda_n)\cup \P_3^{n,0}(\lambda_n),$ there exists $j\in\{1,\ldots,l+1\}$ so that $\link(P,P_{3,j}^{n,0}) \neq 0.$
\end{itemize}

We claim that as $n\to +\infty$ the orbits $P_{3,1}^{n,0}, \ldots, P_{3,l+1}^{n,0}$ converge, up to a subsequence, to mutually distinct and mutually unlinked periodic orbits $P_{3,1}^{\infty,0},\ldots,P^{\infty,0}_{3,l+1}$   in $\P_3^{u,-1,\leq C}(\lambda).$ 
First of all, the upper bound $C$ on the actions of $P_{3,1}^{n,0},\ldots,P_{3,l+1}^{n,0}$ and the Arzel\`a-Ascoli Theorem imply that the orbits $P^{n,0}_{3,j},j=1,\ldots, l+1,$ converge, up to a subsequence, to elements   $P_{3,j}^{\infty,0} \in \P^{\leq C}(\lambda),j=1,\ldots, l +1$. Their Conley-Zehnder indices are $\leq 3$ by the lower semi-continuity.    Since $\lambda$ is weakly convex and the orbits in $\P_{2}^{u,-1,\leq C}(\lambda_n)$ are the only orbits converging to the corresponding orbits in $\P_2(\lambda)$, we have $\cz(P_{3,j}^{\infty,0})=3, \forall j.$  In particular, $P^{\infty,0}_{3,j}$ is simply covered for every $j$. As limits of the Reeb orbits $P^{n,0}_{3,j}$ as $n\to +\infty,$ we conclude that $P^{\infty,0}_{3,j}$ is unknotted and has self-linking number $-1$. If $P_{3,j}^{\infty,0} = P_{3,k}^{\infty,0}$ for some $j\neq k$, then, since $\cz(P_{3,j}^{\infty,0})=3$, we conclude from \cite[Lemma 5.2]{convex} that, for every large $n$,  $\link(P^{n,0}_{3,j},P^{n,0}_{3,k})>0$, a contradiction. Hence the orbits $P^{\infty,0}_{3,1},\ldots,P^{\infty,0}_{3,l+1}$ are mutually distinct  and  mutually unlinked.  The claim is then proved.

Next we show that   the orbits $P_{3,j}^{\infty,0},j=1,\ldots, l+1,$ lie in distinct components $\U_{j}$ of $S^3 \setminus \cup_{i=1}^l \mathcal{S}_i,$ where $\mathcal{S}_i=u_{i,1}(\C) \cup P_{2,i} \cup u_{i,2}(\C)$ is the $C^1$-embedded $2$-sphere such that for every $i$, the maps $u_{i,1}, u_{i,2}$   are the projections to $S^3$ of the $\widetilde J$-holomorphic planes  $\tilde u_{i,1}=(a_{i,1},u_{i,1}),\tilde u_{i,2}=(a_{i,2},u_{i,2})\colon \C \to \R \times S^3,$   asymptotic to $P_{2,i}$ through opposite directions, and obtained as $C^\infty_{\rm loc}$-limits of the corresponding $\widetilde J_n$-holomorphic planes of $\tilde \F_n$, see Proposition \ref{prop_rigid_planes}. 

\begin{lem}\label{lem_P3infty}  After reordering $\U_j$, if necessary, we have
$
P_{3,j}^{\infty,0} \subset \U_j,  \ \forall j=1,\ldots,l+1.
$
\end{lem}

\begin{proof}
For every large $n$ and for every $i$, there exist   $\widetilde J_n$-holomorphic planes $\tilde u_{i,1}^n=(a_{i,1}^n,u_{i,1}^n),\tilde u_{i,2}^n=(a_{i,2}^n,u_{i,2}^n) \colon \C \to \R \times S^3$ which are  asymptotic to $P_{2,i}$ through opposite directions, and which converge in $C^\infty_{\rm loc}$, up to a subsequence, to $\widetilde J$-holomorphic planes  $\tilde u_{i,1}=(a_{i,1},u_{i,1}),\tilde u_{i,2}=(a_{i,2},u_{i,2}) \colon \C \to \R \times S^3,$ also asymptotic to $P_{2,i}$ through opposite directions.

Let 
$
\S_i^n = u_{i,1}^n(\C) \cup P_{2,i} \cup u_{i,2}^n(\C),   \forall i=1,\ldots,l.
$
For every $j=1,\ldots,l+1,$ denote by $\U_j^n\subset S^3$ the component of
$
S^3 \setminus \cup_{i=1}^l \S_i^n,
$
which contains $P_{3,j}^{n,0}.$ By Proposition \ref{prop_rigid_planes}, given small neighborhoods $\V_i$ of $\S_i,i=1,\ldots,l$, there exists $n_0$ so that $\S_i^n\subset \V_i$ for every $n>n_0$. Since $P_{3,j}^{\infty,0}$ is not linked with any $P_{2,i}$, the orbits $P_{3,1}^{n,0},\ldots P_{3,l+1}^{n,0}$ are contained in distinct components $\U_{1}^n,\ldots,\U_{l+1}^n$ for every large $n$. In particular, after relabelling the components $\U_1,\ldots,\U_{l+1}$, if necessary, these orbits are contained in distinct components $\U_{1},\ldots,\U_{l+1}$ for every large $n$.  Hence 
$P_{3,j}^{\infty,0} \subset \U_j, \forall j.
$
The lemma follows.
\end{proof}

Abbreviate
$
\P_3^{\infty,0}(\lambda) = \{P_{3,1}^{\infty,0},\ldots, P_{3,l+1}^{\infty,0} \}\subset \P_3^{u,-1,\leq C}(\lambda). 
$
From Lemma \ref{lem_P3infty} we know that $P_{3,j}^{\infty,0} \subset \U_j,  \forall j=1,\ldots,l+1.$
If every orbit in $\P^{\leq C}(\lambda)$, which is not a cover of an orbit in $\P_2(\lambda)\cup \P_3^{\infty,0}(\lambda)$, is linked with some orbit in $\P^{\infty,0}_3(\lambda)$, then  $\P_3^{\infty,0}(\lambda)$ is the desired set of periodic orbits and there is nothing else to be proved. 
Otherwise, if there exists a simple periodic orbit $U_0\in \P^{\leq C}(\lambda)$ which is not a cover of an orbit in $\P_2(\lambda)\cup \P_3^{\infty,0}(\lambda)$   and is not linked with any orbit in $\P^{\infty,0}_3(\lambda)$,
then we proceed as follows. Consider a new sequence of non-degenerate contact forms $\lambda_n$ converging in $C^{\infty}$ to $\lambda$ as $n \to +\infty$,
so that  every orbit in $\P_2(\lambda)$ and every orbit in
$$
\mathcal{L}_0:= \P_3^{\infty,0}(\lambda)\cup \{U_0\},
$$  is a periodic orbit of $\lambda_n, \forall n$. As before, for a suitable sequence $J_n \in \J_{\rm reg}(\lambda_n)$ converging to $J$, we obtain a sequence of weakly convex   foliations $\F_n$ whose binding orbits are the orbits in $\P_2(\lambda)$ together with other mutually unlinked  orbits $P^{n,1}_{3,1},\ldots,P^{n,1}_{3,l+1} \in \P_3^{u,-1,\leq C}(\lambda_n)$. 

Taking the limit $n\to +\infty$, we obtain a new set of periodic orbits 
$$
\P_3^{\infty,1}(\lambda)=\{P_{3,1}^{\infty,1},\ldots,P_{3,l+1}^{\infty,1} \}\subset \P_3^{u,-1,\leq C}(\lambda),
$$ 
so that each $P_{3,j}^{\infty,1}$ is the $C^\infty$-limit of $P_{3,j}^{n,1}$ as $n\to +\infty$. Arguing as before, we conclude that these orbits are mutually distinct and  mutually  unlinked and satisfy
$
P_{3,j}^{\infty,1} \subset \U_j, \forall j.
$ Some of them may coincide with the corresponding orbits in $\P_3^{\infty,0}(\lambda)$. 

We claim that no orbit in $\P_3^{\infty,1}(\lambda)$ coincides with $U_0$. Indeed, observe that each $P_{3,j}^{n,1}$ is contained in $\U_j$ for all large $n$ and either  coincides with $P_{3,j}^{\infty,0}$ or is linked with $P_{3,j}^{\infty,0}$.  Hence  $P_{3,j}^{\infty,1}$ either coincides with $P_{3,j}^{\infty,0}$ or is linked with $P_{3,j}^{\infty,0}$.  In particular, since  $U_0$ is linked with some $P^{n,1}_{3,j}$ for every large $n$, we conclude that $U_0$ is linked with some $P_{3,j}^{\infty,1}$.  The claim is proved.

Now if every orbit in $\P^{\leq C}(\lambda)$, which is not a cover of an orbit in $\P_2(\lambda)\cup \P_3^{\infty,1}(\lambda)$, is linked with some orbit in $P^{\infty,1}_3(\lambda)$, then  $\P_3^{\infty,1}(\lambda)$ is the desired set of periodic orbits and there is nothing else to be proved.  
Otherwise, if there exists a simple periodic orbit $U_1\in \P^{\leq C}(\lambda)$, which is not a cover of an orbit in $\P_2(\lambda)\cup \P_3^{\infty,1}(\lambda)$ and is not linked with any orbit in $P^{\infty,1}_3(\lambda)$,
then we construct another new sequence of non-degenerate contact forms $\lambda_n$ converging in $C^{\infty}$ to $\lambda$ as $n \to +\infty$ as before,   so that  every orbit in $\P_2(\lambda)$ and every orbit in
$$
\mathcal{L}_1 := \mathcal{L}_0 \cup \P_3^{\infty,1} \cup \{U_1\},
$$
is a periodic orbit of $\lambda_n$ for every $n.$ Choosing a suitable sequence $J_n \to J$, we find a new set $\P_3^{\infty,2}(\lambda)=\{P_{3,1}^{\infty,2},\ldots,P_{3,l+1}^{\infty,2} \}\subset \P_3^{u,-1,\leq C}(\lambda)$, with $P_{3,j}^{\infty,2} \subset \U_j, \forall j,$ as the limit of the new binding orbits $P^{n,2}_{3,j}, j=1,\ldots,l+1$.

As before, we claim that no orbit in $\P_3^{\infty,2}(\lambda)$ coincides with $U_0$ or $U_1$. To see this, observe that each $P_{3,j}^{n,2}$ is contained in $\U_j$ for all large $n$ and either it coincides with one of the orbits $P_{3,j}^{\infty,0}$ or $P_{3,j}^{\infty,1}$ or is linked with both of them.  Hence  $P_{3,j}^{\infty,2}$ either  coincides with one of the orbits $P_{3,j}^{\infty,0},P_{3,j}^{\infty,1}$, or is linked with both of them.    In particular, since  $U_0$ is linked with some $P^{n,2}_{3,j}$ for every large $n$, we conclude that $U_0$ is linked with some $P_{3,j}^{\infty,2}$. The same holds with $U_1$.   The claim follows. 


Again, if we find a simple periodic orbit $U_2\in \P^{\leq C}(\lambda)$ which is not a cover of an orbit in $\P_2(\lambda)\cup \P_3^{\infty,2}(\lambda) $  and is not linked with any   orbit in $\P_3^{\infty,2}(\lambda)$, we define a new set
$$
\mathcal{L}_2 := \mathcal{L}_1 \cup \P_3^{\infty,2}(\lambda)\cup \{U_2\},
$$
and consider again a new sequence $\lambda_n \to \lambda$ (and $J_n\to J$) as before to obtain $\P_3^{\infty,3}(\lambda)$ with similar properties and so on.

Repeating this process indefinitely, if necessary, we end up with  sequences  
$$
  \P_3^{\infty,k}(\lambda)  \subset \P_3^{u,-1,\leq C}(\lambda)   
 \ \ \  \mbox{and} \ \ \  U_k\in \P^{\leq C}(\lambda), \ \ k\in \N,
 $$ so that 
\begin{itemize}
    \item $\P_3^{\infty,k}(\lambda)\subset \P_3^{u,-1,\leq C}(\lambda)$ is formed by $l+1$ mutually distinct and mutually unlinked orbits $P_{3,j}^{\infty,k} \subset \U_j, \forall j=1, \ldots, l+1$.

    \item $U_k$ is not a cover of any orbit in $\P_2(\lambda)\cup \P_3^{\infty,k}(\lambda)$.
    
    \item $U_k$ is not linked with any orbit in $  \P_3^{\infty,k}(\lambda)$. 
    
    \item for every $p>k$ there exists an orbit in $ \P_3^{\infty,p}(\lambda)$ which is linked with $U_k$. 
\end{itemize}
We may extract a subsequence so that the orbits $P_{3,1}^{\infty,k}, \ldots, P_{3,l+1}^{\infty,k} \in \P_3^{\infty,k}(\lambda)$ are converging to 
$
Q^\infty_1,\ldots,Q^\infty_{l+1} \in \P_3^{u,-1,\leq C}(\lambda)
$ as $k\to +\infty$. 
The periodic orbits $Q_j^\infty,j=1,\ldots,l+1,$ are mutually distinct, mutually unlinked and $Q_j^\infty \subset \U_j, \forall j$. Moreover, $\cz(Q_j^\infty)=3, \forall j.$ 

Using the Arzel\`a-Ascoli theorem, we may  assume that 
$
U_k \to U_\infty$ as $k\to +\infty,
$ where the action of $U_\infty$ is $\leq C$. Since $P_{2,i}$ is hyperbolic and each $U_k$ is geometrically distinct from the orbits in $\P_2(\lambda)$, we conclude that  $U_\infty$ is not a cover of any orbit in $\P_2(\lambda)$. Observe that:
\begin{itemize}

    \item If $U_\infty$ is a cover of  $Q_j^\infty$ for some $j$, then  since $\cz (Q_j^\infty)=3$  we conclude that   $U_k$ is linked with $P_{3,j}^{\infty,k}$ for every $k$ sufficiently large,   a contradiction.
    
    \item If $U_\infty$ is not a cover of $Q_j^\infty$ for any $j$  and  is linked with some $Q_j^\infty$, then  $U_k$ is linked with $P_{3,j}^{\infty,k}$ for $k$ sufficiently large, a contradiction.
    
    \item  If $U_\infty$ is not a cover of $Q_j^\infty$ for any $j$  and is  not linked with any $Q_j^\infty$, then for $k$ sufficiently large  $U_k$ is not linked with any $P_{3,j}^{\infty,p}$ for  every $p>k$. This is also a contradiction.
    
    \end{itemize}

We have proved that the process of constructing such new sequences of non-degenerate contact forms converging to $\lambda$  must terminate after finitely many steps. Hence  we find  a sequence $\lambda_n\to \lambda$ with the desired properties whose limiting periodic orbits $P_{3,1},\ldots, P_{3,l+1}$ satisfy all  properties in the statement of Proposition \ref{prop_specialP3}.  The proof is complete.
\end{proof}

\subsection{Compactness properties of holomorphic cylinders}\label{sec:cylinder}

Let $\lambda_n$ and $J_n \in \mathcal{J}_{\rm reg}(\lambda_n)$ be sequences of non-degenerate contact forms and almost complex structures converging  to $\lambda$ and $J \in \mathcal{J}^*_{\rm reg}(\lambda)$, respectively, as obtained in Proposition \ref{prop_specialP3}. For every $n$,  the almost complex structure $\widetilde J_n$ induced by $(\lambda_n, J_n)$ admits a stable finite energy foliation $\tilde \F_n$ that projects to a genus zero transverse  foliation $\F_n$ whose binding orbits are $P_{2,1},\ldots,P_{2,l}, P_{3,1}^n, \ldots, P_{3,l+1}^n$, where, for every $j$, the orbit $P_{3,j}^n$ converges to $P_{3,j} \in \P_3^s(\lambda)$.

Fix $j\in \{1, \ldots, l+1\}$, so that $P_{3,j}^n \subset \U_j$ for every $n$. Choose an arbitrary boundary component, say $\S_i,$ of the closure of $\U_j$.   Then every $\F_n$ contains a unique  rigid cylinder connecting $P_{3,j}^n$ to $P_{2,i}.$ This cylinder is the projection of an embedded finite energy $\widetilde{J}_n$-holomorphic cylinder $\tilde{v}_n=(b_n,v_n) \colon \R \times \R / \Z \to \R \times S^3$. It is asymptotic to $P_{3,j}^n$ at its positive puncture $+\infty$ and to $P_{2,i}$ at its negative puncture $-\infty$.

We shall study the compactness properties of the sequence $\tilde{v}_n.$ 
Consider $\U\subset \U_j$ a  small compact tubular neighborhood of $P_{3,j}$. Since $\cz(P_{3,j})= 3$, we can choose $\U$  sufficiently small so that
\begin{itemize}
\item   $\U$ contains no periodic orbits that are contractible in    $\U$.
   \item there exists no Reeb orbit    $P\subset \U$ of $\lambda$ which is geometrically distinct from $P_{3,j}$, is homotopic to $P_{3,j}$ in $\U$ and satisfies $\link(P,P_{3,j})=0$.
  \end{itemize}
The first property follows from the fact that for $\U$ sufficiently small, every periodic orbit in $\U$ must be homotopic in $\U$ to a positive cover of $P_{3,j}$ and hence is non-contractible in $\U$. The second property can be achieved since $P_{3,j}\in \P_3^{u, -1}(\lambda).$ Indeed, the flow near $P_{3,j}$ twists fast enough so that any periodic orbit sufficiently close to $P_{3,j}$, if it exists, must be linked with $P_{3,j}$. See Lemma 5.2 in \cite{convex}. 


Using that $P^n_{3,j} \to P_{3,j}$ as $n\to +\infty$, we observe that $P^n_{3,j}\subset {\rm int}(\U)$ for every large $n$ and, moreover, due to the asymptotic properties of $\tilde v_n$, we can normalize $\tilde v_n$ to satisfy the following conditions 
\begin{equation}\label{paramvn3}
\begin{cases}
 v_n(s, t)\in  \U \ \mbox{for all} \ s >0, \ t \in \R / \Z.\\
  v_n(0,0)\in \partial \U.\\
  b_n(1,0) = 0. 
\end{cases}
\end{equation}

\begin{lem}\label{lem_p3j3} Let $\tilde v_n$ satisfy the normalizations  \eqref{paramvn3}. Assume that there exist a subsequence of $\tilde v_n$, still denoted by  $\tilde v_n$, and a finite energy $\widetilde J$-holomorphic map $\tilde v=(b,v)\colon (0, \infty) \times \R / \Z \to \R \times S^3$ so that $\tilde v_n|_{(0, \infty) \times S^1}$ converges in $C^\infty_{\rm loc}((0, \infty) \times \R / \Z)$ to $\tilde v$ as $n \to +\infty$. Then 
 $\tilde v$ is asymptotic to $P_{3,j}$ at $\infty$.
\end{lem}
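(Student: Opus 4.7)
The plan is to mimic the proof of Lemma~\ref{lem_p21}, adapted to the cylindrical domain. First I would establish non-constancy of $\tilde v$ by a homotopy argument. Fix $s_0>0$ and set $\gamma_s(t):=v(s,t)$, $\gamma_s^n(t):=v_n(s,t)$. By the normalizations \eqref{paramvn3} we have $\gamma_{s_0}^n\subset\mathcal{U}$, and the portion $v_n|_{[s_0,+\infty)\times\R/\Z}$ is a cylinder entirely contained in $\mathcal{U}$ that exponentially approaches $P_{3,j}^n\subset\mathcal{U}$ at its positive end. This supplies a free homotopy in $\mathcal{U}$ between $\gamma_{s_0}^n$ and $P_{3,j}^n$. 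Since $\gamma_{s_0}$ is $C^\infty$-close to $\gamma_{s_0}^n$ for large $n$ and $P_{3,j}^n\to P_{3,j}$, the loop $\gamma_{s_0}$ is freely homotopic to $P_{3,j}$ inside a slight enlargement of $\mathcal{U}$ having the same defining properties. The first defining property of $\mathcal{U}$ then makes $\gamma_{s_0}$ non-contractible, hence non-constant; so $\tilde v$ is non-constant.

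For any asymptotic limit $Q=(y,T')$ of $\tilde v$ at $+\infty$, Theorem~\ref{thm_asymp_limit} produces $s_k\to+\infty$ with $\gamma_{s_k}\to y(T'\,\cdot)$ in $C^\infty(\R/\Z,S^3)$. Since each $\gamma_{s_k}\subset\overline{\mathcal{U}}$ is freely homotopic to $P_{3,j}$ in $\mathcal{U}$, the limit $y(T'\,\cdot)$ also lies in $\overline{\mathcal{U}}$ and represents the class of $P_{3,j}$ in $\pi_1(\mathcal{U})\cong\Z$. Writing $y(T'\,\cdot)$ as the underlying simple orbit $\bar Q$ traversed $k$ times, where $k$ is the covering multiplicity of $Q$, the identity $k\cdot[\bar Q]=\pm[P_{3,j}]$ in $\Z$ forces $k=1$ and $[\bar Q]=\pm[P_{3,j}]$. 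Thus $Q$ is simply covered, contained in $\mathcal{U}$, and freely homotopic to $P_{3,j}$ in $\mathcal{U}$.

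By the second defining property of $\mathcal{U}$, either $Q=P_{3,j}$, or $Q\neq P_{3,j}$ with $\link(Q,P_{3,j})\neq 0$. I expect the main obstacle to lie in excluding this second alternative. The plan is to combine positivity of intersections with the fact that the leaves of $\F_n$ are disjoint from their binding orbits: a non-trivial linking $\link(Q,P_{3,j})\neq 0$ would force the holomorphic cylinder $\tilde v$ to intersect the trivial $\widetilde J$-holomorphic cylinder $\R\times P_{3,j}$ at some point of $(0,+\infty)\times\R/\Z$. By stability of intersections together with the $C^\infty_{\rm loc}$-convergence $\tilde v_n\to\tilde v$, this would produce geometric intersections of $\tilde v_n$ with $\R\times P_{3,j}^n$ for every large $n$, contradicting the fact that the leaf $v_n(\R\times\R/\Z)\in\F_n$ is disjoint from the binding orbit $P_{3,j}^n$. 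Consequently $Q=P_{3,j}$, and uniqueness of the asymptotic limit at $+\infty$ follows.

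The key technical difference from Lemma~\ref{lem_p21} is that the tubular neighborhood $\mathcal{U}$ around the index-$3$ orbit $P_{3,j}$ cannot be chosen with the strong rigidity available for the index-$2$ orbits: the fast twisting of the flow near $P_{3,j}$ permits nearby Reeb orbits that are homotopic to $P_{3,j}$, although they must then link with it. Ruling out this subtle possibility via the positivity-of-intersections argument sketched above is the only place where the proof departs nontrivially from the plane case.
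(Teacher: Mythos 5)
Your first three steps track the paper's proof: non-constancy via the homotopy of the loops $\gamma_s=v(s,\cdot)$ to $P_{3,j}$ inside $\U$, the identification of the free homotopy class of any asymptotic limit at $+\infty$, and the reduction (via the second defining property of $\U$) to excluding an asymptotic limit $Q\neq P_{3,j}$ with $\link(Q,P_{3,j})\neq 0$. Up to that point the argument is correct and essentially the paper's.

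The final step, however, has a genuine gap. You assert that $\link(Q,P_{3,j})\neq 0$ forces the half-cylinder $\tilde v$ to intersect $\R\times P_{3,j}$, but nothing you have established implies this. A half-open cylinder in $S^3\setminus P_{3,j}$ can perfectly well have its end linked with $P_{3,j}$ (a pushed-off annulus over a meridian of $P_{3,j}$ is already an example), so no geometric intersection is forced by the linking of $Q$ alone. If $v((0,\infty)\times\R/\Z)$ avoids $P_{3,j}$, all you can conclude is that the class $[\gamma_s]\in H_1(S^3\setminus P_{3,j})$ is independent of $s$ and equals $\link(Q,P_{3,j})$; to get a contradiction you must know that this common value is $0$, i.e.\ that some $\gamma_{s_0}$ is unlinked with $P_{3,j}$. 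That is exactly the missing ingredient, and it cannot be obtained from the positive end alone: it requires the \emph{negative} end of the approximating cylinders. The loop $\gamma^n_s=v_n(s,\cdot)$ cobounds, inside the leaf $v_n(\R\times\R/\Z)$ (which is disjoint from the binding orbit $P^n_{3,j}$), with the negative asymptotic limit $P_{2,i}$, and $\link(P_{2,i},P^n_{3,j})=0$ by Lemma \ref{lem_convergeP3}/Proposition \ref{prop_specialP3}; hence $[\gamma^n_s]=0$ in $H_1(S^3\setminus P^n_{3,j})$. Passing to the limit, $\gamma_s$ is unlinked with $P_{3,j}$ for every $s$, hence so is $Q$ — contradicting that $Q$ must be linked. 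This is how the paper concludes, and it makes your intersection-theoretic detour (which in any case only works in the direction opposite to the one you need) unnecessary.
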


\begin{proof}
We argue as in     Lemma \ref{lem_p21}. Let $R>0$. Since the loop  $t\mapsto \gamma_R^n(t):=v_n(R,t) ,  t\in  \R / \Z,$ lies in $\U$ for every $n$  and since $\gamma_R^n$ converges in $C^\infty(\R / \Z)$ to the loop 
$
t \mapsto \gamma_R(t): = v(R, t ), t\in \R / \Z,
$ 
as $n \to +\infty$, we have $\gamma_R(t) \in \U, \forall t$. In particular, $\gamma_R$ is homotopic to $\gamma_R^n$ in $\U $ for every large $n$. 

The fact that $v_n((0, \infty) \times \R / \Z) \subset \U$ for every $n$ and that $\gamma_R^n$ converges to $P_{3,j}^n$ in $\U$ as $R \to +\infty$ implies that $\gamma_R$ is homotopic to $P_{3,j}^n$ in $\U$ for every $R >0$. Since $P_{3,j}^n\to P_{3,j}$ as $n\to +\infty$, $\gamma_R$ is homotopic to $P_{3,j}$ in $\U$ for every $R >0$. In particular, $\gamma_R$ is non-contractible in $\U$ and thus non-constant. Hence $\tilde v$ is non-constant as well. 

Any asymptotic limit $P\subset \U$ of $\tilde v$ at $\infty$  is homotopic to $P_{3,j}$ in $\U$ since each $\gamma_R$ is homotopic to $P_{3,j}$ in $\U$ for every $R>0$.  Assume $P\neq P_{3,j}$. Then $P$ must be linked with $P_{3,j}$. This follows from the choice of $\U$. But since $\gamma_R$ is the $C^\infty$-limit of the loops $\gamma^n_R$ as $n\to +\infty$, which are all unlinked with $P_{3,j}^n$, we conclude that $\gamma_R$ is not linked with $P_{3,j}$, 
a contradiction.  Hence $P_{3,j}$ is the unique asymptotic limit of $\tilde v$ at $\infty$. This finishes the proof.
\end{proof}

 We next show that, up to a subsequence, the sequence   $\tilde{v}_n \colon \R \times \R / \Z \to \R \times S^3$  converges to a $\widetilde J$-holomorphic map   $\tilde v=(b,v) \colon (\R \times \R / \Z) \setminus \Gamma \to \R \times S^3$ which is  asymptotic to $P_{3,j}$ at its unique positive puncture $+\infty$  and to orbits in $\P_2(\lambda)$ at the negative punctures in $\Gamma \cup\{-\infty\}$.

\begin{prop}\label{prop_cylinder1}
Let $\tilde{v}_n=(b_n,v_n) :\R \times \R / \Z \to \R \times S^3$ be the sequence of embedded finite energy $\widetilde{J}_n$-holomorphic cylinders as above. 
Under the  particular choice of a small compact tubular neighborhood $\U \subset \U_j$ of $P_{3,j}$  and the normalizations \eqref{paramvn3}, there exists an embedded finite energy  $\widetilde J$-holomorphic curve $\tilde v=(b,v) \colon (\R \times \R / \Z) \setminus \Gamma \to \R \times S^3$, asymptotic to $P_{3,j}$ at its unique positive puncture $+\infty$,  to $P_{2,i}$ at $-\infty$ and  to other distinct orbits in $\P_2(\lambda)$ at the   punctures in $ \Gamma$,  so that, up to a subsequence, $\tilde v_n \to \tilde v$ in $C^\infty_{\rm loc}$ as $n \to +\infty$.  
Moreover,  $v( ( \R \times \R / \Z) \setminus \Gamma) \subset \U_j,$ and the convergence of $\tilde v$ to $P_{3,j}$ at $+\infty$ is exponential.
\end{prop}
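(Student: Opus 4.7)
The plan is to adapt the bubbling-off analysis carried out in the proof of Proposition \ref{prop_rigid_planes} to the cylindrical domain $\R \times \R/\Z$, which now has two ends. First, Stokes' theorem together with the uniform action bound $C$ on the binding orbits yields
\[
\int_{\R\times \R/\Z} v_n^*\, d\lambda_n \;\leq\; \mathcal{A}(P_{3,j}^n) - \mathcal{A}(P_{2,i}) \;\leq\; C.
\]
Define the set of bubbling points $\Gamma = \{z_* \in \R\times \R/\Z \mid \exists z_n \to z_*,\ |\nabla \tilde v_n(z_n)| \to +\infty\}$. Hofer's Lemma, used exactly as in Proposition \ref{prop_rigid_planes}, shows each point of $\Gamma$ captures at least $\gamma_0 > 0$ of $d\lambda_n$-area, where $\gamma_0$ lies below the minimal action of $\lambda$-orbits. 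Hence, after a subsequence, $\Gamma$ is finite. The normalization $b_n(1,0) = 0$ together with the containment $v_n((0,\infty)\times \R/\Z) \subset \U$ and an analogous containment of the negative end inside a small tubular neighborhood of the hyperbolic orbit $P_{2,i}$ (obtained from the uniform exponential asymptotic behavior of $\tilde v_n$ at $-\infty$) provide $C^0_{\rm loc}$-bounds on $(\R\times \R/\Z) \setminus \Gamma$. Elliptic bootstrapping then produces a $\widetilde J$-holomorphic limit $\tilde v = (b,v) : (\R\times \R/\Z) \setminus \Gamma \to \R\times S^3$ with $\tilde v_n \to \tilde v$ in $C^\infty_{\rm loc}$.

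Next I would identify the asymptotic limits. Lemma \ref{lem_p3j3} and the choice of $\U$ force $\tilde v$ to be asymptotic to $P_{3,j}$ at $+\infty$, while a symmetric tubular-neighborhood argument around the hyperbolic orbit $P_{2,i}$, modeled on Lemma \ref{lem_p21}, shows that $\tilde v$ is asymptotic to $P_{2,i}$ at $-\infty$. Every puncture in $\Gamma$ is negative: Stokes' theorem applied to a small loop around $z_*\in\Gamma$, combined with the lower bound $\gamma_0$ on the captured area, shows $\int_{\partial B_\varepsilon(z_*)} v^*\lambda > 0$. The total action at the negative punctures is bounded by $\mathcal{A}(P_{3,j}) \leq C$, so by hypothesis~II of Theorem~\ref{main1}, each asymptotic limit at a puncture in $\Gamma$ must be a (possibly multiply covered) orbit in $\P_2(\lambda)$.

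The delicate step is to promote each such asymptotic limit to a \emph{simple} orbit in $\P_2(\lambda)$ distinct from $P_{2,i}$ and from one another. For this I would apply Lemma~\ref{lem_genJset} to a somewhere injective factor of $\tilde v$: any such factor inheriting a positive puncture at an index-$2$ orbit and additional negative punctures would contradict the Fredholm index computation $\mathrm{Ind}(\tilde u) \leq 1 - \#\Gamma$. To rule out the possibility that two distinct punctures in $\Gamma \cup \{-\infty\}$ are asymptotic to the same orbit in $\P_2(\lambda)$, I would use Siefring's intersection theory \cite{Si2} together with positivity of intersections applied to $\tilde v$ and its $\R$-translates, exploiting that two rigid cylinders in $\tilde\F_n$ connecting the same pair of binding orbits cannot coexist. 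The same genericity plus index estimates rule out multiple covers at each puncture.

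Finally, $v_n(\R\times \R/\Z) \subset \U_j^n$ and Proposition~\ref{prop_rigid_planes} shows $\U_j^n$ converges to $\U_j$ in the Hausdorff sense, so $v((\R\times \R/\Z)\setminus \Gamma) \subset \overline{\U_j}$; positivity of intersections with the rigid planes $\tilde u_{i,k}$ forbids $v$ from meeting $\partial \U_j$ away from its punctures, which gives the inclusion in $\U_j$. Embeddedness of $\tilde v$ follows from Siefring's inequalities applied to the limit, given that each $\tilde v_n$ is embedded. Exponential convergence at $+\infty$ follows from the Hofer–Wysocki–Zehnder asymptotic analysis together with the uniform exponential rates of the $\tilde v_n$'s at their positive punctures (which converge to the leading eigenvalue of $A_{P_{3,j}}$). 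The main obstacle is the third paragraph: ensuring that the negative asymptotic limits at punctures in $\Gamma$ are \emph{simple}, \emph{distinct}, and all distinct from $P_{2,i}$, since this is where the interplay between hypothesis~II, the genericity of $J \in \J^*_{\rm reg}(\lambda)$, and intersection theory must be orchestrated most carefully.
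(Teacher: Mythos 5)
Your skeleton (Hofer's Lemma and bubbling-off, finiteness of $\Gamma$ via the area quantum $\gamma_0$, $C^\infty_{\rm loc}$ convergence from the normalizations, and Lemma \ref{lem_p3j3} at the positive end) matches the paper, but two steps would fail as written. First, the assertion that hypothesis II forces every asymptotic limit at a puncture of $\Gamma$ to be a cover of an orbit in $\P_2(\lambda)$ is a non sequitur: hypothesis II only bounds the action of a non-index-$2$ orbit from below by $\max_i T_{2,i}$, and since the positive puncture of $\tilde v$ sits at $P_{3,j}$ (whose action may be much larger than $T_{2,i}$), the Stokes budget $T_{3,j}$ leaves ample room for a negative puncture at an orbit that is neither a cover of an index-$2$ orbit nor of $P_{3,j}$. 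The argument in the text for the planes asymptotic to $P_{2,i}$ (Lemma \ref{lem_gamma_vazio}) works precisely because there the positive puncture has the minimal action; it does not transfer to the cylinder. The paper closes this gap with the linking property of the special orbits from Proposition \ref{prop_specialP3}: any orbit of action $\le C$ in $\U_j$ which is not a cover of an orbit in $\P_2(\lambda)\cup\{P_{3,j}\}$ is linked with $P_{3,j}$, which would force $v_n$ to intersect $P^n_{3,j}$; covers of $P_{3,j}$ are then excluded by the area count. Without this input the identification of the negative asymptotic limits does not go through.

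Second, the ``symmetric tubular-neighborhood argument'' at $-\infty$ is unfounded: unlike the positive end, the normalizations \eqref{paramvn3} give no containment of the negative ends of $\tilde v_n$ in a fixed neighborhood of $P_{2,i}$ that is uniform in $n$ --- the value of $s$ at which $v_n$ enters such a neighborhood may drift to $-\infty$, i.e.\ the sequence can break at the negative end, so the limit curve could a priori be asymptotic at $-\infty$ to some other $P_{2,l_0}$, with further levels below connecting it to $P_{2,i}$. The paper rules this out by shifting $\tilde v$ towards $-\infty$ and applying SFT compactness to produce a nontrivial somewhere injective curve with positive puncture at an index-$2$ orbit and nonempty negative puncture set, contradicting Lemma \ref{lem_genJset}. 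Relatedly, your appeal to Lemma \ref{lem_genJset} for simplicity and mutual distinctness of the limits at the punctures of $\Gamma$ is misplaced, since that lemma concerns curves whose (unique) positive puncture is at an index-$2$ orbit; the paper instead derives simplicity and distinctness from the winding numbers of the asymptotic eigenfunctions of the hyperbolic index-$2$ orbits together with the self-intersection criteria of \cite[Proposition C.0]{dPS1}, exploiting that $\tilde v$ is a $C^\infty_{\rm loc}$-limit of embedded curves with embedded projections.
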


\begin{proof}
 Arguing as in the proof of Proposition \ref{prop_rigid_planes}, we take a subsequence of $\tilde v_n$, still denoted by $\tilde v_n$, which admits  a sequence $(s_n, t_n) \in \R \times \R/ \Z  $ so that $|\nabla \tilde v_n(s_n, t_n)| \to +\infty$ as $n \to +\infty$. Then we have $\limsup_{n \to \infty}s_n \leq 0$  and, up to a subsequence, we assume that $(s_n, t_n)$ converges to a point in $(-\infty, 0] \times \R / \Z$.  
Moreover, extracting a subsequence, we can assume that the set of bubbling-off points $\Gamma\subset   (-\infty, 0] \times \R / \Z  $ is finite. 
Because of the normalizations \eqref{paramvn3}, we can find a $\widetilde J$-holomorphic map $\tilde v=(b,v) \colon  ( \R \times \R / \Z) \setminus \Gamma \to \R \times S^3 $ so that $\tilde v_n$ converges to $\tilde v $ in $C^\infty_{\rm loc} ((\R \times \R / \Z) \setminus \Gamma)$ as $n \to +\infty$. Moreover, $\tilde v$ is asymptotic to $P_{3,j}$ at $+\infty$, see Lemma \ref{lem_p3j3}, and $v( (\R \times \R / \Z) \setminus \Gamma) \subset \U_j$  by positivity and stability of intersections. Every puncture in $\Gamma$ is negative. For a proof, see  the argument just after \eqref{dlambda}. The puncture at $-\infty$ is also negative since $\int_{\{s\} \times \R / \Z} v_n^* \lambda_n > T_{2,i}$ for every $n$ and every fixed $s\ll 0$. Since $+\infty$ is the only positive puncture of $\tilde v$ and its asymptotic limit is simple, we also conclude that $\tilde v$ is somewhere injective.

Next we claim that   the asymptotic limit of $\tilde v$ at each  $(s^*, t^*) \in \Gamma$  is a cover~of an orbit in $\P_2(\lambda)$.  Assume by contradiction that   there exists an asymptotic limit $Q\subset \U_j$ at $(s^*, t^*) \in \Gamma$, which is not a cover of an orbit in $\P_2(\lambda)$.   If $Q$ is not a cover of $P_{3,j}$, then $Q$ is linked with $P_{3,j}$, see Proposition \ref{prop_specialP3}. Let  $\varepsilon>0$ be sufficiently small so that the loop $v(\lvert (s,t)-(s^*, t^*)\rvert=\varepsilon)\subset S^3$ is arbitrarily close to $Q$. Then  the loop $v_n(|(s,t)-(s^*, t^*)|=\varepsilon)$ is linked with $P_{3,j}$, if    $n$ is large enough.   In particular, $v_n(|(s,t)-(s^*, t^*)|\leq \varepsilon)$ intersects $P^n_{3,j}$ for every large $n$, a contradiction. It follows that $Q$ is a cover of $P_{3,j}.$ But this implies that $\int_{(\R \times \R / \Z) \setminus \Gamma}v^* d\lambda \leq T_{3,j}-T_{2,i} -T_{3,j}<0$, a contradiction. A similar argument shows that the asymptotic limit of $\tilde v$ at $-\infty$ is a cover of an orbit in $\P_2(\lambda)$.  We conclude that the asymptotic limits of $\tilde v$ at its negative punctures are covers of orbits in $\P_2(\lambda).$

Now we show that   the $d\lambda$-area of $\tilde v$ is positive. Otherwise, $\tilde v$ is a cylinder over some periodic orbit $P$, see \cite[Theorem 6.11]{props2}. In particular, $\Gamma=\emptyset$ and $\tilde v$ is a trivial cylinder over $P_{3,j}$. But this contradicts our normalization \eqref{paramvn3} since it implies $v(0,0)\in \partial \U$.


We have showed that $\tilde v$ is asymptotic to $P_{3,j}$ at $+\infty$, and  to covers of orbits in $\P_2(\lambda)$ at its negative punctures in $\Gamma\cup \{-\infty\}$. The usual analysis near $P_{3,j}$ (see \cite[Theorem 7.2]{convex} and also \cite[Proposition 9.3]{dPS1}) implies that the convergence of $\tilde v$ to $P_{3,j}$ is exponential with a negative leading eigenvalue of $A_{P_{3,j},J}$, whose eigenvector has winding number $1$ with respect to any global trivialization of the contact structure. The asymptotic behavior of $\tilde v$ at $+\infty$ is as in Theorem \ref{thm:asymptoticbehaviour}.

We still need to prove that $\tilde v$ is asymptotic to $P_{2,i}$ at $-\infty$ and to other distinct orbits of $\P_2(\lambda)$ at the remaining negative punctures in $\Gamma$.
Assume, by contradiction, that $\tilde v$ is asymptotic to a $p_0$-cover, $p_0>1$, of  some $P_{2,i_0} \in  \P_2 (\lambda)$ at a negative puncture   $(s_0, t_0) \in \Gamma.$ In particular, $P_{2,i_0}\subset \partial \U_j$. Since $P_{2,i_0}$ is hyperbolic and satisfies $\mu_\cz(P_{2,i_0})=2$, the asymptotic operator $A_{P_{2,i_0},J}$ associated with $P_{2,i_0}$ and $J$ admits a unique  positive eigenvalue $\mu$ with winding number $1$ (the least positive eigenvalue) and associated $\mu$-eigenfunctions $e,e'$ which point inside and outside $\U_j$, respectively. Moreover,  the asymptotic operator $A_{P_{2,i_0}^{p_0},J}$ associated with the ${p_0}$-cover $P_{2,i_0}^{p_0}$ of $P_{2,i_0}$ and $J$ admits an eigenfunction $e^{p_0}$, which equals to $e$ covered ${p_0}$ times, whose associated eigenvalue is $p_0 \mu$. Its winding number is $p_0$ with respect to a global trivialization of $\xi$. Since   $P_{2,i_0}^{p_0}$  is also hyperbolic and satisfies $\mu_{\cz}(P_{2,i_0}^{p_0})=2{p_0},$ the eigenvalue ${p_0} \mu$ is the least positive eigenvalue of  $A_{P_{2,i_0}^{p_0},J}$, and the other positive eigenvalues admit winding numbers larger than ${p_0}$. Since  the image of $v $ lies in $\U_j$, this implies that the eigenfunction $e^{p_0}$   describes the asymptotic behavior of $\tilde v$ near the  puncture $(s_0, t_0)$. 
Since ${p_0}>1$, the map $v$ admits self-intersection, see \cite[Proposition C.0-i)]{dPS1}, which is impossible since the somewhere injective curve $\tilde v$ is the $C^\infty$-limit of embedded curves whose projections to $S^3$ are embedded surfaces. We conclude that the asymptotic limit of $\tilde v$ at a puncture in $\Gamma$ is an orbit in $\P_2(\lambda)$. In a similar way, if $\tilde v$ is asymptotic to the same orbit in $\P_2(\lambda)$ at distinct punctures in $\Gamma\cup \{-\infty\}$, then   $v$ also self-intersects, see \cite[Proposition C.0-ii)]{dPS1}. Again, this is a contradiction since the somewhere injective curve $\tilde v$ is the  $C^\infty_{\rm loc}$-limit of embedded curves  whose projections to $S^3$ are embedded surfaces. 
 
We conclude that, up to extraction of a subsequence, the sequence $\tilde{v}_n$ of embedded finite energy $\widetilde{J}_n$-holomorphic planes, normalized as in \eqref{paramvn3},    converges  in $C^\infty_{\rm loc} ((\R \times \R / \Z) \setminus \Gamma)$   to a somewhere injective curve $\tilde v$ as $n \to +\infty$. Moreover, $\tilde v$ is asymptotic to $P_{3,j}$ at $+\infty$, and to  distinct orbits in $\P_2(\lambda)$ at its negative puncture  in $\Gamma\cup \{-\infty\}$.           Moreover, $\tilde v$ and $v$ are embeddings, and $v( (\R \times \R / \Z) \setminus \Gamma) \subset \U_j$. 

It remains to show that $\tilde v$ is asymptotic to $P_{2,i}$ at $-\infty$. To see this, let $P_{2,l_0}\in \P_2(\lambda)$ be the asymptotic limit of $\tilde v$ at $-\infty.$ Assume $l_0\neq i.$
For suitable $s_n,c_n\in \R$, with $s_n\to -\infty$, the shifted maps
$
\tilde w_n(s,t) =(b(s-s_n)+c_n,v(s-s_n)),  \forall (s,t) \in \R \times \R / \Z,
$
converges in $C^\infty_{\rm loc}$ to a finite energy $\widetilde J$-holomorphic curve $\tilde w \colon (\R \times \R / \Z) \setminus \Gamma' \to \R \times S^3$ with $\Gamma'$ finite, which is asymptotic to $P_{2,l_0}$ at its positive puncture at $+\infty$. Moreover,  $s_n,c_n$ can be appropriately chosen so that $\tilde w$ is not a trivial cylinder over $P_{2,l_0}$. Indeed, since all orbits with action $\leq T_{2,l_0}$ are hyperbolic, one can apply the SFT-compactness theorem (see \cite{BEHWZ03}) to obtain a genus zero building of $\widetilde J$-holomorphic curves so that the building has a positive puncture at $P_{2,l_0}$ and a negative puncture at $P_{2,i}$. The first level of this building is the non-trivial curve $\tilde w$.  Moreover, arguing as above, the punctures of $\tilde w$ in $\Gamma' \cup \{-\infty\}$ are negative and $\tilde w$ is asymptotic to distinct orbits in $\P_2(\lambda)$ at $\Gamma' \cup \{-\infty\}$. Hence $\tilde w$ is somewhere injective and has positive $d\lambda$-area. Since $J\in \J_{\rm reg}^*(\lambda)$, we can apply Lemma \ref{lem_genJset} to conclude that the set of negative punctures of $\tilde w$ is empty, a contradiction. This proves that $l_0=i$ and finishes the proof of this proposition.
\end{proof}

\subsection{Compactness properties of holomorphic planes}

In this section we study the compactness properties of the families of $\widetilde J_n$-holomorphic planes asymptotic to $P^n_{3,j}\in \P_3^{u,-1,\leq C}(\lambda_n),$ $j=1,\ldots,l+1$. Recall that 
$
P^n_{3,j} \to P_{3,j} $ as $n\to+\infty$, $\forall j=1,\ldots,l+1,
$
where $P_{3,j}\in \P_3^{u,-1,\leq C}(\lambda).$ The orbit $P_{3,j}$ lies in the component 
$
\U_j\subset S^3 \setminus \bigcup_{i=1}^l \S_i, \    j=1,\ldots, l+1,
$ where $\S_i = U_{i,1} \cup P_{2,i} \cup U_{i,2}\subset S^3$ is a $C^1$-embedded $2$-sphere. We may assume that  the sequences $\lambda_n$ and $J_n$ are given as in   Proposition \ref{prop_specialP3} so that the   orbits $P_{3,j}$ satisfy the properties stated in that proposition.

Fix $j$ and denote by $\tilde k_j$ the number of boundary components of   $\U_j$. Let 
$
i_1,\ldots i_{\tilde k_j} \in \{1,\ldots,l\}
$ 
be such that 
$
\partial \U_j = \bigcup_{k=1}^{\tilde k_j} \S_{i_k}.
$
For each $n\in \N$, $j\in \{1,\ldots,l+1\}$ and $k\in \{1,\ldots, \tilde k_j\}$, there exists a one-parameter family of  planes 
$
 \F^{j,n}_{k,\tau},  \tau\in (0,1),
$ asymptotic to $P^n_{3,j}$. 
For simplicity, we omit $j,k$ in the notation, i.e.\ $\F^n_\tau = \F^{j,n}_{k,\tau}, \forall \tau.$ Let  $\tilde u^n_\tau = (a^n_\tau,u^n_\tau)\colon \C \to \R \times S^3,  \tau\in (0,1),$ be the family of $\widetilde J_n$-holomorphic planes so that
$
u^n_\tau(\C) = \F^n_\tau, \forall n,\tau.
$

Fix $p\in \U_j \setminus P_{3,j}$ and consider a sequence $p_n\to p,$
where $p_n\in u^n_{\tau_n}(\C)\subset \U_j$ for some $\tau_n \in (0,1).$ 
Denote by
$\tilde v_n=(b_n,v_n)\colon \C \to \R \times S^3 $
the $\widetilde J$-holomorphic plane $\tilde u^n_{\tau_n}$. In particular, $p_n\in v_n(\C), \forall n.$

We consider a small compact tubular neighborhoods $\U \subset \U_j$   of $P_{3,j}$. As in Section \ref{sec:cylinder}, since $\cz(P_{3,j})= 3$,  we can choose $\U$ sufficiently small so that

\begin{itemize}
\item $p \not \in \U$.
\item   $\U$ contains no periodic orbits that are contractible in    $\U$.
   \item there exists no periodic orbit    $P\subset \U$  which is geometrically distinct from $P_{3,j}$, is homotopic to $P_{3,j}$ in $\U$ and satisfies $\link(P,P_{3,j})=0$.
  \end{itemize}

Using that $P^n_{3,j} \to P_{3,j}$ as $n\to +\infty$, we observe that $P^n_{3,j}\subset {\rm int}(\U)$ for every large $n$ and, moreover, we can normalize $\tilde v_n$ to satisfy the following conditions 
\begin{equation}\label{paramvn}
\begin{cases}
 v_n(\C \setminus \D)\subset  \U, \\
  v_n(1)\in \partial \U,\\
  v_n(z^*_n) \in \partial \U  \  \mbox{ for some}  \ z^*_n \in \partial \D \  \mbox{ satisfying } \  {\rm Re}(z^*_n) \leq 0,\\
  b_n(2) = 0. 
\end{cases}
\end{equation}
This normalization is constructed exactly as  in \eqref{paramun}. 

 The proof of the following lemma is similar to the proof of Lemma \ref{lem_p3j3}. 
 
 \begin{lem}\label{lem_p3j} 
 Let $\tilde v_n$ satisfy the normalizations in \eqref{paramvn}. Assume that there exist a subsequence of $\tilde v_n$, still denoted by $\tilde v_n$, and a $\widetilde J$-holomorphic map $\tilde v=(b,v)\colon \C \setminus \D \to \R \times S^3$ so that $\tilde u_n|_{\C\setminus \D}$ converges in $C^\infty_{\rm loc}(\C \setminus \D)$ to $\tilde v$ as $n \to +\infty$. Then 
    $\tilde v$ is  asymptotic to $P_{3,j}$ at $\infty$.
 \end{lem}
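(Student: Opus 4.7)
The plan is to mimic the proof of Lemma~\ref{lem_p3j3} almost verbatim, replacing the half-cylinder $(0,\infty)\times \R/\Z$ by $\C \setminus \D$. First I would fix $R > 1$ and consider the loop $\gamma_R(t) := v(R e^{2\pi i t})$, $t\in \R/\Z$. By the first normalization in \eqref{paramvn}, the approximating loops $\gamma_R^n(t) := v_n(Re^{2\pi i t})$ lie in $\U$, so their $C^\infty$-limit $\gamma_R$ also lies in $\U$ and is homotopic in $\U$ to $\gamma_R^n$ for all large $n$. Since $v_n(\C \setminus \D) \subset \U$ and $\gamma_R^n \to P_{3,j}^n$ in $\U$ as $R \to \infty$, the loop $\gamma_R^n$ is homotopic to $P_{3,j}^n$ in $\U$ for every $R > 1$. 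Combining with $P_{3,j}^n \to P_{3,j}$, I conclude that $\gamma_R$ is homotopic to $P_{3,j}$ in $\U$ for every $R > 1$; in particular $\gamma_R$ is non-constant and non-contractible in $\U$, so $\tilde v$ itself is non-constant.

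Next I would exploit the two defining properties of $\U$. Any asymptotic limit $P \subset \U$ of $\tilde v$ at $\infty$ arises as a $C^0$-limit of the loops $\gamma_R$ and is therefore homotopic to $P_{3,j}$ in $\U$. By the second property of $\U$, either $P = P_{3,j}$ or $\link(P, P_{3,j}) \neq 0$. To rule out the latter alternative I would observe that each plane $u^n_{\tau_n}$ is asymptotic to $P_{3,j}^n$ and hence disjoint from it, so the disk $u^n_{\tau_n}(\{|z| \leq R\})$ cobounded by $\gamma_R^n$ lies in $S^3 \setminus P_{3,j}^n$ and certifies $\link(\gamma_R^n, P_{3,j}^n) = 0$ for every large $n$ and every $R>1$. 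Passing to the $C^\infty$-limit yields $\link(\gamma_R, P_{3,j}) = 0$ for every $R > 1$, and letting $R \to \infty$ forces $\link(P, P_{3,j}) = 0$. This contradicts the supposed nonvanishing of the linking number, so $P = P_{3,j}$ and $\tilde v$ is asymptotic to $P_{3,j}$ at $\infty$, as claimed.

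\emph{The main point} — to the extent there is a nontrivial step — is the transfer of the vanishing of the linking number along the $C^\infty$-limit in the last paragraph; this is routine since the linking number is a homotopy invariant of pairs of disjoint loops and the approximating loops come from planes that avoid their asymptotic orbits. Every other step is a direct translation from Lemma~\ref{lem_p3j3}, so I do not anticipate any genuine obstacle.
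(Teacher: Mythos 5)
Your proposal is correct and follows exactly the route the paper intends: the paper omits the proof of this lemma, stating only that it is "similar to the proof of Lemma \ref{lem_p3j3}", and your argument is precisely that adaptation, with the loops $\gamma_R$ trapped in $\U$, homotopic to $P_{3,j}$, and unlinked from $P_{3,j}$ by passing the vanishing linking number of $\gamma_R^n$ with $P_{3,j}^n$ (certified by the disk $u^n_{\tau_n}(\{|z|\leq R\})$) to the limit. No gaps.
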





Recall that $\tilde v_n=(b_n,v_n)$ is such that $p_n\in v_n(\C), \forall n$, where $p_n \to p\in \U_j\setminus P_{3,j}$ is fixed. Next we show that there exists an open subset of $\U_j\setminus P_{3,j}$ of full measure so that if the limit point  $p$ is fixed in this subset, then the sequence $\tilde v_n$, normalized as in \eqref{paramvn} (for particular choices of the small tubular neighborhood $\U\subset \U_j$ of $P_{3,j}$), does not admit bubbling-off points and converges in $C^\infty_{\rm loc}$ as $n\to +\infty$ to a $\widetilde J$-holomorphic plane $\tilde v\colon \C \to \R \times S^3$ asymptotic to $P_{3,j}$. The cases for which this compactness property fails occur when the sequence $\tilde v_n$ admits bubbling-off points in $\D$, and the limiting curve is asymptotic to $P_{3,j}$ at $\infty$ and to distinct orbits in $\P_2(\lambda)$ at its negative punctures. As we shall prove below,   there are only finitely many such curves. Therefore, if $p$ is taken in the complement of the image of these curves, the compactness property holds, and the limiting curve is a $\widetilde J$-holomorphic plane asymptotic to $P_{3,j}$.

\begin{prop}\label{prop_family_planes1}
There exists an open subset $\U_j' \subset \U_j\setminus P_{3,j}$ of full measure in $\U_j\setminus P_{3,j}$ so that if $p_n\to p \in \U_j'$ and $\tilde v_n=(b_n,v_n)\colon\C \to \R \times S^3$ is such that $p_n\in v_n(\C), \forall n,$ then, under the  particular choice of a small compact tubular neighborhood $\U  \subset \U_j$ of $P_{3,j}$ depending on $p$ as above and the normalizations in \eqref{paramvn}, there exists a $\widetilde J$-holomorphic plane $\tilde v=(b,v)\colon\C \to \R \times S^3$, exponentially asymptotic to $P_{3,j}$,  so that $\tilde v_n \to \tilde v$ in $C^\infty_{\rm loc}$ as $n \to +\infty$ and $p\in v(\C)$.
\end{prop}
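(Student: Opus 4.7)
The plan is to extend the compactness/bubbling analysis of Propositions~\ref{prop_rigid_planes} and~\ref{prop_cylinder1} to the sequence $\tilde v_n=(b_n,v_n)$ under the normalization~\eqref{paramvn}. After passing to a subsequence I would apply Hofer's Lemma and elliptic bootstrapping to isolate a finite set $\Gamma\subset\D$ of bubbling-off points and a $\widetilde J$-holomorphic limit $\tilde v=(b,v)\colon\C\setminus\Gamma\to\R\times S^3$ with $\tilde v_n\to\tilde v$ in $C^\infty_{\rm loc}(\C\setminus\Gamma)$. Lemma~\ref{lem_p3j} identifies the positive puncture at $\infty$ as asymptotic to $P_{3,j}$, while positivity of intersections together with $P^n_{3,j}\to P_{3,j}$ forces $v(\C\setminus\Gamma)\subset\overline{\U_j}$.

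Next I would repeat the arguments of Proposition~\ref{prop_cylinder1}: Stokes' theorem makes every puncture in $\Gamma$ negative; the asymptotic limits at these punctures are simple orbits in $\P_2(\lambda)$ (ruling out covers of $P_{3,j}$ via positivity and stability of intersections together with Proposition~\ref{prop_specialP3}, and ruling out higher covers of $\P_2$-orbits via hypothesis~II on actions and \cite[Proposition~C.0]{dPS1}); these asymptotic limits are pairwise distinct; $\tilde v$ is somewhere injective with positive $d\lambda$-area; and the convergence to $P_{3,j}$ is exponential, governed by a negative eigenvalue of $A_{P_{3,j},J}$ of winding number~$1$, by \cite[Theorem~7.2]{convex} and \cite[Proposition~9.3]{dPS1}. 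If $\Gamma=\emptyset$, then $\tilde v$ is already a plane exponentially asymptotic to $P_{3,j}$, and the containment $p\in v(\C)$ is immediate from $p_n\to p\in\U_j\setminus P_{3,j}$ and $C^\infty_{\rm loc}$ convergence.

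The principal obstacle is controlling the bubbling scenario $\Gamma\neq\emptyset$. My plan is to define a bad set $\M_j\subset\U_j$ as the union of the images in $S^3$ of all somewhere injective finite energy $\widetilde J$-holomorphic curves $\tilde w=(c,w)$ from a genus-zero Riemann surface with $m+1$ punctures, with one positive puncture asymptotic to $P_{3,j}$, $m\geq 1$ negative punctures asymptotic to pairwise distinct orbits in $\P_2(\lambda)$, and image contained in $\overline{\U_j}$. A Fredholm-index computation in the global trivialization of $\xi_0$ yields $\mathrm{Ind}(\tilde w)=2-m$, so that modulo $\R$-translation the corresponding moduli space has expected dimension $1-m\leq 0$. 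By strengthening $\J^*_{\rm reg}(\lambda)$ to a further residual subset in the spirit of \cite{Drag}, the strata $m\geq 2$ become empty while the $m=1$ stratum reduces to a finite collection of rigid cylinders; consequently $\M_j$ is a finite union of immersed two-dimensional surfaces and has Lebesgue measure zero in the three-dimensional $\U_j$. Setting $\U_j':=(\U_j\setminus P_{3,j})\setminus\M_j$ then gives an open subset of full measure. For $p\in\U_j'$, if bubbling occurred then $p$ would have to lie either in the top-level image $v(\C\setminus\Gamma)\subset\M_j$, or in the image of a bottom-level rigid plane asymptotic to some $P_{2,i}$, which by Proposition~\ref{prop_rigid_plane0} sits inside $\S_i\subset\partial\U_j$; both alternatives contradict $p\in\mathrm{int}(\U_j)\setminus\M_j$. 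Hence $\Gamma=\emptyset$, $\tilde v_n$ converges in $C^\infty_{\rm loc}$ to the desired plane $\tilde v$ exponentially asymptotic to $P_{3,j}$, and $p\in v(\C)$.
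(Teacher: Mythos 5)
Your first two paragraphs reproduce the paper's bubbling analysis essentially verbatim (Hofer's Lemma, Lemma~\ref{lem_p3j}, negativity of the punctures in $\Gamma$, distinct simple $\P_2(\lambda)$-limits, exponential convergence to $P_{3,j}$), and your overall architecture — identify a two-dimensional ``bad set'', show it has measure zero, take $\U_j'$ to be its complement — is the right one. The divergence, and the gap, is in how you show the bad set is small. You define $\M_j$ as the union of the images of \emph{all} somewhere injective curves with positive puncture at $P_{3,j}$ and $m\geq 1$ negative punctures at distinct orbits of $\P_2(\lambda)$, and you kill the strata $m\geq 2$ by ``strengthening $\J^*_{\rm reg}(\lambda)$ to a further residual subset in the spirit of \cite{Drag}''. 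This genericity claim cannot be arranged at this stage of the construction. The orbit $P_{3,j}$ is an \emph{output} of Proposition~\ref{prop_specialP3}: it depends on $J$ and on the approximating sequences $(\lambda_n,J_n)$, so a transversality condition phrased in terms of curves asymptotic to $P_{3,j}$ cannot be imposed on $J$ before $J$ is chosen. Nor can you quantify over all candidate positive asymptotics, since $\lambda$ is degenerate and its index-$3$ orbits need not form a countable (or even discrete) set, so the usual ``countable intersection of residual sets'' device is unavailable. The paper does eventually prove exactly the statement you want — Theorem~\ref{thm_props3} rules out $m\geq 2$ — but only by perturbing $J$ to some $J'\in\J^\varepsilon_{J,\rm reg}(\lambda)$ supported near $\partial\U_j$, and the very setup of that perturbation (the existence of a plane $\tilde w_j$ asymptotic to $P_{3,j}$ with image in $\U_j\setminus\U^\varepsilon$) is a consequence of Proposition~\ref{prop_family_planes1}. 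Invoking that genericity here is therefore circular.

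The paper's actual argument avoids transversality altogether and only needs to control the curves that \emph{actually arise as limits}. Given $p$, a limit with $\Gamma\neq\emptyset$ produces a curve $\tilde w_1$ with $p\in w_1(\C\setminus\Gamma_1)$ (the points $q_n$ with $v_n(q_n)=p_n$ stay away from $\Gamma$ by the small-area cylinder analysis, since otherwise $p$ would lie on $\partial\U_j$). Repeating with $p'\notin w_1(\C\setminus\Gamma_1)\cup P_{3,j}$ gives a second limit $\tilde w_2$, and the key point is intersection-theoretic: if $\tilde w_1$ and $\tilde w_2$ shared a negative asymptotic limit $P_{2,i}$, then \cite[Proposition C.0]{dPS1} forces $w_1$ and $w_2$ to intersect, contradicting positivity and stability of intersections applied to the approximating embedded, mutually disjoint leaves of $\F_n$. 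Hence distinct limiting curves have pairwise disjoint sets of negative asymptotic limits, and since at most $\tilde k_j$ orbits of $\P_2(\lambda)$ sit on $\partial\U_j$, there are at most $\tilde k_j$ such limiting curves. Their images form the (finite, hence measure-zero) bad set defining $\U_j'$. If you want to salvage your write-up, replace the genericity step by this pigeonhole-on-asymptotic-limits argument; everything else in your proposal then goes through.
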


\begin{proof}
Arguing as in the proof of Proposition \ref{prop_rigid_planes}, we take a subsequence of $\tilde v_n$, still denoted by $\tilde v_n$, which admits  a sequence $z_n \in \C$ so that $|\nabla \tilde v_n(z_n)| \to +\infty$ as $n \to \infty$. Then $z_n$ is bounded and, up to extraction of a subsequence, converges to a point in $\D$.  
Moreover, extracting a subsequence, we can assume that the set of bubbling-off points $\Gamma\subset \D$ is finite. 
Because of the normalizations \eqref{paramvn}, 
we   find a $\widetilde J$-holomorphic map $\tilde v=(b,v)\colon \C \setminus \Gamma \to \R \times S^3 $ so that $\tilde v_n$ converges to $\tilde v$ in $C^\infty_{\rm loc} (\C \setminus \Gamma)$ as $n \to +\infty$. Moreover,  $\tilde v$ is asymptotic to $P_{3,j}$ at $\infty$ (see    Lemma \ref{lem_p3j}) and to  distinct orbits in $\P_2(\lambda)$ at the negative punctures in $\Gamma$. The convergence of $\tilde v$ to $P_{3,j}$ is exponential with a negative leading eigenvalue whose eigenfunction has winding number $1$ with respect to any global trivialization of the contact structure. We also have $v(\C \setminus \Gamma) \subset \U_j$.

Performing a soft-rescaling of $\tilde v_n$ near each puncture in $\Gamma$ where $\tilde v$ is asymptotic to some $P_{2,i}\in \P_2(\lambda),$ we find a new $\widetilde J$-holomorphic curve  $\tilde w=(d,w)\colon \C \setminus \Gamma'\to \R \times S^3,$ with $\Gamma'$ finite, which is 
asymptotic to $P_{2,i}$ at $\infty$ and, at its punctures in $\Gamma'\subset \D$, the curve is asymptotic to covers of orbits in $\P_2(\lambda)$ (see hypothesis II in Theorem \ref{main1}). See also \cite{fols} and \cite{dPS1} for more details on the soft-rescaling. Since $J\in \J_{\rm reg}^*(\lambda),$ we can apply Lemma \ref{lem_genJset} to conclude that  $\tilde w$ is a plane asymptotic to $P_{2,i}$.  In particular, by uniqueness of such planes, $w(\C) \subset \partial \U_j$.

Define the points $q_n\in \C$ by  $v_n(q_n) =p_n, \forall n.$ We observe that the sequence $q_n$ is bounded and stays away from any puncture of $\tilde w$. Indeed, 
 the usual analysis using cylinders of small area implies that every point sufficiently close to the punctures in $\Gamma$ are mapped under $v_n$ to a point arbitrarily close to $P_{2,i}\cup w(\C)\subset \partial \U_j$, see \cite[Theorem 6.6]{fols}.  Thus, after taking a subsequence, we may assume that $q_n \to q_\infty \in \D$, where $q_\infty \not \in \Gamma$ and $w(q_\infty)=p.$ Denote $\tilde w_1 = \widetilde v$ and $\Gamma_1=\Gamma$.

Now take $p_n'\to p'\neq p\in \U_j\setminus (w_1(\C\setminus \Gamma_1)\cup P_{3,j})$, and, as before, consider the $\widetilde J_n$-holomorphic planes, suitably normalized as in \eqref{paramvn} and again denoted by $\tilde v_n=(b_n,v_n)$, so that $p_n'\in v_n(\C)$. After taking a subsequence, we can assume that there exists $\Gamma_2\subset \D,$ and a $\tilde J$-holomorphic curve  $\tilde w_2=(c_2,w_2)\colon\C \setminus \Gamma_2 \to \R \times S^3$ so that $\tilde v_n \to \tilde w_2$ in $C^\infty_{\rm loc}(\C \setminus \Gamma_2)$ as $n \to +\infty$. The asymptotic limits of $\tilde w_2$ at the punctures in $\Gamma_2$ are distinct orbits in $\P_2(\lambda)$. A soft-rescaling near each  $z' \in \Gamma_2$ produces a $\widetilde J$-holomorphic plane whose asymptotic limit coincides with the asymptotic limit of $\tilde w_2$ at $z'$. Moreover, if $q_n' \in \D$  is such that $v_n(q_n') = p'_n\to p'$ then, up to a subsequence, $q_n' \to q'_\infty \notin \Gamma_2$ and $w_2(q'_\infty)=p'$. In particular, the image of $w_2$ differs from the image of $w_1$. 

Assume that $\tilde w_1$ and $\tilde w_2$ are asymptotic to the same orbit $P_{2,i}\in \P_2(\lambda)$ at   punctures $z_1\in \Gamma_1$ and $z_2 \in \Gamma_2$, respectively. Then Proposition C.0 in \cite{dPS1} implies that $w_1$ and $w_2$ must intersect each other (the intersections of $\bar w_1$ and $\bar w_2$ are isolated) and, as $C^\infty_{\rm loc}$-limits of curves that do not intersect each other, the positivity and stability of intersections of pseudo-holomorphic curves (see \cite[Appendix E]{MSbook}) give a contradiction. Thus $w_1$ and $w_2$ have mutually distinct asymptotic limits at their negative punctures.

We can repeat the process with a point $p'' \in \U_j \setminus ( w_1(\C \setminus \Gamma_1) \cup w_2(\C \setminus \Gamma_2)\cup P_{3,j})$ and find the limiting curve  associated with the sequence $\tilde v_n=(b_n,v_n)$, suitably normalized and satisfying $p''_n\in v_n(\C)\to p''$. As before, if the limiting curve $\tilde w_3=(c_3,w_3)$ has negative punctures, then it is asymptotic to an orbit in $\P_2(\lambda)$ at these punctures. These asymptotic limits are distinct from the asymptotic limits of $\tilde w_1$ and $\tilde w_2$ at any of their negative punctures. Hence, we conclude that when we vary the point $p$ in $\U_j \setminus P_{3,j}$ there can be at most $\tilde k_j$ of such limiting curves admitting negative punctures. A choice of $p\in \U_j\setminus P_{3,j}$ in the complement of the image of these curves implies that the limiting curve does not admit bubbling-off points and thus is a $\widetilde J$-holomorphic plane exponentially asymptotic to $P_{3,j}$.  The proof is complete. \end{proof}

\subsection{Another generic set of almost complex structures}

Let $J\in \J_{\rm reg}^*(\lambda)$ be chosen as in   Lemma \ref{lem_genJset}. In Section \ref{sec:3} we   constructed a transverse foliation $\F_J$ adapted to the Reeb flow of $\lambda$, so that the binding is formed by the orbits $P_{2,i}\in \P_2(\lambda),i=1,\ldots,l,$ and finitely many orbits $P_{3,j}\in \P_3^{u,-1,\leq C}(\lambda), j=1,\ldots, l+1,$ where $C>0$ is as in Proposition \ref{prop_unif_C}. 
Each $P_{2,i}$ is the boundary of a pair of rigid planes $U_{i,1},U_{i,2}\in \F_J$ so that $\S_i = U_{i,1}\cup P_{2,i} \cup U_{i,2}\subset S^3$ is a $C^1$-embedded $2$-sphere that separates $S^3$ into two distinct components. The union of these $2$-spheres is denoted by $\S$.
Each $P_{3,j},j=1,\ldots, l+1,$ lies in the component $\U_j$ of  $S^3\setminus \bigcup_{i=1}^l \S_i.$ The closure of $\U_j$ has $\tilde k_j$ boundary components, denoted by $\S_{n^j_k},k=1,\ldots,\tilde k_j,$ where  $n^j_k\in \{1,\ldots,l\}$.
The leaves of the transverse foliation $\F_J$ are projections to $S^3$ of embedded finite energy $\widetilde J$-holomorphic curves, where $\widetilde J$ is the almost complex structure in $\R \times S^3$ associated with the pair $(\lambda,J)$.  For every $i=1,\ldots,l,$ there exist two embedded $\widetilde J$-holomorphic planes $\tilde u_{i,j}=(a_{i,j},u_{i,j})\colon\C \to \R \times S^3,j=1,2,$ asymptotic to $P_{2,i}\in \P_2(\lambda)$ at $\infty$, and so that
$
U_{i,j} = u_{i,j}(\C)\subset \S_i,  \forall i,j.
$

Let $\varepsilon>0$ be small. For each $j=1,\ldots, l+1 $ and for every $k=1,\ldots,\tilde k_j,$ take a compact  $\varepsilon$-neighborhood $\U^\varepsilon_{j,n^j_k} \subset {\rm closure}(\U_j)$ of $\S_{n_k^j}$.  
Abbreviate
$$
\U^\varepsilon_{j} = \bigcup_{k=1}^{\tilde k_j} \U_{j,n^j_k}^\varepsilon\subset {\rm closure}(\U_j) \ \ \ \ \ \mbox{ and }\ \ \ \ \ \U^\varepsilon = \bigcup_{j=1}^{l+1} \U^\varepsilon_j \subset S^3.
$$
Denote by $\J_J^\varepsilon(\lambda) \subset \J(\lambda)$ the space of $d\lambda$-compatible almost complex structures $J'$ satisfying
$
J' = J \ \ \mbox{ in } \ (S^3 \setminus \U^\varepsilon )\cup \S.
$
The set $\J_J^\varepsilon(\lambda)$ inherits the $C^\infty$-topology from $\J(\lambda)$.  Denote by $\widetilde J'$ the almost complex structure on $\R \times S^3$ determined by $\lambda$ and $J'\in \J_J^\varepsilon(\lambda)$. In particular, 
$\tilde u_{i,j}$ is $\widetilde J'$-holomorphic for every $J'\in \J_J^\varepsilon(\lambda).$

Taking $\varepsilon>0$ sufficiently small, we can assure that for every $j=1,\ldots, l+1,$ there exists an embedded $\widetilde J$-holomorphic plane $\tilde w_j=(c_j,w_j)\colon\C \to \R \times S^3$, which is exponentially asymptotic to $P_{3,j}$ at $\infty$ and satisfies
$
w_j(\C)\subset \U_j \setminus \U^\varepsilon.  
$
In particular, $\tilde w$ is also $\widetilde J'$-holomorphic for every  almost complex structure $\widetilde J'$ associated with $\lambda$ and $J'\in \J_J^\varepsilon(\lambda)$.

The foliation $\F_J$ constructed in the previous section may contain regular leaves which are projections to $S^3$  of a $\widetilde J$-holomorphic curve $\tilde w=(b,w)\colon\C \setminus \Gamma \to \R \times S^3,$ satisfying
\begin{itemize}
    \item $\Gamma \neq \emptyset$. 
    \item $\int_{\C \setminus \Gamma} w^*d\lambda>0.$
    \item $\infty$ is a positive puncture of $\tilde w$ and every puncture in $\Gamma$ is negative.
    \item  $\exists \, j\in \{1,\ldots,l+1\}$ so that $\tilde w$ is exponentially asymptotic to $P_{3,j}$ at $\infty$ and to distinct orbits in $\P_2(\lambda)$ at the punctures in $\Gamma$ and  $w(\C\setminus \Gamma)\subset \U_j.$
\end{itemize} 
The Fredholm index of $\tilde w$ is
$
 {\rm Ind}(\tilde w)   = \cz(P_{3,j}) - \sum_{z \in \Gamma} \cz(P_{2,z}) - 1 + \#\Gamma
 = 2 - \#\Gamma.
$
Here, $P_{2,z}\in \P_2(\lambda)$ is the asymptotic limit of $\tilde w$ at $z\in \Gamma$. Therefore,
$\# \Gamma >1$ implies that  ${\rm Ind}(\tilde w) \leq 0.$

The following theorem, based on the weighted Fredholm theory developed in \cite{props3}, states that it is always possible to find $J'\in \J_J^\varepsilon(\lambda)$, which is $C^\infty$-close to $J$, so that the Fredholm index of $\tilde w$ as above is at least $1$. In particular, such curves are rigid cylinders asymptotic to some $P_{3,j}$ at the positive puncture and to an orbit in $\P_2(\lambda)$ at the negative puncture.    See Figure \ref{fig:genericJ}.

\begin{thm}[Hofer-Wysocki-Zehnder \cite{props3}, Dragnev \cite{Drag}]\label{thm_props3} 
Given $J \in \mathcal{J}_{\rm reg}^*(\lambda)$ and $\varepsilon>0$ sufficiently small, there exists a residual set $\J_{J,{\rm reg}}^\varepsilon(\lambda) \subset \J^\varepsilon_J(\lambda)$ in the $C^\infty$-topology so that the following holds: let $J'\in \J_{J,{\rm reg}}^\varepsilon(\lambda)$ and let $\tilde v=(b,v)\colon\C \setminus \Gamma \to \R \times S^3,\ \Gamma\neq \emptyset,$ be a somewhere injective finite energy $\widetilde J '$-holomorphic curve, where $\widetilde J'$ is the almost complex structure in $\R \times S^3$ induced by the pair $(\lambda,J')$. Assume that all punctures in $\Gamma$ are negative and that $\tilde v$ is exponentially asymptotic to some $P_{3,j}$ at the positive puncture $+\infty$   and to distinct orbits in $\P_2(\lambda)$ at the punctures in $\Gamma$. Then $\#\Gamma=1$. In particular, $\tilde v$ is a $\widetilde J'$-holomorphic cylinder asymptotic to $P_{3,j}$ at  $\infty$ and to an orbit in $\P_2(\lambda)$ at its negative puncture.
\end{thm}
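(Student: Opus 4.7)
The plan is to apply Dragnev's transversality theorem, compute the Fredholm index as in the proof of Proposition~\ref{prop_family_planes1}, and use the freeness of the $\R$-action by symplectization translations on non-constant finite energy curves to rule out $\#\Gamma\geq 2$. For each $j\in\{1,\ldots,l+1\}$, each integer $k\geq 1$, and each ordered tuple $(P_{2,i_1},\ldots,P_{2,i_k})$ of distinct orbits in $\P_2(\lambda)$, I would consider the moduli space $\M_{j;i_1,\ldots,i_k}(J')$ of somewhere injective finite energy $\widetilde J'$-holomorphic maps $\tilde v\colon\C\setminus\Gamma\to\R\times S^3$ with $\#\Gamma=k$, whose positive puncture at $\infty$ is exponentially asymptotic to $P_{3,j}$ and whose negative punctures are asymptotic respectively to $P_{2,i_1},\ldots,P_{2,i_k}$. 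Since only finitely many tuples occur, the desired residual set $\J^\varepsilon_{J,\mathrm{reg}}(\lambda)$ will be obtained as a finite intersection of residual subsets, one for each tuple.

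The main step is to verify that Dragnev's theorem \cite[Corollary 1.10]{Drag} applies: every $\tilde v\in\M_{j;i_1,\ldots,i_k}(J')$ must have an injective point whose image lies in the open perturbation region $\U^\varepsilon\setminus\S$. Near each negative puncture, the exponential asymptotic formula of Theorem~\ref{thm:asymptoticbehaviour} drives $v$ into any prescribed tubular neighborhood of $P_{2,i_\ell}$, which I would choose inside $\U^\varepsilon$ and disjoint from every other $\S_{i'}$. Furthermore, $v$ cannot be locally contained in $\S_{i_\ell}$: since $\widetilde J'=\widetilde J$ along $\R\times\S$ and the rigid planes $\tilde u_{i_\ell,1},\tilde u_{i_\ell,2}$ are $\widetilde J$-holomorphic, unique continuation would force $\tilde v$ to agree with one of them up to reparametrization and $\R$-translation, contradicting the presence of a positive puncture at $P_{3,j}$. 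Hence $v^{-1}(\S)$ is discrete, and combining this with the openness and density of injective points supplies injective points with image in $\U^\varepsilon\setminus\S$.

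Dragnev's theorem then produces $\J^\varepsilon_{J,\mathrm{reg}}(\lambda)$ for which $\M_{j;i_1,\ldots,i_k}(J')$ is a smooth manifold of dimension equal to
\begin{equation*}
\mathrm{Ind}(\tilde v)=\cz(P_{3,j})-\sum_{\ell=1}^{k}\cz(P_{2,i_\ell})-1+k=2-k.
\end{equation*}
Because the $\R$-action is free on any non-constant finite energy curve, every non-empty connected component of this moduli space has dimension at least $1$; for $k\geq 2$ this contradicts $\mathrm{Ind}\leq 0$, forcing $\M_{j;i_1,\ldots,i_k}(J')=\emptyset$ and hence $\#\Gamma=1$. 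The main technical obstacle is the verification of the injective-point condition inside $\U^\varepsilon\setminus\S$; the remainder of the argument is bookkeeping.
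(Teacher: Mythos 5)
The paper offers no proof of this statement: it is quoted as an external result from \cite{props3} and \cite{Drag}, so there is no in-paper argument to compare against. Your reconstruction is precisely the intended argument behind that citation, and it is essentially correct. It reproduces the index computation the paper itself performs in the paragraph preceding the theorem (${\rm Ind}(\tilde v)=2-\#\Gamma$), and it mirrors the way Dragnev's result is invoked in Lemma~\ref{lem_genJset}, with the one genuinely non-formal step being the verification that every curve in the relevant moduli spaces has an injective point whose image lies in the perturbation region $\U^\varepsilon\setminus\S$. Your treatment of that step — exponential approach to $P_{2,i_\ell}$ near the negative punctures via Theorem~\ref{thm:asymptoticbehaviour}, plus the intersection-theoretic dichotomy showing $v^{-1}(\S)$ cannot have interior unless $\tilde v$ is a reparametrized $\R$-translate of a rigid plane (impossible because of the positive puncture at $P_{3,j}$) — is sound and is exactly what a localized Sard--Smale argument requires.

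Two points deserve tightening. First, the paper explicitly attributes the theorem to the \emph{weighted} Fredholm theory of \cite{props3}: since $\lambda$ may be degenerate, $P_{3,j}$ need not be non-degenerate, and the hypothesis that $\tilde v$ is \emph{exponentially} asymptotic to $P_{3,j}$ is what places the linearized operator in a weighted Sobolev setting where the generalized index $\cz(P_{3,j})=3$ yields the correct Fredholm index; your sketch uses the right formula but does not acknowledge that this is where the weights enter, and a referee would ask for it. Second, your assertion that ``the $\R$-action is free on any non-constant finite energy curve'' is false as stated — trivial cylinders over periodic orbits are non-constant yet $\R$-invariant up to reparametrization. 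The correct justification is that a curve whose positive asymptotic limit $P_{3,j}$ differs from its negative asymptotic limits (equivalently, with $\pi\circ dv\not\equiv 0$, as in Lemma~\ref{lem_genJset}) cannot be a trivial cylinder, so the $\R$-action is free on its component of the moduli space and that component has dimension at least $1$, contradicting $2-k\le 0$ for $k\ge 2$. With these two repairs the argument is complete.
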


\begin{figure}[ht!]
\centering
\includegraphics[scale=0.4]{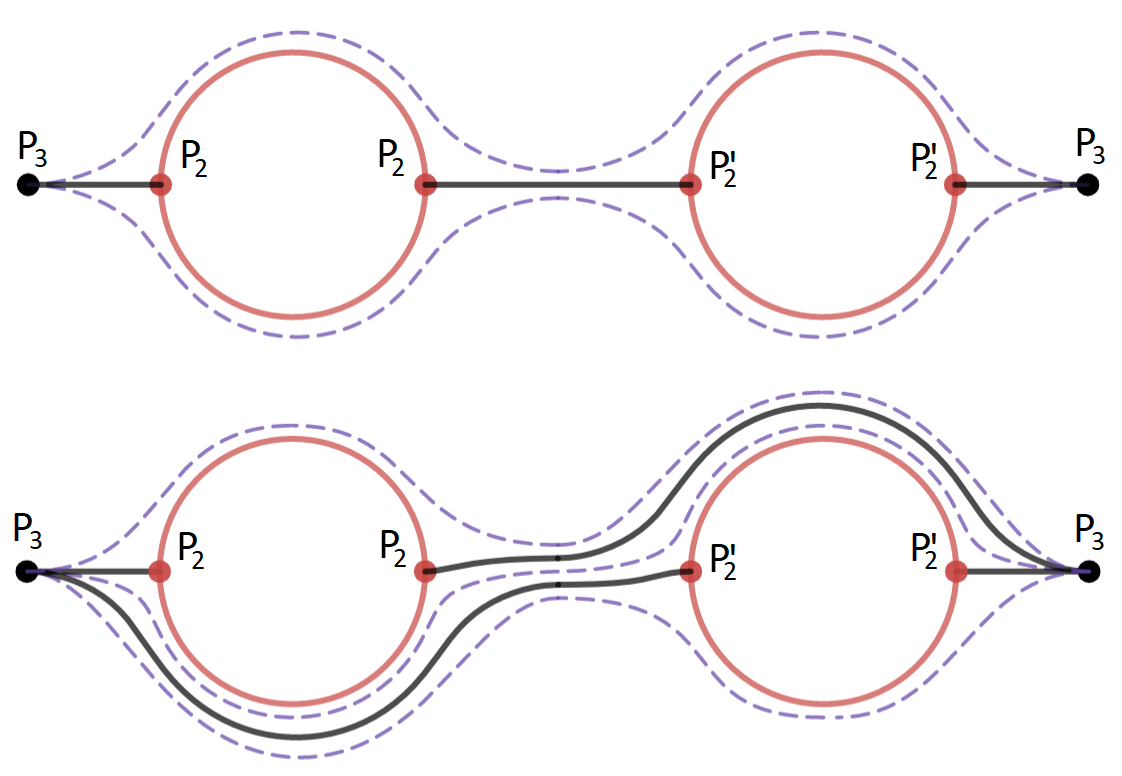} 
\caption{(top) A non-generic $\widetilde J$-holomorphic curve $\tilde v\colon \C \setminus \{z_2,z_2'\} \to \R \times S^3,$ asymptotic to $P_3$ at its positive puncture $\infty$ and to $P_2$ and $P_2'$ at the negative punctures $z_2$ and $z_2'$, respectively. (bottom) For a small generic perturbation of $J$, this curve unfolds into generic $\widetilde J$-holomorphic cylinders, one connecting $P_3$ to $P_2$ and the other connecting $P_3$ to $P_2'$.}
\label{fig:genericJ}
\end{figure}

\subsection{Finding the desired transverse foliation}

Take $\varepsilon>0$ sufficiently small and let $J'\in \J^{\varepsilon}_{J,{\rm reg}}(\lambda)$ be as in Theorem \ref{thm_props3}. Then for every $i=1,\ldots,l,$ the rigid planes $U_{i,1},U_{i,2}$ are projections of embedded $\widetilde J'$-holomorphic planes $\tilde u_{i,1},\tilde u_{i,2}\colon\C \to \R \times S^3$, which are  asymptotic to $P_{2,i}$ at $\infty$. 
For every $j=1,\ldots,l+1,$ there exists an embedded $\widetilde J'$-holomorphic plane $\tilde w_j\colon\C \to \R\times S^3$ asymptotic to $P_{3,j}$ at $\infty$.
Our goal is to construct a transverse foliation which contains only planes and cylinders asymptotic to the orbits $P_{2,i},i=1,\ldots,l,$ and $P_{3,j},j=1,\ldots,l+1$, and so that the given planes are part of the regular leaves.

Denote by $\M_{\widetilde J'}(P_{3,j})$ the space of $\widetilde J'$-holomorphic planes exponentially asymptotic to $P_{3,j}$ at $\infty$. We identify those planes which have the same image in $S^3$, that is, $\tilde w_1 \sim \tilde w_2$ if there exists $a,b\in \C$ and $c\in \R$ so that 
$\tilde w_1(z) = c+ \tilde w_2(az +b), \forall z\in \C,$
where $c+\tilde w_2$ is the  $c$-translation of $\tilde w_2$  in the $\R$-direction. The intersection theory developed in~\cite{props2, props3,Si2} implies the following proposition.

\begin{prop}\label{prop_fredholm}Let $j\in \{1,\ldots,l+1\}$. Then the following assertions hold:
\begin{itemize}
    \item[(i)]  $\M_{\widetilde J'}(P_{3,j})$ has the structure of a $1$-dimensional smooth manifold.
    \item[(ii)] if $[\tilde w=(c,w)]\in \M_{\widetilde J'}(P_{3,j})$, then 
$
w(\C) \subset \U_j. 
$
    \item[(iii)] if $[\tilde w_1=(c_1,w_1)]\neq[\tilde w_2=(c_2,w_2)]\in \M_{\widetilde J'}(P_{3,j}),$ then 
$
w_1(\C) \cap w_2(\C) = \emptyset.
$
\end{itemize}
\end{prop}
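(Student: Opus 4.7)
The three claims correspond to standard consequences of Fredholm theory for asymptotically cylindrical pseudo-holomorphic curves, combined with Siefring's intersection theory. The plan is to establish smoothness via Dragnev-type transversality, and to control the geometric location of leaves and their mutual disjointness through generalized intersection numbers determined by the asymptotic data at the common limit $P_{3,j}$ and the linking of asymptotic limits.

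For (i), any plane $\tilde w \in \M_{\widetilde J'}(P_{3,j})$ is exponentially asymptotic to the simple orbit $P_{3,j}$, hence somewhere injective. Its Fredholm index equals
\[
\mathrm{Ind}(\tilde w) = \cz(P_{3,j}) - \chi(S^2) + \#\Gamma = 3 - 2 + 1 = 2.
\]
Dragnev's transversality theorem \cite{Drag} gives a residual subset of $\J^\varepsilon_J(\lambda)$ on which $\bar\partial_{\widetilde J'}$ is surjective at every somewhere injective plane of this type; since its intersection with the residual set $\J^\varepsilon_{J,\mathrm{reg}}(\lambda)$ from Theorem \ref{thm_props3} is still residual, I would assume $J'$ lies in both. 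The parametrized moduli is then a smooth manifold of the expected dimension; quotienting by the free action of $\mathrm{Aut}(\C) \times \R$ (domain biholomorphism together with $\R$-translation in the target) yields a smooth $1$-manifold.

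For (ii), let $\tilde u_{i,k}$, $k = 1, 2$, be the planes whose projections form $\S_i$; these remain $\widetilde J'$-holomorphic because $J' = J$ on $\S$. Siefring's generalized intersection pairing decomposes as
\[
\tilde w * \tilde u_{i,k} = \tilde w \cdot \tilde u_{i,k} + \iota_\infty(\tilde w, \tilde u_{i,k}),
\]
with both summands non-negative: the first by positivity of intersections, the second as a non-negative asymptotic defect. The total is a homotopy invariant determined by the relative linking of the asymptotic limits, and vanishes because $\link(P_{3,j}, P_{2,i}) = 0$ by Proposition \ref{prop_specialP3}. Hence $w(\C) \cap u_{i,k}(\C) = \emptyset$ for all $i, k$, so $w(\C)$ is disjoint from $\S$. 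Connectedness of $w(\C)$ and the fact that it accumulates on $P_{3,j} \subset \U_j$ then force $w(\C) \subset \U_j$.

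For (iii), given distinct classes $[\tilde w_1] \neq [\tilde w_2] \in \M_{\widetilde J'}(P_{3,j})$, the two curves are somewhere injective with different projections to $S^3$. Siefring's pairing again splits as $\tilde w_1 * \tilde w_2 = \tilde w_1 \cdot \tilde w_2 + \iota_\infty(\tilde w_1, \tilde w_2) \geq 0$. The central computation is that the total vanishes: since $\cz(P_{3,j}) = 3$, the leading negative eigenvalue of $A_{P_{3,j}, J'}$ has winding $\wind^{<0} = 1$, which is the extremal winding forced by the Fredholm index $\mathrm{Ind}(\tilde w_i) = 2$; Siefring's asymptotic formulas from \cite{Si2} then yield $\tilde w_1 * \tilde w_2 = 0$, so both summands of the decomposition vanish. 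In particular $w_1(\C) \cap w_2(\C) = \emptyset$. The main technical obstacle is this last step: verifying the vanishing of the Siefring pairing requires careful matching of the asymptotic winding numbers of $\tilde w_1, \tilde w_2$ with the Fredholm index data — analogous to, but subtler than, the opposite-direction eigensection analysis used to construct the $C^1$-embedded spheres in Lemma \ref{lem_C1embedded}.
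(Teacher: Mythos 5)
Your arguments for (ii) and (iii) are essentially the route the paper intends: it proves the proposition by citing the intersection theory of \cite{props2,props3,Si2}, i.e.\ positivity of intersections plus the homotopy invariance of the Siefring pairing, with the vanishing in (ii) coming from $\link(P_{3,j},P_{2,i})=0$ and the vanishing in (iii) coming from the extremal asymptotic winding $\wind_\infty=1$ forced by $\cz(P_{3,j})=3$ and $\wind_\pi\geq 0$. Those two parts are fine in outline (in (iii) the equality $\wind^{<0}=1$ follows from $\cz=3$ and monotonicity of the winding, not from the Fredholm index, but that is cosmetic).

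Part (i) has a genuine gap. You invoke Dragnev's theorem to get a residual subset of $\J^\varepsilon_J(\lambda)$ on which the linearized operator is surjective ``at every somewhere injective plane of this type.'' But elements of $\J^\varepsilon_J(\lambda)$ agree with $J$ outside $\U^\varepsilon\cup\S$, and any Sard--Smale/Dragnev argument requires the curve to pass through the region where the almost complex structure is allowed to vary. The moduli space $\M_{\widetilde J'}(P_{3,j})$ contains, by the paper's own construction, a plane $\tilde w_j$ with $w_j(\C)\subset \U_j\setminus\U^\varepsilon$; for such a curve no choice of $J'\in\J^\varepsilon_J(\lambda)$, however generic, can produce transversality, so your argument does not establish the manifold structure at exactly the points one cares about. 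The correct mechanism here is transversality that holds for \emph{every} compatible $J'$: either Wendl's automatic transversality for immersed genus-zero curves ($\mathrm{Ind}=2$ exceeds the count of punctures with even index, which is zero since $\cz(P_{3,j})=3$ and $\wind_\pi=0$ makes the curve immersed), or equivalently the Hofer--Wysocki--Zehnder implicit function theorem for embedded index-$2$ curves with $\wind_\pi=0$ from \cite{props3}. A second, related omission: $P_{3,j}$ need not be a non-degenerate orbit of $\lambda$ (only the index-$2$ orbits are assumed hyperbolic), so the index computation and the implicit function theorem must be carried out in the weighted (exponentially decaying) Fredholm setting of \cite{props3} adapted to the exponential asymptotics built into the definition of $\M_{\widetilde J'}(P_{3,j})$; your index formula tacitly assumes non-degeneracy.
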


The last step toward the proof of Theorem \ref{main1} is   the following assertion.

\begin{prop}\label{prop:convergence_of_planes}
Fix $j\in \{1,\ldots,l+1\}$ and let $\tilde k_j$ be the number of components of $\partial \U_j$. Then there exist $\tilde k_j$ embedded $\widetilde J'$-holomorphic cylinders $$
\tilde v_{j,m}=(b_{j,m},v_{j,m})\colon\C \setminus \{0\} \to \R \times S^3, \ \ \ \ \forall m=1,\ldots, \tilde k_j,
$$ 
and $\tilde k_j$ families of embedded $\widetilde J'$-holomorphic planes  $\tilde w_{j,m,\tau}\in \M_{\widetilde J'}(P_{3,j}),$ $$\tilde w_{j,m,\tau}=(c_{j,m,\tau},w_{j,m,\tau})\colon\C \to \R \times S^3,\ \ \tau\in (0,1), \ \ \ \ \forall m=1,\ldots,\tilde k_j,$$ so that the following properties hold:
\begin{itemize}
    \item[(i)] $\infty$ is a positive puncture of $\tilde v_{j,m}$, where it is exponentially asymptotic to $P_{3,j}$, and $0\in \C$ is a negative puncture of $\tilde v_{j,m},$ where it is asymptotic to $P_{2,n^j_m}, \forall m=1,\ldots,\tilde k_j$. Moreover,
    $v_{j,m}(\C \setminus \{0\}) \subset \U_j,  \forall m,
    $
    and 
    $$
    v_{j,m}(\C \setminus \{0\}) \cap v_{j,n}(\C \setminus \{0\}) = \emptyset, \quad  \forall m \neq n.
    $$
    \item[(ii)] $\infty$ is a positive puncture of $\tilde w_{j,m,\tau},$ where it is exponentially asymptotic to $P_{3,j}$, and
    $
    w_{j,m,\tau}(\C) \subset \U_j,  \forall m.
    $
    \item[(iii)] $v_{j,m}(\C \setminus \{ 0 \})$ and $w_{j,m,\tau}(\C),$ $m=1,\ldots, \tilde k_j,\tau\in (0,1),$ are regular leaves of a transverse foliation of $\U_j $.
    \item[(iv)] for every $m\in \{1,\ldots,\tilde k_j\}$,  $\tilde w_{j,m,\tau}$ converges in the SFT-sense $($see \cite{BEHWZ03}$)$ to  
    $
    \tilde v_{j,m} \oplus \tilde u_{n^j_m ,1}$ as  $\tau \to 0^+ $
    and  to
    $
    \tilde v_{j,m+1}\oplus \tilde u_{n^j_{m+1},2}$ as $\tau\to 1^-.$ Here, $\tilde u_{n^j_m,1}=(a_{n^j_m,1},u_{n^j_m,1})$ and $\tilde u_{n^j_{m+1},2}=(a_{n^j_{m+1},2},u_{n^j_{m+1},2})$ are rigid planes asymptotic to orbits in $\P_2(\lambda)$. 
    Moreover, given neighborhoods $\V_{j,m,1}\subset {\rm closure}(\U_j)$ of $u_{n^j_m,1}(\C)\cup P_{2,n^j_m} \cup v_{j,m}(\C \setminus \{0\})$ and $\V_{j,m+1,2} \subset {\rm closure}(\U_j)$ of $u_{n^j_m,2}(\C) \cup P_{2,n^j_{m+1}}\cup v_{j,m+1}(\C \setminus \{0\}),$ there exists $\delta>0$ so that $w_{j,m,\tau}(\C) \subset \V_{j,m,1},  \forall 0<\tau<\delta,$ and $w_{j,m,\tau}(\C) \subset \V_{j,m+1,2},  \forall 1-\delta< \tau<1.$  By convention, $\tilde k_j+1 \equiv 1$. 
\end{itemize}
\end{prop}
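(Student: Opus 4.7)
The plan is to extract the desired cylinders, plane families, and their breaking behavior as SFT-limits of the foliations $\F_n$ adapted to $(\lambda_n,J_n')$, where $J_n'\to J'$ is a suitable sequence of generic complex structures, and then use the genericity of $J'\in\J_{J,\mathrm{reg}}^\varepsilon(\lambda)$ (via Theorem~\ref{thm_props3}) together with the moduli structure of Proposition~\ref{prop_fredholm} to rule out degenerate configurations. Throughout, I exploit that $J'$ coincides with $J$ on $\S\cup(S^3\setminus\U^\varepsilon)$, so the rigid planes $\tilde u_{i,1},\tilde u_{i,2}$ remain $\widetilde J'$-holomorphic and so do the embedded planes $\tilde w_j$ whose existence was already established.

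First I would construct each rigid cylinder $\tilde v_{j,m}$. Fix a boundary component $\S_{n^j_m}\subset\partial\U_j$ with binding orbit $P_{2,n^j_m}$. For every large $n$, the foliation $\F_n$ contains exactly one rigid cylinder in $\U_j^n$ joining $P_{3,j}^n$ to $P_{2,n^j_m}$, realized as the projection of an embedded $\widetilde J_n'$-holomorphic cylinder. Applying Proposition~\ref{prop_cylinder1} with $J$ replaced by $J'$ produces, after the normalization~\eqref{paramvn3} and passage to a subsequence, an embedded somewhere-injective finite-energy $\widetilde J'$-holomorphic curve with a positive puncture at $P_{3,j}$, a distinguished negative puncture at $P_{2,n^j_m}$, and possibly additional negative punctures asymptotic to pairwise distinct orbits in $\P_2(\lambda)$. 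Theorem~\ref{thm_props3} forces the set of additional negative punctures to be empty, so the limit is a genuine embedded $\widetilde J'$-holomorphic cylinder $\tilde v_{j,m}\colon\C\setminus\{0\}\to\R\times S^3$ with $v_{j,m}(\C\setminus\{0\})\subset\U_j$, proving (i). Pairwise disjointness of $v_{j,m}$ and $v_{j,n}$ for $m\neq n$ follows from positivity and stability of intersections~\cite{MSbook}, using that the corresponding cylinders in $\F_n$ are disjoint leaves of a foliation. The existence of the planes $\tilde w_{j,m,\tau}$ in (ii) is obtained analogously from Proposition~\ref{prop_family_planes1}, applied to points $p_m$ chosen in the open full-measure subset $\U_j'$ near each $v_{j,m}(\C\setminus\{0\})$; Theorem~\ref{thm_props3} again shows that the limiting object has no negative punctures and is therefore a genuine plane in $\M_{\widetilde J'}(P_{3,j})$.

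The main obstacle is organizing the moduli space $\M_{\widetilde J'}(P_{3,j})$ into exactly $\tilde k_j$ one-parameter families whose endpoints match the cyclic pattern in (iv), and verifying property (iii). By Proposition~\ref{prop_fredholm} the moduli space is a smooth $1$-manifold of pairwise disjoint planes contained in $\U_j$. Along each connected component I would track what happens in the SFT-limit~\cite{BEHWZ03} as one moves toward an end: the compactness analysis of Proposition~\ref{prop_family_planes1} combined with Theorem~\ref{thm_props3} shows that every broken limit is a two-level building consisting of one rigid cylinder $\tilde v_{j,m}$ on the upper level and one rigid plane $\tilde u_{n^j_m,k}$ ($k\in\{1,2\}$) on the lower level. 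Conversely, the gluing theorem applied at each such broken building — available since the building is regular, the Fredholm index is correct, and $P_{2,n^j_m}$ is non-degenerate hyperbolic — produces a $(0,\delta)$-parameter family of embedded planes in $\M_{\widetilde J'}(P_{3,j})$ converging to the broken configuration. This yields exactly $2\tilde k_j$ ends of $\M_{\widetilde J'}(P_{3,j})$, one for each pair $(m,k)$. Since the $1$-manifold $\M_{\widetilde J'}(P_{3,j})$ has finitely many ends matched in pairs, it splits as a disjoint union of intervals. Matching the rigid planes $\tilde u_{n^j_m,1}$ and $\tilde u_{n^j_{m+1},2}$ on adjacent boundary spheres at the two ends of each interval is then dictated by the transversality of the foliation to the Reeb flow: the two planes appearing in consecutive breakings must lie on different sides of the cyclic arrangement of the spheres $\S_{n^j_m}$ in $\partial\U_j$, because the resulting family of planes sweeps through $\U_j$ from one side to the other. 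Finally, property (iii) — that the cylinders $\tilde v_{j,m}$ together with the plane families $\tilde w_{j,m,\tau}$ foliate $\U_j$ transversely to the Reeb flow — is obtained by combining the embeddedness and pairwise disjointness already established with the surjectivity of the evaluation map: Proposition~\ref{prop_family_planes1} shows that a generic point of $\U_j\setminus P_{3,j}$ lies on some plane in $\M_{\widetilde J'}(P_{3,j})$, and positivity of intersections together with the $1$-dimensionality of the moduli space ensure that every point of $\U_j\setminus(P_{3,j}\cup\bigcup_m v_{j,m}(\C\setminus\{0\}))$ lies on a unique plane of the families.
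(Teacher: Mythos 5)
Your construction of the cylinders $\tilde v_{j,m}$ (and of the plane families) hinges on ``applying Proposition~\ref{prop_cylinder1} (resp.\ Proposition~\ref{prop_family_planes1}) with $J$ replaced by $J'$'', i.e.\ on the existence of stable finite energy foliations adapted to $(\lambda_n,J_n')$ for some sequence $J_n'\to J'$. No such sequence is available in this setup: $J'$ is introduced only \emph{after} the limit $n\to\infty$ has been taken, as a perturbation of $J$ supported in $\U^\varepsilon\subset \overline{\U_j}$, and it is drawn from the residual set $\J_{J,{\rm reg}}^\varepsilon(\lambda)$ of Theorem~\ref{thm_props3}; there is no claim that $J'\in\J^*_{\rm reg}(\lambda)$ or that $J'$ is approximable by $J_n'\in\J_{\rm reg}(\lambda_n)$ compatibly with the frozen orbits of Proposition~\ref{prop_specialP3}. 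The limits that Propositions~\ref{prop_cylinder1} and~\ref{prop_family_planes1} actually deliver are $\widetilde J$-holomorphic, and Theorem~\ref{thm_props3} says nothing about $\widetilde J$-curves, so it cannot be invoked to kill the extra negative punctures of those limits --- removing that obstruction for $\widetilde J'$-curves is precisely why the perturbation is made. The paper's route avoids this: it imports from the $J$-analysis only one plane $\tilde w_j$ whose image lies in $\U_j\setminus\U^\varepsilon$, where $J'=J$, so that $\tilde w_j$ is automatically $\widetilde J'$-holomorphic; it then extends $\tilde w_j$ to a maximal family in $\M_{\widetilde J'}(P_{3,j})$ and obtains each cylinder $\tilde v_{j,m}$ as the top level of the SFT-limit of that family at an end, where Theorem~\ref{thm_props3} legitimately forces $\#\Gamma=1$, and where the soft-rescaled bottom level is identified with a rigid plane $\tilde u_{n^j_m,k}$ by uniqueness.

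The bookkeeping of $\M_{\widetilde J'}(P_{3,j})$ also has gaps. The conclusion ``exactly $2\tilde k_j$ ends, hence $\tilde k_j$ intervals'' requires (a) that $\M_{\widetilde J'}(P_{3,j})$ has no circle components --- the paper rules this out because a compact $S^1$-family of disjoint transverse planes would produce an open book with binding $P_{3,j}$ and force the orbits of $\P_2(\lambda)$ to link with $P_{3,j}$; (b) that each broken building is the SFT-limit of exactly one end --- the paper proves uniqueness of the limit via the topological spheres $S_{n_0}$ whose enclosed volume shrinks to zero; and (c) that the two ends of a single family break at \emph{distinct} boundary spheres when $\tilde k_j>1$ --- the paper deduces this from disjointness of distinct maximal families together with the fact that the planes near a given building fill a full one-sided neighborhood of it. Your appeal to ``the cyclic arrangement of the spheres'' begs the question, since the cyclic order $m\mapsto m+1$ is \emph{defined} by the order in which the continuation process visits the boundary components; likewise, deciding which of $\tilde u_{n^j_m,1}$ or $\tilde u_{n^j_m,2}$ appears at $\tau\to 0^+$ versus $\tau\to 1^-$ requires fixing the orientation convention relating $\tau$ to the Reeb flow. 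Finally, for (iii) you cite Proposition~\ref{prop_family_planes1} for surjectivity of the evaluation map, but again that proposition concerns $\widetilde J$-planes; the paper instead shows that the union of the images of the families, the cylinders and $P_{3,j}$ is open and closed in $\U_j$, hence all of it.
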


\begin{proof}
The choice of $\varepsilon>0$ sufficiently small and the definition of $\widetilde J'$ guarantee  the existence of an embedded $\widetilde J'$-holomorphic plane $\tilde w=(a,w)\colon \C \to \R \times S^3$, which is exponentially asymptotic to $P_{3,j}$  and satisfies $w(\C) \subset \U_j \setminus P_{3,j}$. By Proposition \ref{prop_fredholm}, $\tilde w\in \M_{\widetilde J'}(P_{3,j})$ lies in a $1$-parameter family $\tilde w_\tau=(c_\tau,w_\tau)\colon \C \to \R \times S^3, \tau\in (-\delta, \delta),$ of embedded $\widetilde J'$-holomorphic planes which are exponentially asymptotic to $P_{3,j}$ for some $\delta$ small enough.  For each $\tau\in (-\delta,\delta),$ $w_\tau \colon \C \to S^3$ is an embedding transverse to the Reeb flow, and $w_{\tau_1}(\C) \cap w_{\tau_2}(\C) =\emptyset, \forall \tau_1 \neq \tau_2.$ Consider the maximal one-parameter family of planes containing the family $\tilde w_\tau, \tau\in (-\delta,\delta)$, i.e.\ the smooth family of embedded $\widetilde J'$-holomorphic planes $\tilde w_\tau\in \M_{\widetilde J'}(P_{3,j})$ so that the family $w_\tau(\C)$ fills the maximal volume in $S^3$. Parametrize this maximal family by 
\begin{equation}\label{familywtau}
\tilde w_\tau=(c_\tau,w_\tau)\colon\C \to \R \times S^3,\ \ \ \ \ \tau\in (0,1).
\end{equation} Such a family is not compact since otherwise the $S^1$-family of such planes in the complement of $P_{3,j}$ determines an open book decomposition of $S^3$ whose binding is $P_{3,j}$ and, as a consequence of the transversality of the pages with respect to the flow, the orbits in $\P_2(\lambda)\neq \emptyset$ are linked with $P_{3,j}$, a contradiction. 

We fix the convention that $\tau$ increases in the direction of the Reeb flow and that the Reeb vector field points inside $\U_j$ along $U_{n^j_m,1}=u_{n^j_m,1}(\C)$ and outside $\U_j$ along $U_{n^j_m,2}=u_{n^j_m,2}(\C), \forall m=1,\ldots, \tilde k_j.$

For each $\tau\in (0,1),$ we choose the following normalization of $\tilde w_\tau$. Consider a small compact tubular neighborhood $\U \subset \U_j$  of $P_{3,j}$, so that
\begin{itemize}
    \item $\U$ contains no periodic orbits that are contractible in $\U$.
     \item there exists no Reeb orbit    $P\subset \U$ of $\lambda$ which is geometrically distinct from $P_{3,j}$, is homotopic to $P_{3,j}$ in $\U$ and satisfies $\link(P,P_{3,j})=0$.
     \item $w_\tau (\C \setminus \D)\subset  \U.$
    \item $w_\tau(1)\in \partial \U.$
    \item  $w_\tau(z^*_\tau) \in \partial \U$ for some $z^*_\tau \in \partial \D$  satisfying ${\rm Re}(z^*_\tau) \leq 0.$
    \item $c_\tau(2) = 0$ 
\end{itemize}

Let us study the compactness properties of the family \eqref{familywtau} under the normalizations above.  Take a strictly increasing sequence $\tau_n \to 1^-$ and denote $\tilde w_n=\tilde w_{\tau_n} .$ Arguing as in the proof of Proposition \ref{prop_rigid_planes}, we take a subsequence of $\tilde w_n$, still denoted by $\tilde w_n$, which admits a sequence $z_n \in \C$ so that $|\nabla \tilde w_n(z_n)| \to +\infty$ as $n \to+ \infty$. Then $z_n$ is bounded and, up to a subsequence, $z_n$ converges to a point in $\D$. 
Moreover, extracting a subsequence, we can assume that the set of bubbling-off points $\Gamma\subset \D$ is finite. The normalizations of $\tilde w_n$ imply the existence of a $\widetilde J$-holomorphic curve $\tilde v=(b,v)\colon \C \setminus \Gamma \to \R \times S^3,$ so that $\tilde w_n \to \tilde v$ in $C^\infty_{\rm loc} (\C \setminus \Gamma)$ as $n \to +\infty$. By Lemma \ref{lem_p3j}, $\tilde v$ is non-constant and exponentially converges to $P_{3,j}$ at the positive puncture $\infty$. Every puncture in $\Gamma$ is negative. 

Observe that $\Gamma \neq \emptyset$. Indeed, if $\Gamma =\emptyset$, then $\tilde v$ is an embedded finite energy $\widetilde J'$-holomorphic plane exponentially asymptotic to $P_{3,j}$. In particular, $[\tilde v]\in \M_{\widetilde J'}(P_{3,j})$. By Proposition \ref{prop_fredholm}, the family \eqref{familywtau} can  be continued, contradicting its maximality and the fact that $\tau_n\to 1^-$.

The image of $v$ is contained in $\U_j$ since, otherwise, by stability and positivity of intersections of pseudo-holomorphic curves (see \cite[Appendix E]{MSbook}) in $\R \times S^3$,  for every $n$ sufficiently large, $w_n$ intersects one of the rigid planes asymptotic to  some orbit in $\P_2(\lambda)$ which implies that  $\tilde w_n$ intersects the corresponding $\tilde J'$-holomorphic   plane, absurd. Hence, we obtain   $v(\C \setminus \Gamma) \subset \U_j$.

Arguing again as in the proof of Proposition \ref{prop_rigid_planes}, we  obtain that    $\tilde v$ is asymptotic to $P_{3,j}$ at $+\infty$ and to an orbit  in $\P_2(\lambda)$ at each negative puncture  in $\Gamma\neq \emptyset$.   By Theorem \ref{thm_props3}, $\Gamma =1$ and thus $\tilde v$ is a $\widetilde J'$-holomorphic cylinder, exponentially asymptotic to $P_{3,j}$ at $+\infty$, and asymptotic to   $P_{2,n^j_m}\in \P_2(\lambda)$ for some $m\in \{1, \ldots, \tilde k_j\}$ at its unique negative puncture. We can assume that $\Gamma = \{0\}$, so that   $\tilde v$ is asymptotic to $P_{2,n^j_m}$ at $0$.

Performing a soft-rescaling of $\tilde w_n$ near the negative puncture $0$, we find a new $\widetilde J'$-holomorphic curve $\tilde u=(a,u)\colon\C \setminus \Gamma_u \to \R \times S^3$, which is asymptotic to $P_{2,n^j_m}$ at $+ \infty$ and   to covers of orbits in $\P_2(\lambda)$  at its punctures in $\Gamma_u$. As before, the generic choice of $  J$  and the soft-rescaling process imply that $\Gamma_u=\emptyset$,    and hence $\tilde u$ is a plane asymptotic to $P_{2,n^j_m}$. 

We conclude that $\tilde w_n$ converges to a $2$-level building $\mathcal{B}$ of embedded $\widetilde J'$-holomor\-phic curves. The top level contains the cylinder $\tilde v\colon\C \setminus \{0\} \to \R \times S^3$, exponentially asymptotic to $P_{3,j}$ at $\infty$, and asymptotic to $P_{2,n^j_m}$ at its negative puncture $0\in \C.$ The bottom level consists of a  plane $\tilde u\colon\C \to \R \times S^3,$  asymptotic to  $P_{2,n^j_m}$ at $\infty.$
The usual analysis of cylinders with small area, see \cite{small} and also \cite[Proposition 9.5]{dPS1}, implies that given a neighborhood $\V\subset S^3$ of $v(\C\setminus\{0\}) \cup P_{2,n^j_m} \cup u(\C)$ we have $w_n(\C) \subset \V$ for $n$ sufficiently large.

The uniqueness of $\widetilde J'$-holomorphic planes asymptotic to orbits in $\P_2(\lambda)$, see \cite[Proposition C.-3]{dPS1}, and the fact that $\tau_n\to 1^-$ implies that $\tilde u=\tilde u_{n^j_{m},2}$.

For every large $n_0$, we can patch 
$
w_{n_0}(\C)\cup P_{3,j} \cup v(\C\setminus\{0\})\cup P_{2,n^j_m} \cup u(\C)
$
to form a topological embedded $2$-sphere $S_{n_0}\subset {\rm closure}(\U_j)$. The $2$-sphere $S_{n_0}$ separates $S^3$ into two disjoints subsets, one of them, denoted by $\mathcal{A}_{n_0}$, contains $w_n(\C)$ for $n>n_0$. The volume of $\mathcal{A}_{n_0}$ tends to $0$ as $n_0\to +\infty$. It then follows that for every sequence $\tau_n \to 1^-$, the image of $w_{\tau_n}(\C)$ is contained in $\mathcal{A}_{n_0}$ for every $n$ sufficiently large. This implies that the limiting building $\B$ is the unique SFT-limit of $\widetilde w_\tau$ as $\tau \to 1^-$.

 According to C. Wendl \cite[Theorem 1]{Wendl1}, the curves $\tilde v$ and $\tilde u$ are automatically transverse. In particular,  we can glue $\tilde v=:\tilde v_{j,m}$ with $\tilde u_{n^j_m,1}$ along $P_{2,n^j_m}$ to form a new family of embedded $\widetilde J'$-holomorphic planes, all of them exponentially asymptotic to $P_{3,j}$. See \cite[Section 7]{Nel13} or \cite[Section 10]{WendlSFT}. Such planes lie in a maximal family of planes in $\M_{\widetilde J'}(P_{3,j})$ and will be denoted by
$
\tilde w_\tau'=(c_\tau',w_\tau')\colon\C \to \R \times S^3,  \tau\in(0,1),
$
so that $w'_\tau(\C) \subset \U_j, \forall \tau$. Under our parametrizations,  $\tilde w'_\tau$ converges to the holomorphic building formed by $\tilde v$ and $\tilde u_{n^j_m,1}$ as $\tau \to 0^+$. In our notation the family $\tilde w'_\tau$ now corresponds to $\tilde w_{j,m,\tau}, \tau \in (0,1).$

If $\tilde k_j = 1,$ then the family $\tilde w'_\tau,\tau\in(0,1),$ coincides with the family $\tilde w_\tau,\tau\in (0,1),$ and the compactness properties above show that the  $\cup_{\tau\in (0,1)} w_\tau(\C),$ is open and closed in $\U_j\setminus (P_{3,j}\cup v(\C \setminus \{0\}))$ and thus coincides with $\U_j \setminus (P_{3,j}\cup v(\C \setminus \{0\}))$. If $\tilde k_j>1$, then the families $\tilde w_\tau,\tau\in (0,1),$ and $\tilde w'_\tau,\tau\in (0,1),$ do not coincide, and  we consider the compactness properties of the family $\tilde w'_\tau$ as $\tau \to 1^-$. As before, this family converges to a building whose top level consists of an embedded $\widetilde{J}'$-cylinder $\tilde v'\colon\C \setminus \{0\} \to \R \times S^3,$ which is exponentially asymptotic to $P_{3,j}$ at $\infty$ and asymptotic to some other $P_{2,i}\in \P_2(\lambda)$ at $0$, and whose lower level consists of an embedded $\widetilde{J}'$-holomorphic plane $\tilde u'$, which is asymptotic to  $P_{2,i}$  at $+\infty$.

We necessarily have $P_{2,n^j_{m}} \neq P_{2,i}=:P_{2,n^j_{m+1}}$ since the families $\tilde w_\tau$ and $\tilde w_\tau'$ are distinct and hence, by the uniqueness and intersection properties of the $\widetilde J'$-holomorphic planes exponentially asymptotic to $P_{3,j}$, points of $w'_\tau(\C)$ cannot accumulate at $P_{2,n^j_{m}}$ as $\tau \to 1^-$. It follows  from the normalizations of  $\tilde w'_\tau , \tau\in (0,1),$ that  $\tilde u'=:\tilde u_{n^j_{m+1},2}$ and, as before, we glue $\tilde v'=:\tilde v_{j,m+1}$ with $\tilde u_{n^j_{m+1},1}$ to obtain a new maximal family of embedded $\widetilde J'$-holomorphic planes $\tilde w''_\tau,\tau\in (0,1),$ which are exponentially asymptotic to $P_{3,j}$. 

If $\tilde k_j=2$, then the new family $\tilde w''_\tau$ coincides with the family $\tilde w_\tau$ and $v''(\C\setminus\{0\})=v(\C\setminus \{0\})$, where $\tilde v''$ is the embedded $\widetilde{J}'$-holomorphic cylinder consisting of the top level of  a holomorphic building associated with the family $\tilde w''_\tau$, as $\tau \to 1^-$, and  
\[
\bigcup_{\tau\in (0,1)} w_\tau(\C) \cup \bigcup_{\tau\in (0,1)} w'_\tau(\C)
\]
fills $\U_j\setminus \left( P_{3,j}\cup v(\C \setminus \{0\})\cup v'(\C \setminus \{0\}) \right)$. Otherwise, we glue  $v''=:\tilde v_{j,m+2}$ with the  rigid plane $\tilde u_{n^j_{m+2},1}$ and continue in a similar manner.  It has to stop after a finite number of steps. Indeed, the number of such families of $\widetilde J'$-holomorphic planes asymptotic to $P_{3,j}$ is precisely $\tilde k_j$, the number of components in $\partial\U_j$.

We  conclude that there exist $\tilde k_j$ embedded $\widetilde J'$-holomorphic cylinders $\tilde v_{j,m}\colon\C \setminus \{0\} \to \R \times S^3,$ $m=1, \ldots, \tilde k_j,$ which are exponentially asymptotic to $P_{3,j}$ at the positive puncture $\infty$ and to   $P_{2,n^j_m}\subset \S_{n^j_m}$ at their negative puncture $0$. In the complement of such cylinders, there exist $\tilde k_j$ families of embedded $\widetilde J'$-holomorphic planes $\tilde w_{j,m,\tau},\tau\in(0,1)$, which are exponentially asymptotic to $P_{3,j}$ at $\infty$. Moreover, each  family converges to the holomorphic building formed by $\tilde v_{j,m} \oplus \tilde u_{n^j_m,1}$ as $\tau \to 0^+$ and to the building $\tilde v_{j,m+1}\oplus \tilde u_{n^j_{m+1},2}$ as $\tau \to 1^-$ for every $m=1,\ldots,\tilde k_j$. Here, $\tilde k_j+1 \equiv 1$.  This finishes the proof. \end{proof}

Summarizing the results from Propositions \ref{prop_rigid_plane0}, \ref{prop_specialP3} and  \ref{prop:convergence_of_planes}, we obtain the following statement, which implies Theorem \ref{main1}.

\begin{thm}\label{mainfef}
Given a weakly convex contact form $\lambda = f \lambda_0$ on  $(S^3,\xi_0)$ satisfying hypotheses I-III  of Theorem \ref{main1}, there exists a dense subset $ \widetilde{\J}_{\rm reg}  (\lambda) \subset \J(\lambda)$   in the $C^{\infty}$-topology, so that  for  every $J \in \widetilde{\J}_{\rm reg}  (\lambda) ,$ the pair $(\lambda, J)$ admits a stable finite energy foliation $\tilde{\mathcal{F}}$ satisfying the following properties:

\begin{enumerate}

\item[(i)] For  each $i=1,\ldots,l,$
there exists a pair of embedded  finite energy $\widetilde J$-holomorphic planes $\tilde u_{i,1}=(a_{i,1},u_{i,1}),$ $\tilde u_{i,2}=(a_{i,2},u_{i,2})\colon \C \to \R \times S^3$ which are asymptotic to $P_{2,i}.$  The union $\mathcal{S}_i=u_{i,1}(\C) \cup P_{2,i} \cup u_{i,2}(\C)$ is a $C^1$-embedded $2$-sphere separating $S^3$ into two components and $\mathcal{S}_i \cap \mathcal{S}_j =\emptyset, \forall i \neq j$.   Every component $\U_j, j=1,\ldots, l+1,$ of $S^3 \setminus \cup_{i=1}^l \S_i$ contains an index-$3$ orbit $P_{3,j} $ satisfying the linking properties given in Proposition \ref{prop_specialP3}.

\item[(ii)]  Given $j \in \{1, \ldots, l+1\},$  denote by $\tilde k_j$ the number of boundary components of   $\U_j$, denoted by  $\S_{n^j_k},k=1,\ldots,\tilde k_j,$ where  $n^j_k\in \{1,\ldots,l\}$.
\begin{enumerate}
    \item[(a)] 
Then there exist $\tilde k_j$ embedded finite energy $\widetilde J$-holomorphic cylinders 
\[ 
\tilde v_{j,m}=(b_{j,m},v_{j,m})\colon\C \setminus \{0\} \to \R \times S^3, \ \  \forall m=1,\ldots, \tilde k_j,
\]
which is exponentially asymptotic to $P_{3,j}$ at its positive puncture $+\infty$ and to $P_{2, n^j_m}$ at its negative puncture $0, \forall m = 1, \ldots, \tilde k_j. $ Moreover, 
they satisfy
    $v_{j,m}(\C \setminus \{0\}) \subset \U_j,  \forall m,
    $
    and 
    $$
    v_{j,m}(\C \setminus \{0\}) \cap v_{j,n}(\C \setminus \{0\}) = \emptyset, \quad  \forall m \neq n.
    $$

\item[(b)] The complement 
$
(\R \times \U_j) \setminus \cup_{m=1}^{\bar k_j} \tilde v_{j,m}(\C \setminus \{ 0 \}  )
$
is foliated by   $\bar k_j$ families of embedded finite energy $\widetilde J$-holomorphic planes    
\[
\tilde w_{j,m,\tau}=(c_{j,m,\tau},w_{j,m,\tau})\colon\C \to \R \times S^3,\ \tau\in (0,1), \ \  \forall m=1,\ldots,\tilde k_j,
\]
 exponentially asymptotic to $P_{3,j}$ at its   positive puncture $+\infty.$
Moreover, each plane $\tilde w_{j,m\tau}$ satisfies the compactness property described in      Proposition \ref{prop:convergence_of_planes}-(iv).
 
  \end{enumerate}

\item[(iii)] 
Every finite energy $\widetilde J$-holomorphic curve  described above  satisfies   ${\rm wind}_\pi =0,$ so that its projection  to $S^3$ is  transverse to the Reeb vector field of $\lambda.$ 
\end{enumerate}
Consequently, the projection $\mathcal{F}$ of the finite energy foliation $\tilde{\mathcal{F}}$ to $S^3$ provides the weakly convex foliation as in Theorem \ref{main1}.
\end{thm}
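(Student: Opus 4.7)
The plan is to package the conclusions of Propositions \ref{prop_rigid_plane0}, \ref{prop_specialP3}, and \ref{prop:convergence_of_planes} into a single statement about a $C^\infty$-dense subset $\widetilde{\J}_{\rm reg}(\lambda) \subset \J(\lambda)$. First I would fix $J \in \J_{\rm reg}^*(\lambda)$, the residual set from Lemma \ref{lem_genJset}. Proposition \ref{prop_specialP3} then supplies sequences $\lambda_n \to \lambda$ and $J_n \to J\in\J(\lambda)$ together with weakly convex foliations $\F_n$ whose index-$3$ binding orbits converge to the special collection $\P_3^s(\lambda)=\{P_{3,1},\ldots,P_{3,l+1}\}$ satisfying the required linking conditions, and places each $P_{3,j}$ in the component $\U_j$. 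The rigid $\widetilde J$-holomorphic planes $\tilde u_{i,1},\tilde u_{i,2}$ asymptotic to each $P_{2,i}$, and the associated $C^1$-embedded $2$-spheres $\mathcal{S}_i$, are then furnished by Proposition \ref{prop_rigid_planes} as $C^\infty_{\rm loc}$-limits of the rigid planes in $\tilde\F_n$. This gives assertion (i).

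To produce assertion (ii), I would next perturb $J$ slightly so as to destroy the non-generic curves with several negative punctures at index-$2$ orbits. For $\varepsilon>0$ small, Theorem \ref{thm_props3} provides a residual subset $\J_{J,{\rm reg}}^\varepsilon(\lambda)\subset \J_J^\varepsilon(\lambda)$; choose $J'$ in this subset, $C^\infty$-close to $J$. Since $J'=J$ on $(S^3\setminus\U^\varepsilon)\cup\mathcal{S}$, the rigid planes $\tilde u_{i,j}$, the spheres $\mathcal{S}_i$, the components $\U_j$, and the index-$3$ orbits $P_{3,j}$ survive and remain $\widetilde J'$-holomorphic; and the $\widetilde J'$-holomorphic planes $\tilde w_j$ of Section~7 exist by the small-perturbation argument preceding Theorem \ref{thm_props3}. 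I would set $\widetilde{\J}_{\rm reg}(\lambda) := \bigcup_{J\in\J_{\rm reg}^*(\lambda),\,\varepsilon>0}\J_{J,{\rm reg}}^\varepsilon(\lambda)$, which is dense in $\J(\lambda)$. Applying Proposition \ref{prop:convergence_of_planes} inside each $\U_j$ produces the $\tilde k_j$ rigid cylinders $\tilde v_{j,m}$, together with the $\tilde k_j$ one-parameter families $\tilde w_{j,m,\tau}$ of planes and their prescribed SFT-breakings.

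The remaining task is to assemble these curves and their $\R$-translates into a \emph{stable finite energy foliation} of $\R\times S^3$, and to verify (iii). Transversality of the projection to the Reeb vector field follows from $\wind_\pi=0$: for each rigid plane at an index-$2$ orbit, the leading eigenfunction of $A_{P_{2,i},J'}$ has winding number $1$ (as in Lemma \ref{lem_C1embedded}); for each plane or cylinder asymptotic to an index-$3$ orbit, Proposition \ref{prop_cylinder1} and Proposition \ref{prop:convergence_of_planes} yield a negative leading eigenvalue whose eigenfunction also has winding number $1$; in both cases $\wind_\pi=\wind_\infty-\chi+\#\Gamma=0$. The main obstacle is to show that, for each fixed $j$ and $m$, the family $\bigcup_{\tau}w_{j,m,\tau}(\C)$ together with the four adjacent cylinders and rigid planes exhausts the relevant sector of $\U_j$. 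This follows by combining the smooth $1$-manifold structure of $\M_{\widetilde J'}(P_{3,j})$ from Proposition \ref{prop_fredholm}, which gives openness of the set of filled points via the implicit function theorem, with the SFT-compactness and uniqueness of the limiting buildings from Proposition \ref{prop:convergence_of_planes}-(iv), which gives closedness; positivity of intersections prevents overlap between distinct families. Gathering these pieces produces the desired $\tilde\F$, and projecting to $S^3$ yields exactly the transverse foliation of Theorem~\ref{main1}.
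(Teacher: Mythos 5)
Your proposal is correct and follows essentially the same route as the paper: the paper's proof of Theorem \ref{mainfef} is precisely the assembly of Propositions \ref{prop_rigid_plane0}, \ref{prop_specialP3} and \ref{prop:convergence_of_planes}, with the dense set obtained by first choosing $J\in\J^*_{\rm reg}(\lambda)$ from Lemma \ref{lem_genJset} and then perturbing to $J'\in\J^\varepsilon_{J,{\rm reg}}(\lambda)$ via Theorem \ref{thm_props3}, exactly as you describe. Your open--closed argument for exhausting each $\U_j$ and the $\wind_\pi=0$ computation for (iii) likewise match the arguments the paper carries out inside Proposition \ref{prop:convergence_of_planes} and Lemma \ref{lem_C1embedded}.
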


\section{Transition maps}\label{sec:infinitetwist}
 
 Throughout this section, we assume that the Reeb flow of $\lambda$ is real-analytic.
 Let $\F$ be a genus zero transverse foliation adapted to the Reeb flow of  $\lambda=f\lambda_0$ as in Theorem \ref{main1}. We shall set some notations to represent the elements associated with $\F$, see Figure \ref{fig:examplenotation}. First recall that all the orbits in $\P_2(\lambda)$ are binding orbits of $\F$ and, for each $i=1,\ldots,l,$ the orbit $P_{2,i}\in \P_2(\lambda)$ bounds two  rigid planes $U_{i,1},U_{i,2} \in \F$, so that the embedded $2$-sphere 
$
\S_i = U_{i,1} \cup P_{2,i} \cup U_{i,2}\subset S^3 
$
is $C^1$. The $2$-spheres $\S_i,i=1,\ldots,l,$ are  mutually disjoint and each one of them separates $S^3$ into two components. In this way, the complement of their union is formed by $l+1$ components, denoted by 
$
\U_j \subset S^3 \setminus \cup_{i=1}^l \S_i, j=1,\ldots,l+1.
$

Inside $\U_j,$ there exist a binding orbit $P_{3,j}\in \P_3^{u,-1}(\lambda)$ and $\bar k_j$ one-parameter families of planes asymptotic to $P_{3,j}$, denoted
$
\F_{j,1},\ldots,\F_{j,\tilde k_j}\subset \F, 
j=1,\ldots,l+1,$
where $\bar k_j\in \N^*$ coincides with the number of components in $\partial \U_j$. The family $\F_{j,k}=(\F_{j,k,\tau})_\tau$ is parametrized by $\tau \in (0,1).$ It breaks, as $\tau \to 0^+$, onto a rigid plane
$
U_{j,k}^{-} \in \{U_{1,1},U_{1,2}, \ldots,U_{l,1},U_{l,2}\}\subset \F,
$ asymptotic to a binding orbit
$
\P_{j,k}^{-} \in \P_2(\lambda), 
$
and a rigid cylinder
$
V_{j,k}^{-} \in \F,
$
 asymptotic to $P_{3,j}$ at its positive puncture and to $\P_{j,k}^{-}$ at its negative puncture.
In a similar manner, as $\tau\to 1^-$,  it  breaks onto a rigid plane
$
U_{j,k}^{+} \in \{U_{1,1},U_{1,2}, \ldots,U_{l,1},U_{l,2}\}\subset \F,
$
asymptotic to a binding orbit
$
\P_{j,k}^{+} \in \P_2(\lambda), 
$
and a rigid cylinder
$
V_{j,k}^{+} \in \F,
$
 asymptotic to $P_{3,j}$ at its positive puncture and to $\P_{j,k}^{+}$ at its negative puncture.

Let $
\mathcal{C} := \{(j,k)\in \N ^* \times \N^*  \mid j=1,\ldots, l+1, \; k=1,\ldots , \tilde k_j\}.
$
It parametrizes the space of families of planes asymptotic to the index $3$ binding orbits: each  $(j,k)\in \mathcal{C}$ corresponds to a family $\F_{j,k}$ of planes in $\U_j$ asymptotic to $P_{3,j}$. After relabelling the families of planes, we can assume that $\mathcal{P}^+_{j,k} = \mathcal{P}^-_{j,k+1}, \forall k \mod \bar k_j.$

For every $(j,k)\in \mathcal{C}$, there exist  a unique branch of the local unstable manifold of $\P_{j,k}^{-}$ and a unique  branch of the local stable  manifold of $\P_{j,k}^{+}$, which intersect $\F_{j,k,\tau}$ for $\tau$ sufficiently close to $0$ and $\tau$ sufficiently close to $1$, respectively. Denote these local branches by
$
\B^u_{j,k}\subset W^u_{\rm loc}(\P_{j,k}^{-})$
and $\B^s_{j,k}\subset W^s_{\rm loc}(\P_{j,k}^{+}).
$
We shall fix planes
$
\F_{j,k}^{-} := \F_{j,k,\tau_-}^{-}$ and  $ \F_{j,k}^{+} := \F_{j,k,\tau_+}^{+},$  
where $\tau_->0$ is sufficiently close to $0$ and $\tau_+<1$ is sufficiently close to $1$. In particular, the intersections of $\F_{j,k}^{-}$ and $\F_{j,k}^{+}$ with $\B^u_{j,k}$ and $\B^s_{j,k}$, respectively, are simple closed curves, denoted by 
\begin{equation}\label{eq:CC}
C^u_{j,k} := \F_{j,k}^{-} \cap \B^u_{j,k}\ \  \ \ \mbox{ and }\ \ \ \  C^s_{j,k} := \F_{j,k}^{+} \cap \B^s_{j,k}.
\end{equation}
The closed disks bounded by $C^u_{j,k}$ and $C^s_{j,k}$ will be denoted by 
\begin{equation}\label{eq:DD}
\mathcal{D}^u_{j,k} \subset \F_{j,k}^{-} \ \ \ \ \mbox{ and } \ \ \ \ \mathcal{D}^s_{j,k} \subset \F_{j,k}^{+},
\end{equation}
respectively. These disks have $d\lambda$-area equal to the actions of $\P_{j,k}^{-}$ and $\P_{j,k}^{+},$ respectively.

  \begin{figure}[t]
 \begin{center}
\begin{tikzpicture}[scale=1]
  \draw[thick] (1,1.5) arc (180:360: 1cm and  1cm);    \draw[thick]  (1,1.5 ) arc (180:0: 1cm and 1cm);   \draw[thick] (1,-1.5) arc (180:360: 1cm and  1cm);   \draw[thick]  (1 ,-1.5 ) arc (180:0: 1cm and 1cm);   \draw[thick] (-1,0) arc (180:360: 3cm and  3cm);   \draw[thick]  (-1,0) arc (180:0: 3cm and 3cm);
 \draw[thick] (-2.5,0) to (0.5,0);  \draw[thick] (3.5,0) to (6.5,0);  \draw[thick] (1,1.5) to (1.7,1.5);  \draw[thick] (3,1.5) to (2.3,1.5);  \draw[thick] (1,-1.5) to (1.7,-1.5);  \draw[thick] (3,-1.5) to (2.3,-1.5);     \draw[thick] (0.5,0) [out=80, in=255]  to (0.78,1.2);    \draw[thick] (0.78,1.2) [out=75, in=180]  to (0.97,1.5);    \begin{scope}[yscale=-1]   \draw[thick] (0.5,0) [out=80, in=255]  to (0.78,1.2);    \draw[thick] (0.78,1.2) [out=75, in=180]  to (0.97,1.5);    \end{scope}
  \begin{scope}[xscale=-1, xshift=-4cm]     \draw[thick] (0.5,0) [out=80, in=255]  to (0.78,1.2);    \draw[thick] (0.78,1.2) [out=75, in=180]  to (0.97,1.5);    \begin{scope}[yscale=-1]   \draw[thick] (0.5,0) [out=80, in=255]  to (0.78,1.2);    \draw[thick] (0.78,1.2) [out=75, in=180]  to (0.97,1.5);    \end{scope} \end{scope}
 \draw[densely dotted] (-2.5,0) to (-3.5,0);\draw[densely dotted] (-2.5,0) [out=120, in=270] to (-2.8,2);\draw[densely dotted] (-2.5,0) [out=160, in=280] to (-3.2,1.2 );\draw[densely dotted] (-2.5,0) [out=70, in=250] to (-2.1,2.2 );\draw[densely dotted] (-2.5,0) [out=60, in=180] to (2,3.7  );\draw[densely dotted] (-2.5,0) [out=43, in=180] to (2,3.5 );\draw[densely dotted] (-2.5,0) [out=5, in=245] to (-1.1,1 );\draw[densely dotted] (-1.1,1) [out=65, in=180] to (2.01, 3.2);
 \begin{scope}[yscale=-1]  \draw[densely dotted] (-2.5,0) to (-3.5,0);\draw[densely dotted] (-2.5,0) [out=120, in=270] to (-2.8,2);\draw[densely dotted] (-2.5,0) [out=160, in=280] to (-3.2,1.2 );\draw[densely dotted] (-2.5,0) [out=70, in=250] to (-2.1,2.2 );\draw[densely dotted] (-2.5,0) [out=60, in=180] to (2,3.7  );\draw[densely dotted] (-2.5,0) [out=43, in=180] to (2,3.5 );\draw[densely dotted] (-2.5,0) [out=5, in=245] to (-1.1,1 );\draw[densely dotted] (-1.1,1) [out=65, in=180] to (2.01, 3.2);
\end{scope}
\begin{scope}[xscale=-1, xshift=-4cm] \draw[densely dotted] (-2.5,0) to (-3.5,0);\draw[densely dotted] (-2.5,0) [out=120, in=270] to (-2.8,2);\draw[densely dotted] (-2.5,0) [out=160, in=280] to (-3.2,1.2 );\draw[densely dotted] (-2.5,0) [out=70, in=250] to (-2.1,2.2 );\draw[densely dotted] (-2.5,0) [out=60, in=180] to (2,3.7  );\draw[densely dotted] (-2.5,0) [out=43, in=180] to (2,3.5 );\draw[densely dotted] (-2.5,0) [out=5, in=245] to (-1.1,1 );\draw[densely dotted] (-1.1,1) [out=65, in=180] to (2.01, 3.2);
 \begin{scope}[yscale=-1] \draw[densely dotted] (-2.5,0) to (-3.5,0);\draw[densely dotted] (-2.5,0) [out=120, in=270] to (-2.8,2);\draw[densely dotted] (-2.5,0) [out=160, in=280] to (-3.2,1.2 );\draw[densely dotted] (-2.5,0) [out=70, in=250] to (-2.1,2.2 );\draw[densely dotted] (-2.5,0) [out=60, in=180] to (2,3.7  );\draw[densely dotted] (-2.5,0) [out=43, in=180] to (2,3.5 );\draw[densely dotted] (-2.5,0) [out=5, in=245] to (-1.1,1 );\draw[densely dotted] (-1.1,1) [out=65, in=180] to (2.01, 3.2);
\end{scope}\end{scope}
\draw[densely dotted] (1.7, 1.5) to (2.3, 1.5);\draw[densely dotted] (1.7, 1.5) [out=50, in=130] to (2.3, 1.5); \draw[densely dotted] (1.7, 1.5) [out=-50, in=-130] to (2.3, 1.5); \draw[densely dotted] (1.7, 1.5) [out=150, in=270] to (1.6, 1.6);\draw[densely dotted] (1.6, 1.6) [out=90, in=180] to (2, 1.8);\draw[densely dotted] (1.7, 1.5) [out=160, in=270] to (1.4, 1.6);\draw[densely dotted] (1.4, 1.6) [out=90, in=180] to (2, 2);\draw[densely dotted] (1.7, 1.5) [out=160, in=270] to (1.2, 1.65);\draw[densely dotted] (1.2, 1.65) [out=90, in=180] to (2, 2.2);
\begin{scope}[xscale=-1, xshift=-4cm]\draw[densely dotted] (1.7, 1.5) [out=150, in=270] to (1.6, 1.6);\draw[densely dotted] (1.6, 1.6) [out=90, in=180] to (2, 1.8);\draw[densely dotted] (1.7, 1.5) [out=160, in=270] to (1.4, 1.6);\draw[densely dotted] (1.4, 1.6) [out=90, in=180] to (2, 2);\draw[densely dotted] (1.7, 1.5) [out=160, in=270] to (1.2, 1.65);\draw[densely dotted] (1.2, 1.65) [out=90, in=180] to (2, 2.2);\end{scope}
\begin{scope}[yscale=-1, yshift=-3cm]\draw[densely dotted] (1.7, 1.5) [out=150, in=270] to (1.6, 1.6);\draw[densely dotted] (1.6, 1.6) [out=90, in=180] to (2, 1.8);\draw[densely dotted] (1.7, 1.5) [out=160, in=270] to (1.4, 1.6);\draw[densely dotted] (1.4, 1.6) [out=90, in=180] to (2, 2);\draw[densely dotted] (1.7, 1.5) [out=160, in=270] to (1.2, 1.65);\draw[densely dotted] (1.2, 1.65) [out=90, in=180] to (2, 2.2);
\begin{scope}[xscale=-1, xshift=-4cm]\draw[densely dotted] (1.7, 1.5) [out=150, in=270] to (1.6, 1.6);\draw[densely dotted] (1.6, 1.6) [out=90, in=180] to (2, 1.8);\draw[densely dotted] (1.7, 1.5) [out=160, in=270] to (1.4, 1.6);\draw[densely dotted] (1.4, 1.6) [out=90, in=180] to (2, 2);\draw[densely dotted] (1.7, 1.5) [out=160, in=270] to (1.2, 1.65);\draw[densely dotted] (1.2, 1.65) [out=90, in=180] to (2, 2.2);\end{scope}\end{scope}
  \begin{scope}[yscale=-1] \draw[densely dotted] (1.7, 1.5) to (2.3, 1.5);\draw[densely dotted] (1.7, 1.5) [out=50, in=130] to (2.3, 1.5);\draw[densely dotted] (1.7, 1.5) [out=-50, in=-130] to (2.3, 1.5); \draw[densely dotted] (1.7, 1.5) [out=150, in=270] to (1.6, 1.6);\draw[densely dotted] (1.6, 1.6) [out=90, in=180] to (2, 1.8);\draw[densely dotted] (1.7, 1.5) [out=160, in=270] to (1.4, 1.6);\draw[densely dotted] (1.4, 1.6) [out=90, in=180] to (2, 2);\draw[densely dotted] (1.7, 1.5) [out=160, in=270] to (1.2, 1.65);\draw[densely dotted] (1.2, 1.65) [out=90, in=180] to (2, 2.2);
\begin{scope}[xscale=-1, xshift=-4cm]\draw[densely dotted] (1.7, 1.5) [out=150, in=270] to (1.6, 1.6);\draw[densely dotted] (1.6, 1.6) [out=90, in=180] to (2, 1.8);\draw[densely dotted] (1.7, 1.5) [out=160, in=270] to (1.4, 1.6);\draw[densely dotted] (1.4, 1.6) [out=90, in=180] to (2, 2);\draw[densely dotted] (1.7, 1.5) [out=160, in=270] to (1.2, 1.65);\draw[densely dotted] (1.2, 1.65) [out=90, in=180] to (2, 2.2);\end{scope}
\begin{scope}[yscale=-1, yshift=-3cm]\draw[densely dotted] (1.7, 1.5) [out=150, in=270] to (1.6, 1.6);\draw[densely dotted] (1.6, 1.6) [out=90, in=180] to (2, 1.8);\draw[densely dotted] (1.7, 1.5) [out=160, in=270] to (1.4, 1.6);\draw[densely dotted] (1.4, 1.6) [out=90, in=180] to (2, 2);\draw[densely dotted] (1.7, 1.5) [out=160, in=270] to (1.2, 1.65);\draw[densely dotted] (1.2, 1.65) [out=90, in=180] to (2, 2.2);\begin{scope}[xscale=-1, xshift=-4cm]\draw[densely dotted] (1.7, 1.5) [out=150, in=270] to (1.6, 1.6);\draw[densely dotted] (1.6, 1.6) [out=90, in=180] to (2, 1.8);\draw[densely dotted] (1.7, 1.5) [out=160, in=270] to (1.4, 1.6);\draw[densely dotted] (1.4, 1.6) [out=90, in=180] to (2, 2);\draw[densely dotted] (1.7, 1.5) [out=160, in=270] to (1.2, 1.65);\draw[densely dotted] (1.2, 1.65) [out=90, in=180] to (2, 2.2);\end{scope}\end{scope}\end{scope}
\draw[densely dotted] (0.5,0) [out=  100 , in= 240  ] to ( 0.8 , 1.5); \draw[densely dotted] (0.8, 1.5) [out=70, in=180]to (2, 2.6);\draw[densely dotted] (0.5,0) [out=100, in= 260] to (0.1, 1);\draw[densely dotted] (0.1,1) [out=70, in=180] to  (2,2.75);\draw[densely dotted] (0.5,0) [out=170, in=260] to (-0.5, 0.8);\draw[densely dotted] (-0.5,0.8) [out=80, in=230] to  (0.45, 2.2);  \draw[densely dotted] (0.45, 2.2)[out=50, in=180] to(2,2.9);
\begin{scope}[xscale=-1, xshift=-4cm]\draw[densely dotted] (0.5,0) [out=  100 , in= 240  ] to ( 0.8 , 1.5); \draw[densely dotted] (0.8, 1.5) [out=70, in=180]to (2, 2.6);\draw[densely dotted] (0.5,0) [out=100, in= 260] to (0.1, 1);\draw[densely dotted] (0.1,1) [out=70, in=180] to  (2,2.75);\draw[densely dotted] (0.5,0) [out=170, in=260] to (-0.5, 0.8);\draw[densely dotted] (-0.5,0.8) [out=80, in=230] to  (0.45, 2.2);  \draw[densely dotted] (0.45, 2.2)[out=50, in=180] to(2,2.9);\end{scope}
\begin{scope}[yscale=-1]\draw[densely dotted] (0.5,0) [out=  100 , in= 240  ] to ( 0.8 , 1.5); \draw[densely dotted] (0.8, 1.5) [out=70, in=180]to (2, 2.6);\draw[densely dotted] (0.5,0) [out=100, in= 260] to (0.1, 1);\draw[densely dotted] (0.1,1) [out=70, in=180] to  (2,2.75);\draw[densely dotted] (0.5,0) [out=170, in=260] to (-0.5, 0.8);\draw[densely dotted] (-0.5,0.8) [out=80, in=230] to  (0.45, 2.2);  \draw[densely dotted] (0.45, 2.2)[out=50, in=180] to(2,2.9);
\begin{scope}[xscale=-1, xshift=-4cm]\draw[densely dotted] (0.5,0) [out=  100 , in= 240  ] to ( 0.8 , 1.5); \draw[densely dotted] (0.8, 1.5) [out=70, in=180]to (2, 2.6);\draw[densely dotted] (0.5,0) [out=100, in= 260] to (0.1, 1);\draw[densely dotted] (0.1,1) [out=70, in=180] to  (2,2.75);\draw[densely dotted] (0.5,0) [out=170, in=260] to (-0.5, 0.8);\draw[densely dotted] (-0.5,0.8) [out=80, in=230] to  (0.45, 2.2);  \draw[densely dotted] (0.45, 2.2)[out=50, in=180] to(2,2.9);\end{scope}\end{scope}
\draw[densely dotted] (0.5,0) to (3.5,0);\draw[densely dotted] (0.5,0) [out=60, in= 120 ] to (1, 1);\draw[densely dotted] (1,1) [out=300, in= 180 ] to  (2,0.4);\draw[densely dotted] (0.5,0) [out= 40   , in= 120  ] to (1,0.6);\draw[densely dotted] (1,0.6) [out= 300   , in= 180  ] to (2,0.3);
\draw[densely dotted] (0.5,0)  [out= 20   , in= 150  ] to (1,0.3); \draw[densely dotted] (1,0.3) [out= 330   , in= 180  ] to (2,0.15);
\begin{scope}[xscale=-1, xshift=-4cm]\draw[densely dotted] (0.5,0) [out=60, in= 120 ] to (1, 1);\draw[densely dotted] (1,1) [out=300, in= 180 ] to  (2,0.4);\draw[densely dotted] (0.5,0) [out= 40   , in= 120  ] to (1,0.6);\draw[densely dotted] (1,0.6) [out= 300   , in= 180  ] to (2,0.3);\draw[densely dotted] (0.5,0)  [out= 20   , in= 150  ] to (1,0.3); \draw[densely dotted] (1,0.3) [out= 330   , in= 180  ] to (2,0.15);\end{scope}
\begin{scope}[yscale=-1]\draw[densely dotted] (0.5,0) [out=60, in= 120 ] to (1, 1);\draw[densely dotted] (1,1) [out=300, in= 180 ] to  (2,0.4);\draw[densely dotted] (0.5,0) [out= 40   , in= 120  ] to (1,0.6);\draw[densely dotted] (1,0.6) [out= 300   , in= 180  ] to (2,0.3);\draw[densely dotted] (0.5,0)  [out= 20   , in= 150  ] to (1,0.3); \draw[densely dotted] (1,0.3) [out= 330   , in= 180  ] to (2,0.15);
\begin{scope}[xscale=-1, xshift=-4cm]\draw[densely dotted] (0.5,0) [out=60, in= 120 ] to (1, 1);\draw[densely dotted] (1,1) [out=300, in= 180 ] to  (2,0.4);\draw[densely dotted] (0.5,0) [out= 40   , in= 120  ] to (1,0.6);\draw[densely dotted] (1,0.6) [out= 300   , in= 180  ] to (2,0.3);\draw[densely dotted] (0.5,0)  [out= 20   , in= 150  ] to (1,0.3); \draw[densely dotted] (1,0.3) [out= 330   , in= 180  ] to (2,0.15);\end{scope}\end{scope}
\draw[thick, gray] (-1,0) to (-0.3,0.7);  \draw[thick, gray] (-1,0) to (-1.7,0.7); \draw[thick, gray] (-1,0) to (-0.3,-0.7);\draw[thick, gray] (-1,0) to (-1.7,-0.7);
\draw[thick, gray, -<] (-1,0) to (-0.65,0.35);    \draw[thick, gray, ->] (-1,0) to (-0.6,-0.4);  \draw[thick, gray, ->] (-1,0) to (-1.4,0.4);   \draw[thick, gray, -<] (-1,0) to (-1.35,-0.35);  
\node[red] at (5,0.2) [right] {\small$P_{2,1}$};  \node[blue] at (6.5,0.2) [right] {\small$P_{3,1}$};   \node[blue] at (3.5,0.2) [right] {\small$P_{3,2}$} ;    \node[red] at (2.95,1.7) [right] {\small$P_{2,2}$};    ;\node[red] at (3,-1.6) [right] {\small$P_{2,3}$};   \node[blue] at (2.25,1.7) [right] {\small$P_{3,3}$} ;      \node[blue] at (2.25, -1.75) [right] {\small$P_{3,4}$} ;     \node at (2, 3.5) {\small{$\mathcal{F}_{1,1}$}};    \node at (2, 1.9) {\small{$\mathcal{F}_{3,1}$}};      \node at (2, -1.2) {\small{$\mathcal{F}_{4,1}$}};  \node at (1.65, 0) {\small{$\mathcal{F}_{2,1}$}};  \node at (4.1, -1  ) {\small{$\mathcal{F}_{2,3}$}};  \node at (4.1,  1  ) {\small{$\mathcal{F}_{2,2}$}};      
\node[gray] at (-0.35, 0.7) [right] {\small{$B^s_{2,2}$}};    \node[gray] at (-0.35, -0.7) [right] {\small{$B^u_{2,3}$}};  \node[gray] at (-1.6, -0.8) [left] {\small{$B^s_{1,1}$}};    \node[gray] at (-1.6, 0.7) [left] {\small{$B^u_{1,1}$}}; 
\draw[thick, purple, ->] (-0.7, 0.5) [out=210, in=330] to (-1.3, 0.5);    \draw[thick, purple, <-] (-0.7, -0.5) [out=150, in=30] to (-1.3, -0.5);  \draw[thick, purple, ->]  (-0.5, 0.3) [out=240, in=120] to (-0.5, -0.3);    \draw[thick, purple, ->]  (-1.5, -0.3) [out=60, in=300] to (-1.5, 0.3);
\node[purple] at (-0.55, 0.5) [above] {\footnotesize{$l^{\mathrm{int}}_{2,2}$}};    \node[purple] at (-1.25, -0.45) [below] {\footnotesize{$l^{\mathrm{int}}_{1,1}$}};    \node[purple] at (-0.5, 0.3) [right] {\footnotesize{$l^{\mathrm{ext}}_{2,2}$}};    \node[purple] at (-1.4, -0.3) [left] {\footnotesize{$l^{\mathrm{ext}}_{1,1}$}};
\draw[thick, purple, ->] (1.4, -0.6) [out=140, in=270] to ( 1.15,  0.1);  
\draw[thick, purple ] (1.15,  0) [out=90, in=220] to ( 1.4, 0.6);  
\node[purple] at (1.21,0.15) [left] {\footnotesize{$g_{2,1} $}};

\draw[<-] (5.7,0) [out=270, in=160] to (6, -1);  \node at (6, -1) [right] {\footnotesize{$V^-_{1,1}=V^+_{1,1}$}};    \draw[<-] (4.7,-1.3) [out=340, in=90] to (5.2, -2.2);  \node at (5.2, -2.2) [below] {\footnotesize{$U^+_{1,1}=U^-_{2,3-}$}};    \begin{scope}[yscale=-1] \draw[<-] (4.7,-1.3) [out=340, in=90] to (5.2, -2.2); \end{scope}     \node at (5.2, 2.2) [above] {\footnotesize{$U^- _{1,1}=U^+_{2,2}$}};    \draw[<-] (2.5,-2.35) [out=320, in=120] to (4, -3.3);  \node at (4, -3.3) [right] {\footnotesize{$U^+_{2,3}=U^-_{4,1 }$}};    \begin{scope}[yscale=-1] \draw[<-] (2.5,-2.35) [out=320, in=120] to (4, -3.3); \end{scope}  \node at (4, 3.3) [right] {\footnotesize{$U^-_{2,2}=U^+_{3,1}$}};     \draw[<-] (2.85 ,-1) [out=50, in=310] to (2.75, -0.4);  \node at (2.75, -0.55) [above] {\footnotesize{$U^-_{2,1 }=U^+_{4,1 }$}};    \begin{scope}[yscale=-1]  \draw[<-] (2.85 ,-1) [out=50, in=310] to (2.75, -0.4);  \end{scope}  \node at (2.75, 0.55) [below] {\footnotesize{$U^+_{2,1 }=U^-_{3,1}$}};    \draw[<-] (0.7 ,-1) [out=210, in=20] to (-0.7 , -2.2   );  \node at (-0.7, -2.2) [left] {\footnotesize{$V^-_{2,1}=V^+_{2,3}$}};    \begin{scope}[yscale=-1] \draw[<-] (0.7 ,-1) [out=210, in=20] to (-0.7 , -2.2   ); \end{scope} \node at (-0.7, 2.2) [left] {\footnotesize{$V^+_{2,1}=V^-_{2,2}$}};    \draw[<-] (4.5 ,0) [out=90, in=180] to (5.5 , 1  );  \node at (5.5,1 ) [right] {\footnotesize{$V^+_{2,2}=V^-_{2,3}$}};    \draw[<-] (1.35 ,-1.5) [out=270, in=20] to (0.3 , -3  );  \node at (0.3,-3 ) [left] {\footnotesize{$V^-_{4,1}=V^+_{4,1}$}};    \begin{scope}[yscale=-1]\draw[<-] (1.35 ,-1.5) [out=270, in=20] to (0.3 , -3  ); \end{scope} \node at (0.3,3 ) [left] {\footnotesize{$V^-_{3,1}=V^+_{3,1}$}};
 \draw [fill, red] (-1,0) circle [radius=0.07]; \draw [fill, blue] (0.5,0) circle [radius=0.07]; \draw [fill, blue] (3.5,0) circle [radius=0.07];  \draw [fill, red] (5,0) circle [radius=0.07];  \draw [fill, red] (1 ,1.5) circle [radius=0.07]; \draw [fill, red] (3,1.5) circle [radius=0.07]; \draw [fill, red] (1,-1.5) circle [radius=0.07]; \draw [fill, red] (3,-1.5) circle [radius=0.07]; \draw [fill, blue] (6.5,0) circle [radius=0.07]; \draw [fill, blue] (-2.5,0) circle [radius=0.07]; \draw [fill, blue] (1.7,1.5) circle [radius=0.07]; \draw [fill, blue] (2.3,1.5) circle [radius=0.07]; \draw [fill, blue] (1.7,-1.5) circle [radius=0.07]; \draw [fill, blue] (2.3,-1.5) circle [radius=0.07];
  \end{tikzpicture}
 \end{center}
\caption{This  example  illustrates  a section of  a weakly convex foliation  on $S^3$ with $\ell=3$.   The blue  and red dots represent the binding orbits with Conley-Zehnder indices $3$ and $2$, respectively. The rigid planes and rigid cylinders are represented by bold curves, and  the families of planes are represented by dotted curves. 
}
\label{fig:examplenotation}
 \end{figure}
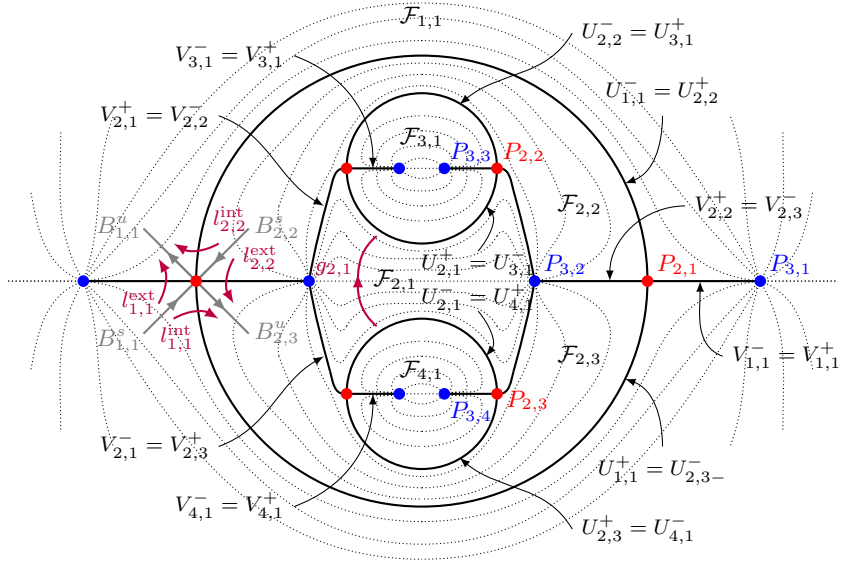

For  each $(j,k) \in \mathcal{C}$, we have  the following transition maps that  preserve the area form induced by $d\lambda$. 

 \begin{itemize}
 \item The  {\bf global transition map}
 $
 g_{j,k} : \F^-_{j,k} \to \F^+_{j,k}
 $
 is defined as   the first intersection point of the forward trajectory  with the plane $\F_{j,k}^{+}.$ This map is well-defined since $P_{3,j}$ has index 3. 

  \item The {\bf   local exterior transition map} 
$
l^{\rm ext}_{j,k}\colon   \F^+_{j,k} \setminus \mathcal{D}^s_{j,k} \to \F^{-}_{j,k+1} \setminus \mathcal{D}^u_{j,k+1}
$
    is defined as the first intersection point of the forward trajectory  with $\F_{j,k+1}^{-}$. Any such a trajectory crosses $V_{j,k}^{+}$ before hitting $\F_{j,k+1}^{-}$. 

\item  The {\bf  local interior transition map} 
$
l^{\rm int}_{j,k}\colon   \mathcal{D}^s_{j,k}\setminus C^s_{j,k} \to \mathcal{D}^u_{j',k'} \setminus C^u_{j',k'}
$
 is defined as the first intersection point of the forward trajectory  with $\F_{j',k'}^{-}$, where $(j', k')$ is such that $U_{j,k}^+ = U_{j',k'}^-$. Any such a trajectory crosses  $U_{j,k}^{+}$ before hitting $\F_{j',k'}^{-}$. 

   \end{itemize}
   
\subsection{Real-analytic models}\label{sec:coordinates} 
 
  Let $P_{2,i}=(x_{2,i},T_{2,i})\in \P_2(\lambda)$. In this section, we show the existence of suitable real-analytic coordinates near $P_{2,i}$ that will be used to model neighborhoods of  $C^s_{j,k} \subset \F^+_{j,k}$ and $C^u_{j,k} \subset \F^-_{j,k}$. 
  
  
  \begin{prop}\label{prop:coordiantes}
  There exist real-analytic coordinates $(t,x,y)\in (\R / T_{2,i} \Z) \times B_{\delta'}(0),$ $\delta'>0$ small, on a small tubular neighborhood $\U_{\delta'}\subset S^3$ of $P_{2,i}$, so that 
     $P_{2,i} \equiv (\R / T_{2,i} \Z )\times \{0\},$ and,
      up to time reparametrization, the trajectories of the Reeb flow of $\lambda$ in $\U_\delta$ coincide with the trajectories of
      \begin{equation}\label{eq_motion}
      \dot t  =1, \quad  \dot x  = -u(xy)x, \quad  \dot y  =u(xy) y,
    \end{equation}
    where   $u(w)=\ln \eta + \eta_1 w + \eta_2 w^2 +\cdots $ ($\eta>1$) is a convergent power series  near $w=0$. In particular, the quantity $xy$ is preserved by the flow.
  \end{prop}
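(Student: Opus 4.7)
The plan is to reduce the statement to Moser's real-analytic normal form for an area-preserving map near a hyperbolic fixed point, and then to suspend the normalized return map to the flow in such a way that the resulting transverse dynamics become autonomous.

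As a preliminary step, I would construct a real-analytic tubular chart $(\tau, \xi, \eta) \in (\R/T_{2,i}\Z) \times B_{\delta_0}(0)$ on a neighborhood of $P_{2,i}$ in which $P_{2,i} = \{\xi = \eta = 0\}$, the Reeb vector field restricts to $\partial_\tau$ along the orbit, and $d\lambda$ pulls back to $d\xi \wedge d\eta$ on the transverse disk $\Sigma_0 := \{\tau = 0\}$. Such a chart is produced by analytic parallel transport of a $d\lambda$-symplectic frame along the real-analytic loop $P_{2,i}$. The Poincar\'e return map $\varphi\colon (\Sigma_0, 0) \to (\Sigma_0, 0)$ is then a real-analytic, area-preserving diffeomorphism whose linearization is hyperbolic. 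A classical theorem of Moser (\emph{The analytic invariants of an area-preserving mapping near a hyperbolic fixed point}, Comm.~Pure Appl.~Math.~9 (1956); see also Siegel--Moser, \emph{Lectures on Celestial Mechanics}, \S23) then provides a convergent, area-preserving analytic change of coordinates $(\xi, \eta) \mapsto (x, y)$ bringing $\varphi$ to the form
\[
\varphi(x, y) = \bigl( \Lambda(xy)^{-1}\, x,\ \Lambda(xy)\, y \bigr),
\]
where $\Lambda(w) = \mu + \Lambda_1 w + \Lambda_2 w^2 + \cdots$ is convergent, $\mu > 1$ is the unstable Floquet multiplier of $P_{2,i}$, and $xy$ is a real-analytic first integral of $\varphi$. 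The local stable and unstable manifolds of the origin become the coordinate axes $\{x = 0\}$ and $\{y = 0\}$, respectively.

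Next I would propagate the normalized chart on $\Sigma_0$ along the orbit via the Reeb flow, obtaining a real-analytic family of charts on the transverse disks $\Sigma_t$ whose round-trip at time $T_{2,i}$ reproduces $\varphi$. Performing a $t$-dependent analytic rescaling of the form $(x, y) \mapsto (x\, e^{s(t, xy)},\, y\, e^{-s(t, xy)})$, with $s$ chosen so that the resulting transverse flow becomes autonomous, I obtain real-analytic coordinates $(t, x, y)$ on a tubular neighborhood $\U_\delta$ in which $P_{2,i} = \{x = y = 0\}$, the stable and unstable manifolds of $P_{2,i}$ coincide with the axes $\{x=0\}$ and $\{y = 0\}$, and the Reeb vector field is proportional to $\partial_t - u(xy)\, x\, \partial_x + u(xy)\, y\, \partial_y$ for a convergent power series $u$. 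The condition that this flow reproduce $\varphi$ at time $T_{2,i}$ forces $u(xy) = (\ln \Lambda(xy))/T_{2,i}$; in particular $u(0) = (\ln \mu)/T_{2,i} = \ln \eta$ for $\eta := \mu^{1/T_{2,i}} > 1$. Reparametrizing time to absorb the proportionality factor then yields exactly the system \eqref{eq_motion}.

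The main obstacle is the suspension step: constructing the analytic $t$-dependent rescaling that turns the flow into a transverse system independent of $t$. This step relies crucially on the convergent, and not merely formal, character of Moser's normal form; on the real-analyticity of the stable and unstable manifolds of $P_{2,i}$ (which guarantees that the straightened axes propagate analytically along the orbit); and on a Floquet-type argument in which $xy$ plays the role of an analytic parameter, with hyperbolicity supplying the non-resonance that makes the parametrized family of Floquet normalizations converge uniformly on a neighborhood of $xy = 0$.
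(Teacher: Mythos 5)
Your proposal is correct and follows essentially the same route as the paper: apply Moser's 1956 convergent normal form to the real-analytic, area-preserving Poincar\'e return map on a cross section of the hyperbolic orbit, and then suspend the normalized map to obtain the autonomous model \eqref{eq_motion} on a tubular neighborhood. The paper compresses the suspension into one sentence (``such coordinates on a cross section induce the desired coordinates on a tubular neighborhood''), whereas you spell it out via the $t$-dependent, $xy$-preserving rescaling that interpolates the identity with the return map through the model flow — a correct and welcome elaboration, though the concluding Floquet/non-resonance remarks are not actually needed once one observes that Moser's normal form is the time-$T_{2,i}$ map of the autonomous field $-u(xy)x\,\partial_x+u(xy)y\,\partial_y$.
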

  
  \begin{proof}

 The proof is a direct application of a result due to Moser in \cite{Moser56} which asserts that a real-analytic mapping $\phi$ defined near a hyperbolic fixed point at $0\in \R^2$ admits coordinates $(x,y)$ so that it has the form $\phi(x,y)= \left( x e^{ -u(xy)}, y e^{ u(xy)} \right),$ 
where $u(w)=\ln \eta + \eta_1 w + \eta_2 w^2 +\cdots $ is a convergent power series  near $w=0$. Such coordinates on a cross section of $P_{2,i}$ induce the desired coordinates on a tubular neighborhood of $P_{2,i}$. \end{proof}

 In the coordinates given in Proposition \ref{prop:coordiantes}, the local stable and unstable manifolds of $P_{2,i}$ are
 $
 W^s_{\rm loc}(P_{2,i}) \subset ( \R / T_{2,i} \Z)  \times \R \times \{0\}$ and 
 $W^u_{\rm loc}(P_{2,i})\subset ( \R / T_{2,i} \Z)  \times \{0\} \times \R,
 $
 respectively.

  \subsubsection{The local exterior transition maps}\label{sec:localtranext}

    Fix $(j,k) \in \mathcal{C}$ and set $k'=k+1$.
   Fix also $\tau_-$ and $\tau_+$  close enough to $0$ and $1$, respectively, so that the circles $C^s_{j,k}$ and $C^u_{j, k'}$,  see \eqref{eq:CC}, are contained in the tubular neighborhood $\mathcal{U}_{\delta'}$ of $\P_{j,k}^{+}=\P_{j,k'}^{-}$ as in Proposition \ref{prop:coordiantes}. 
        Choose  sufficiently small annular neighborhoods   $R_{j,k}^s \subset \mathcal{F}_{j,k}^{+} $   of $C^s_{j,k} $  and   $R_{j,k'}^u \subset \mathcal{F}_{j,k'}^{-} $   of $C^u_{j,k'} $  that are modelled in the real-analytic coordinates $(t,x,y)$ near $P_{2,i}$ by 
    \begin{align*}
    R^s   &:= \{ ( t, x, y) \mid x = \tfrac{\delta}{2} , \;  y \in (-\tfrac{\delta }{4}  , \tfrac{ \delta }{4}) \},\\
        R^u   &:= \{ ( t, x, y) \mid    x \in (-\tfrac{\delta }{4}  , \tfrac{ \delta }{4}), \; y = \tfrac{\delta}{2} \},
    \end{align*}
    respectively, where $0<\delta \ll \delta'$.  Set  $ A_{j,k}^{\mathrm{ext},s}:= R_{j,k}^s \setminus \mathcal{D}^s_{j,k}$   and $ A_{j,k'}^{\mathrm{ext},u}:= R_{j,k'}^u \setminus \mathcal{D}^u_{j,k'}$. These annuli are modelled in our real-analytic coordinates by 
    \begin{align}\label{eq:extA}
\begin{split}
    A ^{\mathrm{ext},s} : = \{ ( t, x, y) \mid x = \tfrac{\delta}{2} , \; y \in (0 , \tfrac{ \delta }{4}) \}, \\
    A ^{\mathrm{ext},u}  := \{ ( t, x, y) \mid x \in (0 , \tfrac{ \delta }{4}), \; y= \tfrac{\delta}{2}   \},
\end{split}
\end{align}
respectively. 
     Note that  $A_{j,k}^{\mathrm{ext},s}$ is mapped under $l^{\mathrm{ext}}_{j,k}$ onto $A^{\mathrm{ext},u}_{j,k'}. $

 Consider the real-analytic maps 
    \begin{align}\label{eq:extrealanalyticcoordinates}
    \begin{split}
 F_{j,k}^{\mathrm{ext},s} \colon (\R /T_{2,i} \Z) \times (0, \tfrac{\delta}{4}) \to   A_{j,k}^{\mathrm{ext},s }, \\
 F_{j,k'}^{\mathrm{ext},u} \colon (\R /T_{2,i} \Z) \times (0, \tfrac{\delta}{4}) \to   A_{j,k'}^{\mathrm{ext},u }, 
 \end{split}
 \end{align}
 given in  our coordinates by   $(t, y) \mapsto (t, \tfrac{\delta}{2}, y)$ and  $(t, x) \mapsto (t, x, \tfrac{\delta}{2}),$ respectively.   
In these coordinates, the local exterior transition map   $l^{\mathrm{ext}}_{j,k} \colon    A_{j,k}^{\mathrm{ext},s } \to  A_{j,k'}^{\mathrm{ext},u }$ admits a lift
\begin{equation*}\label{eq:ellext}
\tilde l :\R \times (0, \tfrac{\delta}{4}) \to \R \times (0, \tfrac{\delta}{4}), \quad (t, r) \mapsto (t + \Delta t(r), r),
\end{equation*}
where   
\begin{equation}\label{eq:timevar}
\Delta t(r)  = T_{2,i} \frac{1}{u(  \delta r / 2 )} \ln \frac{ \delta}{2r} = g(r) - h(r)\ln r,
\end{equation}
for real-analytical functions $g(r),h(r)$ defined near $r=0$, with $h(0)>0$. Notice that   $\Delta t(r) \to +\infty$ as $r \to 0^+$. 

\begin{lem}
 \label{lem:infinitetwists} 
 Let $\gamma : [0, 1) \to \R \times [0, \tfrac{\delta}{4})$  be a real-analytic curve  such that $\gamma(s)  \in \R \times \{ 0 \}$ only at $s=0$. Then  $\tilde l \circ \gamma$ is a monotone curve in the $\R$-direction for $s>0$ sufficiently small. Moreover, writing $\tilde l \circ \gamma$ as a graph    $r=\eta(t), t\gg 0,$ we have $\frac{d\eta}{dt} \to 0$ as  $t\to +\infty$
 \end{lem}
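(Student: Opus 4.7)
The plan is to exploit real-analyticity of $\gamma$ to pin down the leading-order behavior of its components at $s=0$, and then combine this with the explicit expression for $\Delta t(r)$ in \eqref{eq:timevar}. Write $\gamma(s)=(t(s),r(s))$. Since $r\geq 0$ is real-analytic on $[0,1)$, $r(0)=0$, and $r(s)>0$ for $s>0$, the function $r$ vanishes to finite order at the origin, so there exist an integer $n\geq 1$ and a real-analytic $\rho$ with $\rho(0)>0$ such that
\[
r(s)=s^{n}\rho(s),\qquad r'(s)=n s^{n-1}\rho(0)+O(s^{n}),\qquad \frac{r'(s)}{r(s)}=\frac{n}{s}+O(1)
\]
as $s\to 0^+$. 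The function $t(s)$ is real-analytic with $t'(0)$ finite.

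Step 1 (monotonicity). Set $T(s):=t(s)+\Delta t(r(s))$. From \eqref{eq:timevar}, $\Delta t'(r)=g'(r)-h'(r)\ln r-h(r)/r$, so
\[
T'(s)=t'(s)+\bigl(g'(r(s))-h'(r(s))\ln r(s)\bigr)r'(s)-h(r(s))\,\frac{r'(s)}{r(s)}.
\]
Using $h(0)>0$ and the asymptotic $r'(s)/r(s)\sim n/s$, the last term is asymptotic to $-h(0)n/s$, while every other term is $O(|\ln s|\cdot s^{n-1})$ or bounded. Hence $T'(s)\to -\infty$ as $s\to 0^+$, and in particular $T'(s)<0$ on some interval $(0,s_0]$. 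This gives strict monotonicity of the $\R$-component of $\tilde l\circ \gamma$.

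Step 2 (graph form and slope). Because $\Delta t(r(s))\sim -h(0)n\ln s\to +\infty$, we have $T(s)\to +\infty$ as $s\to 0^+$, so $T:(0,s_0]\to[T(s_0),+\infty)$ is a continuous strictly decreasing bijection. Its inverse $s=s(t)$ produces a graph $r=\eta(t):=r(s(t))$ defined for $t\geq T(s_0)$, and
\[
\frac{d\eta}{dt}=\frac{r'(s)}{T'(s)}\sim \frac{n\rho(0)\,s^{n-1}}{-h(0)n/s}=-\frac{\rho(0)}{h(0)}\,s^{n}\longrightarrow 0
\]
as $s\to 0^+$, i.e.\ as $t\to+\infty$, which is the desired slope decay.

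The only delicate point is the factorization $r(s)=s^{n}\rho(s)$ with $\rho(0)>0$; this is where the real-analyticity assumption and the hypothesis that $\gamma$ hits $\R\times\{0\}$ only at $s=0$ are used, since they preclude $r\equiv 0$ and force a finite order of vanishing. With that in hand, everything reduces to a dominant-term estimate in which the singular contribution $-h(0)n/s$ coming from the $-h(r)\ln r$ summand of $\Delta t$ swamps the regular parts.
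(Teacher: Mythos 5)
Your proof is correct and follows essentially the same route as the paper's: factor $r(s)=s^n\rho(s)$ by real-analyticity, observe that the singular term $-h(r(s))\,r'(s)/r(s)\sim -h(0)n/s$ dominates the derivative of $t(s)+\Delta t(r(s))$, and then read off the slope of the graph as a quotient of derivatives. The only cosmetic difference is that you track exact leading asymptotics where the paper works with two-sided bounds $b/s<r'(s)/r(s)<B/s$ and $T'(s)<-c_1/s$; the substance is identical.
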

 \begin{proof} 
Write  $\gamma(s) = (t(s), r(s))$ such that $t(0)=t_0$ and $r(0)=0$. Since $r(s)$ is real-analytic, we find  $a>0$ and $n \in \N$ such that 
     \begin{equation}\label{eq:rs}
  r(s) = a s^n + O(s^{n+1})
  \end{equation}
    and thus $r(s)$ is strictly increasing for $s\geq 0$ sufficiently small. Moreover, there exists $b,B>0$ such that
   \begin{equation}\label{eq:estima1}
  \frac{B}{s}> \frac{r'(s)}{r(s)}>\frac{b}{s}, \quad \forall s>0 \mbox{ small}.
   \end{equation}
This implies that there exists $c_1>0$ so that
\begin{equation}\label{eq:derivats}    
  \begin{aligned} 
  \frac{d}{ds} \left(t(s) + \Delta t ( r(s)) \right) &= t'(s) +\left(g'(r(s))-h'(r(s))\ln r(s) - \frac{h(r(s))}{r(s)} \right)r'(s)\\
 & < -\frac{c_1}{s},
  \end{aligned}
  \end{equation}
   for every $s>0$  sufficiently small.
  Hence $t(s)  + \Delta t( r(s))$ increases monotonically to $+\infty$ as $s \to 0^+$.
    From the conclusion above, we can write the curve $\tilde l \circ \gamma$  as a graph  $r=\eta(t), t \gg 0$, where $\eta$ is  real-analytic.  It satisfies $\eta(t(s) + \Delta t(r(s))) = r(s), \forall s>0$ small.   From \eqref{eq:estima1} and \eqref{eq:derivats}, we see that 
  $$
  0  > \eta'(t(s)+\Delta t(r(s))) =\frac{r'(s)}{\frac{d}{ds}(t(s) + \Delta t(r(s)))} >-\frac{B}{c_1}r(s) \to 0
    $$
  as $s \to 0^+$. Since    $t(s)  + \Delta t( r(s)) \to +\infty$ as $s \to 0^+$, this completes the proof. \end{proof}

Now assume that every orbit in $\P_2(\lambda)$ has the same action $T_2>0$, so that the disks $\mathcal{D}_{j,k}^s$ and $\mathcal{D}_{j,k}^u$, see \eqref{eq:DD}, have the  same $d\lambda$-area equal to $T_2$ for all $(j,k) \in \mathcal{C}$.

Using an area-preserving argument, we  check below that for any fixed $P_{2, i} \in \P_2(\lambda)$, the branch of its unstable manifold in $\U_j$ must intersect the branch in $\U_j$ of the stable manifold of an orbit $P_{2,i'} \subset \partial \mathcal{U}_j$. Note that such an intersection  corresponds to a heteroclinic/homoclinic trajectory in $\U_j$ connecting $P_{2, i}$ to $P_{2, i'}$. 

Let 
$
G: \mathcal{C} \to \mathcal{C}
$ be defined so that $(J,K) = G(j,k)$ is the family of planes so that  $\mathcal{D}^s_{J,K}\subset \F^+_{J,K}$ is the first disk  intersected by the forward flow of $\mathcal{D}^u_{j,k}$. 
To be more precise in the definition of $G$, fix $(j,k)\in \mathcal{C}$ and set $D^0 = \mathcal{D}^u_{j,k}$. If
$
g_{j,k}(D^0) \cap C^s_{j,k} \neq \emptyset,
$
then we define $G(j,k)=(j,k).$ Otherwise, since all index 2 orbits have the same action, we have 
$
g_{j,k}(D^0) \subset \F^+_{j,k} \setminus \mathcal{D}^s_{j,k}, 
$
and then define 
$
D^1:= l^{\rm ext}_{j,k}({g_{j,k}(D^0))}\subset \F^{-}_{j,k+1}.
$  
Now we repeat the procedure with $D^1$: if
$
g_{j,k+1}(D^1) \cap C^s_{j,k+1} \neq \emptyset,
$
then we define $G(j,k)=(j ,k+1).$ Otherwise, we have
  $
  g_{j,k+1}(D^1) \subset \F^{+}_{j,k+1} \setminus \mathcal{D}^s_{j, k+1} 
  $
  and then   define
$
D^2:= l^{\rm ext}_{j,k+1}({g_{j,k+1}(D^1))}\subset \F^{-}_{j,k+2}.
$ 
Repeating this process we obtain a sequence  $(j, k^{(n)}) \in \mathcal{C}$, where $k^{(n)} \equiv k+n \; ({\rm mod} \; \tilde k_j),$ and disks 
$
D^n \subset \F^{-}_{j,k^{(n)}},   n \in \N. 
$
By construction and uniqueness of solutions, the disks $D^n$ are mutually disjoint. Since the  $d\lambda$-area of $D^n$ is independent of $n$ (indeed, it is equal to $T_2=1$ for all $n$) and the available area in all $\F^\pm_{j,k}$ is finite (it coincides with the action of $P_{3,j}$), the procedure above has to terminate after finitely many steps and we find $(j ,k^{(N)})$ for some $N\geq 0 $ so that 
$
g_{j,k^{(N)}}(D^N)\cap C^s_{j,k^{(N)}}\neq \emptyset.
$
In this case we set $(J,K)=(j ,k^{(N)})$ and define $G(j,k) = (J,K),$ where $J=j$ since along the process we remain inside the same component $\mathcal{U}_j$ via the global and the  local exterior transition maps.
 
We also consider a map 
\begin{equation}\label{eq_psijk}
\Psi_{j,k}\colon  \mathcal{N}(\mathcal{D}^u_{j,k} ) \to \F^{+}_{J,K},  (j,k) \in \mathcal{C},
\end{equation}
where $(J,K) = G(j,k)$ and $\mathcal{N}(\mathcal{D}^u_{j,k})$ denotes a small neighborhood of  $\mathcal{D}^u_{j,k}$ in the plane $\F^-_{j,k}.$ The first hit of $\mathcal{N}(\mathcal{D}^u_{j,k} ) $ into $\F^+_{J,K}$ under the forward flow is precisely $\Psi_{j,k}$.

\begin{dfn}
We say that $(j,k)\in \mathcal{C}$ is {\bf coincident} if 
$
\Psi_{j,k}(\mathcal{D}^u_{j,k}) = \mathcal{D}^s_{J,K},
$
where $(J,K)= G(j,k).$ Otherwise, we say it is {\bf non-coincident}.
\end{dfn}

   
      Pick  $(j,k) \in \mathcal{C}$ and abbreviate $(J,K)=G(j,k)$. By definition of   $G,$ we have
   $
   \Psi_{j, k }(\mathcal{D}^u_{j, k }) \cap C^s_{J, K } \neq \emptyset.
   $
   Since all index-$2$ orbits have the same action and the flow is real-analytic, we find one of the following   scenarios associated with the pair $(j,k)$: 

 \begin{enumerate}
 
 \item[(a)] $\Psi_{j,  k } (\mathcal{D}^u_{j, k }) =\mathcal{D}^s_{J, K}$, i.e.\ $(j ,k)$ is coincident. 

 \item[(b)]   $\Psi_{j, k} (\mathcal{D}^u_{j,k}) \cap (    \mathcal{D}^s_{J, K}   \setminus C^s_{J, K}  )= \emptyset $, and   $\Psi_{j, k} (\mathcal{D}^u_{j,k})$ intersects the circle $C^s_{J, K}$   at finitely many points.  

 \item[(c)] $\Psi_{j, k}(\mathcal{D}^u_{j,k})$  intersects both   $ \mathcal{F}^{+}_{J,K} \setminus \mathcal{D}^s_{J, K}  $  and $\mathcal{D}^s_{J, K}  \setminus  C^s_{J, K}$. In this case the  disk $\Psi_{j, k} (\mathcal{D}^u_{j,k})$ also intersects the circle $C^s_{J, K}$  at finitely many points. 
 \end{enumerate}

Notice that every intersection in   (b) is tangent and an intersection  in  (c) is not necessarily transverse.

   \begin{figure}[ht]
     \centering
    \begin{subfigure}{1\textwidth}
\centering
\begin{tikzpicture}[scale=0.7 ]

 \begin{scope}[xshift=3cm]  \draw[thick ]  (-7.7,-0.6 ) arc (180:360: 1cm and  1cm);    \draw[thick ]   (-7.7,-0.6 ) arc (180:0: 1cm and 1cm);  
  \node[gray] at ( -6.2, 0.9)  [right] {\footnotesize{$C^s_{J,K}$}}; 
\node  at ( -6, 0.9) [left] {\footnotesize{$\Psi_{j, k} (\mathcal{D}^u_{j, k}) =$}}; 
\end{scope}

 \begin{scope}[xshift=7cm, yshift=-0.2cm]
\draw[dashed, lightgray] (-5, -0.5) to (-5,1.5);    \draw[dashed, lightgray] (-4, -1) to (-4,1 );    \draw[dashed, lightgray] (-5, -0.5) to (-4,-1);     \draw[dashed, lightgray] (-5, 1.5) to (-4,1);
\draw[ gray, thick] (-4.5, 0.6) to (-3.6, 0.6);\draw[ gray, thick] (-4.5, -0.2) to (-3.6, -0.2);
  \draw[  gray, thick] (-4.5 ,-0.2) arc (-90:150: 0.2cm and  0.4cm);       \draw[ gray, thick, densely dotted,] (-4.5 ,-0.2) arc (-90:270: 0.2cm and  0.4cm); 
  \draw[  thick] (-4.5 ,-0.2) arc (-90:90: 0.2 cm and  0.4cm);     \draw[ densely dotted, thick] (-4.5 ,-0.2) arc (-90:-270: 0.2cm and  0.4cm); 
\draw[thick] (-4.5, 0.6) to (-5.4, 0.6);     \draw[thick] (-4.5, -0.2) to (-5.4, -0.2);
\node at (-4.9, -1 ) [below] {\footnotesize{$\Psi_{j, k}(\mathcal{D}^u_{j, k})=$}};
\node[gray] at (-3.2 , -1 ) [below] {\footnotesize{$C^s_{J,K}$}};
\node[lightgray] at (-5.05 ,  1.1)  [right] {\tiny{$\mathcal{F}^{+}_{J,K}$}};
\draw[gray] (-3.9, -1.2 ) [out=  110 , in= 330  ] to (-4.3,  0.2);
 \end{scope}
  \end{tikzpicture}
  \caption{}
   \label{fig:scenarioA}
\end{subfigure}

   \begin{subfigure}{0.48\textwidth}
\centering
\begin{tikzpicture}[scale=0.7 ]
      \draw[white]   (5,3.2 ) circle [radius=0.05];

  \begin{scope}[xshift=-3cm] 
   \draw[thick, lightgray] (2,0 ) arc (180:360: 1cm and  1cm);    \draw[thick, lightgray]  (2,0 ) arc (180:0: 1cm and 1cm);  
       \draw[fill]   ( 2.3,0.7) circle [radius=0.05];        
\draw[thick] (2.3, 0.7) [out=30, in= 180] to (2.9,1.2);
\draw[thick] (2.9, 1.2) [out=0, in=150] to (3.35 , 1.1);
 \draw[thick] (3.35, 1.1) [out=330, in=250] to (4, 1.2);
 \draw[thick] (4, 1.2) [out=70, in=290] to (3, 2 );
 \draw[thick] (3, 2 ) [out=110, in=70] to (2.2, 1.6);
 \draw[thick] (2.2, 1.6) [out=250, in=210] to (2.3, 0.7);
   \node[gray] at ( 3.3,-1) [below] {\footnotesize{$C^s_{J,K}$}}; 
\node  at ( 3.8, 1.1) [below] {\footnotesize{$\Psi_{j, k} (\mathcal{D}^u_{j, k}) $}}; 
 \end{scope}
\begin{scope}[xshift=9cm]
\draw[dashed, lightgray] (-5, -0.5) to (-5,1.5);    \draw[dashed, lightgray] (-4, -1) to (-4,1 );    \draw[dashed, lightgray] (-5, -0.5) to (-4,-1);     \draw[dashed, lightgray] (-5, 1.5) to (-4,1);
\node[lightgray] at (-5.05 ,  1.1)  [right] {\tiny{$\mathcal{F}^{+}_{J,K}$}};
\begin{scope}[yshift=-0.2cm]
\draw[ gray, thick] (-4.5, 0.4) to (-3.6, 0.4);\draw[ gray, thick] (-4.5, -0.2) to (-3.6, -0.2);
  \draw[  gray, thick] (-4.5 ,-0.2) arc (-90:90: 0.15cm and  0.3cm);       \draw[ gray, thick] (-4.5 ,-0.2) arc (-90:270: 0.15cm and  0.3cm); 
\node[gray] at (-3.4, -0.6) [below] {\footnotesize{$C^s_{J,K}$}};
\draw[gray] (-3.5, -0.6 ) [out=  110 , in= 330  ] to (-4.35,  0.2);
\end{scope}
\begin{scope}[yshift=0.4cm]
\draw (-4.6  ,0) [ out= 230, in= 60] to (-5.3  , -0.9);
  \draw[  thick] (-4.5 ,-0.2) arc (-90:90: 0.1cm and  0.3cm);     \draw[ densely dotted, thick] (-4.5 ,-0.2) arc (-90:-270: 0.1cm and  0.3cm); 
\draw[thick] (-4.5, 0.4) to (-5.4, 0.4);     \draw[thick] (-4.5, -0.2) to (-5.4, -0.2);
\node at (-5.2, -0.8) [below] {\footnotesize{$\Psi_{j, k}(\mathcal{D}^u_{j, k})$}};
\end{scope}
 \end{scope}
 \end{tikzpicture}
  \caption{}
   \label{fig:scenario2}
\end{subfigure}
    \begin{subfigure}{0.48\textwidth}
\centering
\begin{tikzpicture}[scale=0.7 ]
      \draw[white]   (7,3 ) circle [radius=0.05];
  \begin{scope}[xshift=1cm] 
   \draw[thick, lightgray] ( 6,0 ) arc (180:360: 1cm and  1cm);    \draw[thick, lightgray]  ( 6,0 ) arc (180:0: 1cm and 1cm);  
       \draw[fill]   ( 6.3,0.7) circle [radius=0.05];         \draw[fill]   (8, 0 ) circle [radius=0.05];
\draw[thick] (6.3, 0.7) [out=300, in=240] to (8,0);
\draw[thick] (8,0) [out=60, in= 240] to (8.2, 1 );
\draw[thick] (8.2, 1) [out=60, in= 330] to (7.5, 1.8);
\draw[thick] (7.5, 1.8) [out=150, in=40] to (6.8, 1.5);
\draw[thick] (6.8, 1.5) [out=220, in=120] to (6.3, 0.7);
   \node[gray] at ( 7,-1) [below] {\footnotesize{$C^s_{J,K}$}}; 
\node  at (5.9, 2.2) [right] {\footnotesize{$\Psi_{j, k} (\mathcal{D}^u_{j, k}) $}}; 
\end{scope}
\begin{scope}[xshift=17cm] 
\draw[dashed, lightgray] (-5, -0.5) to (-5,1.5);    \draw[dashed, lightgray] (-4, -1) to (-4,1 );    \draw[dashed, lightgray] (-5, -0.5) to (-4,-1);     \draw[dashed, lightgray] (-5, 1.5) to (-4,1);
\node[lightgray] at (-5.05 ,  1.1)  [right] {\tiny{$\mathcal{F}^{+}_{J,K}$}};
\begin{scope}[yshift=0.2cm, xshift=0.1cm]
\draw[ gray, thick] (-4.5, 0.4) to (-3.6, 0.4);        \draw[ gray, thick, densely dotted] (-4.5, 0.4) to (-4.3, 0.4);   \draw[ gray, thick] (-4.5, -0.2) to (-3.6, -0.2);
  \draw[ gray, thick] (-4.5 ,-0.2) arc (-94:160: 0.1cm and  0.3cm);     \draw[ gray,densely dotted, thick] (-4.5 ,-0.2) arc (-90:-270: 0.1cm and  0.3cm); 
  \node[gray] at (-3.6, -0.8) [below] {\footnotesize{$C^s_{J,K}$}};
  \draw[gray] (-3.7, -0.8) [out=  110 , in= 330  ] to (-4.4,  0.2);
  \end{scope}
    \draw[  thick] (-4.5 ,-0.2) arc (-90:90: 0.1cm and  0.3cm);     \draw[ densely dotted, thick] (-4.5 ,-0.2) arc (-90:-270: 0.1cm and  0.3cm); 
\draw[thick] (-4.5, 0.4) to (-5.4, 0.4);     \draw[thick] (-4.5, -0.2) to (-5.4, -0.2);
\node at (-5.3, -0.5) [below] {\footnotesize{$\Psi_{j, k}(\mathcal{D}^u_{j, k})$}};
\draw (-4.6  ,0.1) [ out= 230, in= 60] to (-5.3 , -0.65);
 \end{scope}
 \end{tikzpicture}
  \caption{}
   \label{fig:scenario4}
\end{subfigure}
 \caption{Possible scenarios for  $\Psi_{j,k}(\mathcal{D}^u_{j,k}) \cap C^s_{J,K}.$}
 \label{fig:scenarios}
 \end{figure}
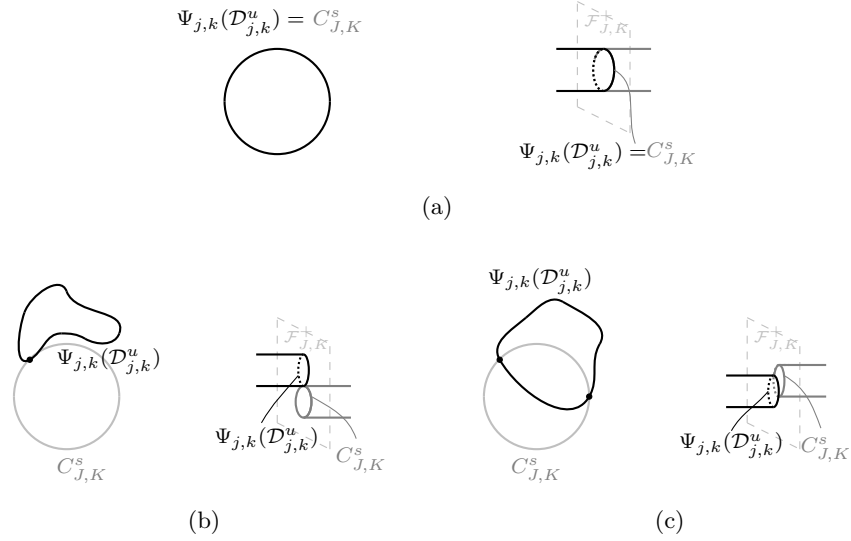

\section{Proof of Theorem \ref{main2}}

\subsection{Proof of Theorem \ref{main2}-(i)}

Fix $j\in \{1,\ldots,l\}$ and assume that for every $k\in \{1,\ldots, \tilde k_j\}$ the branch in $\U_j$ of the unstable manifold of $P_{2,n_k^j}$ coincides with the branch in $\U_j$ of the stable manifold of $P_{2,n_l^j},$ for some $l\in \{1,\ldots,\tilde k_j\}$. This means that all pairs $(j,k)\in \mathcal{C}$ are coincident as in scenario (a).

Fix $(j,k_1)\in \mathcal{C}$ as above,  let $(j,K_1) = G(j,k_1)$, and let $(j,k_2):=(j,K_1+1)$. Our assumptions imply that we can construct an $N$-periodic sequence of distinct $k_1,\ldots,k_N,\ldots,$ so that $G(j,k_i)=(j,K_i),$ and $k_{i+1}=K_i+1, \forall i.$ In particular, the mapping $\Psi_{j,k_i} \colon \mathcal{N}(\mathcal{D}^u_{j,k_i}) \to \mathcal{F}^+_{j,K_i}$ satisfies
$
\Psi_{j,k_i}(\mathcal{D}^u_{j,k_i}) = \mathcal{D}^s_{j,K_i},  \forall i=1,\ldots,N.
$
Moreover, due to the coincidences,
the mapping 
\begin{equation}\label{eq:psi_coincid}\Psi:= \Psi_{j,k_1} \circ l^{\rm ext}_{j,K_N}\circ \ldots   \circ \Psi_{j,k_2} \circ l^{\rm ext}_{j,K_1},
\end{equation}
is well-defined in $\mathcal{V}_1\setminus \mathcal{D}^s_{j,K_1}$, where $\mathcal{V}_1\subset \F^+_{j,K_1}$  is a sufficiently small neighborhood of $\mathcal{D}^s_{j,K_1}$.
For simplicity, assume that $T_{2,i}=1, \forall i.$ Let $\tilde l_i \colon \R \times (0, \delta) \to \R \times (0, \delta)$ be a lift of $l^{\rm ext}_{j,K_i}$ in coordinates $(t,r)$, see Section \ref{sec:localtranext}.

\begin{prop}\label{prop:coincid}
There exist real-analytical functions  $g_i,h_i \colon (-\epsilon,\epsilon) \to \R$,  $h_i(0)>0,$ so that 
$\tilde l_i(t,r) = (t+ g_i(r) - h_i(r) \ln r,r),$ $\forall (t,r).
$

\end{prop}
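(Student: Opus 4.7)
The proposition is essentially a restatement of the formula \eqref{eq:timevar} applied to each of the $N$ local exterior transition maps $l^{\rm ext}_{j,K_i}$, so the proof plan is short and largely a matter of unpacking notation and invoking the analysis of Section \ref{sec:localtranext}.

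The plan is as follows. Each map $l^{\rm ext}_{j,K_i}$ is a local exterior transition map at the hyperbolic index-$2$ orbit $\mathcal{P}^+_{j,K_i}=\mathcal{P}^-_{j,K_i+1}\in\P_2(\lambda)$. By Proposition \ref{prop:coordiantes}, in the real-analytic tubular coordinates $(t,x,y)$ near this orbit the Reeb flow is conjugate (up to time reparametrization) to \eqref{eq_motion} with a convergent power series $u_i(w)=\ln\eta_i+\eta_{i,1}w+\eta_{i,2}w^2+\cdots$, where $\eta_i>1$. The local annuli $R^{s}_{j,K_i}$ and $R^{u}_{j,K_i+1}$ are modeled in these coordinates by the half-planes $\{x=\delta/2\}$ and $\{y=\delta/2\}$ as in \eqref{eq:extA}, and the quantity $xy$ is preserved by the flow. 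Thus the same computation that gave \eqref{eq:timevar} applies verbatim here: using the real-analytic coordinates $(t,r)\in\R\times(0,\delta/4)$ from \eqref{eq:extrealanalyticcoordinates}, a lift of $l^{\rm ext}_{j,K_i}$ is
\[
\tilde{l}_i(t,r)=\Bigl(t+T_{2,n}\,\frac{1}{u_i(\delta r/2)}\ln\frac{\delta}{2r},\;r\Bigr),
\]
where $T_{2,n}$ is the (common) period of the index-$2$ orbit involved.

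The next step is to expand this closed-form expression. Using $T_{2,n}=1$ (the common-action normalization in the hypothesis of Theorem \ref{main2}), and writing $\ln(\delta/(2r))=\ln(\delta/2)-\ln r$, I set
\[
g_i(r):=\frac{\ln(\delta/2)}{u_i(\delta r/2)},\qquad h_i(r):=\frac{1}{u_i(\delta r/2)}.
\]
Because $u_i(0)=\ln\eta_i>0$ (as $\eta_i>1$), the reciprocal $1/u_i(\delta r/2)$ is real-analytic on a neighborhood $(-\epsilon,\epsilon)$ of $0$, and likewise for $g_i$. Moreover $h_i(0)=1/\ln\eta_i>0$. Substituting yields $\tilde{l}_i(t,r)=(t+g_i(r)-h_i(r)\ln r,\,r)$, as claimed.

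There is essentially no obstacle: the only nontrivial input is the real-analytic normal form of Proposition \ref{prop:coordiantes}, which is already available, and the fact that $u_i(0)>0$ forces $1/u_i$ to be real-analytic near the origin with positive value there. The proof is just the observation that the derivation of \eqref{eq:timevar} depended only on the integrable local model \eqref{eq_motion} of the Reeb flow near a hyperbolic index-$2$ orbit, and hence applies to each $l^{\rm ext}_{j,K_i}$ with its own real-analytic $u_i$, producing the pair $(g_i,h_i)$ described above.
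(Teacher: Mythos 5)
Your proposal is correct and follows exactly the route of the paper's own (very brief) proof: both simply recall the formula for $\Delta t_i(r)$ from \eqref{eq:timevar}, split $\ln\frac{\delta}{2r}$ into $\ln\frac{\delta}{2}-\ln r$, and observe that $u_i(0)=\ln\eta_i>0$ makes $g_i=\ln(\delta/2)/u_i(\delta r/2)$ and $h_i=1/u_i(\delta r/2)$ real-analytic near $0$ with $h_i(0)>0$. Your write-up just makes explicit the choices of $g_i$ and $h_i$ that the paper leaves implicit.
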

 
 \begin{proof} 
 Recall that $\tilde l_i(t,r) = (t+\Delta t_i(r),r)$, where $\Delta t_i(r) = \frac{1}{u_i(\tilde \delta r /2)} \ln \frac{\tilde \delta_i}{2r},$  $u_i>0$ is real-analytic and $\tilde \delta_i>0$. This implies the existence of $g_i,h_i$ as in the statement. \end{proof}

Now we describe  the global mappings $\Psi_{j,k_i}, i=1,\ldots, N,$ in coordinates $(t,r)$. Denote by 
$
\tilde \psi_i  \colon  \R \times (-\delta, \delta)  \to \R  \times (-\delta', \delta'),
$
$0<\delta\ll \delta'$, a lift of $\Psi_{j,k_i}$, defined in a small neighborhood of $C^u_{j,k_i}\subset \F^-_{j,k_i}$.  

\begin{prop}{\cite[Lemma 7.1]{dPS2}}\label{prop:coincid_global}
The global mapping $\tilde \psi_i =( X_i, Y_i)$ has the form
$
 X_i(t,r)  =  H_i(t) + r \tilde X_i(t,r)$ and $
Y_i(t,r)  =r \tilde Y_i(t,r),$
where $H_i-{\rm Id} \colon \R \to \R$ and $\tilde X_i,\tilde Y_i \colon \R \times (-\delta,\delta)\to \R$ are real-analytic functions, $1$-periodic in $t$, and satisfy $H_i',\tilde Y_i>0.$ 
\end{prop}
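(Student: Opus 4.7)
The plan is to derive the asserted form of $\tilde\psi_i$ from two ingredients: the real-analyticity of the Reeb flow in the coordinates of Proposition~\ref{prop:coordiantes}, and the geometric consequences of the coincidence hypothesis $\Psi_{j,k_i}(\mathcal{D}^u_{j,k_i})=\mathcal{D}^s_{j,K_i}$. First I would fix consistent real-analytic coordinates near the relevant circles. Near $C^u_{j,k_i}\subset\F^-_{j,k_i}$, since $\F^-_{j,k_i}$ is a real-analytic section transverse to $R_\lambda$, I may reparametrize a neighborhood of $C^u_{j,k_i}$ by a real-analytic diffeomorphism $(t,r)\in\R\times(-\delta,\delta)$ with $r=0$ parametrizing $C^u_{j,k_i}$ and with $r$ proportional to the coordinate $x$ of Proposition~\ref{prop:coordiantes} (since $C^u$ lies on the unstable branch $x=0$). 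Symmetrically, near $C^s_{j,K_i}\subset\F^+_{j,K_i}$, I choose coordinates $(t,r)$ in which $r$ is proportional to the coordinate $y$ of the model near $\mathcal{P}^+_{j,K_i}$. Real-analyticity of the flow and the two sections then makes the lift $\tilde\psi_i=(X_i,Y_i)$ real-analytic.

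Next, the coincidence hypothesis states that the branch in $\U_j$ of $W^u(\mathcal{P}^-_{j,k_i})$ equals the branch in $\U_j$ of $W^s(\mathcal{P}^+_{j,K_i})$, so orbits emanating from $C^u_{j,k_i}$ flow forward to hit $\F^+_{j,K_i}$ precisely on $C^s_{j,K_i}$. In our coordinates this reads $Y_i(t,0)=0$ for all $t\in\R$. Real-analyticity of $Y_i$ then yields the factorization $Y_i(t,r)=r\tilde Y_i(t,r)$ with $\tilde Y_i$ real-analytic. Setting $H_i(t):=X_i(t,0)$, I likewise obtain $X_i(t,r)=H_i(t)+r\tilde X_i(t,r)$. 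For the periodicity claim, $\tilde\psi_i$ is a lift of a genuine map of cylindrical neighborhoods in $(\R/\Z)\times(-\delta,\delta)$, so there is $N\in\Z$ with $\tilde\psi_i(t+1,r)=\tilde\psi_i(t,r)+(N,0)$. To pin down $N=1$, I would use the one-parameter family $\F_{j,k_i,\tau}$, $\tau\in(0,1)$: the circle map $\Psi_{j,k_i}|_{C^u_{j,k_i}}$ extends continuously through this family to maps between transverse circles that degenerate, at both endpoints of the $\tau$-interval, to parametrizations of the index-$2$ orbits consistent with the Reeb direction. This gives a homotopy to a degree-one map, so $N=1$, and hence $H_i-\mathrm{Id}$ and the remainder terms $\tilde X_i,\tilde Y_i$ are all $1$-periodic in $t$.

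Finally I would establish the positivity assertions. $H_i'(t)>0$ since the Reeb flow preserves orientation and the coordinate $t$ on $C^u_{j,k_i}$ and $C^s_{j,K_i}$ is inherited from the common $S^1$-direction along the respective index-$2$ orbits, so the induced circle diffeomorphism is orientation-preserving. For $\tilde Y_i>0$, note that coincidence sends the interior disk $\mathcal{D}^u_{j,k_i}$ diffeomorphically to $\mathcal{D}^s_{j,K_i}$ and (by choice of signs of $r$) preserves the sign of $r$ on a neighborhood; combined with the factorization $Y_i=r\tilde Y_i$ this forces $\tilde Y_i(t,0)>0$, hence $\tilde Y_i>0$ near $r=0$. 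I expect the main subtlety to be the degree computation: one must verify that the family of circles $\F_{j,k_i,\tau}\cap\mathcal{B}^u_{j,k_i}$ varies continuously as $\tau\to 0^+$ and carries the map all the way to a controlled endpoint, which is where the transverse-foliation structure and the SFT-convergence of Proposition~\ref{prop:convergence_of_planes} are crucial.
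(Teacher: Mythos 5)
The paper does not actually prove this proposition---it is imported wholesale from \cite[Lemma 7.1]{dPS2}---so your reconstruction has to stand on its own, and in substance it does: the chain ``coincidence $\Rightarrow Y_i(t,0)\equiv 0$ $\Rightarrow$ real-analytic division by $r$'', together with $H_i(t):=X_i(t,0)$ and a degree/orientation argument for the periodicity and positivity claims, is exactly the right skeleton. Two points deserve tightening. First, your opening premise that ``$\F^-_{j,k_i}$ is a real-analytic section'' is not available: the leaves are projections of pseudo-holomorphic curves and are a priori only smooth, so composing the real-analytic flow with a merely smooth section would destroy analyticity of $H_i,\tilde X_i,\tilde Y_i$. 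The correct device is the one the paper sets up in Section \ref{sec:localtranext}: one replaces the leaf annuli near $C^u_{j,k_i}$ and $C^s_{j,K_i}$ by the model annuli $R^u=\{y=\delta/2\}$ and $R^s=\{x=\delta/2\}$ in the Moser coordinates of Proposition \ref{prop:coordiantes}, which \emph{are} real-analytic hypersurfaces transverse to the flow; you gesture at this by taking $r$ proportional to $x$, but the justification should run through the model sections, not the leaves. Second, your degree computation via the family $\F_{j,k_i,\tau}$ and the SFT limits of Proposition \ref{prop:convergence_of_planes} is heavier than necessary and somewhat shaky as stated (the two circles degenerate onto \emph{different} index-$2$ orbits, so it is unclear what map the family is supposed to converge to). A cleaner route: under coincidence both $C^u_{j,k_i}$ and $C^s_{j,K_i}$ are essential circles on the single heteroclinic cylinder $Z=W^u_{\U_j}(\P^-_{j,k_i})=W^s_{\U_j}(\P^+_{j,K_i})$, the $t$-coordinates restrict to degree-one parametrizations compatible with the Reeb orientation of the two core orbits, and the first-hit map $C^u\to C^s$ along the flow restricted to $Z$ is homotopic to the identity of $Z$ by shrinking the hitting time; this gives degree $+1$, hence $1$-periodicity of $H_i-{\rm Id}$ and $H_i'>0$ simultaneously. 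For $\tilde Y_i(t,0)>0$ you should also record the nondegeneracy: since $Y_i(t,0)\equiv 0$ the Jacobian of $\tilde\psi_i$ on $\{r=0\}$ equals $H_i'(t)\,\partial_r Y_i(t,0)$, and invertibility of the first-hit map forces $\partial_r Y_i(t,0)=\tilde Y_i(t,0)\neq 0$; combined with your side-preservation observation this yields strict positivity, and then $\tilde Y_i>0$ on all of $\R\times(-\delta,\delta)$ after shrinking $\delta$.
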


Consider the sequence 
$
(T_i,R_i):=\tilde \psi_{i+1} \circ\tilde l_i \circ \ldots \circ \tilde \psi_2 \circ \tilde l_1 ,
$
 so that  $(T_N,R_N)$ represents the mapping $\Psi$ in \eqref{eq:psi_coincid}.
Then $(T_{i+1},R_{i+1}) = \tilde \psi_{i+2} \circ \tilde l_{i+1}(T_i,R_i),  \forall i,$~implying 
\begin{equation}\label{eq:t_i}
\begin{aligned}
T_{i+1} & = H_{i+2}\left(T_i + g_{i+1}(R_i) - h_{i+1}(R_i) \ln R_i\right)\\ & \quad + R_i\tilde X_{i+2}(T_i + g_{i+1}(R_i) - h_{i+1}(R_i) \ln R_i,R_i),\\
R_{i+1} & = R_i\tilde Y_{i+2}(T_i+g_{i+1}(R_i)-h_{i+1}(R_i)\ln R_i,R_i),
\end{aligned}
\end{equation}
for every $i\geq 1$, see Propositions \ref{prop:coincid} and \ref{prop:coincid_global}.


\begin{prop}\label{prop:coincid_ln} For every $i\geq 1$, there exist $A_i,B_i,C_i>0$ so that
\begin{equation}\label{eq:tjrj}
T_i(t,r) - t > -C_i \ln r  \ \ \ \mbox{ and } \ \ \  A_ir  <R_i(t,r)< B_i r,
\end{equation}
uniformly in $t$, for every $r>0$ sufficiently small. 
\end{prop}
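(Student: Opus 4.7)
The plan is to prove both estimates simultaneously by induction on $i\ge 1$, using the recursion \eqref{eq:t_i} together with the two structural facts: that $H_{i+2}-\mathrm{Id}$ and $\tilde X_{i+2},\tilde Y_{i+2}$ are $1$-periodic in $t$ (hence bounded, with $\tilde Y_{i+2}$ bounded below by a positive constant on $\R\times[0,\delta)$), and that $h_{i+1}(0)>0$, so that $-h_{i+1}(r)\ln r\to+\infty$ is the dominant term as $r\to0^+$. The key heuristic is that the $R$-component shrinks comparably to $r$ multiplicatively, while the $T$-component grows at worst logarithmically in $r^{-1}$, and the logarithmic growth coming from each $\tilde l_k$ is always in the favorable direction (i.e., additive and positive), which is what keeps the inductive inequalities from degenerating.

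For the base case $i=1$, write $H_2(s)=s+K_2(s)$ with $K_2\colon\R\to\R$ bounded. Then
\[
T_1-t = g_1(r)-h_1(r)\ln r + K_2\bigl(t+g_1(r)-h_1(r)\ln r\bigr) + r\,\tilde X_2(\cdot,r).
\]
For $r$ small, $g_1(r)$, $K_2$, and $r\tilde X_2$ are all bounded uniformly in $t$, while $h_1(r)\ge h_1(0)/2>0$, so $-h_1(r)\ln r\ge -\tfrac{h_1(0)}{2}\ln r$; absorbing the bounded terms into a slightly larger coefficient $C_1>0$ gives $T_1-t>-C_1\ln r$. For $R_1=r\,\tilde Y_2(\cdot,r)$, periodicity in $t$ and continuity give $0<A_1\le \tilde Y_2\le B_1$ on $\R\times[0,\delta]$ for suitable $A_1,B_1$ and $\delta>0$, yielding $A_1 r<R_1<B_1 r$.

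For the inductive step, assume $A_i r<R_i<B_i r$ and $T_i-t>-C_i\ln r$ uniformly in $t$ for $r$ small. Then $R_i\to 0$ uniformly in $t$ as $r\to 0^+$, so eventually $R_i\in(0,\delta)$ and $h_{i+1}(R_i)\ge h_{i+1}(0)/2>0$. Using $R_i\le B_i r$ we get
\[
-h_{i+1}(R_i)\ln R_i \;\ge\; -h_{i+1}(R_i)\bigl(\ln B_i+\ln r\bigr)\;\ge\;-\tfrac{h_{i+1}(0)}{2}\ln r - M'
\]
for some $M'>0$. Combining with the inductive bound on $T_i-t$ and the boundedness of $g_{i+1}$, $K_{i+2}$, and $R_i\tilde X_{i+2}$, the estimate $T_{i+1}-t>-C_{i+1}\ln r$ follows with $C_{i+1}:=C_i+\tfrac{h_{i+1}(0)}{2}+1$, say, after absorbing additive bounded terms into $-\ln r$ (which blows up as $r\to 0^+$). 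The $R$-estimate is easier: since the first argument of $\tilde Y_{i+2}$ is arbitrary but $\tilde Y_{i+2}$ is $1$-periodic in it and continuous, it is pinched between two positive constants on $\R\times[0,\delta]$, giving $R_{i+1}\in(A_{i+1}r,B_{i+1}r)$ with $A_{i+1},B_{i+1}$ the product of $A_i,B_i$ by those bounds.

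The only subtle point is the uniformity in $t$: at step $i+1$ the argument of $H_{i+2}$, $\tilde X_{i+2}$, and $\tilde Y_{i+2}$ involves $T_i+g_{i+1}(R_i)-h_{i+1}(R_i)\ln R_i$, which tends to $+\infty$ with $r\to 0^+$ and depends on $t$. The $1$-periodicity of $H_{i+2}-\mathrm{Id}$, $\tilde X_{i+2}$, $\tilde Y_{i+2}$ in the first variable is precisely what lets us convert this into bounds that are uniform in $t$; without it the induction would fail. This is the only non-routine point, and once it is handled the calculation is mechanical.
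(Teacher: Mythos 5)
Your proof is correct and follows essentially the same route as the paper's: induction on $i$, with the base case read off from the explicit formula for $(T_1,R_1)$ and the inductive step driven by the recursion \eqref{eq:t_i}, the $1$-periodicity in $t$ of $H_{i+2}-\mathrm{Id}$, $\tilde X_{i+2}$, $\tilde Y_{i+2}$ (which gives the uniformity in $t$), and the positivity of $h_{i+1}(0)$ and $\tilde Y_{i+2}(\cdot,0)$. You simply make explicit the absorption of the bounded additive terms into $-\ln r$ that the paper leaves implicit.
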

\begin{proof} We prove  by induction. Notice that for $i=1$, we have 
\begin{equation}\label{eq:t1r1}
\begin{aligned}
T_1(t,r)  & = H_2(t+g_1(r) - h_1(r)\ln r) + r\tilde X_2(t+g_1(r)-h_1(r) \ln r,r),\\
& = t+g_1(r) -h_1(r)\ln r + (H_2-{\rm Id})(t+g_1(r) -h_1(r)\ln r) \\
& \ \ \ + r\tilde X_2(t+g_1(r)-h_1(r) \ln r,r),\\
R_1(t,r) & = r \tilde Y_2(t+g_1(r) - h_1(r)\ln r,r).
\end{aligned}
\end{equation}
The claim follows for $i=1$ since $H_2-{\rm Id}$, $\tilde X_2$ and $\tilde Y_2$ are 1-periodic in $t$, and $\tilde Y_2(\cdot ,0)$ and $h_1(0)$ are positive.

Next assume \eqref{eq:tjrj} holds for $i$. We shall prove that \eqref{eq:tjrj} holds for $i+1$. Using that $H_{i+2}={\rm Id} + \tilde H_{i+2}$, for some $1$-periodic function $\tilde H_{i+2}$, we obtain  from \eqref{eq:t_i} 
$$
\begin{aligned}
T_{i+1} & = T_i + g_{i+1}(R_i) - h_{i+1}(R_i) \ln R_i + \tilde H_{i+2}\left(T_i + g_{i+1}(R_i) - h_{i+1}(R_i) \ln R_i\right)\\ & \quad  + R_i\tilde X_{i+2}(T_i + g_{i+1}(R_i) - h_{i+1}(R_i) \ln R_i,R_i),\\
R_{i+1} & = R_i\tilde Y_{i+2}(T_i+g_{i+1}-h_{i+1})\ln R_i,R_i),
\end{aligned}
$$
where $g_{i+1}$ and $h_{i+1}$ are real-analytic near $0$. Since $\tilde Y_{i+2},\tilde X_{i+2}$ are $1$-periodic in $t,$ and  $\tilde Y_{i+2}(\cdot,0), h_{i+1}(0)>0$, it follows from the induction hypothesis that \eqref{eq:tjrj}  holds for $i+1$.
\end{proof}

Fix any component $\U_j$ of      $S^3 \setminus \bigcup_{i=1}^{\ell} \S_i$ so that $(j,k)\in \mathcal{C}$ is coincident for every $1\leq k \leq \tilde k_j$.  Recall that $\U_j$ is homeomorphic to a $3$-sphere with $\tilde k_j$ disjoint closed $3$-balls removed. 
Fix a plane $\mathcal{F}^+_{j,k_1}$ of the genus zero transverse foliation in $\U_j$, bounded by $P_{3,j}$.  
The branches in $\U_j$ of the stable/unstable manifolds of the orbits in $\P_2(\lambda)$  transversely intersect $\F^+_{j,k_1}$  at mutually disjoint circles. Indeed, these circles bound closed disks $B_\alpha, \alpha=1,\ldots,\nu$. Since their symplectic areas coincide, $B_\alpha$ have  mutually disjoint interiors. The trajectories through the interior of each $B_\alpha$ eventually leave $\U_j$ to an adjacent component $\U_i,i\neq j$. The points in $\partial B_\alpha$ converge to some $P_{2,i} \subset \partial \U_j$. 

Consider the connected subset of $\F^+_{j,k_1}$ given by
$
\mathcal{A}:= \mathcal{F}^+_{j,k_1}   \setminus \cup_{\alpha=1}^{\nu} B_\alpha . 
$
Since the Reeb flow is transverse to $\mathcal{F}^+_{j,k_1}$, the successive local and global maps  determine a diffeomorphism $\Psi \colon \mathcal{A} \to \mathcal{A}$ that  preserves the finite area form induced by $d\lambda$. 
Abbreviating by $\mathcal{U}_{\mathcal{A}} \subset \U_j$ an invariant open subset, defined by the Reeb trajectories through  $\mathcal{A}$, we conclude that  $\mathcal{A}$ is a global surface of section for the Reeb flow restricted to $\mathcal{U}_{\mathcal{A}}$. The mapping $\Psi$ is the first return map to $\mathcal{A}$, and thus  periodic orbits of $\Psi$ correspond to periodic orbits of the Reeb flow in $\mathcal{U}_{\mathcal{A}}$.

Notice that the outer boundary component of $\mathcal{A},$ i.e. the binding orbit $P_{3,j},$ is preserved by $\Phi$, and hence  points near $P_{3,j}$ are mapped under $\Psi$ to points near $P_{3,j}.$ However, near the inner boundary components,     $\Psi$ behaves as a permutation. More precisely, given $\alpha \in \{1, \ldots, \nu\}$ there exists $\alpha'$ (possibly $\alpha'=\alpha)$ such that $\Psi$ maps points near $\partial B_\alpha$ to points near $\partial B_{\alpha'}.$ 

Now we proceed as in   \cite[Section 7]{dPS2}.
Define the  equivalence relation $\sim$ on $\mathcal{F}^+_{j,k_1} $ such that   
$
x \sim y  \Leftrightarrow x, y \in B_\alpha$  for some    $\alpha.
$  
 Abbreviate by $\Pi \colon \mathcal{F}^+_{j,k_1}  \to  \widetilde{\mathcal{F}}^+_{j,k_1} := \mathcal{F}^+_{j,k_1}/ \sim$ the natural projection. It induces the quotient topology on $\widetilde{\mathcal{F}}^+_{j,k_1}$, which becomes an open disk. For each $  \alpha$,  put $p _\alpha = \Pi (B_\alpha) \in \widetilde{\mathcal{F}}^+_{j,k_1}$. The restriction   $\Pi |_{  \mathcal{A}} \colon \mathcal{A} \to    \widehat{\mathcal{F}} ^+_{j,k_1} := \widetilde{\mathcal{F}}^+_{j,k_1} \setminus \{ p _\alpha \mid   \alpha = 1, \ldots, \nu \}$ is a bijection. We endow  $\widehat{\mathcal{F}}^+_{j,k_1} $ with  a natural smooth structure by declaring $\Pi|_{\mathcal{A}}$ to be a smooth diffeomorphism onto $\widehat{\mathcal{F}} ^+_{j,k_1} $. In this way, we obtain the finite area form $\omega$ on $  \widehat{\mathcal{F}}^+_{j,k_1} $ which is defined as the push-forward of the area form on $\mathcal{A}$ induced by $d\lambda$.  The area form $\omega$ naturally extends to a finite area form on $\widetilde{\mathcal{F}}^+_{j,k_1}$, still denoted by $\omega$. This enables us to define a  homeomorphism $\Phi \colon \widetilde{\mathcal{F}}^+_{j,k_1} \to \widetilde{\mathcal{F}}^+_{j,k_1}$, which preserves $\omega$, permutes the points $p_1, \ldots, p_\nu$, and satisfies $\Phi(z) = \Pi \circ \Psi \circ \Pi^{-1}(z), \forall   z \in  \widehat{\mathcal{F}} ^j_{k_1+}$.  
  In particular, the points $p_1,\ldots,p_\nu$ are periodic. 

 \medskip
 
 {\bf Case 1.} $  \nu \geq 2$. 

In this case, Brouwer's fixed point theorem \cite{Brouwer} gives a fixed point of $\Phi$, say $p \in \widetilde{\mathcal{F}}^+_{j,k_1}$. The restriction $\Phi |_{\widetilde{\mathcal{F}}^+_{j,k_1} \setminus \{ p \} }$ is an area preserving homeomorphism  of the open annulus $\widetilde{\mathcal{F}}^+_{j,k_1}\setminus \{ p \},$ containing at least one periodic point $p_j\neq p$.
   An application of Franks' Theorem \cite{Franks}   implies that $\Phi$ has infinitely many periodic points. It follows that $\Psi$ admits infinitely many periodic points, and hence  the Reeb flow admits infinitely many periodic orbits in $\mathcal{U}_{\mathcal{A}}$.

 \medskip
 
 {\bf Case 2.} $ \nu=1$. 
 
  In this case, there exists only one such disk $B_\alpha\subset \F^+_{j,k_1}$ which coincides with $\mathcal{D}^s_{j, k_1}$. Its boundary is necessarily the intersection of the unstable manifold of $\P^-_{j,k_1}$ with $\F^+_{j,k_1}$, which coincides with the intersection of the stable manifold of $\P^+_{j,k_1}$ with $\F^+_{j,k_1}$. Moreover, $\F^+_{j,k_1}\setminus B_\alpha$ is mapped under $l^{\rm ext}_{j,k_1}$ to $\F^-_{j,k_1+1}\setminus \mathcal{D}^u_{j,k_1+1}$. In particular, since $\nu =1$, the branch of the 
  unstable manifold of $\P^-_{j,k_1+1}$ 
  inside $\U_j$  coincides with the branch of the stable manifold of $\P^+_{j,k_1+1}$. Now we can consider the global map $g_{j,k_1+1}$, which necessarily maps $C^u_{j,k_1+1} \subset \F^-_{j,k_1+1}$ to  $C^s_{j,k_1+1} \subset \F^+_{j,k_1+1}$. Continuing this process, we conclude that the first return map $\Psi\colon \F^+_{j,k_1}\setminus B_\alpha \to \F^+_{j,k_1} \setminus B_\alpha,$ is given by successive compositions of local and global maps
  \begin{equation}\label{eq_composition}
  \Psi= g_{j,k_1} \circ l^{\rm ext}_{j ,k_1+\tilde k_j-1} \circ \cdots g_{j,k_1+2} \circ l^{\rm ext}_{j ,k_1+1} \circ g_{j,k_1+1} \circ l^{ \rm ext}_{j,k_1},
  \end{equation}
  where $\tilde k_j$ is the number of boundary components of $\U_j$.
  
Recall that, in the special coordinates $(t,r) \in \R \times (0,\epsilon_i)$, $\epsilon_i>0$ small, defined near  $C^s_{j,k_i}\subset \F^+_{j,k_i}$, the local mapping $l^{\rm ext}_{j,k_i}$ has a lift of the form
$\tilde l_i(t,r)= \left( t + g_i(r)-h_i(r)\ln r,r\right),$  
 for real-analytic functions $g_i(r),h_i(r)$ defined near $r=0$ with $h_i(0)>0$, see Proposition \ref{prop:coincid}.

The global mapping $g_{j,k_i}$ has a lift of the form
$
\tilde \psi(t,r) = (t+H_i(t)+r\tilde X_i(t,r),$ $r\tilde Y_i(t,r)),
$
where $\tilde H_i,\tilde X_i, \tilde Y_i$ are real-analytic and $1$-periodic in $t$   and satisfy  $\tilde H_i'>1$ and $\tilde Y_i>0$, see Proposition \ref{prop:coincid_global}.
 A lift $\tilde \Psi$ of $\Psi$ in coordinates $(t,r)\in \R \times (0,1)$ is then given by the composition
 $
 \tilde \Psi = \tilde \psi_{1} \circ \tilde l_{\tilde k_j} \circ \ldots \circ \tilde \psi_2 \circ \tilde l_1,
 $ 
 for every $r>0$ sufficiently small.
 By Proposition \ref{prop:coincid_ln}, $\tilde \Psi$ has the form
 $
 \tilde \Psi(t,r) = (T_{\tilde k_j}(t,r),R_{\tilde k_j}(t,r)),
 $
 where 
 $
 T_{\tilde k_j}(t,r)-t > -C \ln r$ and $Ar<R_{\tilde k_j}(t,r) < Br
  $
 for every $(t,r)\in \R \times (0,1)$ with $r>0$ sufficiently small. Here,  $0<A<B$ and $0<C$ are positive constants.  
 In particular, $\Psi$ has infinite twist near $\R / \Z \times \{0\}$ which implies that $\Psi$ has infinitely many fixed points.  Let  $\tilde{\Psi}_k (x,y):= \tilde{\Psi}(x,y)-(k  , 0), \forall k \in \N^*$. Proposition 7.2 from \cite{dPS2} says that $\tilde{\Psi}_k$ has a fixed point for every $k$ large. The proof is based on Franks' generalization of the Poincar\'e-Birkhoff theorem \cite{Franks88}.   Moreover,  fixed points of $\tilde{\Psi}_{k_1}$ and $\tilde{\Psi}_{k_2}$, with $k_1 \neq k_2$, correspond to distinct fixed points of $\Psi$. 
Therefore,  the existence of infinitely many periodic orbits in $\mathcal{U}_{\mathcal{A}}$ follows, and this finishes the proof of   Theorem \ref{main2}-(i).

  \subsection{Proof of Theorem \ref{main2}-(ii)}\label{sec_tmain2-(ii)}

Fix $j\in \{1,\ldots,l\}$, and  assume that $\Psi_{j,k}(C^u_{j,k}) \neq C^s_{j,K},  \forall k \in \{1,\ldots, \tilde k_j\},$
where the map $\Psi_{j,k}$ is as in \eqref{eq_psijk} and $(j,K)=G(j,k)$. This means that for every $k=1,\ldots, \tilde k_j$, the branch in $\U_j$ of the unstable manifold of $P_{2,n_k^j}$ does not coincide with the branch in $\U_j$ of the stable manifold of any $P_{2,n_l^j}.$ Hence, for every $(j,k)$, with $k=1,\ldots, \tilde k_j$, we are in scenario (b) or (c), see Figure \ref{fig:scenarios}. 

\begin{prop}\label{prop:trans_homoc}
There exists $P_{2,n_k^j}\subset \partial \mathcal{U}_j$ with a transverse homoclinic in $\U_j$.
\end{prop}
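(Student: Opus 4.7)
The plan is to convert the non-coincidence hypothesis into a finite directed graph of heteroclinic connections on $\partial\U_j$, find a cycle in it, and then run Conley's real-analytic, area-preserving argument on the cycle to produce a transverse homoclinic. First I would fix the notation: for $k\in\{1,\ldots,\tilde k_j\}$, write $G(j,k)=(j,\phi(k))$ and observe that $\Psi_{j,k}(C^u_{j,k})$ is a real-analytic simple closed curve in $\F^+_{j,\phi(k)}$ distinct from $C^s_{j,\phi(k)}$ by hypothesis, while $\Psi_{j,k}(\mathcal{D}^u_{j,k})\cap C^s_{j,\phi(k)}\neq\emptyset$ by the very definition of $G$. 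Since both curves are real-analytic and not identically coincident, their intersection must be a non-empty finite set. Every such intersection point, by pulling back along the forward flow, corresponds to a trajectory in $\U_j$ that is backward-asymptotic to $\mathcal{P}^-_{j,k}=P_{2,n^j_k}$ and forward-asymptotic to $\mathcal{P}^+_{j,\phi(k)}=P_{2,n^j_{\phi(k)+1}}$; i.e., a heteroclinic connection inside $\U_j$.

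Next I would build a directed graph whose vertices are the orbits $P_{2,n^j_1},\ldots,P_{2,n^j_{\tilde k_j}}$ on $\partial\U_j$ and whose edges are the heteroclinic connections just described. Setting $\sigma(k):=\phi(k)+1\pmod{\tilde k_j}$, every vertex $k$ has at least one outgoing edge to $\sigma(k)$. Since $\sigma$ is a self-map of a finite set, it admits a periodic cycle $k_1\mapsto k_2\mapsto\cdots\mapsto k_N\mapsto k_1$. Concatenating the corresponding finitely many heteroclinic connections yields a heteroclinic cycle through the hyperbolic orbits $P_{2,n^j_{k_1}},\ldots,P_{2,n^j_{k_N}}$ lying entirely in $\U_j$. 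In the special case $N=1$, the cycle is already a homoclinic loop to $P_{2,n^j_{k_1}}$; we still need to argue that some iterate of the unstable manifold crosses the stable manifold transversely.

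The central step is to extract transversality from this real-analytic area-preserving heteroclinic cycle, following Conley's ideas from \cite{C1}. I would work on the annular cross section near $C^u_{j,k_1}$ in $\F^-_{j,k_1}$ and consider the area-preserving, real-analytic return map $\Phi$ obtained by composing the global maps $g_{j,k_i}$ and the local exterior transition maps $l^{\mathrm{ext}}_{j,k_i}$ along the cycle; by Proposition~\ref{prop:coincid}, \eqref{eq:timevar}, and Lemma~\ref{lem:infinitetwists}, each local factor of $\Phi$ has infinite twist at the stable circle of the hyperbolic saddle it avoids. Successive application of the Palis $\lambda$-lemma at each saddle stretches a small arc of $W^u_{\U_j}(P_{2,n^j_{k_1}})$ along the subsequent unstable branches and, upon closing up the cycle, returns a $C^1$-close copy inside the starting section. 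Area preservation prevents the returned arc from being trapped on one side of $W^s_{\U_j}(P_{2,n^j_{k_1}})$, while real-analyticity rules out the alternative that infinitely many iterates remain tangent of infinite order to $W^s_{\U_j}(P_{2,n^j_{k_1}})$. Combined with the infinite twist, this forces at least one transverse crossing of a forward iterate of $W^u_{\U_j}(P_{2,n^j_{k_1}})$ with $W^s_{\U_j}(P_{2,n^j_{k_1}})$, producing the desired transverse homoclinic.

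The main obstacle, in my view, is controlling the tangential intersections permitted in scenarios (b) and (c) of Section~\ref{sec:localtranext}: a priori the curves $\Psi_{j,k}(C^u_{j,k})$ and $C^s_{j,\phi(k)}$ could meet only at finitely many tangencies, and one must rule out that all successive iterates of the unstable branch meet the stable branch only at tangencies. The resolution is the real-analyticity of $\Phi$ together with the logarithmic blow-up of $\Delta t(r)$ in \eqref{eq:timevar} and the area-preserving character of the local and global transitions, which together imply, as in Conley's argument, that tangencies of finite order cannot persist under iteration and must be broken into genuine transverse crossings after finitely many returns. Once a single transverse homoclinic point is produced, the corresponding orbit provides the desired $P_{2,n^j_{k_i}}\subset\partial\U_j$.
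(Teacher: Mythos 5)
Your overall architecture coincides with the paper's: you iterate $k\mapsto G(j,k)+1$ on the finite set $\{1,\ldots,\tilde k_j\}$ until it becomes periodic (your directed-graph/cycle formulation is just a repackaging of the paper's construction of the periodic sequence $k_1,\ldots,k_N$ with $(j,k_{i+1}-1)=G(j,k_i)$), you correctly observe that real-analyticity makes $\Psi_{j,k}(C^u_{j,k})\cap C^s_{j,\phi(k)}$ a non-empty finite set of heteroclinic points, and you then try to upgrade the resulting heteroclinic cycle to a transverse homoclinic using the infinite twist of the local exterior transition maps acting on real-analytic arcs. Up to and including the identification of the cycle, this matches the paper.

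The gap is in the decisive transversality step, where the mechanisms you invoke are not the ones that do the work. An arc has zero area, so area preservation cannot "prevent the returned arc from being trapped on one side of $W^s$"; and the Palis $\lambda$-lemma only gives $C^1$-accumulation of iterated arcs onto unstable manifolds, which by itself says nothing about transversality with a stable manifold. What actually closes the argument (Proposition~\ref{prop:extinfinittransverse}) is a quantitative slope comparison in the Moser normal-form coordinates $(t,r)$ of Proposition~\ref{prop:coordiantes}: a real-analytic arc $\dot\gamma\subset\Psi_{j,k_i}(C^u_{j,k_i})$ emanating from an intersection point with $C^s_{j,K_i}$ and staying outside $\mathcal{D}^s_{j,K_i}$ is sent by $l^{\rm ext}_{j,K_i}$ to a spiral whose slope satisfies $\tilde t_\gamma'(r)<-c_1/r$ as in \eqref{eq:anglesgamma}, whereas any real-analytic arc $\beta$ with $\Psi_{j,k_{i+1}}(\beta)\subset C^s_{j,K_{i+1}}$, meeting $C^u_{j,k_{i+1}}$ transversely or with a finite-order tangency, has slope bounded below by $-c_2/r^{1-\lambda}$ as in \eqref{eq:anglesbeta}. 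Since $1/r$ dominates $1/r^{1-\lambda}$, the spiral crosses $\beta$ transversely infinitely often, yielding transverse intersections of $W^u_{\U_j}(\P^-_{j,k_i})$ with $W^s_{\U_j}(\P^-_{j,k_{i+2}})$; iterating this one-step gain of transversality around the periodic cycle produces the transverse homoclinic. Your phrase "tangencies of finite order cannot persist under iteration" is the right slogan, but without this derivative comparison the passage from "a heteroclinic cycle with possibly only tangential intersections exists" to "some crossing is transverse" is not actually established.
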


To prove Proposition \ref{prop:trans_homoc}, we  find $k\in \{1,\ldots,\tilde k_j\}$ so that $(j,k)\in \mathcal{C}$ is periodic under $G$. To find such $k$, choose any $(j,k_1)\in \mathcal{C}$,  let $(j,K_1) = G(j,k_1)$, and let $(j,k_2):=(j,K_1+1)$. Define the sequences $k_1,k_2,k_3,\ldots,$ and $K_1,K_2,K_3,\ldots,$ accordingly, as $(j,K_i)=G(j,k_i)$ and $k_{i+1}=K_i+1$ for every $i$. The sequence  $k_1,k_2,\ldots,$ is eventually periodic, so we may ignore the first elements and assume that $k_1,k_2,\ldots,k_N,k_1,\ldots$ is periodic with least period $N>0$. 



Next we show that for every $i$ the branch in $\U_j$ of the unstable manifold of $\P_{j,k_i}^-$ intersects transversely the branch in $\U_j$ of the stable manifold of $\P_{j,k_{i+m}}^-, \forall m\geq 2.$ In particular, the orbits $\P^-_{j,k_i}$ admit transverse homoclinics for every $i$.
Take a real-analytic curve 
$      
 \gamma \colon [0, \varepsilon)   \to  \Psi_{j, k_i}(C^u_{j, k_i}),  \varepsilon>0 \mbox{ small,}
$
  such that 
$
  \gamma(0) \in   \Psi_{j, k_i}(C^u_{j, k_i}) \cap  C^s_{j, K_i}$ and $ 
\gamma(t) \notin  \mathcal{D}^s_{j, K_i},   \forall t.
$
Let $\dot \gamma = \gamma \setminus \{ \gamma(0)\}$.
       
Recall that the mapping $\Psi_{j, k_{i+1}}$ is defined on a small neighborhood $\mathcal{N}( \mathcal{D}^u_{j, k_{i+1}})$ of  $\mathcal{D}^u_{j, k_{i+1}}.$ Let $\beta: [0,   \varepsilon) \to \mathcal{N}( \mathcal{D}^u_{j, k_{i+1}})$ 
be a real-analytic arc  intersecting 
  $C^u_{j, k_{i+1}}$ only at $t=0$ and satisfying
$
\beta(s) \in \mathcal{N}( \mathcal{D}^u_{j, k_{i+1}}) \setminus   \mathcal{D}^u_{j, k_{i+1}} ,  \forall s \in (0, \varepsilon),$ and $ 
 \Psi_{j, k_{i+1}} (\beta ) \subset C^s_{j, K_{i+1}}.
$
The following lemma is based on Conley's ideas \cite{C1}.

\begin{prop} \label{prop:extinfinittransverse} The real-analytic arc $l^{\rm ext}_{j, K_i}( \dot \gamma)$ is a spiral around  $C^u_{j,k_{i+1}}\subset \F^-_{j,k_{i+1}}$  intersecting $\beta$ transversely infinitely many times near  $C^u_{j k_{i+1}}.$  In particular, 
the branch in $\U_j$ of the unstable manifold of $\P^-_{j,k_i}$ transversely intersects the branch in $\U_j$ of the stable manifold of $\P^-_{j,k_{i+2}}$. 
\end{prop}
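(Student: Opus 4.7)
The strategy is to work in the real-analytic coordinates $(t,r)$ on $A^{\mathrm{ext},s}_{j,K_i}$ and $A^{\mathrm{ext},u}_{j,k_{i+1}}$ from Section~\ref{sec:localtranext}, identify $l^{\mathrm{ext}}_{j,K_i}(\dot\gamma)$ as a spiral graph $r=\eta(T)$ that winds infinitely around $C^u_{j,k_{i+1}}=\{r=0\}$ as $T\to+\infty$, and intersect this spiral with $\beta$. First, I would pull back $\dot\gamma$ via $F^{\mathrm{ext},s}_{j,K_i}$ to a real-analytic curve $\tilde\gamma(s)=(t(s),r(s))$ in $\R\times(0,\delta/4)$ with $\tilde\gamma(s)\to(t^*,0)$ as $s\to 0^+$, and apply Lemma~\ref{lem:infinitetwists} directly. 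This produces a monotone graph $r=\eta(T)$ for $T$ large, with $\eta(T)\searrow 0$ and $\eta'(T)\to 0$ as $T\to+\infty$; after projecting to the annulus via $F^{\mathrm{ext},u}_{j,k_{i+1}}$, the $T$-coordinate becomes periodic, so the curve spirals around $C^u_{j,k_{i+1}}$ infinitely many times as $r\to 0$. Simultaneously, $\beta$ pulls back to a real-analytic arc $\tilde\beta(s)=(\tau(s),\rho(s))$ with $\tilde\beta(0)=(\tau_0,0)$, $\rho\not\equiv 0$, and $\rho(s)>0$ for $s>0$ small.

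For the intersection count, I would consider the $\Z$-translates $\tilde\beta_m(s):=(\tau(s)+m,\rho(s))$ of $\tilde\beta$ in the lift, normalizing actions so the $t$-period is $1$. For each sufficiently large integer $m$, the continuous function $s\mapsto\eta(\tau(s)+m)-\rho(s)$ is strictly positive at $s=0$ (since $\rho(0)=0$ and $\eta>0$), while for any fixed $s_0>0$ it becomes negative once $m$ is large enough (since $\eta(T)\to 0$ as $T\to+\infty$ but $\rho(s_0)>0$). The intermediate value theorem thus gives an intersection $s_m$ with $s_m\to 0$ and $r_m:=\rho(s_m)\to 0$ as $m\to+\infty$, producing infinitely many intersection points between $l^{\mathrm{ext}}_{j,K_i}(\dot\gamma)$ and $\beta$ in the annulus.

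The main technical obstacle will be establishing that all but finitely many intersections are transverse. At such an intersection the spiral tangent is $(1,\eta'(T_m))$ and the tangent of $\tilde\beta$ is $(\tau'(s_m),\rho'(s_m))$, so transversality reduces to $\rho'(s_m)\neq\tau'(s_m)\eta'(T_m)$. A careful revisit of the computation behind Lemma~\ref{lem:infinitetwists} shows $|\eta'(T_m)|$ is comparable to $r_m/h(0)$, i.e., decays linearly in $r_m$. On the other hand, from the leading real-analytic expansions $\rho(s)=c s^n+\cdots$ and $\tau(s)-\tau_0=b s^p+\cdots$ (with $c,b\neq 0$, $n\geq 1$, $p\geq 1$) one obtains $|\rho'(s_m)|\sim r_m^{(n-1)/n}$ and $|\tau'(s_m)\eta'(T_m)|\sim r_m^{(n+p-1)/n}$; the two exponents differ since $p\geq 1$, so these quantities are of different orders as $r_m\to 0$ and cannot coincide for $m$ large. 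Finally, by construction $l^{\mathrm{ext}}_{j,K_i}(\dot\gamma)$ is contained in the branch of the unstable manifold of $\P^-_{j,k_i}$ inside $\U_j$, while $\beta$ is contained in the branch of the stable manifold of $\P^+_{j,K_{i+1}}=\P^-_{j,k_{i+2}}$ inside $\U_j$; thus the infinitely many transverse intersections found in $\F^-_{j,k_{i+1}}$ lift to transverse intersections of the respective two-dimensional invariant manifolds in $\U_j$, proving the last assertion.
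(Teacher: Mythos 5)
Your proposal is correct and follows essentially the same route as the paper: work in the real-analytic model of Section~\ref{sec:localtranext}, use Lemma~\ref{lem:infinitetwists} to see that $l^{\rm ext}_{j,K_i}(\dot\gamma)$ is a logarithmic spiral accumulating on $C^u_{j,k_{i+1}}$, and exploit the finite-order (real-analytic) contact of $\beta$ with $\{r=0\}$ to show the slopes of the two curves are of different orders as $r\to 0$, so the infinitely many intersections are transverse. The only differences are cosmetic — the paper writes both curves as graphs over $r$ and compares $\tilde t_\gamma'(r)<-c_1/r$ with $t_\beta'(r)>-c_2/r^{1-\lambda}$, whereas you invert the graph and compare leading exponents in $r_m$, and you spell out the intermediate-value argument for the existence of the intersections, which the paper leaves implicit.
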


 \begin{proof} 
 As before, let $\tilde l_i \colon \R \times (0, \delta) \to \R \times (0, \delta)$ denote a lift of $l^{\rm ext}_{j,K_i}$. 
  In view of Lemma \ref{lem:infinitetwists},  $l^{\rm ext}_{j,K_i}(\dot \gamma)$ intersects $\beta$ infinitely many times. 
  The curve $\gamma$ is written as 
$
 \gamma  = \{ ( t_{\gamma }(r), r) \mid r \in (0, \varepsilon ) \}
$
for some real analytic function $t_{\gamma}\colon (0,\delta') \to \R \times (0,\varepsilon),$ which continuously extends over $[0,\delta')$.  We then obtain
$
\tilde l(\dot \gamma) = \{ (  \tilde t_\gamma(r)=t_{\gamma }(r) + \Delta t (r), r) \mid r \in ( 0, \varepsilon) \},
$
where $\Delta t(r) = g(r)-h(r)\ln r,$
and $g(r),h(r)$ are real-analytic functions  defined near $r=0$, with $h(0)>0$. See \eqref{eq:timevar}.
Hence
$
\tilde t_\gamma'(r) = t_\gamma'(r) +g'(r) -h'(r) \ln r -\frac{h(r)}{r} 
$
for every $r\in (0,\varepsilon)$. Due to the estimates in the proof of Lemma \ref{lem:infinitetwists}, we find  a constant $c_1>0$ and $\varepsilon_0 \in (0,\varepsilon ) $ such that
\begin{equation}\label{eq:anglesgamma}
\tilde t_\gamma' (r) <- \frac{c_1}{r}, \quad \forall r \in (0, \varepsilon_0 ).
\end{equation}

Now we proceed similarly with  $\beta.$ We may assume that $\beta(0)= (0,0)$. 
Hence there exists a continuous function  $t_\beta: [0, \varepsilon) \to [0,\varepsilon_0)$, which is real-analytic in $(0,\varepsilon)$ such that $t_\beta (0)= 0$ and 
$
  \beta = \{ ( t_\beta(r), r) \mid r \in [0, \varepsilon)\}.
$
Because of  real analyticity of $\beta$,  the  intersection  at $\beta(0)=(0,0)$  is  either transverse to $\{r=0\}$ or has  finite order tangency. Hence, we have either   $t_\beta \equiv 0$ or $t_\beta (r) = r^m g(r),$
 where $g$ is  real-analytic  on $r>0$ and satisfies $g(0) \neq 0$, and $m \in \N^*$ or $\frac{1}{m} \in \N^*$, depending on the way $\beta$ intersects $C^u_{j k_{i+1}}$ at $\beta(0)=(0,0)$. 
In all cases, we can choose $\lambda \in (0,1)$ and $c_2>0$ such that
\begin{equation}\label{eq:anglesbeta}
 t_{\beta }'(r) = m r^{m-1} g + r^m g' > - \frac{c_2}{ r^{1-\lambda}}
\end{equation}
for  every $r>0$ small enough.  
 We conclude   from \eqref{eq:anglesgamma} and  \eqref{eq:anglesbeta} that 
  $t_\gamma'(r) <  t_{\beta }'(r),$ for every $r>0$ sufficiently small. 
  \end{proof}

  Instead of $\dot \gamma$, we can now repeat the above construction using a small arc $\dot \gamma_1 \subset \Psi_{j,k_{i+1}} \circ l^{\rm ext}_{j,K_i}(\dot \gamma)$ in $\F^+_{j,K_{i+1}}\setminus \mathcal{D}^s_{j,K_{i+1}},$ which corresponds to a transverse intersection of the branch in $\U_j$ of the unstable manifold of $\P^-_{j,k_i}$ with the branch in $\U_j$ of the stable manifold of $\P^-_{j,k_{i+2}}.$ Proposition \ref{prop:extinfinittransverse} then provides a transverse intersection between the branch in $\U_j$ of the unstable manifold of $\P^-_{j,k_i}$ and the branch in $\U_j$ of the stable manifold of $\P^-_{j,k_{i+3}}.$ Using the periodicity of the sequence $k_1,k_2,\ldots,k_N,\ldots,$ which satisfies $(j,k_{i+1}-1)= G(j,k_i), \forall i$, we repeat this construction to find, for every $i$, a transverse homoclinic to $\P^-_{j,k_i}$. 
  
  The proof of Proposition \ref{prop:trans_homoc} is complete. We are ready to prove Theorem \ref{main2}-(ii).

  It is well-known that a transverse homoclinic forces positivity of topological entropy. For completeness, we include  the construction of an invariant subset  $\Lambda_j\subset \U_j$ whose dynamics contains the Bernoulli shift as a sub-system. We follow  Moser's book \cite{Moser}, see also \cite{BGRS}. 
  
  Let $(j,k)\in \mathcal{C}$ be such that $\P^-_{j,k}$ admits a transverse homoclinic orbit, as obtained in Proposition \ref{prop:trans_homoc}.  
 Consider a point $q_0\in C^s_{j,k-1}\subset \F^+_{j,k-1}$ which corresponds to a transverse homoclinic to $\P^+_{j,k-1}=\P^-_{j,k}$. This point lies in a small arc   $V_\infty \subset \F^+_{j,k-1}$ which is transverse to $C^s_{j,k-1}$ and is the image of an arc $V_\infty' \subset C^u_{j,k}$ under the forward flow. Denote by 
  $\mathcal{G}\colon \V'\subset \F^-_{j,k} \to \mathcal{V}\subset \F^+_{j,k-1},$ 
  the corresponding diffeomorphism, given by the first forward hitting point, so that $\mathcal{G}(V_\infty')=V_\infty$. Notice that $V'_\infty \subset \{r=0\}$, where $(t,r)$ are the real-analytic canonical coordinates near $C^u_{j,k}\subset \F^-_{j,k}$ as in Section \ref{sec:infinitetwist}.

  There exists a small arc $H_\infty \subset C^s_{j,k-1}$, starting from $q_0$ which, except for $q_0$, does not intersect  $\mathcal{G}(\V' \cap \mathcal{D}^u_{j,k})$. In this way, we find a small topological square $Q_0\subset \V$ near $q_0$, whose boundary is formed by the following arcs.
  \begin{itemize}
     \item[(i)] the two arcs  $H_\infty,V_\infty$ above.
     
     \item[(ii)] an arc $H_0$ starting from $\mathcal{G}(\V'\cap C^u_{j,k})$ which, except for this extreme point, is contained in the exterior of $\mathcal{D}^s_{j,k-1}$ and does not intersect  $\mathcal{G}(\V' \cap \mathcal{D}^u_{j,k})$.
     
     \item[(iii)] an arc $V_0$ starting from $C^s_{j,k-1}$ which, except for this extreme point, is contained in the exterior of $\mathcal{D}^s_{j,k-1}$ and does not intersect  $\mathcal{G}(\V' \cap \mathcal{D}^u_{j,k})$.
     \end{itemize}
     In local coordinates $(t,r)$ defined on a neighborhood of $C^s_{j,k-1}$ 
     we may assume that $H_0$ is a horizontal segment $r= r_0>0$ and $V_0$ is a horizontal shift of $V_\infty$.

      We can always find coordinates $(u,v)$ near $Q_0$ so that 
      $Q_0 \equiv [0,1] \times [0,1],
      V_\infty \equiv \{0\} \times [0,1],
      H_\infty \equiv [0,1] \times \{0\},
      V_0 \equiv \{1\} \times [0,1],
      H_0 \equiv [0,1] \times \{1\}.
      $
            We may assume that $q_0=(0,0)$ and that the mapping $(t,r) \mapsto (u,v)$ has the form
      \begin{equation}\label{eq:uv}
      (u,v) = M(t,r) + O(t^2+r^2),
      \end{equation}
      for some invertible linear mapping $M$.
  
  A vertical strip $V$ in $Q_0$ is a topological closed disk  whose boundary consists of horizontal arcs $h_0\subset H_0, h_\infty\subset H_\infty$ and two regular arcs  $v_1,v_2 \subset Q_0$ that connect $H_0$ and $H_\infty$. We assume that the arcs $v_1,v_2$ are transverse to the horizontals  $[0,1] \times  {\rm const}$. A horizontal strip in $Q_0$ is similarly defined.

Let 
$
\mathcal{P} := \mathcal{G} \circ l^{\rm ext}_{j,k-1}\colon\mathcal{H}_{-1}\subset  Q_0 \to Q_0,
$
where $\mathcal{H}_{-1}\subset Q_0$ is its domain of definition. Abusing the notation, we denote by
$
\mathcal{P}^{-1}:=(l^{\rm ext}_{j,k-1})^{-1} \circ \mathcal{G}^{-1}\colon   \mathcal{P}(\mathcal{H}_{-1})=:\mathcal{V}_1 \to Q_0,
$
 the  first return map under the backward flow. 
 

\begin{lem}\label{lem:strips}If $Q_0$ is (suitably chosen) sufficiently small, then $\mathcal{H}_{-1}$ is formed by countably many horizontal strips $H_n,n\in \N,$ in $Q_0$, monotonically accumulating on $H_\infty$ as $n\to \infty$. Moreover, $\mathcal{V}_1$ is formed by  countably many  vertical strips $V_n,n\in \N,$ in $Q_0$, monotonically accumulating on $V_\infty$ as $n\to \infty$. For every $n$,  $\mathcal{G} \circ l^{\rm ext}_{j,k-1}(H_n)  = V_n,$ and the vertical (horizontal) boundary components of $H_n$ are mapped to the respective vertical (horizontal) boundary components of $V_n$.
\end{lem}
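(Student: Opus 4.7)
My plan is to express $\mathcal{P}=\mathcal{G}\circ l^{\rm ext}_{j,k-1}$ in the Moser-type coordinates $(t,x,y)$ near $\P^+_{j,k-1}=\P^-_{j,k}$ supplied by Proposition \ref{prop:coordiantes}. In these coordinates, $C^s_{j,k-1}\equiv \{y=0\}$, $C^u_{j,k}\equiv\{x=0\}$, the quantity $xy$ is preserved by the flow, and $l^{\rm ext}_{j,k-1}$ takes the clean form $(t,y)\mapsto(t+\Delta t(y),y)$ with $\Delta t(y)=g(y)-h(y)\ln y$, $h(0)>0$, by \eqref{eq:timevar} and Proposition \ref{prop:coincid}; in particular $\Delta t$ is strictly monotone near $y=0$ and $\Delta t(y)\to+\infty$ as $y\to 0^+$. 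Since $H_\infty\subset C^s_{j,k-1}$ lies on $\{y=0\}$ and $V_\infty$ is transverse to it at $q_0$, the linear part $M$ of \eqref{eq:uv} has the form $u = at + cy$, $v = by$ with $ab\neq 0$; thus $v\to 0^+$ corresponds to $y\to 0^+$ and to $\Delta t(y)\to +\infty$, monotonically.

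After possibly shrinking $Q_0$, I would set $\Omega:=\mathcal{G}^{-1}(Q_0)\subset\F^-_{j,k}$, a curvilinear quadrilateral around a subarc of $V'_\infty\subset\{x=0\}$, and let $2\epsilon_1$ denote its width in the $t$-direction and $\epsilon_2$ its width in the $x$-direction. A point $(t_s,y_s)\in Q_0$ lies in $\mathcal{H}_{-1}$ iff $l^{\rm ext}_{j,k-1}(t_s,y_s)\in\Omega$, which, using $x_\tau=y_s$ and $t_\tau=t_s+\Delta t(y_s)$, reduces to $y_s\in(0,\epsilon_2)$ and $t_s+\Delta t(y_s)\pmod{T_{2,i}}$ lying in the $t$-window of $\Omega$. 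Monotonicity and divergence of $\Delta t$ produce disjoint intervals $[y_n^-,y_n^+]\subset (0,\epsilon_2)$ with $\Delta t(y_n^-)-\Delta t(y_n^+)=2\epsilon_1$ and $y_n^\pm\to 0$ as $n\to\infty$. Shrinking $Q_0$ so that its $u$-extent (hence its $t_s$-extent) is less than $\epsilon_1$ makes the $t$-congruence automatic for every $t_s$ once $y_s\in[y_n^-, y_n^+]$, so via \eqref{eq:uv} we obtain horizontal strips $H_n=\{v\in[bv_n^-,bv_n^+]\}\cap Q_0$ accumulating on $H_\infty$. The image $V_n=\mathcal{P}(H_n)$ has $x_\tau\in [y_n^-, y_n^+]$ (narrow) while its $t_\tau$-coordinate spans the entire $t$-window of $\Omega$; since $\mathcal{G}$ sends the $x$-direction along $\{x=0\}$ to the $u$-direction along $V_\infty$ and the $t_\tau$-direction to the $v$-direction, $V_n$ becomes a vertical strip $\{u\in[u_n^-,u_n^+]\}\cap Q_0$ with $u_n^\pm\to 0$, accumulating on $V_\infty$.

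For the boundary correspondence, by definition of $y_n^\pm$ the horizontal sides $\{v=bv_n^\pm\}\cap H_n$ are sent by $l^{\rm ext}_{j,k-1}$ onto the two $t_\tau$-edges of $\Omega$, and then by $\mathcal{G}$ onto the horizontal edges $H_0, H_\infty$ of $Q_0$, i.e.\ onto the horizontal boundary of $V_n$. Conversely, the short side $V_\infty\cap H_n$, parameterized in $(t,y)$ by $y\mapsto(-cy/a,y)$ with $y\in[y_n^-, y_n^+]$, is mapped by $l^{\rm ext}_{j,k-1}$ to an arc whose $t_\tau$-coordinate $-cy/a+\Delta t(y)$ sweeps an interval of length close to $2\epsilon_1$ (since the affine term is dominated by $\Delta t$), hence sweeps the full $t_\tau$-projection of $\Omega$; pushing forward through $\mathcal{G}$ produces a curve in $Q_0$ spanning $v\in[0,1]$ within a narrow $u$-band, i.e.\ a vertical boundary component of $V_n$. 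The same argument applied to $V_0\cap H_n$ gives the other vertical boundary.

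The main technical obstacle will be calibrating the sizes of $Q_0$, $\Omega$, and the Moser neighborhood so that $\mathcal{H}_{-1}$ really is a disjoint union of full-width horizontal strips rather than truncated pieces at $\{u=0\}$ or $\{u=1\}$. This is handled by shrinking $Q_0$ in the $u$-direction below $\epsilon_1$; area-preservation of $\mathcal{P}$ then gives the quantitative consistency $u_n^+-u_n^-\propto v_n^+-v_n^-$, keeping the picture clean and confirming the monotone accumulation of both families.
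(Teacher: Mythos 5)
Your overall mechanism is the right one and is the same as the paper's: the logarithmic divergence of $\Delta t$ in \eqref{eq:timevar} makes $l^{\rm ext}_{j,k-1}$ wind infinitely often, and the transversality of the homoclinic through $\mathcal{G}$ converts the resulting bands into vertical strips. However, there is a concrete error in the step where you identify the strips. You define $y_n^{\pm}$ so that $\Delta t$ maps $[y_n^-,y_n^+]$ onto the \emph{full} $t$-window of $\Omega=\mathcal{G}^{-1}(Q_0)$ (of length $2\epsilon_1$), and then claim that shrinking the $t_s$-extent of $Q_0$ below $\epsilon_1$ makes the condition $t_s+\Delta t(y_s)\in I$ ``automatic for every $t_s$'' on $y_s\in[y_n^-,y_n^+]$. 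This is false: as $(t_s,y_s)$ ranges over that product, $t_s+\Delta t(y_s)$ sweeps an interval of length at least $2\epsilon_1$ plus the $t_s$-extent, which cannot fit inside the window $I$ of length $2\epsilon_1$. Consequently the sets you call $H_n=\{v\in[bv_n^-,bv_n^+]\}\cap Q_0$ are \emph{not} the components of $\mathcal{H}_{-1}$; they contain points on which $\mathcal{P}$ is not defined (their image under $l^{\rm ext}_{j,k-1}$ misses $\tilde Q_0$), which would break condition (N1) later. The components of $\mathcal{H}_{-1}$ are not constant-$v$ bands: they are the regions cut out of $Q_0$ by the level sets $\{t_s+\Delta t(y_s)=\mathrm{const}\}$, equivalently by the preimages under $l^{\rm ext}_{j,k-1}$ of the two transverse sides $\tilde V_\infty=\mathcal{G}^{-1}(H_\infty)$ and $\tilde V_0=\mathcal{G}^{-1}(H_0)$ of $\tilde Q_0$.

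The missing ingredient is precisely what Lemma \ref{lem:infinitetwists} and Proposition \ref{prop:extinfinittransverse} provide: these spiral preimages are, inside $Q_0$, monotone graphs over the horizontal direction with slope tending to $0$, so consecutive pieces bound genuine horizontal strips connecting $V_0$ to $V_\infty$, accumulating on $H_\infty$. The paper's proof runs the argument in the target first (showing $l^{\rm ext}_{j,k-1}(Q_0\setminus H_\infty)\cap\tilde Q_0$ consists of horizontal strips $\tilde H_n$ in $\tilde Q_0$, whose $\mathcal{G}$-images are the $V_n$), and then observes that a lift of $(l^{\rm ext}_{j,k-1})^{-1}$ has the same form $(t,r)\mapsto(t-g(r)+h(r)\ln r,\,r)$, so the identical spiral argument applied to $\tilde V_\infty$, $\tilde V_0$ yields the $H_n=(l^{\rm ext}_{j,k-1})^{-1}(\tilde H_n)$ and the boundary correspondence. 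Your boundary discussion (horizontal sides of $H_n$ going to the $\tilde V$-edges of $\Omega$ and then to $H_0,H_\infty$; $V_\infty\cap H_n$ sweeping the full $t_\tau$-window) is correct in outline once the strips are redefined this way, but as written it is applied to the wrong sets. Replace the constant-$v$ bands by the spiral-bounded regions and invoke the monotonicity/small-slope statement, and the proof closes.
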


\begin{proof}
Let $\tilde Q_0:=\mathcal{G}^{-1}(Q_0)\subset \F^-_{j,k}$. In coordinates $(t,r)$ on $\mathcal{F}^-_{j,k}$,  $\tilde Q_0$ is a square whose sides are 
$
\tilde H_\infty  := \mathcal{G}^{-1}(V_\infty), 
\tilde H_0  := \mathcal{G}^{-1}(V_0),
\tilde V_\infty  := \mathcal{G}^{-1}(H_\infty),
\tilde V_0  := \mathcal{G}^{-1}(H_0).
$
Notice that $\tilde H_\infty = V_\infty'.$
Recall that in coordinates $(t,r)$ on $\F^+_{j,k-1}$, we have $H_0\subset \{r=r_0\},$ for some $r_0>0$ small, and $V_0 = V_\infty + (t_0,0)$ for some $|t_0|>0$ small.
Taking $r_0$ and $|t_0|$ sufficiently small, we can assume that $\tilde H_0$ is arbitrarily $C^1$-close to $\tilde H_\infty\subset \{r=0\}$ and $\tilde V_0$ is arbitrarily $C^1$ close to $\tilde V_\infty$. Moreover, the image under $\mathcal{G}^{-1}$ of the curves $v_t = V_\infty + (t,0), |t|\leq |t_0|,$ are also $C^1$-close to $\tilde H_0$ and $\tilde H_\infty$.

Now observe that $l^{\rm ext}_{j,k-1}(V_\infty \setminus\{q_0\})$ transversely intersects $\tilde V_\infty$ and is arbitrarily $C^1$-close to the horizontal line $\{r=0\}$. This follows from Propositions \ref{prop:coincid_ln} and \ref{prop:extinfinittransverse}. Hence, if $r_0,|t_0|>0$ are sufficiently small and suitably chosen, then $l^{\rm ext}_{j,k-1}(Q_0 \setminus H_\infty)\cap \tilde Q_0$ consists of countably many horizontal strips $\tilde H_n,n\in \N,$ in $\tilde Q_0$. In particular, the image of such strips under $\mathcal{G}$ are vertical strips $V_n,n\in \N,$ in $Q_0$, ordered from right to left.

Finally, notice that a lift $\tilde l^{-1}$ of $(l^{\rm ext}_{j,k-1})^{-1}$ in coordinates $(t,r)$ has the form $\tilde l^{-1}(t,r) = (t-g(r)+h(r)\ln r,r),   \forall (t,r),$
for suitable real-analytic functions $g(r),h(r),$ defined near $r=0$, with $h(0)>0$. Therefore, $\tilde l^{-1}$ shares the same properties of a lift $\tilde l$ representing $l^{\rm ext}_{j,k-1}$ and we can apply Propositions \ref{prop:coincid_ln} and \ref{prop:extinfinittransverse} to the curve $\tilde V_\infty$ (or $\tilde V_0$) to conclude that $H_n:=(l^{\rm ext}_{j,k-1})^{-1}(\tilde H_n)$ are horizontal strips in $Q_0$ that accumulate on $H_\infty$, and are ordered from top to bottom. Moreover, the horizontal (vertical) boundary components of $\tilde H_n$ are mapped under $\tilde l^{-1}$ to vertical (horizontal) boundary components of $H_n$. The interchanging of vertical (horizontal) strips between $\tilde Q_0$ and  $Q_0$ under the mapping $\mathcal{G}$ finishes the proof of this proposition. 
\end{proof}

Each vertical strip $V_n \subset \mathcal{V}_1$ is regarded as a new square and its image under $\P$ consists of infinitely many vertical strips,  precisely one sub-strip of each strip in $\mathcal{V}_1$. Hence $\mathcal{V}_2:=\P(\mathcal{V}_1)\subset \mathcal{V}_1$   consists of countably many vertical strips, with countably many sub-strips of each strip in $\mathcal{V}_1$. Similarly, $\mathcal{V}_2=\P(\mathcal{H}_{-2})$, where $\mathcal{H}_{-2}\subset \mathcal{H}_{-1}$ consists of countably many horizontal strips,  with countably many sub-strips of each strip in  $\mathcal{H}_{-1}$. 

Repeating indefinitely this construction, we obtain sequences 
$\mathcal{V}_1 \supset \mathcal{V}_2 \supset \mathcal{V}_3  \supset \ldots$
and
$\mathcal{H}_{-1} \supset \mathcal{H}_{-2} \supset \mathcal{H}_{-3}\supset \ldots,$
so that $\mathcal{V}_{n+1}$ consists of countably many vertical strips, with countably many sub-strips of each strip in $\mathcal{V}_n$.  In the same way, $\mathcal{H}_{-n-1}$ consists of countably many horizontal strips, with countably many sub-strips of each strip in $\mathcal{H}_{-n}$. The image of $\mathcal{H}_{-n}$ under $\P$ coincides with $\mathcal{V}_n$.

The non-empty compact subsets of $Q_0$, defined as
$
\bar \Lambda_\mathcal{H} := \cap_{i=1}^{+\infty} {\rm closure}( \mathcal{H}_{-i} )$ and $
\bar \Lambda_\mathcal{V} := \cap_{i=1}^{+\infty}{\rm closure}( \mathcal{V}_i),
$
contain points whose forward and backward trajectories remain in the fixed component $\U_j$ of $S^3 \setminus \cup_{i=1}^l \S_i,$ respectively. 

The non-empty compact subset 
$
\bar \Lambda := \bar \Lambda_\mathcal{H} \cap \bar \Lambda_\mathcal{V} \subset Q_0,
$
 contains points whose entire trajectories remain in $\U_j$. It admits a symbolic dynamics as we outline below. Notice that some points in $\bar \Lambda$ are eventually mapped to $H_\infty$  where $\P$ is not well-defined. Similarly, $\P^{-1}$ is not well-defined on $V_\infty$.
 

Let $\bar \Sigma$ be the set of doubly infinite sequences $a=(a_n)_{n\in \Z}$ of the form $$\ldots,\infty,\infty,a_{l_0},\ldots, a_{-2},a_{-1},a_0,a_1,a_2,\ldots,a_{r_0},\infty,\infty,\ldots,$$ where $a_n$ is a positive integer for every $l_0 \leq n \leq r_0,$ and $a_n = \infty$ for $n<l_0$ and for $n>r_0$. Here, $-\infty \leq l_0 \leq 0 \leq r_0 \leq +\infty.$ The usual shift in $\bar \Sigma$ is given by  $\sigma(a)_n:=a_{n+1}, \forall n,$ and is defined only if $a_0 \neq \infty.$ Similarly, one has the inverse~$\sigma^{-1}.$ 

The conjugation $h\colon (\bar \Lambda,\P) \to (\bar \Sigma, \sigma)$ maps each $x\in \bar \Lambda$ to $(a_n)_{n\in \Z}$ satisfying $\P^n(x) \in V_{a_n},  -\infty \leq l_0\leq n \leq r_0 \leq +\infty.$ If $\P^{r_0}(x) \in H_\infty$ for some $0<r_0<+\infty$, then  $\P^{r_0+1}(x)$ is not well-defined. In this case, we define $h(x)_n := \infty$ for every $n> r_0$. In the same way, if $\P^{l_0}(x)\in V_\infty$ for some $-\infty<l_0<0$, then  $\P^{l_0-1}(x)$ is not well-defined, and we define $h(x)_n:=\infty$ for every $n<l_0$. We also define $h(q_0)=(\ldots,\infty,\infty,\infty,\ldots).$ Hence
$
h \circ \P = \sigma \circ h
$
wherever  defined. By Lemma \ref{lem:strips} and the reasoning just after it, we see that $h(\bar \Lambda)=\bar \Sigma.$ 

The subset $\Sigma\subset \bar \Sigma$ of sequences whose entries are positive integers, that is $l_0=-\infty$ and $r_0=+\infty$, corresponds to an invariant subset $\Lambda \subset \bar \Lambda$,  all whose iterates of $\P$ (positive and negative)  are defined. Our estimates below show that $\Lambda$ is in one-to-one correspondence with $\Sigma.$

 

 Now we study the hyperbolic structure of  the invariant set $\Lambda$. To do that, we first compute the derivative of $\P=\mathcal{G} \circ l^{\rm ext}_{j,k-1}$ in coordinates $(t,r)$. Since the lift $\tilde l$ representing $l^{\rm ext}_{j,k-1}$ has the form
$
\tilde l(t,r) = (t+g(r) - h(r)\ln r,r),  \forall (t,r),
$
for suitable real-analytic functions $g(r)$ and $h(r)$ defined  near $r=0$, with $h(0)>0$, we find
$$
d\tilde l(t,r) \equiv \left(\begin{array}{cc} 1 & g'(r)-h'(r) - \frac{h(r)}{r}\\ 0 & 1 \end{array}\right), \quad \forall (t,r).
$$
Observe that the dominating term in $d\tilde l(t,r)$ is
$
L(r) := g'(r) - h'(r) - h(r)/r \to -\infty$ as  $r \to 0.
$
A lift of  $\mathcal{G}$ is represented by  $\tilde \psi=\tilde \psi(t,r)$.
We may assume that $q_0\equiv (0,0)$ and $\tilde q_0=\mathcal{G}^{-1}(q_0)\equiv (0,0)$. Due to the transversality of the homoclinic trajectory, we may also assume that 
$$
d\tilde \psi(t,r) \equiv \left(\begin{array}{cc}A(t,r) & B(t,r)\\ C(t,r) & D(t,r) \end{array} \right)\to \left(\begin{array}{cc} A_0 & B_0\\ C_0 & D_0 \end{array}\right)=d\tilde \psi(0,0) \ \ \mbox{ as } \ \ (t,r) \to (0,0),
$$
where $A_0D_0-B_0C_0=1$ and $C<0$. Notice that  $(A_0,C_0)^T = d\tilde \psi(0,0)\cdot (1,0)^T$ is tangent to $V_\infty$ at $(0,0)$.
Hence, $d\mathcal{P}  = d\mathcal{G} \cdot  d l^{\rm ext}_{j,k-1}$ is represented by
$$
\begin{aligned}
d\tilde \psi \cdot d\tilde l 
& \equiv \left(\begin{array}{cc}A(t,r) & A(t,r)L(r) +B(t,r)\\ C(t,r) & C(t,r)L(r) + D(t,r)  \end{array}\right),
\end{aligned}
$$
whose eigenvalues are
$
\lambda_{\pm} = \frac{\rm tr}{2} \pm \frac{\sqrt{{\rm tr}^2-4}}{2},
$
where 
$
{\rm tr} := A(t,r) + D(t,r) +C(t,r)L(r) \to +\infty \quad \mbox{as} \quad r \to 0.
$
Hence $\lambda_+ \to +\infty$ and $\lambda_- \to 0^+$ as $r \to 0$. The respective eigenspaces $E_+(t,r)$ and $E_-(t,r)$ converge to 
$
E_+:=\R(A_0,C_0)^T$ and $E_-:= \R(1,0)^T
$  
as $r \to 0$. Notice that $E_+$ is tangent to $V_\infty$ and $E_-$ is tangent to $H_\infty$ at $q_0\equiv (0,0)$. 
In coordinates $(u,v)$, see \eqref{eq:uv}, $E_+$ and $E_-$ converge to $\R(0,1)^T$ and $\R(1,0)^T,$ respectively, as $(u,v) \to (0,0)$.

Fixing $0<\mu<\frac{1}{2}$ and taking $Q_0$ sufficiently small, we conclude that the  cones
$
\zeta_{(u,v)}:=\{|\delta_v| < \mu |\delta_u|\}$  and $\eta_{(u,v)}:=\{|\delta_u| < \mu |\delta_v|\},
$
with $\delta_u \frac{\partial}{\partial u} + \delta_v \frac{\partial}{\partial v} \in T_{(u,v)}Q_0 \equiv \R^2,$ satisfy
$
d\P \cdot \eta_{(u,v)} \subset \eta_{\P(u,v)},$ $ d\P^{-1} \cdot \zeta_{(u,v)} \subset \zeta_{\P^{-1}(u,v)},$
$$
\begin{aligned}
|d\P \cdot \eta| > \mu^{-1}|\eta|, \ \ \ \forall \eta\in \eta_{(u,v)} \ \ \ \mbox{ and } \ \ \ 
|d\P^{-1} \cdot \zeta| > \mu^{-1} |\zeta|, \ \ \ \forall \zeta \in \zeta_{(u,v)}.
\end{aligned}
$$

As proved in \cite[chapter III]{Moser}, the mapping $h \colon  \bar \Lambda \to \bar \Sigma$ is a conjugation between $\P$ and $\sigma$. In particular, the topological entropy of $\P$ is positive. The trajectories through $\Lambda$ form an invariant subset $\Lambda_j \subset \U_j$, where the Reeb flow has positive topological entropy.  This completes the proof of Theorem \ref{main2}-(ii).

  \bibliographystyle{abbrv}
\bibliography{mybibfile}

\end{document}